\renewcommand*{\intitlepunct}{\space}
\newtheorem{sublemma}{Lemma}[subsection]
\newtheorem{subprop}[sublemma]{Proposition}
\newtheorem{subthm}[sublemma]{Theorem}
\newtheorem{subcor}[sublemma]{Corollary}
\newtheorem{corollary}{Corollary}[section]
\newtheorem*{conditionA}{Condition (A)}
\theoremstyle{definition}
\newtheorem{subdef}[sublemma]{Definition}
\theoremstyle{remark}
\newtheorem{subrmk}[sublemma]{Remark}
\newtheorem{subex}[sublemma]{Example}
\newcommand{\setword}[2]{%
  \phantomsection
  #1\def\@currentlabel{\unexpanded{#1}}\label{#2}%
}
\numberwithin{equation}{section}
\newcommand{\Ccc}{\mathbb{C}}
\newcommand{\Rrr}{\mathbb{R}}
\newcommand{\Qqq}{\mathbb{Q}}
\newcommand{\Zzz}{\mathbb{Z}}
\newcommand{\R}{\Rrr}
\newcommand{\Q}{\Qqq}
\newcommand{\Z}{\Zzz}
\DeclareMathOperator{\sign}{sign}
\DeclareMathOperator{\Span}{Span}
\DeclareMathOperator{\rk}{rank}
\DeclareMathOperator{\Gal}{Gal}
\newcommand{\coveredby}{\prec}
\newcommand{\interior}[1]{#1^{\circ}}
\newcommand{\id}{\operatorname{id}}
\newcommand{\cupdots}{\cup \cdots \cup}
\newcommand{\Qn}{\Pi^{\text{ord}}_{n}}
\newcommand{\Qns}{\Pi^{\text{ord},B}_{n}}
\newcommand{\vanish}[1]{}
\newcommand{\fl}{\longrightarrow}
\newcommand{\iso}{\stackrel{\sim}{\fl}}
\newcommand{\la}{\langle}
\newcommand{\ra}{\rangle}
\newcommand{\bl}[2]{\langle #1, #2 \rangle}
\newcommand\til[1]{\widetilde{#1}}
\newcommand{\GL}{\mathbf{GL}}
\newcommand\Cf{\mathcal{C}}
\newcommand\F{\mathcal{F}}
\newcommand\Hf{\mathcal{H}}
\newcommand\Kf{\mathscr{K}}
\newcommand\Lf{\mathscr{L}}
\newcommand\Mf{\mathscr{M}}
\newcommand\Of{\mathscr{O}}
\newcommand\Pf{\mathscr{P}}
\newcommand\Tc{\mathscr{T}}
\newcommand\Tt{\mathcal{T}}
\newcommand\spam[1]{\langle#1\rangle}
\begin{document}

\title{
A generalization of combinatorial identities\\ for stable discrete series
constants}

\author{{\sc Richard EHRENBORG}\footnote{\noindent
{\sc University of Kentucky,
Department of Mathematics,
Lexington, KY 40506, USA.} \hfill\break
{\tt richard.ehrenborg@uky.edu}.},
        {\sc Sophie MOREL}\footnote{
\noindent
{\sc 
UMPA UMR 5669 CNRS,
ENS de Lyon site Monod,
46, all\'ee d'Italie,
69364 Lyon Cedex 07, France.}\hfill\break
{\tt sophie.morel@ens-lyon.fr}.
}
and        
        {\sc Margaret READDY}\footnote{
\noindent
{\sc University of Kentucky,
Department of Mathematics,
Lexington, KY 40506, USA.}  \hfill\break
{\tt margaret.readdy@uky.edu}.
}
}

\date{\today.}

\maketitle

\begin{abstract}
This article is concerned with the constants that appear
in Harish-Chandra's character formula for
stable discrete series of real reductive groups, 
although it does not require any knowledge about real reductive groups
or discrete series.
In Harish-Chandra's work the only information we have about
these constants is that they are uniquely determined by an inductive property.
Later Goresky--Kottwitz--MacPherson and Herb gave different formulas for these constants;
see~\cite[Theorem~3.1]{GKM} and~\cite[Theorem~4.2]{Herb-2S}.
In this article we generalize these formulas to the case of arbitrary finite Coxeter groups
(in this setting, discrete series no longer make sense), and
give a direct proof that the two formulas agree. We actually prove a slightly more
general identity that also implies the combinatorial identity underlying the
discrete series character identities of
Morel~\cite[Proposition~3.3.1]{Morel}.
We deduce this identity from a general abstract theorem
giving a way to calculate the alternating sum of the values of a
valuation on the chambers of a Coxeter arrangement.
We also introduce a ring structure on the set of valuations on
polyhedral cones in Euclidean space with values in a fixed ring.
This gives a theoretical framework
for the valuation appearing in~\cite[Appendix~A]{GKM}.
In Appendix~\ref{appendix_2_structures}
we extend the notion of $2$-structures (due to Herb) 
to pseudo-root systems.
\footnote{2020 MSC classification:
Primary 20F55, 17B22, 52C35;
Secondary 52B45, 14G35, 14F43, 05E45, 06A07, 52B22.}
\end{abstract}

\tableofcontents

\section{Introduction}

Although this paper deals exclusively with the combinatorics of real
hyperplane arrangements and Coxeter complexes, it has its origin
in the representation theory of real reductive groups and its connections
with the cohomology of locally symmetric spaces, and in particular, of
Shimura varieties. We start by explaining some of this background.
This explanation can be safely skipped by the reader not interested in
Shimura varieties.

\vspace{.5cm}

Let $G$ be an algebraic group over $\Q$. To simplify the exposition, we assume
that $G$ is connected and
semisimple. Let $K_\infty$ be a maximal compact subgroup of
$G(\R)$ and $K$ be an open compact subgroup of $G(\mathbb{A}^\infty)$,
where $\mathbb{A}^\infty=\widehat{\Z}\otimes_\Z\Q$ is the ring of finite
ad\`eles of $\Q$. We consider the double quotient $X_K=G(\Q)\backslash(G(\R)\times
G(\mathbb{A}^\infty))/(K_\infty\times K)$. This is a real analytic
variety for $K$ small enough, and the projective system $(X_K)_{K\subset
G(\mathbb{A}^\infty)}$ has an action of $G(\mathbb{A}^\infty)$ by
Hecke correspondences that induces an action of the Hecke algebra at level
$K$ on the cohomology of $X_K$ for any reasonable cohomology theory.

We restrict our attention
further to the case where the real Lie group $G(\R)$ has
a discrete series. This is the so-called ``equal rank case''
because it occurs if and only if the groups $G(\R)$ and $K_\infty$ have
the same rank. Then the $L^2$-cohomology $H^*_{(2)}(X_K)$ is finite-dimensional,
and Matsushima's formula,
proved in this generality by
Borel and Casselman~\cite{Borel_Casselman},
gives a description of this cohomology and
of its Hecke algebra action in terms of
discrete automorphic representations of~$G$ whose infinite component is
a cohomological representation of~$G(\R)$, and in particular, either a discrete
series or a special type of non-tempered representation.

Another cohomology
of interest in this case is the intersection cohomology $IH^*(\overline{X}_K)$
of the minimal
Satake compactification $\overline{X}_K$ of $X_K$. 
In order to study this cohomology,
Goresky, Harder and MacPherson introduced in~\cite{GHM} a family of cohomology
theories called ``weighted cohomologies'' and showed that the two middle
weighted cohomologies agree with $IH^*(\overline{X}_K)$
if~$X_K$ has the structure of a complex algebraic variety.
This result was later generalized by Saper in~\cite{Saper}. 

All the cohomology theories
that we discussed have actions of the Hecke algebra, and the isomorphism
of the previous paragraph
is equivariant for this action. Zucker
conjectured that there should be a Hecke-equivariant isomorphism between
$H^*_{(2)}(X_K)$ and $IH(\overline{X}_K)$. This conjecture was
proved by Looijenga~\cite{Loo}, Looijenga--Rapoport~\cite{LooRap} and
Saper--Stern~\cite{SaSt} if $X_K$ has the structure of a complex
algebraic variety and by Saper~\cite{Saper} in general. In particular,
by comparing the formulas for the action of a Hecke operator
on weighted cohomology (this was calculated by Goresky and MacPherson
using topological methods
in~\cite{GM}) and on $L^2$-cohomology (this was calculated by Arthur using
the Arthur--Selberg trace formula in~\cite{A-L2}), one can obtain a formula for
averaged discrete series characters of the group $G(\R)$. One of the
goals of the paper~\cite{GKM} of Goresky--Kottwitz--MacPherson was to
prove this identity directly. 

If moreover the space $X_K$ is the set of complex points of
a Shimura variety, then it descends to an algebraic variety over an explicit
number field $E$ known as the reflex field, as does the minimal
Satake compactification, and so the intersection cohomology has a
natural action of the absolute Galois group $\Gal(\overline{E}/E)$.
We can further complicate the calculation by trying to calculate the
trace on $IH^*(\overline{X}_K)$ of 
Hecke operators twisted
by elements of the group $\Gal(\overline{E}/E)$,
for example, powers of Frobenius maps. 
In the case where $X_K$ is a Siegel modular variety, this was done
by the second author
in~\cite{Morel}.
It requires a slightly different character identity for averaged
discrete series characters of~$G(\R)$, 
also involving discrete series characters of the
endoscopic groups of $G$, and whose relationship with the
Goresky--Kottwitz--MacPherson identity was not clear.

\vspace{.5cm}

For the specialists,
we give a more detailed explanation of the relevance of our main results to
cohomology calculations in Appendix~\ref{appendix_more_detail}.
Let us return here to a discussion of the current article.

\vspace{.5cm}

In a previous article of the authors~\cite{Ehrenborg_Morel_Readdy}, we investigate
the character identity of Morel~\cite{Morel}. In particular we relate it
to the geometry of the Coxeter complex of the symmetric group and give
a simpler and more natural proof than the brute force calculation in
the appendix of~\cite{Morel}.
The goal of the present article is to generalize the approach
of~\cite{Ehrenborg_Morel_Readdy} and to prove a combinatorial
identity (Theorem~\ref{thm_second_main}) that implies the character formulas
of
Goresky--Kottwitz--MacPherson~\cite{GKM} and of Morel~\cite{Morel} 
(see Subsections~\ref{section_Coxeter_case} and~\ref{second_application}).
To obtain the character formula of~\cite{GKM} from our results, we need
to use Herb's formula for
averaged discrete series characters
(see for example~\cite{Herb} and~\cite{Herb-2S}).
We also generalize, in Corollary~\ref{cor_shelling_order} and
Lemma~\ref{lemma_Coxeter_A},
the geometric result of~\cite{Ehrenborg_Morel_Readdy}
(see Theorem~4.3 of that article). 
In fact, we prove an identity
that holds
not just for root systems that are
generated by strongly orthogonal roots, but 
for all Coxeter systems with finite Coxeter group.
The representation-theoretic
interpretation of our identity in the general case is still unclear.

\vspace{.5cm}

We now describe in more detail the different sections of the article.

\vspace{.5cm}

In Section~\ref{section_hyperplane_arrangements} we review some
background material about real hyperplane arrangements and 
Coxeter arrangements.

In Section~\ref{section_first_main_theorem} we prove our first
main theorem (Theorem~\ref{theorem_2_structures_and_chambers})
that concerns the calculation over the chambers $T$ of a Coxeter
arrangement $\Hf$ of the alternating sum of quantities $f(T)$, where
$f$ is a valuation defined on closed convex polyhedral cones.
More precisely, Theorem~\ref{theorem_2_structures_and_chambers} reduces
this calculation to a similar calculation for simpler subarrangements
of $\Hf$ and it is the main ingredient in the proof of our second
main theorem (Theorem~\ref{thm_second_main}). The original proof
of Theorem~\ref{thm_second_main} used an induction similar to the
ones used in the proofs of the character identities of~\cite[Theorem~3.1]{GKM}
and~\cite[Theorem~4.2]{Herb-2S}, 
but we later realized that Theorem~\ref{thm_second_main}
was a particular case of the more general identity of
Theorem~\ref{theorem_2_structures_and_chambers}.

In Section~\ref{section_definition_sum}
we state and prove our second main theorem (Theorem~\ref{thm_second_main}).
We first introduce in Subsection~\ref{section_weighted_complex}
our main geometric construction, which we call 
the \emph{weighted complex},
that allows us to define the \emph{weighted sum};
see Remark~\ref{rmk_name_weighted_sum} for an explanation of these names.
The weighted complex is the set of all the faces of a fixed hyperplane
arrangement that are on the nonnegative side of an auxiliary hyperplane
$H_\lambda$. It contains what is known as the \emph{bounded complex} in the
theory of affine oriented matroids, and coincides with it if $H_\lambda$ is
in general position. 
We state Theorem~\ref{thm_second_main} in 
Subsection~\ref{section_second_main_thm}
and prove it in Subsection~\ref{section_proof_second_main_theorem}.
The proof is straightforward:
Using Corollary~\ref{cor_second_convolution}, which
generalizes~\cite[Proposition~A.4]{GKM}, to reinterpret the
weighted sum as an alternating sum on the chambers of the arrangement
of the value of a particular valuation,
we are able to show
that Theorem~\ref{thm_second_main} is a particular case of
Theorem~\ref{theorem_2_structures_and_chambers}.
In Subsections~\ref{section_Coxeter_case}
and~\ref{second_application}
we explain how Theorem~\ref{thm_second_main}
implies the identities of~\cite[Theorem~3.1]{GKM}
and of~\cite[Theorem~6.4]{Ehrenborg_Morel_Readdy}.

In Section~\ref{section_weighted_complex_shellable}, 
we study the geometric properties of the weighted complex.
We prove in particular that, under a
hypothesis about the dihedral
angles between the hyperplanes of the arrangement
(Condition~\ref{(A)}
in Subsection~\ref{section_condition_A},
which always holds in the Coxeter case), the
weighted complex is shellable; see Corollary~\ref{cor_shelling_order}, which
generalizes Theorem~4.3 of~\cite{Ehrenborg_Morel_Readdy}. 
We consider the case of Coxeter arrangements in
Subsection~\ref{section_weighted_complex_Coxeter}.
These geometric results were originally needed in the proof
of Theorem~\ref{thm_second_main}, but the new proof
via Theorem~\ref{theorem_2_structures_and_chambers} allows us
to circumvent them. We nevertheless decided to keep them in the article
because we thought that they could be of independent interest.

In Section~\ref{section_concluding_remarks}
we include concluding remarks.

We finish with three appendices. Each of the
first two appendices can be read independently
from the rest of the article (except that a proof
in Appendix~\ref{appendix_valuations}
uses Lemma~\ref{lemma_star}).
The goal of our Appendix~\ref{appendix_valuations}
is to generalize~\cite[Proposition~A.4]{GKM},
which is a key part in the proof of our main
theorem. In Appendix~A of their article~\cite{GKM}, 
Goresky--Kottwitz--MacPherson show that a
certain function, which they call $\psi_C(x,\lambda)$,
is a \emph{valuation} (see Definition~\ref{def_valuation})
on closed convex polyhedral cones, although they do not phrase it in these
terms. We show that their function is a special case of a general
construction that takes two valuations and produces a third one,
and that this operation makes the set of valuations on closed
convex polyhedral cones into a ring.
See Theorem~\ref{thm_coalgebra} and its corollaries for the
precise definition of this operation.

In Appendix~\ref{appendix_2_structures} we review the theory
of $2$-structures, due to Herb; see for example Herb's review
article~\cite{Herb-2S}.
We believe that this will be useful to the reader for a number of
reasons.
The proofs of the fundamental
results of this theory are somewhat scattered in the literature and
sometimes left as exercises. Furthermore, we needed to slightly
adapt a number of results so that they continue to hold 
for Coxeter systems that do not necessarily arise from a 
(crystallographic) root system.

Finally, Appendix~\ref{appendix_more_detail} is a continuation of
the first part of the introduction, and is intended to give specialists
more information about the way the weighted sum of
Definition~\ref{def_psi} and Theorem~\ref{thm_second_main}
appear in the calculation of the cohomology of locally symmetric
varieties.

\section{Hyperplane arrangements}
\label{section_hyperplane_arrangements}

\subsection{Background material}
\label{background}

We fix a finite-dimensional $\R$-vector space $V$ with an inner product
$(\cdot,\cdot)$. 
If $\alpha\in V$, we write
\begin{align*}
H_\alpha & = \{x\in V : (\alpha,x)=0\}, &
H^+_\alpha & = \{x\in V : (\alpha,x)> 0\}, &
H^-_\alpha & = \{x\in V : (\alpha,x)< 0\}.
\end{align*}
We also denote
by $s_\alpha$ the (orthogonal) reflection across the hyperplane $H_\alpha$.

Let $(\alpha_e)_{e \in E}$ be a finite family of nonzero vectors in $V$. 
The corresponding 
\emph{(central) hyperplane arrangement} is the family of hyperplanes
$\Hf = (H_{\alpha_e})_{e \in E}$.
Let $V_{0}$ be the intersection of all the hyperplanes,
that is, $V_0=\bigcap_{e \in E} H_{\alpha_e}$.
We say that the arrangement $\Hf$ is \emph{essential}
if $V_0 = \{0\}$, which means that the family
$(\alpha_e)_{e \in E}$ spans $V$.

Consider the map $s:V \longrightarrow \{+,-,0\}^E$ sending $x\in V$ to the family
$(\sign((\alpha_e,x)))_{e \in E}$, where $\sign:\R \longrightarrow \{+,-,0\}$ is the
map sending positive numbers to $+$, negative numbers to $-$ and
zero to $0$. 

\begin{subrmk}
The image of the map $s:V \longrightarrow \{+,-,0\}^E$
is the set of covectors of an oriented matroid
(see for example~\cite[Definition~4.1.1]{OM}). This is the oriented matroid
corresponding to the hyperplane arrangement. 
In fact, some of our results extend to general oriented matroids.
In this article we have chosen to concentrate
on hyperplane arrangements to keep the exposition more concrete.
In particular, we do not assume that the reader knows what an oriented
matroid is.
\end{subrmk}

We denote by $\Lf(\Hf)$ or just $\Lf$
the set of nonempty subsets of $V$ of the form
$C=s^{-1}(X)$, for a sign vector $X\in\{+,-,0\}^E$.
The elements of $\Lf$ are called \emph{faces} of the arrangement.
The set $\Lf$ has a natural partial order
given by $C \leq D$ if and only if $C \subseteq \overline{D}$.
The relation $C \leq D$ is equivalent to the fact that
for every $e\in E$ we have $s(C)_e=0$ or $s(C)_e=s(D)_e$.
The set $\Lf$ with this partial order
is called the \emph{face poset} of the arrangement.
Note that $V_{0}$ is the minimal element of~$\Lf$.
When we adjoin a maximal element~$\widehat{1}$ to the poset~$\Lf$,
we obtain a lattice $\Lf \cup \{\widehat{1}\}$
known as the \emph{face lattice}.
Note that under our convention
faces other than $V_0$ are not closed subsets of $V$:
for every $C\in\Lf$, the closure $\overline{C}$ is a closed convex polyhedral
cone in~$V$, and it is an intersection of closed half-spaces~$\overline{H^{\pm}_{\alpha_e}}$.
The poset $\Lf$
is graded with the rank of a face $C\in\Lf$ given by $\rho(C) = \dim(C)-\dim(V_0)$,
where we write $\dim(C)$ for $\dim(\Span(C))$.

We denote by $\Tc(\Hf)$ or just $\Tc$ the set of maximal faces of $\Lf$.
These elements are often called \emph{chambers},
\emph{regions} or \emph{topes},
and are the connected components of $V-\bigcup_{e\in E}H_{\alpha_e}$.
If $T \in \Tc$ then $T$ is an open
subset of~$V$,
and its closure is a closed convex polyhedral cone of dimension~$\dim(V)$.

If $X,Y\in\{+,-,0\}^E$, their \emph{composition} $X\circ Y$
is the sign vector defined by
\[(X\circ Y)_e = \begin{cases}X_e & \text{ if }X_e \neq 0,\\
Y_e & \text{ otherwise.}\end{cases} \]
If $C,D\in\Lf$ then $s(C)\circ s(D)$ is also the image of a face of
$\Lf$, and we denote this face by $C\circ D$.
This is the unique
face of $\Lf$ that contains all vectors of $V$ of the form
$x+\varepsilon y$, with $x\in C$, $y\in D$ and
$\varepsilon>0$ sufficiently small (relative to $x$ and $y$).
Define the
\emph{separation set} of $C$ and $D$ to be the set
\[S(C,D)=\{e \in E :  s(C)_e=-s(D)_e \neq 0\}.\]
This is the set of $e \in E$ such that $C$ and $D$ are on different sides
of the hyperplane $H_{\alpha_e}$.

Fix a chamber $B\in\Tc$. We can then define a partial order $\preceq_B$ on
$\Tc$ by declaring that $T\preceq_B T'$ if and only if $S(B,T) \subseteq S(B,T')$.
The resulting poset is called the \emph{chamber poset with base chamber~$B$}.
We will denote it by $\Tc_B$. 
It is a poset with minimal element~$B$ and maximal element~$-B$.
When all the hyperplanes are distinct,
this poset is also graded
with the rank function $\rho(T) = |S(B,T)|$;
see~\cite[Proposition~4.2.10]{OM}.

If the choice of the base chamber $B$ is understood, we write, for
every face $C$ of the arrangement,
\[(-1)^C=(-1)^{|S(B,C\circ B)|}.\]

We also consider the graph with vertex set $\Tc$, where two
chambers $T,T'\in\Tc$ are connected by an edge if and only if
$\overline{T}\cap\overline{T}'$ spans a hyperplane (necessarily one of
the hyperplanes $H_{\alpha_e}$).
In this situation, we say that this hyperplane is a \emph{wall} of
the chambers $T$ and $T'$.
This graph is called the \emph{chamber graph}.
In the case
when all the hyperplanes of the arrangement $\Hf$ are distinct,
the distance between two chambers $T$ and $T'$ in this graph is $|S(T,T')|$;
see~\cite[Proposition~4.2.3]{OM}.

Consider the sphere $\mathbb{S}$ of center $0$ and radius $1$ in $V/V_0$.
The intersections $\overline{C}\cap\mathbb{S}$, for $C\in\Lf$, form a regular
cell decomposition $\Sigma(\Lf)$ of $\mathbb{S}$, and we will identify $\Lf$
with the face poset of this regular cell decomposition.

Finally, we recall the definition of the star of a face in $\Lf$.

\begin{subdef}
Let $C\in\Lf$. The \emph{star} of $C$ in $\Lf$ is 
$\{D\in\Lf :  C\leq D\}$.
Geometrically it is the set of faces
of $\Lf$ whose closure contains $C$. We will denote it by $\Lf_{\geq C}$.
\end{subdef}

\begin{sublemma}
\label{lemma_star}
Let $C\in\Lf$ and let $E(C)=\{e\in E :  C\subset H_{\alpha_e}\}$. Consider the
hyperplane arrangement $\Hf(C)=(H_{\alpha_e})_{e\in E(C)}$ and let $\Lf_{\Hf(C)}$
be its face poset.
Then the following four statements hold:
\begin{itemize}
\item[(i)]
Each face $D$ of $\Lf$ is contained in a unique
face $D'$ of $\Lf_{\Hf(C)}$, and the map $D \longmapsto D'$ induces an isomorphism of
posets
$\iota_C: \Lf_{\geq C} \stackrel{\sim}{\longrightarrow} \Lf_{\Hf(C)}$.
In particular, it sends the chambers of
$\Tc\cap\Lf_{\geq C}$ to the chambers of $\Lf_{\Hf(C)}$.

\item[(ii)]
The isomorphism $\iota_C$ of (i) sends a face $D\geq C$ of
$\Hf$ to the relative interior of the closed convex
polyhedral cone $\overline{D}+\Span(C)$.
Let $\Cf_C=\bigcap_{e\in E-E(C)}H_e^{\epsilon_e}$, where
$\epsilon_e=s(C)_e$.
The inverse of the isomorphism $\iota_C$ sends a face $D'$ of $\Hf(C)$ to
the intersection $D'\cap\Cf_C$.

\item[(iii)]
If $D_1,D_2\in\Lf_{\geq C}$ then the inclusion $S(D_1,D_2)\subset E(C)$ holds.
In particular, we have the equality $S(D_1,D_2)=S(\iota_C(D_1),\iota_C(D_2))$,
where the isomorphism $\iota_C$ is as in (i). 

\item[(iv)] The isomorphism $\iota_C:\Lf_{\geq C} \stackrel{\sim}{\longrightarrow} 
\Lf_{\Hf(C)}$ preserves composition and dimension, that is, for all
$D,D'\in\Lf_{\geq C}$,
the identities
$\iota_C(D \circ D') = \iota_C(D)\circ\iota_C(D')$ 
and
$\dim(\iota_C(D)) = \dim(D)$ 
hold.
\end{itemize}

\end{sublemma}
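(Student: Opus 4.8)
The plan is to prove the four statements in order, building the isomorphism $\iota_C$ explicitly and then checking that it has the claimed properties. The key observation is that the faces $D \geq C$ are exactly those with $s(D)_e = s(C)_e$ for all $e \in E - E(C)$ (i.e.\ those whose sign vector agrees with $s(C)$ outside of $E(C)$, where $C$ itself has zero coordinates), while their coordinates indexed by $E(C)$ range freely over the covectors of the subarrangement $\Hf(C)$.

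\textbf{Construction of $\iota_C$ and proof of (i).}
First I would fix a point $x_0$ in the relative interior of $C$. For any $D \geq C$, pick $y \in D$; then for small $\varepsilon > 0$ the point $x_0 + \varepsilon y$ lies in $D$ (since $C \leq D$ means $s(D)$ dominates $s(C)$, and the coordinates where $s(C)_e = 0$ are exactly those in $E(C)$), and its image under the sign map restricted to $E(C)$ depends only on $D$. This defines the map $D \mapsto D'$, where $D'$ is the face of $\Lf_{\Hf(C)}$ with $s(D')_e = s(D)_e$ for $e \in E(C)$. To see it is a poset isomorphism, I would exhibit the inverse: given a face $D'$ of $\Hf(C)$, set $\iota_C^{-1}(D') = D' \cap \Cf_C$ with $\Cf_C = \bigcap_{e \in E - E(C)} H_e^{\epsilon_e}$, $\epsilon_e = s(C)_e$; one checks this intersection is nonempty (it contains $x_0 + \varepsilon y$ for suitable $y \in D'$ and small $\varepsilon$) and is a single face of $\Lf$ containing $C$. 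That both composites are the identity is then a matter of comparing sign vectors coordinate by coordinate, and the order-preservation in both directions is immediate from the description $C' \leq D'$ iff $s(C')_e \in \{0, s(D')_e\}$ for all $e$. Chambers of $\Tc \cap \Lf_{\geq C}$ are the maximal elements of $\Lf_{\geq C}$, hence go to maximal elements of $\Lf_{\Hf(C)}$, i.e.\ its chambers.

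\textbf{Proofs of (ii), (iii), (iv).}
For (ii), the geometric description follows by noting that $\overline{D} + \Span(C)$ is cut out by exactly the halfspaces $\overline{H_{\alpha_e}^{\pm}}$ for $e$ such that $C \not\subset H_{\alpha_e}$ forces equality —— i.e.\ it sees only the constraints indexed by $E(C)$ —— and its relative interior has the sign vector of $D'$; the statement about $D' \cap \Cf_C$ is exactly the inverse map constructed above. For (iii), if $D_1, D_2 \geq C$ then for $e \in E - E(C)$ we have $s(D_1)_e = s(C)_e = s(D_2)_e \neq 0$, so $e \notin S(D_1, D_2)$; hence $S(D_1, D_2) \subset E(C)$, and since $\iota_C$ does not change the coordinates indexed by $E(C)$, the separation sets agree. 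For (iv), preservation of composition is immediate because composition $X \circ Y$ is computed coordinate-wise and $\iota_C$ simply forgets the coordinates outside $E(C)$ (on which all faces $\geq C$ agree with $C$), so it commutes with $\circ$; preservation of dimension follows since $\iota_C(D)$ is the relative interior of $\overline{D} + \Span(C)$ and $C \subseteq \overline{D}$ already gives $\Span(C) \subseteq \Span(\overline{D} + \Span(C)) = \Span(D)$, whence $\dim(\iota_C(D)) = \dim(\overline{D} + \Span(C)) = \dim(D)$. Alternatively, $\iota_C$ is a poset isomorphism between graded posets and $\dim$ is an affine function of the rank on each side, matching up at $C$ (rank $0$ in $\Lf_{\geq C}$, whose image is the minimal face $V_0(\Hf(C))$).

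\textbf{Main obstacle.}
The genuine content is all in part (i): verifying carefully that the map $D \mapsto D'$ is well defined (independent of the choices of $x_0$ and $y$) and that $D' \cap \Cf_C$ is really a single face of $\Lf$, not a union —— this is where one must argue that choosing any point in the relative interior of $D'$ and perturbing into $\Cf_C$ lands in one face whose sign vector is forced. Once the bijection and its explicit two-sided description are pinned down, parts (ii)–(iv) are short verifications on sign vectors and spans. I would therefore spend most of the write-up making the perturbation argument in (i) precise, and treat the rest briskly.
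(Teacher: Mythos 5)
Your construction is essentially the paper's: $\iota_C$ is read off the sign vectors restricted to $E(C)$, the inverse is $D'\longmapsto D'\cap\Cf_C$, the nonemptiness of that intersection is seen by perturbing a point of $C$ into $D'$, part (iii) is the observation that faces $\geq C$ all carry the sign $s(C)_e\neq 0$ at every $e\in E-E(C)$, and composition is preserved because $\iota_C$ just forgets the coordinates outside $E(C)$. All of that is correct (and in fact the paper dismisses (i) as clear, so your careful perturbation argument there is fine but is not where the real work lies).

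The one place where you have misjudged the difficulty is the first claim of (ii): that $\iota_C(D)$ equals the relative interior of $\overline{D}+\Span(C)$, equivalently that $\overline{D}+\Span(C)$ coincides with the closed cone $K$ cut out by only the $E(C)$-constraints satisfied by $\overline{D}$. You assert this ("it sees only the constraints indexed by $E(C)$"), but the inclusion $K\subseteq\overline{D}+\Span(C)$ is not a sign-vector formality: it needs an argument. The paper's proof is short but genuine: given $x\in K$ and $y\in C$, since $(\alpha_e,y)\neq 0$ for all $e\in E-E(C)$ one can choose $\lambda>0$ so large that $(\alpha_e,x+\lambda y)$ has the sign $s(C)_e=s(D)_e$ for every such $e$, so $x+\lambda y\in\overline{D}$ and hence $x\in\overline{D}+\Span(C)$; the reverse inclusion and the identification of the relative interior of $K$ with $\iota_C(D)$ are then immediate. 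This is the same scaling trick you already use in (i), so the gap is easily filled, but as written your (ii) is an assertion rather than a proof. Note also that your first argument for dimension preservation in (iv) quotes this very claim, so it inherits the gap; your alternative graded-poset argument works provided you also check $\dim(C)=\dim\bigl(\bigcap_{e\in E(C)}H_{\alpha_e}\bigr)$ (true, since $C$ is open in that subspace), whereas the paper gives a direct two-line contradiction argument for dimension that does not pass through (ii).
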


In particular, $\Lf_{\geq C}$ is also isomorphic to the face poset
of a regular cell decomposition of the unit sphere in
$V/\bigcap_{e \in E(C)} H_{\alpha_e}$
that we denote by $\Sigma(\Lf_{\geq C})$.

\begin{proof}[Proof of Lemma~\ref{lemma_star}.]
Statement~(i) is clear.

We prove statement~(ii).
Let $D\in\Lf_{\geq C}$ and let $D'=\iota_C(D)$. As
$C\leq D$, we have $s(D)_e=s(C)_e$ for every $e\in E-E(C)$, and
so $D\subset\Cf_C$. As $D\subset D'$, we deduce that $D'\cap\Cf_C\supset D$.
As $D'$ is a face of $\Hf(C)$, hence an intersection
$\bigcap_{e\in E(C)}H_{\alpha_e}^{s_e}$ with $s_e\in\{0,+,-\}$,
the intersection $D'\cap\Cf_C$ is either empty or a face of
$\Hf$. We have just proved that this intersection contains $D$,
so it is not empty and hence is equal to the face $D$ of $\Hf$.
It remains to prove that $D'$ is the relative interior of $\overline{D}+
\Span(C)$. We write $D=
\bigcap_{e\in E_0}H_{\alpha_0}\cap\bigcap_{e\in E_+}
H^+_{\alpha_e}\cap\bigcap_{e\in E_-}H^-_{\alpha_e}$, 
with $E=E_0\sqcup E_+\sqcup
E_-$. We then have $E_0\subset E(C)$, and $D'$ is equal to
$\bigcap_{e\in E_0}H_{\alpha_0}\cap\bigcap_{e\in E_+\cap E(C)}
H^+_{\alpha_e}\cap\bigcap_{e\in E_-\cap E(C)}H^-_{\alpha_e}$. So it suffices
to show that $\overline{D}+\Span(C)=K$, where
$K=\bigcap_{e\in E_0}H_{\alpha_0}\cap\bigcap_{e\in E_+\cap E(C)}
\overline{H}^+_{\alpha_e}\cap\bigcap_{e\in E_-\cap E(C)}\overline{C}H^-_{\alpha_e}$.
We clearly have $\overline{D}\subset K$ and $\Span(C)\subset K$,
so $\overline{D}+\Span(C)\subset K$. Conversely, let $x\in K$ and
let $y\in C$. Then $(\alpha_e,y) \neq 0$ for every $e\in E-E(C)$,
so there exists $\lambda>0$ such that $(\alpha_e,\lambda y)+(\alpha_e,x)$
has the same sign as $(\alpha_e,y)$ for every $e\in E-E(C)$.
We then have $\lambda x+y\in\overline{D}$, and so $x\in\overline{D}+
\Span(C)$.

We prove~(iii). Let $D_1,D_2\in\Lf_{\geq C}$, and let $e\in S(D_1,D_2)$.
Suppose for example that $s(D_1)_e=+$ and $s(D_2)_e=-$. (The other case
is similar.) Then $\overline{D}_1\subset\overline{H_{\alpha_e}^+}$
and $\overline{D}_2\subset\overline{H_{\alpha_e}^-}$, so
$C \subset \overline{D}_1\cap\overline{D}_2
\subset \overline{H_{\alpha_e}^+} \cap \overline{H_{\alpha_e}^-}=
H_{\alpha_e}$, which implies that $e\in E(C)$.

The first statement of (iv)
follows easily from the definitions: the composition
$D\circ D'$ is defined on the sign vectors of $D$ and $D'$, and the
isomorphism $\iota_{C}$ just forgets the coordinates outside of~$E(C)$
in these sign vectors.

We prove the second statement of (iv). 
Let $D\in\Lf_{\geq C}$, and let $D'$ be the unique
face of $\Lf_{\Hf(C)}$ containing~$D$. We clearly have $\dim(D)\leq\dim(D')$.
If $\dim(D')>\dim(D)$ then there exists $e\in E$ such that
$D\subset H_{\alpha_e}$ and $D'\not\subset H_{\alpha_e}$. But
$C\subset\overline{D}$, so this implies that $e\in E(C)$.
As $D'$ is not included in~$H_{\alpha_e}$, it must be contained in
one of the open half-spaces $H^{\pm}_{\alpha_e}$, contradicting the fact
that $D'$ contains~$D$.
\end{proof}

\begin{subrmk}
\label{rmk_star_convex}
Let $C\in\Lf$ and let $F'=\{e\in E :  C\not\subset H_{\alpha_e}\}$.
Then the set $\Tc\cap\Lf_{\geq C}$ is equal to
$\{T\in\Tc : \forall e\in F'\ s(T)_e=s(C)_e\}$, so it is a $T$-convex subset of $\Tc$ in the
sense of~\cite[Definition~4.2.5]{OM}; see~\cite[Proposition~4.2.6]{OM}.
In other words, it contains every shortest path in the
chamber graph between any two of its elements, so it is a lower order
ideal in $\Tc_B$ for every choice of base chamber $B\in\Tc\cap\Lf_{\geq C}$.
\end{subrmk}

\subsection{Coxeter arrangements}
\label{section_Coxeter_arrangements}

Let $(W,S)$ be a Coxeter system,
that is, $W$ is the group generated by the set $S$
and the relations between the generators are of the form
$(st)^{m_{s,t}} = 1$ where $m_{s,s} = 1$ and $m_{s,t} \geq 2$ for $s \neq t$;
see~\cite[Section~1.1]{BB}.
The corresponding
Coxeter graph has vertex set $S$, and two generators $s$ and $t$
are connected with an edge if $m_{s,t} \geq 3$.
If $m_{s,t} \geq 4$ it is customary to label the edge by the integer~$m_{s,t}$.

There are three natural partial orders on the elements of the 
Coxeter group $W$.
First the {\em strong Bruhat order} is defined by the following
cover relation: $z \coveredby w$ if there exists $s\in S$ and $u\in W$
such that $(usu^{-1}) z = w$ and
$\ell(z)+1 = \ell(w)$ where $\ell$ is the length function on $W$;
see for example~\cite[Definition~2.1.1]{BB}.
Next, we have 
the {\em right} (respectively {\em left}) {\em weak Bruhat order},
where the cover relation is
$z \coveredby w$ if there exists $s\in S$
such that
$z \cdot s= w$
(respectively $s \cdot z = w$)
and
$\ell(z) + 1 =\ell(w)$. 
The strong Bruhat order refines both the left and right weak Bruhat orders.

Let $V=\bigoplus_{s\in S}\R e_s$, with the symmetric
bilinear form $(\cdot,\cdot)$ defined by
\[(e_s,e_t)=-\cos\left({\pi}/{m_{s,t}}\right).\]
In particular, $(e_s,e_s) = 1$.
The \emph{canonical representation} of $(W,S)$
is the representation of~$W$ on~$V$ given by
\begin{align}
\label{equation_canonical_representation}
s(v) & = v - 2 \cdot (e_s,v) \cdot e_s,
\end{align}
for every $s\in S$ and every $v\in V$. Note that 
this formula defines an orthogonal isomorphism of~$V$ for the symmetric
bilinear form $(\cdot,\cdot)$.
We refer the reader
to~\cite[Chapitre~V, \S~4, \textnumero~8, Th\'eor\`eme~2 p.~98]{Bourbaki}
for the next result.
\begin{subthm}\label{thm_W_finite}
Equation~\eqref{equation_canonical_representation}
defines a faithful representation of~$W$ on~$V$,
and the form $(\cdot,\cdot)$ is positive definite if and only if $W$ is finite.
\end{subthm}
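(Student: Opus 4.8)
This is the classical theorem of Bourbaki; here is how I would organize a self-contained proof. First I would check that equation~\eqref{equation_canonical_representation} actually defines a representation, i.e.\ that $s\mapsto\sigma_s$, where $\sigma_s(v)=v-2(e_s,v)e_s$, extends to a homomorphism $\sigma\colon W\to\GL(V)$. Each $\sigma_s$ is an involution lying in $O(V,(\cdot,\cdot))$, so by the presentation of $W$ it suffices to verify that $\sigma_s\sigma_t$ has order dividing $m_{s,t}$ for all $s\ne t$. I would do this by restricting to the plane $P=\Span(e_s,e_t)$: when $m_{s,t}<\infty$ the form is positive definite on $P$ (because $\lvert\cos(\pi/m_{s,t})\rvert<1$), the product $\sigma_s\sigma_t$ acts on $P$ as the rotation of angle $2\pi/m_{s,t}$, and it fixes $P^\perp$ pointwise (as $\sigma_s$ fixes $e_s^\perp\supseteq P^\perp$ and $\sigma_t$ fixes $e_t^\perp\supseteq P^\perp$); when $m_{s,t}=\infty$ there is nothing to check. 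All $\sigma(w)$ are then automatically $(\cdot,\cdot)$-orthogonal.

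For faithfulness I would introduce the set of roots $\Phi=\{\sigma(w)e_s : w\in W,\ s\in S\}$, calling a root \emph{positive} (resp.\ \emph{negative}) if it is a nonnegative (resp.\ nonpositive) linear combination of the $e_s$. The core is the standard dichotomy, proved by induction on $\ell(w)$: every element of $\Phi$ is positive or negative, and $\sigma(w)e_s$ is positive if and only if $\ell(ws)>\ell(w)$. The inductive step reduces, via a reduced word for $w$ and the exchange condition, to the explicit description of the roots of the rank-two (dihedral) standard parabolic subgroup generated by two elements of $S$. Granting the dichotomy, a second induction on $\ell(w)$ shows $\#\{\alpha\in\Phi^{+} : \sigma(w)\alpha\text{ is negative}\}=\ell(w)$ for every $w$, using that $\sigma_s$ sends $e_s$ to $-e_s$ and permutes the remaining positive roots; hence $\sigma(w)=\id$ forces $\ell(w)=0$, i.e.\ $w=1$, so $\sigma$ is faithful.

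For the equivalence with finiteness, suppose first that $W$ is finite. Averaging any inner product over the finite group $\sigma(W)$ produces a $W$-invariant positive definite form $B'$, and I would write $(x,y)=B'(Tx,y)$ for the $B'$-self-adjoint operator $T$; since both forms are $W$-invariant, $T$ commutes with $\sigma(W)$, so each eigenspace $V_\lambda$ is a subrepresentation on which $(\cdot,\cdot)$ is definite of sign $\sgn(\lambda)$, with $V_0$ equal to the radical $R$ of $(\cdot,\cdot)$. If $\lambda<0$ and $V_\lambda\ne0$, pick $0\ne u\in V_\lambda$; then $\sigma_s(u)-u=-2(e_s,u)e_s\in V_\lambda$, so either some $e_s\in V_\lambda$ (impossible, as $(e_s,e_s)=1>0$) or $(e_s,u)=0$ for all $s$, forcing $u\in R$ and contradicting $V_\lambda\cap R=0$; hence $(\cdot,\cdot)$ is positive semidefinite. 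Finally, writing each $e_s=e_s^{0}+e_s^{+}$ along $V=R\oplus\bigl(\bigoplus_{\lambda>0}V_\lambda\bigr)$ and using $\sigma_s(e_s)=-e_s$ together with $\sigma_s|_{R}=\id$, one gets $e_s^{0}=0$ for all $s$; since the $e_s$ span $V$ this forces $R=0$, so $(\cdot,\cdot)$ is positive definite. Conversely, if $(\cdot,\cdot)$ is positive definite then $\sigma(W)\subseteq O(V,(\cdot,\cdot))$, which is compact; the simplicial cone $D=\{v : (e_s,v)\ge 0\ \forall s\in S\}$ is full-dimensional (the $e_s$ form a basis) and is a strict fundamental domain for $W$ --- a fact I would establish through the usual Tits-cone argument --- so for $x$ in the interior of $D$ the orbit $\sigma(W)x$ meets $\interior{D}$ only in $x$; being discrete and contained in a sphere, $\sigma(W)x$ is finite, and since $\mathrm{Stab}_W(x)=1$ by faithfulness, $W$ itself is finite.

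The main obstacle is the faithfulness step: the inductive proof of the root dichotomy, and in particular the bookkeeping for rank-two parabolic subgroups and its interaction with the exchange/deletion condition, is where the real work lies. A secondary difficulty, in the ``positive definite $\Rightarrow$ finite'' direction, is proving that $D$ is a (strict) fundamental domain, which again goes through the Tits cone.
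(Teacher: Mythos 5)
The paper does not actually prove this statement: Theorem~\ref{thm_W_finite} is quoted directly from Bourbaki (Chapitre~V, \S~4, \textnumero~8, Th\'eor\`eme~2), so there is no in-paper argument to compare yours against. Your outline is the classical proof of that theorem (Bourbaki, or Humphreys \S\S5.3--5.4 and~6.4): the rank-two reduction to verify the defining relations, the positive/negative root dichotomy for faithfulness, averaging plus an eigenspace analysis of the self-adjoint operator $T$ for ``finite $\Rightarrow$ positive definite'', and compactness of the orthogonal group for the converse. Those steps are sound as sketched, and the parts you single out as the real work (the dichotomy lemma, and the chamber/fundamental-domain statement) are indeed where the content lies.

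One step in the last direction is stated loosely enough to count as a gap, though a repairable one. You assert that the orbit $\sigma(W)x$ of a point $x\in\interior{D}$ is ``discrete and contained in a sphere, hence finite'', and that $\mathrm{Stab}_W(x)=1$ ``by faithfulness''. Discreteness of the orbit is not free: the chambers $\sigma(w)(\interior{D})$ are pairwise disjoint, but points chosen from pairwise disjoint open cones can still accumulate (a priori on the boundary of the Tits cone), and faithfulness of $\sigma$ does not by itself give trivial point stabilizers. The standard repair uses only ingredients you already have. From the dichotomy lemma, for $w\neq 1$ there is $s$ with $\sigma(w^{-1})e_s$ a negative root, and $W$-invariance of the form then gives $(\sigma(w)v,e_s)<0$ for all $v\in\interior{D}$; hence $\sigma(w)(\interior{D})\cap\interior{D}=\varnothing$ for $w\neq 1$. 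Now if $W$ were infinite, compactness of $O(V,(\cdot,\cdot))$ would give distinct elements $w_n$ with $\sigma(w_n)$ convergent; setting $u_n=w_n^{-1}w_{n+1}\neq 1$ we get $\sigma(u_n)\to\id$, so $\sigma(u_n)x\to x$ and, $\interior{D}$ being open, $\sigma(u_n)x\in\interior{D}\cap\sigma(u_n)(\interior{D})$ for large $n$, contradicting the disjointness. With that substitution (no Tits-cone fundamental-domain theorem is needed, only the chamber disjointness), your argument is complete.
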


From now on, we assume that $W$ is finite, and
we write $\Phi=\{w(e_s) : w\in W,\ s\in S\}$
and $\Phi^+=\Phi\cap\sum_{s\in S}\R_{\geq 0} e_s$.
The set $\Phi$ is a {\em pseudo-root system},
its subset $\Phi^+$ is a set of {\em positive pseudo-roots},
and
the set $\Phi^- = -\Phi^+ = \Phi - \Phi^+$ is the corresponding
set of {\em negative pseudo-roots};
see Definitions~\ref{def_pseudo_root_system}
and~\ref{def_positive_pseudo_roots}.
Then $\Hf=(H_\alpha)_{\alpha\in\Phi^+}$ is an essential
hyperplane arrangement on~$V$. The set of chambers~$\Tc$
of this arrangement is in
canonical bijection with $W$: the unit element
$1\in W$ corresponds to the chamber
$B=\bigcap_{\alpha\in\Phi^+}H_\alpha^+=\bigcap_{s\in S}H_{e_s}^+$, 
and an arbitrary element
$w$ of~$W$ corresponds to the chamber $w(B)$. 

More generally, a \emph{parabolic subgroup} of $W$
is a subgroup $W_I$ generated by a subset~$I$ of $S$, and the left
cosets of parabolic subgroups of $W$ are called \emph{standard cosets}.
The \emph{Coxeter complex}~$\Sigma(W)$
of~$W$ is the set of standard cosets of $W$
ordered by reverse inclusion. It is a simplicial complex, and
we have an isomorphism of posets from $\Sigma(W)$ to the
face poset~$\Lf$ of $\Hf$ 
sending a standard coset~$wW_I$ to the cone
$\{x\in V : \forall s\in I\ (x,w(e_s))=0\text{ and }
\forall s\in S-I\ (x,w(e_s))>0\}$. 
The fact that this is an isomorphism is proved 
in~\cite[Chapitre~V \S~4 \textnumero~6 pp.\ 96--97]{Bourbaki}, 
since the representation of
$W$ on $V^\vee$ is isomorphic to its canonical representation on~$V$ by
Theorem~\ref{thm_W_finite}. The fact that $\Sigma(W)$ is a simplicial
complex then follows 
from~\cite[Chapitre~V \S~3 \textnumero~3 Proposition~7 p.~85]{Bourbaki}.

The definitions of $B$ and of the isomorphism $\Tc\simeq W$ imply that,
if $w,w'\in W$ and $T_w,T_{w'}\in\Tc$ are the corresponding chambers, then
\begin{align*}
S(T_w,T_{w'})
& =
\{\alpha\in\Phi^+ :
w^{-1}(\alpha) \in \Phi^+ \text{ and } {w'}^{-1}(\alpha) \in \Phi^- \}  \\
& \quad\cup 
\{\alpha\in\Phi^+ : 
w^{-1}(\alpha) \in \Phi^- \text{ and } {w'}^{-1}(\alpha) \in \Phi^+\} ,
\end{align*}
and in particular
\[S(B,T_w)=\{\alpha\in\Phi^+ : w^{-1}(\alpha) \in \Phi^-\},\]
hence, by~\cite[Proposition~4.4.4]{BB},
\[(-1)^{T_w}=(-1)^{|S(B,T_w)|}=\det(w).\]
By~\cite[Propositions~3.1.3 and~4.4.6]{BB} this also implies that
the isomorphism $\Tc\simeq W$ sends the partial order
$\preceq_B$ to the right weak Bruhat order on $W$. 

\begin{subdef}
Let $\Hf=(H_{\alpha_e})_{e\in E}$
be a finite hyperplane arrangement on a finite-dimensional real
inner product space $V$, with
inner product denoted by $(\cdot,\cdot)$.
We say that $\Hf$ is a \emph{Coxeter arrangement} if
$\alpha_e\not\in\R\alpha_f$ for distinct $e,f\in E$ and if
for every $e \in E$
the family of hyperplanes~$\Hf$ is stable by the (orthogonal)
reflection $s_{\alpha_e}$ across~$H_{\alpha_e}$.
\end{subdef}

\begin{subthm}\label{thm_Coxeter_arrangements}
The hyperplane arrangement associated to a Coxeter system with finite
Coxeter group is a Coxeter arrangement.
Conversely, suppose that $\Hf$ is a Coxeter arrangement on 
an inner product space $V$, and 
that there exists
a chamber $B$ of $\Hf$ that is on the positive side of each
hyperplane in $\Hf$.
Let $W$ be
the subgroup of $\GL(V)$ generated by the
set $\{s_{\alpha_e} : e \in E\}$, let
$F$ be the set of $e \in E$
such that $\overline{B}\cap H_{\alpha_e}$ is a facet of $\overline{B}$
and let $S=\{s_{\alpha_f} : f \in F\}$. Then $(W,S)$ is a Coxeter system,
the group $W$ is finite,
and the hyperplane
arrangement induced by $\Hf$ on $V/\bigcap_{e\in E}H_{\alpha_e}$ 
is isomorphic to
the arrangement
associated to the Coxeter system~$(W,S)$.
\end{subthm}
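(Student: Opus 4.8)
The plan is to treat the two directions separately; the forward direction is a quick consequence of Theorem~\ref{thm_W_finite}, and the converse is the classical structure theory of finite reflection groups rephrased for arrangements.

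\emph{Forward direction.} Every pseudo-root $\alpha\in\Phi$ has the form $w(e_s)$ with $w\in W$ orthogonal for the positive definite form $(\cdot,\cdot)$ and $(e_s,e_s)=1$, so $\|\alpha\|=1$; hence two distinct elements of $\Phi^+$ are never proportional. Since $\Phi$ is $W$-stable it is stable under each $s_\alpha$, and because $H_\beta=H_{-\beta}$ and $\Phi=\Phi^+\sqcup(-\Phi^+)$, the family $\Hf=(H_\alpha)_{\alpha\in\Phi^+}$ is stable under every $s_\alpha$, $\alpha\in\Phi^+$. So $\Hf$ is a Coxeter arrangement.

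\emph{Converse.} First I would pass to $V/V_0$: the reflections $s_{\alpha_e}$, the arrangement, and the property of being a Coxeter arrangement all descend, and $W$ embeds into $\GL(V/V_0)$ (an element trivial mod $V_0$ and fixing $V_0$ pointwise is orthogonal and unipotent, hence $1$), so $\Hf$ is essential; I also normalize the $\alpha_e$ to unit inward normals of $B$, so that $B=\bigcap_e H^+_{\alpha_e}$ and $\overline B=\bigcap_{f\in F}\overline{H^+_{\alpha_f}}$. Then I would establish, using only the $W$-stability of $\Hf$:
\begin{itemize}
\item[(1)] $\langle S\rangle$ acts transitively on $\Tc$, by induction on $|S(B,T)|$: if $T\neq B$ some wall $H_{\alpha_f}$ ($f\in F$) lies in $S(B,T)$ (otherwise $T\subseteq\overline B$, forcing $T=B$), and a short computation with separation sets and the isometry $s_{\alpha_f}$ gives $|S(B,s_{\alpha_f}(T))|=|S(B,T)|-1$;
\item[(2)] $\langle S\rangle=W$: every hyperplane of $\Hf$ is a wall of some chamber, hence by (1) a $\langle S\rangle$-translate of a wall of $B$, so each $s_{\alpha_e}$ is an $\langle S\rangle$-conjugate of an element of $S$;
\item[(3)] $W$ acts freely on $\Tc$: if $w(B)=B$, write $w=s_{\alpha_{f_1}}\cdots s_{\alpha_{f_k}}$ with $k$ minimal; if $k\geq1$, the gallery $B_i=s_{\alpha_{f_1}}\cdots s_{\alpha_{f_i}}(B)$ has $B_0=B_k=B$ on the positive side of $H_{\alpha_{f_1}}$ and $B_1$ on the negative side, so it recrosses $H_{\alpha_{f_1}}$ at a least step $j\geq2$; the wall crossed at step $j$ is $s_{\alpha_{f_1}}\cdots s_{\alpha_{f_{j-1}}}(H_{\alpha_{f_j}})=H_{\alpha_{f_1}}$, whence conjugation of reflections gives $s_{\alpha_{f_1}}s_{\alpha_{f_2}}\cdots s_{\alpha_{f_{j-1}}}=s_{\alpha_{f_2}}\cdots s_{\alpha_{f_{j-1}}}s_{\alpha_{f_j}}$, yielding an expression of $w$ of length $k-2$, a contradiction; so $w=1$.
\end{itemize}
From (1)--(3) the orbit map $w\mapsto w(B)$ is a bijection $W\to\Tc$ (so $W$ is finite), $\ell_S(w)=|S(B,w(B))|$ for every $w$ (one inequality from the gallery above, the other by induction using (3) as base case), and $\Hf$ is precisely the arrangement of all reflecting hyperplanes of $W$ (a reflection $s_\gamma\in W$ with $H_\gamma\notin\Hf$ would fix the chamber through a point of $H_\gamma$ avoiding the other hyperplanes, contradicting (3)). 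Applying the folding identity of (3) to a reduced word for $w$ and a generator $s\in S$ with $\ell_S(sw)<\ell_S(w)$ --- which, via $\ell_S=|S(B,\cdot(B))|$, says the mirror of $s$ separates $B$ and $w(B)$ --- yields the exchange condition, so $(W,S)$ is a Coxeter system by the standard characterization (\cite[Chapitre~IV, \S~1]{Bourbaki}), with $m_{s_{\alpha_f},s_{\alpha_g}}$ the order of $s_{\alpha_f}s_{\alpha_g}$. Finally, $\overline B$ is a simplicial cone (the fundamental chamber of a finite reflection group; \cite[Chapitre~V, \S~3]{Bourbaki}), so $(\alpha_f)_{f\in F}$ is a basis; for distinct $f,g$ the subgroup $\langle s_{\alpha_f},s_{\alpha_g}\rangle$ is the finite dihedral group of order $2m_{s_{\alpha_f},s_{\alpha_g}}$, and by Lemma~\ref{lemma_star} the localization of $\Hf$ at $\overline B\cap H_{\alpha_f}\cap H_{\alpha_g}$ is its arrangement of mirrors in the orthogonal plane, meeting $\overline B$ in a fundamental sector; hence $(\alpha_f,\alpha_g)=-\cos(\pi/m_{s_{\alpha_f},s_{\alpha_g}})$. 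Therefore $\alpha_f\mapsto e_{s_{\alpha_f}}$ extends to a $W$-equivariant linear isometry carrying $\Hf$ onto the arrangement associated to $(W,S)$; equivalently, both face posets are $W$-equivariantly identified with the Coxeter complex $\Sigma(W)$ via \cite[Chapitre~V, \S~4]{Bourbaki} and the forward direction.

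\emph{Main obstacle.} The content is concentrated in step~(3) (equivalently, the exchange condition): everything before it is bookkeeping in the face poset, and everything afterwards is either a citation to the classical theory or a routine identification. The delicate points in~(3) are to know that the wall crossed at the $i$-th step of the gallery $B_i=s_{\alpha_{f_1}}\cdots s_{\alpha_{f_i}}(B)$ is again a hyperplane of $\Hf$ --- which is exactly where the Coxeter-arrangement hypothesis is used --- and that a length-minimal word with $k\geq1$ can always be folded, which produces the contradiction.
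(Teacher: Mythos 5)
Your proof is correct, but it takes a genuinely different route from the paper's. The paper disposes of the theorem in two lines by citation: the forward direction is immediate from the definitions, the Coxeter-system property and the identification of the arrangement are quoted from Bourbaki (Chapitre~V \S~3 \textnumero~2, Th\'eor\`eme~1), and finiteness of $W$ is quoted from Bourbaki (Chapitre~V \S~3 \textnumero~7, Proposition~4) together with the centrality of $\Hf$. You instead reprove the substance of Bourbaki's theorem: simple transitivity of $W$ on $\Tc$ via the gallery/wall-crossing count, the exchange condition via the folding identity, and then you get finiteness for free from the bijection $W\simeq\Tc$ (arguably cleaner than the paper's route through Proposition~4), and you obtain the identification with the canonical arrangement of $(W,S)$ by an explicit Gram-matrix computation $(\alpha_f,\alpha_g)=-\cos(\pi/m_{s_{\alpha_f},s_{\alpha_g}})$, using Lemma~\ref{lemma_star} to localize at the codimension-two faces of $\overline{B}$ and recognize a rank-two Coxeter arrangement with $2m$ chambers. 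The trade-off is clear: the paper's proof is short but opaque, while yours is essentially self-contained (only the exchange-condition characterization of Coxeter systems and the simpliciality of the fundamental chamber are cited) and produces useful byproducts along the way, namely $\ell_S(w)=|S(B,w(B))|$ and the explicit $W$-equivariant isometry $\alpha_f\mapsto e_{s_{\alpha_f}}$. The only place where you compress more than you should is the localization step: one should say explicitly that $\Hf(C)$ is again a Coxeter arrangement with a chamber on the positive side of all its hyperplanes (so that your steps (1)--(3) apply to it), giving $2N$ chambers for $N$ hyperplanes through the codimension-two face and hence order $N$ for $s_{\alpha_f}s_{\alpha_g}$; but this is a presentational point, not a gap.
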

\begin{proof}
The first statement is an immediate consequence of the definition of the
arrangement associated to a Coxeter system.
The second and fourth statements
follow from~\cite[Chapitre~V \S~3 \textnumero~2
Th\'eor\`eme~1 p.~74]{Bourbaki}. The statement that $W$ is finite follows
from~\cite[Chapitre~V \S~3 \textnumero~7 Proposition~4
p.~80]{Bourbaki} and from the fact that the arrangement $\Hf$ is central.
\end{proof}

\section{The abstract pizza quantity}
\label{section_first_main_theorem}

\subsection{$2$-structures and signs}
\label{section_2_structures}

Let $\Phi\subset V$ be a pseudo-root system 
(see Definition~\ref{def_pseudo_root_system})
with Coxeter group $W$ (see
Proposition~\ref{prop_pseudo_root_system_vs_Coxeter_system})
and $\Phi^+\subset\Phi$ be a
system of positive pseudo-roots
(see Definition~\ref{def_positive_pseudo_roots}).
Recall the definition of $2$-structures from
Subsection~\ref{subsection_2-structures}:
A \emph{$2$-structure} for $\Phi$ is
a subset $\varphi \subseteq \Phi$ such that:
\begin{itemize}
\item[(a)] $\varphi$ is a pseudo-root
system whose irreducible components are all of type~$A_1$, $B_2$
or $I_2(2^k)$ with $k\geq 3$;
\item[(b)] for every $w\in W$ such that $w(\varphi\cap\Phi^+)=
\varphi\cap\Phi^+$, we have $\det(w)=1$.
\end{itemize}

Recall that $\Tt(\Phi)$ is the set of $2$-structures for $\Phi$.
By Proposition~\ref{prop_W_acts_transitively_on_T_Phi},
the group $W$ acts transitively on $\Tt(\Phi)$.
In Definition~\ref{def_sign_2_structure}
we define the sign $\epsilon(\varphi)=\epsilon(\varphi,\Phi^+)$ of
any $2$-structure $\varphi\in\Tt(\Phi)$.
If $\varphi\in\Tt(\Phi)$, we write
$\varphi^+=\varphi\cap\Phi^+$.

We have the following proposition that extends~\cite[Theorem~5.3]{Herb-DSC}
to the case of Coxeter systems.
Note that our proof is a simple adaptation of Herb's proof.

\begin{subprop}
The sum of the signs of all $2$-structures of a pseudo-root system 
is equal to~$1$, that is,
\[\sum_{\varphi\in\Tt(\Phi)}\epsilon(\varphi)=1.\]
\label{prop_sum_of_signs}
\end{subprop}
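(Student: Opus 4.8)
The plan is to prove the identity by induction on the rank of $\Phi$ (that is, on $\dim\Span(\Phi)$), after first reducing to the irreducible case. If $\Phi=\Phi_1\sqcup\cdots\sqcup\Phi_r$ is the decomposition of $\Phi$ into irreducible components, then a $2$-structure for $\Phi$ is the same as a choice of $2$-structure for each $\Phi_i$, the sign is multiplicative, $\epsilon(\varphi)=\prod_i\epsilon(\varphi\cap\Phi_i)$, and $\Tt(\Phi)=\prod_i\Tt(\Phi_i)$; hence $\sum_{\varphi\in\Tt(\Phi)}\epsilon(\varphi)=\prod_{i=1}^r\bigl(\sum_{\varphi_i\in\Tt(\Phi_i)}\epsilon(\varphi_i)\bigr)$, and it suffices to treat each irreducible factor. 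So from now on I would assume $\Phi$ irreducible.

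The base cases are the irreducible pseudo-root systems that are themselves of type $A_1$, $B_2$, or $I_2(2^k)$: here the only $2$-structure is $\varphi=\Phi$ itself (one checks directly, using Herb's definition and property~(b), that no proper subsystem of the required shape can be a $2$-structure, or rather that the full system is the unique one), its sign is $+1$ by the normalization in Definition~\ref{def_sign_2_structure}, and the sum is $1$. For the inductive step, following Herb's argument for \cite[Theorem~5.3]{Herb-DSC}, I would fix a distinguished pseudo-root — say a highest pseudo-root $\beta\in\Phi^+$ with respect to a choice of simple system — and partition $\Tt(\Phi)$ according to how each $2$-structure $\varphi$ interacts with $\beta$: either $\beta$ lies in (the span of a component of) $\varphi$, or it is orthogonal to $\varphi$ in a suitable sense. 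The key structural input is that deleting $\beta$ and passing to $\beta^\perp$, or to the subsystem orthogonal to $\beta$, relates $2$-structures of $\Phi$ to $2$-structures of strictly smaller pseudo-root systems, so that the inductive hypothesis applies to each block of the partition; the signs must be tracked through these reductions. One then checks that the contributions from the various blocks telescope or cancel so that the total is again $1$ — concretely, the signs $\epsilon(\varphi)$ are defined precisely so that the one ``principal'' family of $2$-structures containing $\beta$ contributes $+1$ while the remaining families cancel in pairs, or contribute zero via an inductive sum that vanishes.

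The main obstacle, I expect, is two-fold. First, one must verify that the combinatorial reduction (removing a highest pseudo-root and descending to an orthogonal or quotient subsystem) genuinely lands inside the class of pseudo-root systems and carries $2$-structures to $2$-structures, including the non-crystallographic dihedral types $I_2(2^k)$ that do not occur for genuine root systems — this is exactly the point where Herb's original proof must be ``slightly adapted,'' and where the pseudo-root axioms from Appendix~\ref{appendix_2_structures} (Definition~\ref{def_pseudo_root_system}) and the transitivity of the $W$-action on $\Tt(\Phi)$ (Proposition~\ref{prop_W_acts_transitively_on_T_Phi}) do the work. Second, the bookkeeping of the sign $\epsilon(\varphi,\Phi^+)$ under these reductions is delicate: one needs the compatibility of Definition~\ref{def_sign_2_structure} with passing to subsystems and with the action of $W$ (so that $\epsilon$ is well-defined on $W$-orbits up to the determinant twist in axiom~(b)), and this is what makes the cancellation come out to exactly $1$ rather than some other constant. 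Everything else — the irreducible reduction, the base cases, the induction skeleton — is routine once these two points are in place.
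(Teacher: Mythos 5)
Your skeleton (induction plus a Herb-style partition of $\Tt(\Phi)$ according to a distinguished pseudo-root) is the same strategy the paper follows, but the two points you defer as "the main obstacle" are precisely the content of the proof, and as stated your plan would not go through. First, the cancellation of the $2$-structures not containing the chosen pseudo-root works only if that pseudo-root is \emph{simple}: the paper pairs $\varphi$ with $s_\alpha(\varphi)$ and gets $\epsilon(s_\alpha(\varphi))=-\epsilon(\varphi)$ from Lemma~\ref{lemma_sign_of_w(varphi)}, which requires $s_\alpha(\varphi^+)\subset\Phi^+$; this follows from $s_\alpha(\Phi^+-\{\alpha\})\subset\Phi^+$, valid for simple $\alpha$ but false for a highest root $\beta$, so your choice of a highest pseudo-root destroys the sign-reversing involution that makes the "non-principal" terms vanish. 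Second, your key structural input --- that intersecting with $\beta^\perp$ carries $2$-structures containing $\beta$ to $2$-structures of $\beta^\perp\cap\Phi$ --- is false in general: for $\Phi$ of type $B_n$ with $n$ odd and $\alpha$ the short simple root, a $2$-structure whose $B_2$-component contains $\alpha$ meets $\alpha^\perp\cap\Phi$ in a set violating condition~(b). The paper must isolate this exceptional subset $\Tt''_2$ and cancel its contribution by a separately constructed sign-reversing involution (statement~(3) of Lemma~\ref{lemma_T_Phi_induction_step}), and it must prove the sign compatibility $\epsilon(\varphi,\Phi^+)=\epsilon(\varphi\cap\Phi_\alpha,\Phi_\alpha^+)$ on the good subset (statement~(2), itself a case-by-case computation). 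Without these three ingredients the claimed "telescoping or cancellation" is an assertion, not an argument.

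Two smaller inaccuracies: the picture "the principal family containing $\beta$ contributes $+1$ while the rest cancel" is not what happens --- in type $A_2$ the $2$-structure through the highest root has sign $-1$ (this is exactly the role of the factor $(-1)^{r+r'}$ in Definition~\ref{def_sign_2_structure}) and the sum is $-1+1+1=1$; and the reduction to irreducible $\Phi$ with base cases $A_1$, $B_2$, $I_2(2^k)$ is harmless but unnecessary, since the paper inducts on $|\Phi|$ with the trivial base $\Phi=\varnothing$.
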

\begin{proof}
We prove the result by induction on
$|\Phi|$. It is clear if $\Phi=\varnothing$, because then
$\Tt(\Phi)=\{\varnothing\}$ and the sign of $\varnothing$ is $1$.
Suppose that $|\Phi|\geq 1$ and that we know the result for all
pseudo-root systems of smaller cardinality.
Let $\alpha\in\Phi$, and set
$\Phi_\alpha=\alpha^\perp\cap\Phi$;
this is a pseudo-root system with positive system
$\alpha^\perp\cap\Phi^+$.

Let $\Tt''=\{\varphi\in\Tt(\Phi) : s_\alpha(\varphi)=\varphi\}$.
By statement~(0) of Lemma~\ref{lemma_T_Phi_induction_step}, we have
$\Tt''=\{\varphi\in\Tt(\Phi) : \alpha\in\varphi\}$.
If $\varphi\not\in\Tt''$, then $\varphi^+\subset\Phi^+ - \{\alpha\}$,
so $s_\alpha(\varphi^+)\subset\Phi^+$ by
Lemma~4.4.3 of~\cite{BB}, 
hence $\epsilon(s_\alpha(\varphi))=-\epsilon(\varphi)$ by
Lemma~\ref{lemma_sign_of_w(varphi)}.
This implies that $\sum_{\varphi\in\Tt(\Phi) - \Tt''}\epsilon(\varphi)=0$.

We define subsets $\Tt''_1$ and $\Tt''_2$ of $\Tt''$ by
\begin{align*}
\Tt''_1
& =
\{\varphi\in\Tt(\Phi) : \varphi\cap\Phi_\alpha\in \Tt(\Phi_\alpha)\} , \\
\Tt''_2
& =
\Tt'' - \Tt''_1.
\end{align*}
By (3) of Lemma~\ref{lemma_T_Phi_induction_step}, there exists an involution
$\iota$ of $\Tt''_2$ such that, for every
$\varphi\in\Tt''_2$, 
we have that $\iota(\varphi)\cap\Phi_\alpha=
\varphi\cap\Phi_\alpha$ and $\epsilon(\iota(\varphi))=-\epsilon(\varphi)$.
This implies that
\[\sum_{\varphi\in\Tt''_2}
\epsilon(\varphi)=0,\]
and so
\[
\sum_{\varphi\in\Tt(\Phi)} \epsilon(\varphi) 
=\sum_{\varphi\in\Tt''_1}
\epsilon(\varphi).\]

Finally, by (1) and (2)
of Lemma~\ref{lemma_T_Phi_induction_step},
the map $\varphi\longmapsto\varphi\cap\Phi_\alpha$
induces a bijection from $\Tt''_1$ to~$\Tt(\Phi_\alpha)$, and we have
$\epsilon(\varphi)=\epsilon(\varphi\cap\Phi_\alpha)$
for every $\varphi\in\Tt''_1$. Hence we obtain
\[\sum_{\varphi\in\Tt''_1}\epsilon(\varphi)=\sum_{\varphi_0\in\Tt(\Phi_\alpha)}
\epsilon(\varphi_0),\]
and this last sum is equal to $1$ by the induction hypothesis.
\end{proof}

\begin{subrmk}
As we are using the definition of the sign of a $2$-structure from
\cite{Herb-Pl}, our formula looks a bit different from the one
of~\cite[Theorem~5.3]{Herb-DSC}. This is explained
in~\cite[Remark~5.1]{Herb-DSC}, and we generalize the comparison
between the two definitions of the sign in
Corollary~\ref{cor_comparison_signs} below.
\end{subrmk}

\begin{subcor}
Let $\varphi\in\Tt(\Phi)$, 
$W(\varphi,\Phi^+)=\{w\in W:w(\varphi^+)\subset\Phi^+\}$ and
$W_1(\varphi,\Phi^+)=\{w\in W:w(\varphi^+)\subset\varphi^+\}$.
Then the sign $\epsilon(\varphi,\Phi^+)$ is given by
\[\epsilon(\varphi,\Phi^+)
=\frac{1}{|W_1(\varphi,\Phi^+)|}
\sum_{w\in W(\varphi,\Phi^+)}\det(w).\]
\label{cor_comparison_signs}
\end{subcor}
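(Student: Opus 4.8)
The plan is to combine Proposition \ref{prop_sum_of_signs} with the transitivity of the $W$-action on $\Tt(\Phi)$ and the sign-change behavior recorded in Lemma \ref{lemma_sign_of_w(varphi)}. First I would observe that since $W$ acts transitively on $\Tt(\Phi)$ (Proposition \ref{prop_W_acts_transitively_on_T_Phi}), every $2$-structure is of the form $w(\varphi)$ for some $w\in W$, and the stabilizer of $\varphi$ under the $W$-action is exactly the group $\{w\in W: w(\varphi)=\varphi\}$. I would want to separate the condition $w(\varphi)=\varphi$ (stabilizing the pseudo-root system as a set) from $w(\varphi^+)\subset\varphi^+$ (stabilizing the positive part); the key point, using condition (b) in the definition of a $2$-structure together with Lemma 4.4.3 of \cite{BB}, is that an element of $W$ stabilizing $\varphi$ either preserves $\varphi^+$ or not, and when it does preserve $\varphi^+$ it has determinant $1$, so in fact $W_1(\varphi,\Phi^+)$ is precisely the full stabilizer of $\varphi$ in $W$ and it is contained in $W(\varphi,\Phi^+)$.

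Next I would rewrite the sum $\sum_{\varphi'\in\Tt(\Phi)}\epsilon(\varphi')=1$ by parametrizing $\Tt(\Phi)$ via the orbit map $w\longmapsto w(\varphi)$. Each $\varphi'\in\Tt(\Phi)$ is hit exactly $|W_1(\varphi,\Phi^+)|$ times, so
\[
1=\sum_{\varphi'\in\Tt(\Phi)}\epsilon(\varphi')
=\frac{1}{|W_1(\varphi,\Phi^+)|}\sum_{w\in W}\epsilon(w(\varphi),\Phi^+).
\]
Then I would invoke Lemma \ref{lemma_sign_of_w(varphi)}, which (in the relevant form) says that $\epsilon(w(\varphi),\Phi^+)=\det(w)\cdot\epsilon(\varphi,\Phi^+)$ whenever $w(\varphi^+)\subset\Phi^+$, and that the contribution of the remaining $w$ cancels in the sum. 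More precisely, the $w$ with $w(\varphi^+)\not\subset\Phi^+$ should be grouped so their contributions vanish — this is the same cancellation mechanism used in the proof of Proposition \ref{prop_sum_of_signs}, pairing $w$ with $s_\beta w$ for a suitable pseudo-root $\beta$ sent outside $\Phi^+$. After discarding those, the sum runs over $W(\varphi,\Phi^+)$, giving
\[
1=\frac{\epsilon(\varphi,\Phi^+)}{|W_1(\varphi,\Phi^+)|}\sum_{w\in W(\varphi,\Phi^+)}\det(w),
\]
which, since $\epsilon(\varphi,\Phi^+)=\pm1$, is equivalent to the claimed identity after multiplying both sides by $\epsilon(\varphi,\Phi^+)$.

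The main obstacle I anticipate is making the cancellation argument for the terms with $w(\varphi^+)\not\subset\Phi^+$ precise in a way that interfaces cleanly with the chosen definition of $\epsilon$: one needs to know that for such $w$ there is a canonical way to flip exactly one sign (pass from $w(\varphi)$ with its induced positive system to $w(\varphi)$ with $\Phi^+$), and that this flip changes $\epsilon(\cdot,\Phi^+)$ by $-1$, with the pairing being a genuine fixed-point-free involution on the relevant index set. An alternative, possibly cleaner route that avoids re-deriving the cancellation: apply Proposition \ref{prop_sum_of_signs} not to $\Phi$ but to the sub-root-system spanned by $\varphi$, or directly quote the transformation rule of Lemma \ref{lemma_sign_of_w(varphi)} in its full strength (including the sign on the "bad" side), so that $\sum_{w\in W}\epsilon(w(\varphi),\Phi^+)$ telescopes termwise to $\epsilon(\varphi,\Phi^+)\sum_{w\in W(\varphi,\Phi^+)}\det(w)$ without an auxiliary involution. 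Either way, the only genuinely delicate bookkeeping is the compatibility between the set-stabilizer, the positive-part-stabilizer $W_1$, and the determinant condition (b), and I would state that compatibility as a short preliminary claim before assembling the displayed equations.
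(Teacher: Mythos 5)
Your core computation reproduces the paper's, but there is a genuine error in the preliminary counting step on which your whole plan rests. You assert that $W_1(\varphi,\Phi^+)$ is the full stabilizer of $\varphi$ in $W$, and hence that the orbit map $W\to\Tt(\Phi)$, $w\mapsto w(\varphi)$, has fibers of size $|W_1(\varphi,\Phi^+)|$. This is false. The stabilizer of the \emph{set} $\varphi$ contains the whole Coxeter group $W(\varphi)$ of $\varphi$ (for instance every $s_\alpha$ with $\alpha\in\varphi$ stabilizes $\varphi$ but sends $\alpha$ to $-\alpha$, so it is not in $W_1$). In fact the stabilizer factors as $W(\varphi)\cdot W_1(\varphi,\Phi^+)$ with $W(\varphi)\cap W_1(\varphi,\Phi^+)=\{1\}$, so the fibers of the orbit map have size $|W(\varphi)|\cdot|W_1(\varphi,\Phi^+)|$. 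Consequently your displayed identity
\[
1=\frac{1}{|W_1(\varphi,\Phi^+)|}\sum_{w\in W}\epsilon(w(\varphi),\Phi^+)
\]
is off by a factor of $|W(\varphi)|$: the right-hand side actually equals $|W(\varphi)|$, not $1$. Already for $\Phi$ of type $A_1$ (where $W=\{1,s\}$, $\varphi=\Phi$, $W_1=\{1\}$) it evaluates to $2$. The cancellation you hope to perform afterwards over $w$ with $w(\varphi^+)\not\subset\Phi^+$ cannot rescue this, because those terms sum to $(|W(\varphi)|-1)\cdot|W_1(\varphi,\Phi^+)|$, which is not zero unless $\varphi=\varnothing$. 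The ``alternative route'' in your last paragraph, telescoping $\sum_{w\in W}\epsilon(w(\varphi),\Phi^+)$ termwise, fails for the same reason. The intermediate sign argument you give (that an element preserving $\varphi^+$ has determinant $1$) is correct but simply does not imply that the set-stabilizer equals $W_1$.

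The repair is to parametrize $\Tt(\Phi)$ not by all of $W$ but directly by $W(\varphi,\Phi^+)$. This is what Corollary~\ref{cor_T_Phi_as_quotient} does: it establishes the bijection $W(\varphi,\Phi^+)/W_1(\varphi,\Phi^+) \stackrel{\sim}{\longrightarrow} \Tt(\Phi)$, $w\mapsto w(\varphi)$, and its proof already handles exactly the bookkeeping you were worried about (it factors an arbitrary $w$ with $w(\varphi)=\varphi'$ through an element of $W(\varphi)$ to land in $W(\varphi,\Phi^+)$). With that bijection in hand, summing $\epsilon(\cdot,\Phi^+)$ over the coset representatives, applying Proposition~\ref{prop_sum_of_signs}, and then Lemma~\ref{lemma_sign_of_w(varphi)}(ii) termwise gives the claimed identity in two lines, with no auxiliary involution and no discarded terms.
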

\begin{proof}
By Corollary~\ref{cor_T_Phi_as_quotient} we have a bijection
$W(\varphi,\Phi^+)/W_1(\varphi,\Phi^+) \longrightarrow \Tt(\Phi)$,
$w \longmapsto w(\varphi)$. By Proposition~\ref{prop_sum_of_signs}
and Lemma~\ref{lemma_sign_of_w(varphi)}, we obtain
\begin{align*}
1
& =
\frac{1}{|W_1(\varphi,\Phi^+)|}\sum_{w\in W(\varphi,\Phi^+)}
\epsilon(w(\varphi),\Phi^+)
=
\epsilon(\varphi,\Phi^+)
\frac{1}{|W_1(\varphi,\Phi^+)|}\sum_{w\in W(\varphi,\Phi^+)}
\det(w).
\qedhere
\end{align*}
\end{proof}

We consider the hyperplane arrangement $\Hf=(H_\alpha)_{\alpha\in\Phi^+}$
corresponding to $\Phi$, with base chamber $B=\bigcap_{\alpha\in\Phi^+}
H_\alpha^+$. 
For every $2$-structure $\varphi\in\Tt(\Phi)$, we denote
by $\Hf_\varphi$ the hyperplane arrangement $(H_\alpha)_{\alpha\in\varphi^+}$,
with base chamber $B_\varphi=\bigcap_{\alpha\in\varphi^+}H_\alpha^+$.
If $T$ is a chamber of $\Hf$, we denote by $Z_\varphi(T)$ the unique
chamber of $\Hf_\varphi$ containing $T$;
as $\varphi^+\subset\Phi^+$, we have $Z_\varphi(B)=B_\varphi$.

\begin{subcor}
For every chamber $T$ of $\Hf$, we have
\[(-1)^T=\sum_{\varphi\in\Tt(\Phi)}
(-1)^{Z_\varphi(T)}\epsilon(\varphi).\]
\label{cor_signs}
\end{subcor}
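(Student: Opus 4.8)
The plan is to deduce this from Corollary~\ref{cor_comparison_signs} together with the identification of the sign $(-1)^T$ with $\det(w)$ when $T=T_w$. First I would recall that, under the bijection $\Tc\simeq W$, the base chamber $B$ corresponds to $1\in W$, a chamber $T$ corresponds to some $w\in W$, and $(-1)^T=\det(w)$; moreover the arrangement $\Hf_\varphi$ is itself (up to the ambient span) the Coxeter arrangement attached to the pseudo-root subsystem $\varphi$, so its chambers are indexed by the Coxeter group $W(\varphi)$ generated by the reflections $s_\alpha$, $\alpha\in\varphi$, with base chamber $B_\varphi$ corresponding to the identity. The map $Z_\varphi\colon\Tc(\Hf)\to\Tc(\Hf_\varphi)$ is exactly the coarsening-of-chambers map, which on sign vectors simply forgets the coordinates outside $\varphi^+$, and $(-1)^{Z_\varphi(T)}$ is the sign of the corresponding element of $W(\varphi)$, i.e.\ $(-1)^{|S(B_\varphi,Z_\varphi(T))|}$.

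Next I would rewrite the right-hand side using Corollary~\ref{cor_comparison_signs}. Fix one $2$-structure $\varphi_0$, so that every $\varphi\in\Tt(\Phi)$ is $u(\varphi_0)$ for some $u\in W$, with the fibers being cosets of $W_1(\varphi_0,\Phi^+)$; by Corollary~\ref{cor_T_Phi_as_quotient} the sum over $\Tt(\Phi)$ can be replaced by $\frac{1}{|W_1|}\sum_{u\in W(\varphi_0,\Phi^+)}$. Writing $T=T_w$, the term $(-1)^{Z_{u(\varphi_0)}(T_w)}$ should be expressible, after applying $u^{-1}$, as $(-1)^{Z_{\varphi_0}(T_{u^{-1}w})}$ times a correction factor coming from how $u$ moves $\varphi_0^+$ inside $\Phi^+$; and Lemma~\ref{lemma_sign_of_w(varphi)} controls precisely that correction, giving $\det(u)$ up to the analogous sign for $\epsilon$. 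The point is that $(-1)^{Z_\varphi(T)}\epsilon(\varphi)$ is ``$W$-equivariant enough'' that summing over the $W$-orbit of $\varphi_0$ collapses, via $\sum_{w}\det(w)$-type identities on the relevant parabolic-type subgroups, to the single term $(-1)^{T}$.

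Concretely, I expect the cleanest route is: substitute $\epsilon(\varphi)=\epsilon(u(\varphi_0))=\det(u)\epsilon(\varphi_0)\cdot(\pm1)$ (the sign tracked by Lemma~\ref{lemma_sign_of_w(varphi)}, depending on whether $u(\varphi_0^+)\subset\Phi^+$) and similarly track how $Z$ transforms, so that each summand becomes $(-1)^{T_w}$ times a quantity summing to $1$ over the coset representatives by Proposition~\ref{prop_sum_of_signs} (or its proof). The main obstacle will be the bookkeeping of signs: reconciling the sign $(-1)^{Z_\varphi(T)}$, which depends on the separation set $S(B_\varphi, Z_\varphi(T))\subset\varphi^+$, with the global sign $\det(w)$ and with $\epsilon(\varphi)$, across the $W$-action; in particular one must check that the ``local'' length parity inside $W(\varphi)$ plus the parity of how $\varphi^+$ sits in $\Phi^+$ adds up correctly to the global parity. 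Once the equivariance bookkeeping is set up, the identity $\sum_{\varphi}\epsilon(\varphi)=1$ of Proposition~\ref{prop_sum_of_signs} does the rest; indeed the statement for $T=B$ is literally that proposition, and the general case is the ``$W$-twisted'' version of it.
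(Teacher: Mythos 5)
Your plan is correct and is essentially the paper's own proof: the paper carries out exactly the ``$W$-twisted'' version of Proposition~\ref{prop_sum_of_signs} that you describe, by writing $T=w^{-1}(B)$, introducing for each $\varphi$ the element $v_\varphi(w)\in W(\varphi)$ with $v_\varphi(w)(\varphi^+)=\varphi\cap w(\Phi^+)$, observing that $(-1)^{Z_\varphi(T)}=\det(v_\varphi(w))$, and applying Lemma~\ref{lemma_sign_of_w(varphi)} to $w^{-1}v_\varphi(w)$ to get $(-1)^{Z_\varphi(T)}\epsilon(\varphi)=\det(w)\,\epsilon(w^{-1}(\varphi))$, so that summing and re-indexing by the bijection $\varphi\mapsto w^{-1}(\varphi)$ yields $\det(w)=(-1)^T$ via Proposition~\ref{prop_sum_of_signs}. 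Your detour through Corollaries~\ref{cor_T_Phi_as_quotient} and~\ref{cor_comparison_signs} is harmless but not needed, and the sign bookkeeping you single out as the main obstacle is precisely what the $v_\varphi(w)$ device resolves.
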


Recall that $(-1)^T=(-1)^{|S(B,T)|}$ for every $T\in\Tc(\Hf)$, and
similarly for $T\in\Tc(\Hf_\varphi)$.

\begin{proof}[Proof of Corollary~\ref{cor_signs}.]
For every $\varphi\in\Tt(\Phi)$, we denote the Coxeter group of
$\varphi$ by $W(\varphi)$. We also use the notation
of Lemma~\ref{lemma_sign_of_w(varphi)}.
Let $w$ be the unique element of $W$ such that $T=w^{-1}(B)$.
Let $\varphi\in\Tt(\Phi)$. Then $\varphi\cap w(\Phi^+)$ is a system
of positive pseudo-roots
in $\varphi$, so there exists a unique
$v\in W(\varphi)$ such that $v(\varphi^+)=\varphi\cap w(\Phi^+)$;
we write $v=v_\varphi(w)$. As $T=\{x\in V : \forall\alpha\in w(\Phi^+) \: 
(x,\alpha)> 0\}$, we have
\begin{align*}
Z_\varphi(T)
& =
\{x\in V : \forall\alpha\in w(\Phi^+)\cap\varphi \:\: (x,\alpha)> 0\} \\
& =
\{x\in V : \forall\alpha\in v_\varphi(w)(\varphi) \:\: (x,\alpha)> 0\},
\end{align*}
and so $v_\varphi(w)$ is the element of $W(\varphi)$
corresponding to $Z_\varphi(T)$ by the bijection from
$W(\varphi)$ to the set of chambers of $\Hf_\varphi$
sending $v$ to $v^{-1}(Z_\varphi(B))$.

For a $2$-structure $\varphi\in\Tt(\Phi)$ we have that
$w^{-1}v_\varphi(w)(\varphi^+)=w^{-1}(\varphi\cap w(\Phi^+))\subset\Phi^+$,
so by Lemma~\ref{lemma_sign_of_w(varphi)}
(and the fact that $v_\varphi(w)(\varphi)=\varphi$),
we obtain that
\[\epsilon(w^{-1}(\varphi))=\det(w^{-1}v_\varphi(w))\epsilon(\varphi).\]
Hence we have
\begin{align*}
\sum_{\varphi\in\Tt(\Phi)} (-1)^{Z_\varphi(T)}\epsilon(\varphi)
& =
\sum_{\varphi\in\Tt(\Phi)}\det(v_\varphi(w))\epsilon(\varphi) \\
& =
\det(w) \cdot \sum_{\varphi\in\Tt(\Phi)} \epsilon(w^{-1}(\varphi)) \\
& =
\det(w) \cdot \sum_{\varphi\in\Tt(\Phi)} \epsilon(\varphi) ,
\end{align*}
where in the last step
we used that the map $\varphi \longmapsto w^{-1}(\varphi)$
on the set $\Tt(\Phi)$ is bijective.
The result now follows by Proposition~\ref{prop_sum_of_signs}.
\end{proof}

\subsection{Calculating the abstract pizza quantity with 2-structures}
\label{thm_first_main}

We use the notation of Appendix~\ref{appendix_valuations}.
In particular, 
if $K$ is a closed convex polyhedral cone in $V$, we denote the set of
its closed faces by $\F(K)$
(we include $K$ itself in the set of its faces).
The
\emph{dimension} $\dim K$ of $K$ is by definition the dimension of
its span $\Span(K)$, and the \emph{relative interior}~$\mathring{K}$ of
$K$ is the interior of $K$ in $\Span(K)$. We say that
$K$ is \emph{degenerate} if $\Span(K)$ is strictly included in $V$,
equivalently, if $K$ has empty interior.

Let $\Hf$ be a central hyperplane arrangement on $V$ with fixed base
chamber $B$.
Let $\Cf_\Hf(V)$ be the set of closed convex polyhedral cones in $V$
that are intersections of closed half-spaces bounded by hyperplanes $H$ where $H\in\Hf$.
Denote the free abelian group on $\Cf_\Hf(V)$ 
by $\bigoplus_{K\in \Cf_\Hf(V)}\Z[K]$
and
let $K_\Hf(V)$ be its quotient 
by the relations 
$[K]+[K']=[K\cup K']+[K\cap K']$ for all
$K,K'\in \Cf_\Hf(V)$ such that $K\cup K'\in \Cf_\Hf(V)$.
For $K\in \Cf_\Hf(V)$, we still denote
the image of $K$ in $K_\Hf(V)$ by $[K]$. 
For the relative interior $\mathring{K}$
we also define
a class $[\mathring{K}]\in K_\Hf(V)$ by
\[[\mathring{K}]=(-1)^{\dim K}\sum_{F\in\F(K)}(-1)^{\dim F}[F].\]
We then have
\[[K]=\sum_{F\in\F(K)}[\mathring{F}]\]
by formula~(A.4) on page~543 of~\cite{GKM}.

Recall that $\Lf(\Hf)$ and $\Tc(\Hf)$ are the set of faces and chambers
of the arrangement $\Hf$ as in Subsection~\ref{background}.
Each $C\in\Lf(\Hf)$ is the relative interior of its closure and,
if $T\in\Tc(\Hf)$,
then $\F(\overline{T})=\{\overline{C}\,:\, C\in\Lf(\Hf),\ C\leq T\}$.
We have
$V=\coprod_{C\in\Lf(\Hf)}C$, and the family
$([C])_{C\in\Lf(\Hf)}$ is a $\Z$-basis of $K_\Hf(V)$.
As in Section~\ref{background},
the \emph{sign} of a face $C\in\F(\Hf)$ is defined by $(-1)^C=
(-1)^{|S(B,C\circ B)|}$.

We consider the following quantity:
\[\Pi(\Hf)=\sum_{C\in\Lf(\Hf)}(-1)^{C}[C]
\in K_\Hf(V).\]

Let $A$ be an abelian group. We say that a function
$\nu:\Cf_\Hf(V) \longrightarrow A$ is a \emph{valuation} on $\Cf_\Hf(V)$ if, for
all $K,K'\in \Cf_\Hf(V)$ such that $K\cup K'\in \Cf_\Hf(V)$ we have
$\nu(K\cup K')+\nu(K\cap K')=\nu(K)+\nu(K')$. 
Such a valuation $\nu$ defines a morphism of abelian groups
$K_\Hf(V) \longrightarrow A$ sending $[K]$ to $\nu(K)$
for every $K\in \Cf_\Hf(V)$, and we still denote this morphism
by $\nu:K_\Hf(V) \longrightarrow A$. We set
\[\Pi(\Hf,\nu)=\nu(\Pi(\Hf))\in A.\]
If $\nu $ vanishes on degenerate cones then we have
\begin{equation}
\Pi(\Hf,\nu)=\sum_{T\in\Tc(\Hf)}(-1)^T\nu(T)=\sum_{T\in\Tc(\Hf)}(-1)^T
\nu(\overline{T})\in A.
\label{equation_P_H_nu}
\end{equation}

The first main theorem of this article is the following.
For Coxeter arrangements
we can express the quantity $\Pi(\Hf)$ in terms
of the quantities $\Pi(\Hf_\varphi)$ for the arrangements $\Hf_\varphi$
associated to the $2$-structures of the arrangement.
\begin{subthm}
Let $\Phi\subset V$ be a pseudo-root
system.  Choose a system of positive pseudo-roots $\Phi^+\subset\Phi$
and let $\Hf$  be the hyperplane
arrangement $(H_\alpha)_{\alpha\in\Phi^+}$ on $V$. 
\begin{enumerate}
\item[(i)]
We have the identity
\[\Pi(\Hf)=\sum_{\varphi\in\Tt(\Phi)}\epsilon(\varphi)\Pi(\Hf_\varphi)\]
in the quotient $K_\Hf(V)$, where $\Hf_\varphi$ is as before the
arrangement $(H_\alpha)_{\alpha\in\varphi\cap\Phi^+}$ for every
$\varphi\in\Tt(\Phi)$.

\item[(ii)]
If $\nu:\Cf_\Hf(V)\longrightarrow A$ is a valuation, we have
\[\Pi(\Hf,\nu)=
\sum_{\varphi\in\Tt(\Phi)}\epsilon(\varphi)\Pi(\Hf_\varphi,\nu).\]
\end{enumerate}
\label{theorem_2_structures_and_chambers}
\end{subthm}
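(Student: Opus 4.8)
The plan is to deduce~(i) from Corollary~\ref{cor_signs} by computing both sides in the $\Z$-basis $([C])_{C\in\Lf(\Hf)}$ of $K_\Hf(V)$, and then to obtain~(ii) by applying to~(i) the morphism $\nu:K_\Hf(V)\to A$ attached to the valuation.

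First I would make precise the right-hand side of~(i). Since $\varphi^+\subset\Phi^+$ we have $\Cf_{\Hf_\varphi}(V)\subset\Cf_\Hf(V)$, and this inclusion respects the inclusion--exclusion relations, so it induces a morphism $K_{\Hf_\varphi}(V)\longrightarrow K_\Hf(V)$; it is through this morphism that $\Pi(\Hf_\varphi)$ is regarded as an element of $K_\Hf(V)$. The key bookkeeping lemma is that, in $K_\Hf(V)$, the class of a face $C'$ of $\Hf_\varphi$ satisfies $[C']=\sum_C[C]$, where $C$ runs over the faces of $\Hf$ contained in $C'$ as subsets of $V$. This is proved by identifying $K_\Hf(V)$ with the group of $\Z$-valued functions on $V$ spanned by the indicator functions $\ungras_K$, $K\in\Cf_\Hf(V)$: the rule $[K]\longmapsto\ungras_K$ is well defined because indicator functions satisfy inclusion--exclusion, it sends $[C]\longmapsto\ungras_C$ for each face $C$ of $\Hf$ (apply it to the defining formula of $[\mathring K]$ and use the Euler relation $\ungras_{\mathring K}=(-1)^{\dim K}\sum_{F\in\F(K)}(-1)^{\dim F}\ungras_F$, together with $C=\mathring{\overline C}$), and since the $\ungras_C$ with $C\in\Lf(\Hf)$ are linearly independent while the $([C])_{C\in\Lf(\Hf)}$ are a basis, the rule is an isomorphism; the lemma then follows from the set equality $C'=\coprod_{C\subseteq C'}C$.

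Summing the lemma over all faces of $\Hf_\varphi$, and writing $\pi_\varphi(C)$ for the unique face of $\Hf_\varphi$ containing a given face $C$ of $\Hf$, I obtain
\[\Pi(\Hf_\varphi)=\sum_{C\in\Lf(\Hf)}(-1)^{\pi_\varphi(C)}[C]\qquad\text{in }K_\Hf(V),\]
where $(-1)^{\pi_\varphi(C)}$ denotes the sign of the face $\pi_\varphi(C)$ in the arrangement $\Hf_\varphi$ with base chamber $B_\varphi$. Comparing sign vectors along $\varphi^+$ gives $\pi_\varphi(C)\circ B_\varphi=Z_\varphi(C\circ B)$, so this sign equals $(-1)^{Z_\varphi(C\circ B)}$ in the notation of Corollary~\ref{cor_signs}. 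Hence
\[\sum_{\varphi\in\Tt(\Phi)}\epsilon(\varphi)\,\Pi(\Hf_\varphi)
=\sum_{C\in\Lf(\Hf)}\Bigl(\sum_{\varphi\in\Tt(\Phi)}\epsilon(\varphi)\,(-1)^{Z_\varphi(C\circ B)}\Bigr)[C],\]
and applying Corollary~\ref{cor_signs} to the chamber $T=C\circ B$ of $\Hf$ identifies the inner sum with $(-1)^{C\circ B}=(-1)^C$; the right-hand side is thus $\sum_{C\in\Lf(\Hf)}(-1)^C[C]=\Pi(\Hf)$, which proves~(i).

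For~(ii), the restriction of $\nu$ to $\Cf_{\Hf_\varphi}(V)$ is again a valuation, and the morphism $K_{\Hf_\varphi}(V)\to A$ it induces is the composite of $K_{\Hf_\varphi}(V)\to K_\Hf(V)$ with $\nu:K_\Hf(V)\to A$; hence $\nu(\Pi(\Hf_\varphi))=\Pi(\Hf_\varphi,\nu)$, and applying $\nu$ to the identity of~(i) gives~(ii). The substantive input is Corollary~\ref{cor_signs}, which already encodes the combinatorics of $2$-structures; granting it, the proof is essentially bookkeeping, and I expect the only real care to be needed in the constructible-function identification underlying the lemma $[C']=\sum_{C\subseteq C'}[C]$ and in the routine sign comparison $\pi_\varphi(C)\circ B_\varphi=Z_\varphi(C\circ B)$.
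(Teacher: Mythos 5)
Your proposal is correct, and it takes a genuinely different route from the paper's. The paper proves statement~(ii) first for valuations that vanish on degenerate cones (a direct expansion and sum swap, using Corollary~\ref{cor_signs}), then deduces~(i) by applying that special case to the particular valuation $\psi=[\cdot]\star\psi_x$ built from the $\star$-product of Appendix~\ref{appendix_valuations} and the indicator-of-dual-cone valuation $\psi_x$, with a detour through Lemma~\ref{lemma_psi_x_vs_circ} and the endomorphism $[K]\mapsto(-1)^{\dim K}[\mathring K]$ to pass between closed and open cones; only then does~(ii) in general follow. You instead prove~(i) directly by expanding both sides in the $\Z$-basis $([C])_{C\in\Lf(\Hf)}$, using the embedding of $K_\Hf(V)$ into constructible functions via $[K]\mapsto\ungras_K$ (well-defined by inclusion--exclusion, sending $[C]\mapsto\ungras_C$ by the Euler relation, and injective since the $\ungras_C$ are disjoint-support hence independent) to justify the refinement identity $[C']=\sum_{C\subseteq C'}[C]$, and the elementary sign-vector identity $\pi_\varphi(C)\circ B_\varphi=Z_\varphi(C\circ B)$ to reduce the coefficient of $[C]$ to the sum computed in Corollary~\ref{cor_signs}; statement~(ii) is then purely formal. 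Both proofs are powered by the same combinatorial input, Corollary~\ref{cor_signs}, but your route avoids the $\star$-product machinery entirely and is arguably cleaner; what it does not give directly (which the paper's argument does, as an intermediate step) is an independent proof of~(ii) for valuations vanishing on degenerate cones, though of course that follows a fortiori. One small bookkeeping point you should make explicit is that the family $(\pi_\varphi(C))_{C\subseteq C'}$ is constant equal to $C'$, so the double sum $\sum_{C'}\sum_{C\subseteq C'}$ really does reorganize into $\sum_C$ with a well-defined coefficient; this is immediate since the faces of $\Hf$ partition $V$ and refine those of $\Hf_\varphi$.
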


If $\varphi\in\Tt(\Phi)$ then the faces of $\Hf_\varphi$ 
are relative interiors of elements of $\Cf_\Hf(V)$, so $\Pi(\Hf_\varphi)$ makes
sense as an element of $K_\Hf(V)$.

\begin{subrmk}
This theorem is useful in the following situation. Suppose
that we have a function~$f$ on closed convex polyhedral cones and that
we wish to calculate the alternating sum over the chambers $T$ of a
hyperplane arrangement~$\Hf$ of the values $f(\overline{T})$. If $\Hf$
is a Coxeter arrangement and the function $f$ is a valuation that vanishes
on cones contained in hyperplanes of~$\Hf$, then the theorem says that
we can reduce the problem to a similar calculation for very simple
subarrangements of~$\Hf$ that are products of rank $1$ and rank $2$
Coxeter arrangements.

Here are two situations when we wish to calculate
alternating sums of $f(\overline{T})$ for such a valuation~$f$:
\begin{itemize}
\item[(a)] The weighed sums of Section~\ref{section_definition_sum}.
These sums appear in the calculation of weighted cohomology of locally
symmetric spaces and Shimura varieties 
(see Appendix~\ref{appendix_more_detail} for additional details and
references). We want to relate them to stable discrete series
constants to get a spectral description of that cohomology.

\item[(b)] The pizza problem (see for example
the paper~\cite{EMR_pizza}). In this setting, we fix a measurable
subset $K$ of $V$ with finite volume, and the function
$f$ sends a cone $C$ to the volume of $C\cap K$.
We are interested in ``the pizza quantity'',
that is,
the alternating sum of the volumes $f(K\cap\overline{T})$.
In particular we would like to know when this alternating
sum vanishes,
which is to say that the ``pizza'' $K$ has been evenly divided
among the two participants, $+$ and~$-$.
This problem is approached by analytic methods
in~\cite{EMR_pizza}.
Theorems~1.1 and~1.2 in~\cite{EMR_pizza} give
general sufficient conditions to guarantee that the pizza quantity vanishes.
Using Theorem~\ref{theorem_2_structures_and_chambers}
we can give a dissection proof; see~\cite{EMR_pizza2}.
\end{itemize}

When $f$ is a valuation that does not vanish on cones contained
in hyperplanes of $\Hf$, we have to decide how to count the contributions
of lower-dimensional faces of $\Hf$. One possibility is given in
Theorem~\ref{theorem_2_structures_and_chambers}, and another in
Corollary~\ref{corollary_no_walls}. In both cases, if $\Hf$ is a Coxeter arrangement,
then we can again reduce
the calculation to the case of simpler subarrangements of $\Hf$.
This is not needed in situation~(a), but in situation (b) it allows
us to obtain versions of the pizza theorem that hold for all the intrinsic volumes.
See~\cite{EMR_pizza2} for this.
\end{subrmk}

We will provide a proof of Theorem~\ref{theorem_2_structures_and_chambers} 
in Subsection~\ref{proof_main_theorem}.
First we state and prove a corollary.
For $\Hf$ a central hyperplane arrangement on $V$ with a fixed base chamber,
we define
\begin{align*}
P(\Hf)
& =
\sum_{T\in\Tc(\Hf)}(-1)^{T}[\overline{T}]\in K_\Hf(V) , \\
P_0(\Hf)
& =
\sum_{T\in\Tc(\Hf)}(-1)^{T}[T]\in K_\Hf(V).
\end{align*}
Analogous to the pizza quantity defined in Section~2
of~\cite{EMR_pizza}, we call $P(\Hf)$ the \emph{abstract pizza quantity}
of the arrangement $\Hf$.

\begin{sublemma}
For $\Hf$ a central hyperplane arrangement on $V$,
we have
\[P_0(\Hf)=P(\Hf).\]
\label{lemma_P_vs_P0}
\end{sublemma}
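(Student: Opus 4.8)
The statement $P_0(\Hf) = P(\Hf)$ asserts that $\sum_{T}(-1)^T[T] = \sum_T (-1)^T[\overline{T}]$ in $K_\Hf(V)$, i.e.\ that the difference $\sum_T (-1)^T\bigl([\overline T]-[T]\bigr)$ vanishes. My plan is to rewrite both sides in the $\Z$-basis $([C])_{C\in\Lf(\Hf)}$ of $K_\Hf(V)$ and compare coefficients face by face. Using the relation $[\overline{T}] = \sum_{F\in\F(\overline T)}[\mathring F] = \sum_{C\le T}[C]$ recorded just before the quantity $\Pi(\Hf)$ was introduced, the right-hand side becomes $P(\Hf) = \sum_T (-1)^T \sum_{C\le T}[C] = \sum_{C\in\Lf(\Hf)}\bigl(\sum_{T\ge C}(-1)^T\bigr)[C]$, where the inner sum ranges over chambers $T$ whose closure contains $C$. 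Since $P_0(\Hf) = \sum_{T}(-1)^T[T]$ is already expressed in the basis, the lemma is equivalent to the assertion that for every non-chamber face $C\in\Lf(\Hf)$ one has $\sum_{T\in\Tc\cap\Lf_{\ge C}}(-1)^T = 0$, while for a chamber $C=T_0$ the sum is $(-1)^{T_0}$ (which is automatic, the only chamber above $T_0$ being itself).

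So the crux is: for every face $C$ that is not a chamber, the signed count of chambers in the star of $C$ vanishes. The hard part is this sign cancellation, and the natural tool is Lemma~\ref{lemma_star}. By that lemma, $\iota_C$ gives a poset isomorphism $\Lf_{\ge C}\xrightarrow{\sim}\Lf_{\Hf(C)}$ carrying $\Tc\cap\Lf_{\ge C}$ bijectively onto the chambers of the subarrangement $\Hf(C) = (H_{\alpha_e})_{e\in E(C)}$, and by part~(iii) it preserves separation sets, hence preserves the signs $(-1)^{(\cdot)}$ relative to a base chamber in $\Lf_{\ge C}$ (one may choose, say, $B' = C\circ B$ as base chamber, which lies in $\Lf_{\ge C}$ by Remark~\ref{rmk_star_convex}, and note that $(-1)^T$ as defined via $S(B,T\circ B)=S(B,T)$ differs from the count via $S(B',T)$ only by the global constant $(-1)^{|S(B,B')|}$, which factors out of the sum). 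Thus $\sum_{T\in\Tc\cap\Lf_{\ge C}}(-1)^T = \pm\sum_{T'\in\Tc(\Hf(C))}(-1)^{T'}$, reducing everything to showing that for any central arrangement with at least one hyperplane, $\sum_{T'\in\Tc}(-1)^{T'} = 0$. When $C$ is not a chamber, $E(C)\neq\varnothing$, so $\Hf(C)$ genuinely has a hyperplane.

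Finally I would dispatch the reduced claim: for a central arrangement $\Hf'$ with $E\ne\varnothing$, $\sum_{T'\in\Tc(\Hf')}(-1)^{T'}=0$. Pick any $e_0\in E$; the reflection $s_{\alpha_{e_0}}$ acts freely on $\Tc(\Hf')$ (no chamber lies on $H_{\alpha_{e_0}}$), pairing each $T'$ with a chamber on the opposite side of $H_{\alpha_{e_0}}$; concretely one can instead use that antipodal map $T'\mapsto -T'$ has $S(B',-B')=E$ but more cheaply partition $\Tc$ into the chambers $T'$ with $s(T')_{e_0}=+$ and those with $s(T')_{e_0}=-$ and match $T'$ with the chamber $T''$ having the same sign vector except in coordinate $e_0$ whenever such a chamber exists — but this matching need not be well-defined, so the cleanest route is: the antipodal involution $T'\mapsto -T'$ satisfies $(-1)^{-T'} = (-1)^{|S(B',-T')|}$ and $S(B',-T') = E\setminus S(B',T')$ when all hyperplanes are distinct, so $(-1)^{-T'} = (-1)^{|E|}(-1)^{T'}$; this only gives cancellation when $|E|$ is odd. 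The robust argument is simply that the character $T'\mapsto (-1)^{T'}$ is, after transporting to the Coxeter picture via Theorem~\ref{thm_Coxeter_arrangements} when applicable, the determinant; in general one invokes that the Euler characteristic argument / the known fact that the reduced Euler characteristic of the sphere decomposition forces $\sum_{C\in\Lf(\Hf')}(-1)^{\dim C}(-1)^C$-type identities. I expect the truly routine verification to be this last vanishing, and I would phrase it via the free $s_{\alpha_{e_0}}$-action: $s_{\alpha_{e_0}}$ is an involution of $\Tc(\Hf')$ with no fixed points, and $S(B', s_{\alpha_{e_0}}(T'))$ and $S(B',T')$ differ precisely by the single element $e_0$ up to the contribution of the walls fixed by the reflection — checking carefully that the parity of $|S(B',\cdot)|$ flips gives $(-1)^{s_{\alpha_{e_0}}(T')} = -(-1)^{T'}$, hence the sum is zero. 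The main obstacle is getting this sign-flip under the reflection exactly right; once that is pinned down, the rest is formal bookkeeping with Lemma~\ref{lemma_star}.
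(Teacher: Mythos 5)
You follow the paper's route exactly up to the crux: expanding $[\overline{T}]=\sum_{C\le T}[C]$ in the basis $([C])_{C\in\Lf}$ and regrouping reduces the lemma to the vanishing of $\sum_{T\in\Tc\cap\Lf_{\ge C}}(-1)^T$ for every non-chamber face $C$, and you correctly use Lemma~\ref{lemma_star} (parts (i) and (iii), together with the disjointness $S(B,C\circ B)\cap E(C)=\varnothing$ so the base-chamber change only introduces a global sign) to reduce this to the statement that $\sum_{T'\in\Tc(\Hf')}(-1)^{|S(B',T')|}=0$ for any central arrangement $\Hf'$ with at least one hyperplane. The paper, for what it is worth, simply asserts ``the last inner sum is equal to zero'' without argument, so its written proof leaves this same step to the reader.

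Where your proof breaks is in the final step. The argument you settle on --- a free, sign-flipping action of the reflection $s_{\alpha_{e_0}}$ on $\Tc(\Hf')$ --- does not apply to a general central arrangement: the reflection $s_{\alpha_{e_0}}$ preserves the set of hyperplanes of $\Hf'$ (and hence acts on its chambers at all) only when $\Hf'$ is a Coxeter arrangement, whereas the lemma is stated for arbitrary central arrangements, and the arrangement $\Hf(C)$ obtained from the star reduction need not be Coxeter even if the original $\Hf$ has symmetry. You yourself note, correctly, that the antipodal map only gives cancellation when $|E'|$ is odd, and that the coordinate-flip matching is not well-defined; but the appeal to ``the Euler characteristic argument'' is not a proof either, since the Euler characteristic of $\Sigma(\Lf')$ computes $\sum_{C\in\Lf'}(-1)^{\dim C}$, not $\sum_{T'}(-1)^{|S(B',T')|}$. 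So the reduced claim --- which is genuinely true --- is left unproved: a correct closing argument (e.g.\ by deletion of a hyperplane, or a careful inductive argument using the star isomorphisms of Lemma~\ref{lemma_star} and the fact that $\Hf(C)$ has strictly fewer hyperplanes when $C$ is not the minimal face) is still required.
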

\begin{proof}
If $T\in\Tc(\Hf)$ then we have
\[[\overline{T}]
=
[T] + \sum_{\substack{F\in\Lf(\Hf) \\ F<T}}[F].\]
Summing over all chambers $T$ of $\Hf$ yields
\begin{align*}
\sum_{T\in\Tc(\Hf)}(-1)^T[\overline{T}]
& =
\sum_{T\in\Tc(\Hf)}(-1)^T
\left([T] + \sum_{\substack{F\in\Lf(\Hf) \\ F<T}}[F]\right)\\
&=
\sum_{T\in\Tc(\Hf)}(-1)^T[T]+
\sum_{F\in\Lf(\Hf) - \Tc(\Hf)}[F]
\sum_{\substack{T\in\Tc(\Hf) \\ T > F}} (-1)^T.
\end{align*}
The last inner sum is equal to zero,
yielding the result.
\end{proof}

\begin{subcor}
If $\Phi$ and $\Hf$ are as in
Theorem~\ref{theorem_2_structures_and_chambers}, we have
\[P(\Hf)=\sum_{\varphi\in\Tt(\Phi)}\epsilon(\varphi)
P(\Hf_\varphi).\]
\label{corollary_no_walls}
\end{subcor}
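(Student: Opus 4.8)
The plan is to derive Corollary~\ref{corollary_no_walls} directly from Theorem~\ref{theorem_2_structures_and_chambers}(i) together with Lemma~\ref{lemma_P_vs_P0}. The key observation is that $P_0(\Hf)$ is precisely the ``top-dimensional part'' of $\Pi(\Hf)$: in the expression $\Pi(\Hf)=\sum_{C\in\Lf(\Hf)}(-1)^C[C]$, the chambers $T\in\Tc(\Hf)$ contribute $\sum_{T}(-1)^T[T]=P_0(\Hf)$, and by Lemma~\ref{lemma_P_vs_P0} this equals $P(\Hf)$. So what I need is a way to project $\Pi(\Hf)$ onto its chamber contributions in a fashion compatible with the identity in Theorem~\ref{theorem_2_structures_and_chambers}(i).

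The cleanest route is to invoke part (ii) of Theorem~\ref{theorem_2_structures_and_chambers} with a suitable valuation, rather than part (i). Indeed, if $\nu:\Cf_\Hf(V)\longrightarrow A$ is any valuation that vanishes on degenerate cones, then by~\eqref{equation_P_H_nu} we have $\Pi(\Hf,\nu)=\sum_{T\in\Tc(\Hf)}(-1)^T\nu(\overline{T})$, and similarly $\Pi(\Hf_\varphi,\nu)=\sum_{T\in\Tc(\Hf_\varphi)}(-1)^T\nu(\overline{T})$ for each $2$-structure $\varphi$ (here one uses that the chambers of $\Hf_\varphi$, which are unions of faces of $\Hf$, have closures lying in $\Cf_\Hf(V)$, and that a degenerate cone of $\Hf_\varphi$ is still degenerate in $V$). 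Thus Theorem~\ref{theorem_2_structures_and_chambers}(ii) gives, for every such $\nu$,
\[
\sum_{T\in\Tc(\Hf)}(-1)^T\nu(\overline{T})
=\sum_{\varphi\in\Tt(\Phi)}\epsilon(\varphi)\sum_{T\in\Tc(\Hf_\varphi)}(-1)^T\nu(\overline{T}).
\]
To upgrade this from an identity of $\nu$-values to an identity in $K_\Hf(V)$, I would take $A=K_\Hf(V)$ and let $\nu$ be the ``universal'' valuation that vanishes on degenerate cones: concretely, define $\nu(\overline{T})=[\overline{T}]$ for full-dimensional $\overline{T}$ and extend by the inclusion–exclusion formula so that $\nu$ kills degenerate cones — that is, $\nu(K)=[K]$ if $\dim K=\dim V$ and $\nu(K)=0$ otherwise is not itself a valuation, so instead one sets $\nu(K)=\sum_{F\in\F(K),\ \dim F=\dim V}[F]$, which is readily checked to be a valuation vanishing on degenerate cones. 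Applying the displayed identity to this $\nu$ yields $P(\Hf)=\sum_\varphi\epsilon(\varphi)P(\Hf_\varphi)$ directly, since $\nu(\overline{T})=[\overline{T}]$ for every chamber.

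Alternatively, and perhaps more in the spirit of staying within part (i): one can observe that the assignment $[C]\mapsto [\overline{C}]$ if $\dim C=\dim V$ and $[C]\mapsto 0$ otherwise does \emph{not} descend to $K_\Hf(V)$, which is exactly why the detour through a genuine valuation (or through Lemma~\ref{lemma_P_vs_P0}) is needed. Concretely, combine Lemma~\ref{lemma_P_vs_P0} with the fact that $\Pi(\Hf)-\sum_{T\in\Tc(\Hf)}(-1)^T[T]=\sum_{C\in\Lf(\Hf),\ C\notin\Tc(\Hf)}(-1)^C[C]$, and show the analogous lower-dimensional parts match up on both sides of Theorem~\ref{theorem_2_structures_and_chambers}(i) after applying $\iota_C$-type identifications — but this is more delicate than the valuation argument. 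I expect the main obstacle to be the bookkeeping in verifying that the chosen $\nu$ is genuinely a valuation on $\Cf_\Hf(V)$ and vanishes on degenerate cones; once that is in hand, and once one checks the elementary compatibility that chambers and degenerate faces of each $\Hf_\varphi$ behave correctly as subsets of $V$ (not just of their own spans), the corollary follows in a couple of lines. I would therefore present the proof as: (1) exhibit the valuation $\nu:\Cf_\Hf(V)\to K_\Hf(V)$ vanishing on degenerate cones with $\nu(\overline{T})=[\overline{T}]$ for chambers $T$; (2) apply Theorem~\ref{theorem_2_structures_and_chambers}(ii) and~\eqref{equation_P_H_nu} to $\nu$; (3) conclude, optionally noting $P_0=P$ via Lemma~\ref{lemma_P_vs_P0} if one prefers to phrase the output in terms of $P_0$.
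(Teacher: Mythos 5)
Your strategy of feeding a single valuation into Theorem~\ref{theorem_2_structures_and_chambers}(ii) cannot be made to work, and the specific $\nu$ you construct is not a valuation. Note first that your proposed fix $\nu(K)=\sum_{F\in\F(K),\ \dim F=\dim V}[F]$ is literally the same function as the naive one you reject: a full-dimensional cone $K$ has exactly one face of dimension $\dim V$, namely $K$ itself, and a degenerate cone has none, so the sum collapses to $[K]$ or $0$ exactly as before. To see that this function fails the valuation axiom, take adjacent chambers $T,T'$ of $\Hf$ sharing a wall; then $\overline{T}\cup\overline{T'}$ and $\overline{T}\cap\overline{T'}$ both lie in $\Cf_\Hf(V)$, the latter is degenerate, and the required equality $\nu(\overline{T}\cup\overline{T'})+\nu(\overline{T}\cap\overline{T'})=\nu(\overline{T})+\nu(\overline{T'})$ would force $[\overline{T}\cup\overline{T'}]=[\overline{T}]+[\overline{T'}]$, i.e.\ $[\overline{T}\cap\overline{T'}]=0$ in $K_\Hf(V)$, which is false.

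The obstruction is structural, not just a defect of this particular formula. A valuation $\nu:\Cf_\Hf(V)\to A$ vanishing on degenerate cones is the same thing as an additive map $K_\Hf(V)\to A$ that kills every basis element $[C]$ with $C\in\Lf(\Hf)$ a non-chamber. Consequently, for any chamber $Z$ of $\Hf_\varphi$ one always has $\nu(\overline{Z})=\sum_{T\in\Tc(\Hf),\ T\subset Z}\nu([T])$, and if you further insist on $\nu(\overline{T})=[\overline{T}]$ for chambers $T$ of $\Hf$, this becomes $\nu(\overline{Z})=\sum_{T\subset Z}[\overline{T}]$, which overcounts every interior wall of $Z$ and so differs from $[\overline{Z}]$ whenever $Z$ contains more than one chamber of $\Hf$. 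Thus the second requirement $\nu(\overline{Z})=[\overline{Z}]$, without which $\Pi(\Hf_\varphi,\nu)\neq P(\Hf_\varphi)$, cannot hold for any valuation. This is exactly why the paper's proof is not a one-liner: it first reduces to $P_0$ via Lemma~\ref{lemma_P_vs_P0}, then uses the genuine valuation $\nu(K)=\sum_{T\in\Tc(\Hf),\ T\subset K}[T]$ (which sends $\overline{T}$ to $[T]$, the class of the \emph{open} chamber, not to $[\overline{T}]$) together with Theorem~\ref{theorem_2_structures_and_chambers}(ii) and equation~\eqref{equation_P_H_nu} to settle only the coefficients of the chamber basis elements, and must then handle the coefficients of the non-chamber basis elements by an entirely separate fixed-point-free involution argument, built from a reflection across a hyperplane of $\Hf$ containing the face in question.
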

\begin{proof}
By Lemma~\ref{lemma_P_vs_P0}, it suffices to prove that
\[P_0(\Hf)=\sum_{\varphi\in\Tt(\Phi)}\epsilon(\varphi)
P_0(\Hf_\varphi),\]
where $P_0(\Hf)=\sum_{T\in\Tc(\Hf)}(-1)^T[T]$ and $P_0(\Hf_\varphi)$
is defined similarly.
For every $x\in V$ and every $\varphi\in\Tt(\Phi)$, let
$a_\varphi(x)$ equal $(-1)^Z$ if there exists a chamber
$Z$ of $\varphi$ such that $x\in Z$, and $0$ otherwise.
We need to show that
$\sum_{\varphi\in\Tt(\Phi)}\epsilon(\varphi)a_\varphi(x)$ is equal to
$(-1)^T$ if there exists a chamber $T$ of $\Hf$ such that $x\in T$,
and otherwise is zero.

Consider the valuation $\nu:\Cf_\Hf(V)\rightarrow K_\Hf(V)$ sending a cone 
$C\in \Cf_\Hf(V)$ to
$\sum_{T\in\Tc(\Hf),\ T\subset C}[T]\in K_\Hf(V)$. 
This valuation corresponds to the endomorphism of
$K_\Hf(V)$ sending the class of $T$ to itself if
$T\in\Tc(\Hf)$, and the class of $F$ to $0$ if
$F\in\F(\Hf) - \Tc(\Hf)$.
The valuation $\nu$
vanishes on degenerate cones, so
by statement~(ii) of Theorem~\ref{theorem_2_structures_and_chambers}
and equation~\eqref{equation_P_H_nu}, we have that
if $T$ is a chamber of~$\Hf$ and $x\in T$ then
$\sum_{\varphi\in\Tt(\Phi)}\epsilon(\varphi)a_\varphi(x)=(-1)^T$.

Now let $x\in V - \bigcup_{T\in\Tc(\Hf)}T$, and let
$F$ be the unique face of $\Hf$ such that $x\in F$.
For each $\varphi\in\Tt(\Phi)$, there is at most one chamber of
$\varphi$ that contains $x$. We denote by $X$ the set of
pairs~$(\varphi,Z)$, where $\varphi\in\Tt(\Phi)$ and $Z$ is a chamber
of $\varphi$ such that $x\in Z$.
As $F$ is not a chamber, there exists $e\in E$ such that $F\subset H_e$.
We denote by $s$ the orthogonal reflection in the hyperplane~$H_e$.
As $s(x)=x$, we can
make $s$ act on $X$ by sending $(\varphi,Z)$ to $(s(\varphi),s(Z))$.
This is a fixed-point free involution. Indeed, if
$\varphi$ is a $2$-structure such that $s(\varphi)=\varphi$, then
$e\in\varphi$ by statement~(0) of Lemma~\ref{lemma_T_Phi_induction_step},
so $H_e$ is a hyperplane of $\Hf_\varphi$,
which is impossible because $x$ is both in $H_e$ and in
a chamber of~$\varphi$. To prove that
$\sum_{\varphi\in\Tt(\Phi)}\epsilon(\varphi)a_\varphi(x)=\sum_{(\varphi,Z)\in X}
\epsilon(\varphi)(-1)^Z$ is equal to $0$, it suffices to show that
for every $(\varphi,Z)\in X$ we have
$\epsilon(\varphi)(-1)^Z=-\epsilon(s(\varphi))(-1)^{s(Z)}$. After applying
an element of $W$ to the whole situation, we may assume that
$x$ is in the base chamber of $\Hf$. Then~$Z$, respectively $s(Z)$, is the
base chamber of $\varphi$, respectively $s(\varphi)$, so
$(-1)^Z=(-1)^{s(Z)}=1$. Also, as the reflection~$s$ sends the base chamber of $\varphi$
to that of $s(\varphi)$, we have $s(\varphi^+)\subset\Phi^+$, and
so $\epsilon(s(\varphi))=-\epsilon(\varphi)$ by 
Lemma~\ref{lemma_sign_of_w(varphi)}.
\end{proof}

\subsection{Proof of Theorem~\ref{theorem_2_structures_and_chambers}}
\label{proof_main_theorem}

In this subsection we prove
Theorem~\ref{theorem_2_structures_and_chambers}.
We begin by stating and proving a lemma.

\begin{sublemma}
Let $\Hf$ be a central hyperplane arrangement on $V$, let $C$ be a face of
$\Hf$ and let $x\in V$. We denote by $C_0$ the unique face of $\Hf$
containing $x$. Then for $D\leq C$ a face of $\Hf$ the following three
conditions are equivalent:
\begin{itemize}
\item[(a)] $x\in\overline{C}+\Span(D)=\overline{C}+\Span(\overline{D})$;
\item[(b)] $\psi_x(\overline{D}^{\perp,\overline{C}})=1$, where
$\overline{D}^{\perp,\overline{C}}=\overline{D}^\perp\cap\overline{C}^*$
as in Subsection~\ref{subsection_convolution};
\item[(c)] $D\circ C_0\leq C$.
\end{itemize}
Moreover, if $C_0$ is a chamber then these conditions can only hold if
$C$ is also a chamber, and they are equivalent to the following
condition:
\begin{itemize}
\item[(d)] $D\circ C_0=C$.
\end{itemize}
\label{lemma_psi_x_vs_circ}
\end{sublemma}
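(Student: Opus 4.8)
The plan is to establish (a)$\Leftrightarrow$(c) and (a)$\Leftrightarrow$(b) separately and then read off the ``moreover'' clause. All of this is a direct unwinding of the definitions of faces, of the composition $\circ$, and of dual cones; there is no conceptually hard step, so the real work is in the sign-vector bookkeeping, and the one place I would be most careful is reconciling the sign conventions in part~(b).

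For (a)$\Leftrightarrow$(c): to prove (c)$\Rightarrow$(a), fix any $z\in D$. By the characterization of $\circ$ recalled in Subsection~\ref{background}, the face $D\circ C_0$ contains every vector $w+\varepsilon y$ with $w\in D$, $y\in C_0$ and $\varepsilon>0$ small; since faces are stable under positive dilation, rescaling shows $x+\lambda z\in D\circ C_0$ for $\lambda>0$ large. If $D\circ C_0\leq C$, then $x+\lambda z\in\overline C$, hence $x=(x+\lambda z)-\lambda z\in\overline C+\Span(D)$. For (a)$\Rightarrow$(c), write $x=c+v$ with $c\in\overline C$ and $v\in\Span(D)=\Span(\overline D)$, and verify $D\circ C_0\leq C$ on sign vectors: on an index $e$ with $s(D)_e\neq 0$ it is automatic from $D\leq C$, while on an index with $s(D)_e=0$ it amounts to $\sign((\alpha_e,x))\in\{0,s(C)_e\}$, which holds because $s(D)_e=0$ forces $\Span(D)\subseteq H_{\alpha_e}$, so $(\alpha_e,x)=(\alpha_e,c)$, and the sign of $(\alpha_e,c)$ lies in $\{0,s(C)_e\}$ since $c\in\overline C$.

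For (a)$\Leftrightarrow$(b): the key observation is that $\overline C+\Span(D)$ and $\overline{D}^{\perp,\overline{C}}=\overline{D}^\perp\cap\overline{C}^*$ are mutually dual closed convex polyhedral cones. Indeed, using that the dual of the subspace $\Span(D)^\perp$ is $\Span(D)$, that conic duality exchanges sums and intersections of polyhedral cones (no closure being needed), and biduality for the closed convex cone $\overline C$, one computes $(\overline{D}^\perp\cap\overline{C}^*)^*=\Span(D)+\overline C$. Now I invoke the definition of $\psi_x$ from Subsection~\ref{subsection_convolution}, namely that $\psi_x(K)=1$ precisely when $x$ belongs to the dual cone of $K$: then (b) says exactly $x\in(\overline{D}^{\perp,\overline{C}})^*=\overline C+\Span(D)$, which is (a). The main obstacle, such as it is, lies here: I would need to pin down the precise sign conventions in the definitions of $\overline{C}^*$ and of $\psi_x$ and thread them correctly through this identification.

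Finally, for the ``moreover'' part, assume $C_0$ is a chamber, so $s(C_0)_e\neq 0$ for every $e$, and suppose (c) holds. On sign vectors, $s(D\circ C_0)_e$ equals $s(D)_e$ (which is $s(C)_e$ by $D\leq C$) when $s(D)_e\neq 0$, and equals $s(C_0)_e\neq 0$ otherwise; hence the sign vector of $D\circ C_0$ is nowhere zero, so $D\circ C_0$ is a chamber. Since a chamber is an open subset of $V$ it cannot lie in any hyperplane, so $D\circ C_0\leq C$ forces $C$ not to be contained in any $H_{\alpha_e}$, i.e.\ $C$ is a chamber as well; and then maximality of chambers in $\Lf$ forces $D\circ C_0=C$, which is~(d). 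The implication (d)$\Rightarrow$(c) is trivial.
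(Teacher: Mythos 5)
Your argument is correct and follows essentially the same route as the paper's proof: the equivalence (a)$\Leftrightarrow$(b) rests on the same two facts (that $\psi_x(K)=1$ iff $x\in K^*$ for nonempty $K$, and that $(\overline{D}^\perp\cap\overline{C}^*)^*=\Span(D)+\overline{C}$, which the paper asserts and you verify by polyhedral duality), the rescaling argument for (c)$\Rightarrow$(a) is the paper's perturbation argument in disguise, and the ``moreover'' clause is handled identically via maximality of chambers. The only cosmetic difference is that you check (a)$\Rightarrow$(c) coordinatewise on sign vectors, whereas the paper writes $x=x_1+x_2$ and shows $y+\varepsilon x\in\overline{C}$ for small $\varepsilon>0$; both are valid, and your worry about sign conventions is unfounded since the characterization of $\psi_x$ you invoke is exactly the one the paper records after Lemma~\ref{lemma_GKM}.
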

\begin{proof}
We first note that $\Span(\overline{D})=\Span(D)$ because
$\Span(D)$ is a finite-dimensional subspace of~$V$, hence it is closed
and so contains $\overline{D}$. This explains the equality in condition~(a).

To prove that conditions (a) and (b) are equivalent, we note that
by the definition of the valuation $\psi_x$ in
Lemma~\ref{lemma_GKM} we have $\psi_x(\overline{D}^{\perp,\overline{C}})=1$
if and only if $x\in(\overline{D}^{\perp,\overline{C}})^*$. As
$\overline{D}^{\perp,\overline{C}}=\overline{D}^\perp\cap\overline{C}^*$ by
definition, we have $(\overline{D}^{\perp,\overline{C}})^*=\Span(\overline{D})+
\overline{C}$, so condition (b) is equivalent to the fact that
$x\in\Span(\overline{D})+\overline{C}$, which is condition (a).

We prove that (c) implies (a). Let $y\in D$. If $\varepsilon>0$ then
$y+\varepsilon x\in D\circ C_0$, so $y+\varepsilon x\in\overline{C}$ by~(c).
Thus $x=\frac{1}{\varepsilon}(y+\varepsilon x-y)\in\overline{C}+
\Span(D)$, which is condition~(a).

We prove that (a) implies~(c). By condition~(a) we can write
$x=x_1+x_2$, with $x_1\in\overline{C}$ and $x_2\in\Span(D)$.
Let $y\in D$. If $\varepsilon>0$ is small enough then
$y+\varepsilon x_2\in D$, so $y+\varepsilon x=(y+\varepsilon x_2)+
\varepsilon x_1\in\overline{C}$. As $y+\varepsilon x\in D\circ C_0$
for $\varepsilon>0$ small enough, this shows that
$D\circ C_0\subset\overline{C}$, that is, $D\circ C_0\leq C$, which
is condition~(a).

Finally, suppose that $C_0$ is a chamber. Then $D\circ C_0$ is a chamber,
so condition~(c) can only hold if $C$ is also a chamber, and it is equivalent
to condition~(d) because chambers are maximal faces.
\end{proof}

\begin{proof}[Proof of Theorem~\ref{theorem_2_structures_and_chambers}.]
We first prove statement~(ii) of~Theorem~\ref{theorem_2_structures_and_chambers}
for a valuation $\nu:\Cf_\Hf(V)\longrightarrow A$ that vanishes on degenerate
cones.
For every $\varphi\in\Tt(\Phi)$, we have by
equation~\eqref{equation_P_H_nu}:
\begin{align*}
\Pi(\Hf_\varphi,\nu)=
\sum_{Z\in\Tc(\Hf_\varphi)}(-1)^Z\nu(Z)=\sum_{Z\in\Tc(\Hf_\varphi)}(-1)^Z
\sum_{\substack{T\in\Tc(\Hf) \\ T\subset Z} } \nu(T).
\end{align*}
Hence
\begin{align*}
\sum_{\varphi\in\Tt(\Phi)}\epsilon(\varphi)\Pi(\Hf_\varphi,\nu)
& =
\sum_{\varphi\in\Tt(\Phi)}\epsilon(\varphi)\sum_{Z\in\Tc(\Hf_\varphi)}
(-1)^Z \sum_{\substack{T\in\Tc(\Hf) \\ T\subset Z}} \nu(T) \\
&=
\sum_{T\in\Tc(\Hf)}\nu(T)
\sum_{\varphi\in\Tt(\Phi)}\epsilon(\varphi)
(-1)^{Z_\varphi(T)}.
\end{align*}
As $\sum_{\varphi\in\Tt(\Phi)}\epsilon(\varphi)
(-1)^{Z_\varphi(T)}=(-1)^T$ for every $T\in\Tc(\Hf)$ by
Corollary~\ref{cor_signs}, the statement follows.

We now prove statement~(i) of~Theorem~\ref{theorem_2_structures_and_chambers}.
Fix a point $x$ in the base chamber~$B$ of~$\Hf$. For every
closed convex polyhedral cone $K\subset V$, let
$\F_x(K)$ be the set of closed faces $F$ of $K$ such that $x\in K+\Span(F)$.
Consider
the function $\psi:\Cf_\Hf(V)\longrightarrow K_\Hf(V)$ 
defined by 
\[\psi(K)=\sum_{F\in\F_x(K)}(-1)^{\dim F}[F].\]
This is the $\star$-product in the sense of
Corollary~\ref{cor_second_convolution}
(see also Remark~\ref{rmk_K_Hf})
of the valuations
$\Cf_\Hf(V) \longrightarrow K_\Hf(V)$, $K \longmapsto [K]$ and
$\psi_x:\Cf(V^\vee) \longrightarrow \Z$, where $V^\vee$ is the dual of $V$
and $\psi_x$ is the valuation of Lemma~\ref{lemma_GKM}.
More explicitly, for $K\subset V^\vee$ a nonempty closed convex
polyhedral cone, we have $\psi_x(K)=1$
if and only if $x\in K^*$. Indeed, with the notation of
that definition, we have $(F^{\perp,K})^*=K+\Span(F)$ for
every $K\in \Cf_\Hf(V)$ and every closed face $F$ of $K$.
By Corollary~\ref{cor_second_convolution},
the function~$\psi$ is a valuation, so
it induces a morphism $\psi:K_\Hf(V) \longrightarrow K_\Hf(V)$.
Moreover, 
the valuation~$\psi$ vanishes on degenerate cones in $\Cf_\Hf(V)$. Indeed,
if $K\in \Cf_\Hf(V)$ is contained in a hyperplane $H$ of~$\Hf$, then
$K+\Span(F)\subset H$ for every $F\in\F(K)$, so
$\F_x(K)=\varnothing$ because $x$ is not on any hyperplane of~$\Hf$.
Hence we can apply statement~(i)
that we just proved to~$\psi$.

Let $\Hf'$ be a subarrangement of $\Hf$, and let $B'$ be the unique
chamber of $\Hf'$ containing $B$. If $T\in\Tc(\Hf')$, we
write $\F_0(T)=\{C\in\Lf(\Hf') : C\leq T\ \mathrm{and}\ C\circ B'=T\}$. 
As $x\in B\subset B'$, we have 
$\F_x(\overline{T})=\{\overline{C} : C\in\F_0(T)\}$ by
Lemma~\ref{lemma_psi_x_vs_circ}.
We deduce that
\begin{align*}
\Pi(\Hf',\psi)
&=\sum_{T\in\Tc(\Hf')}(-1)^T\psi(\overline{T}) \\
& =
\sum_{T\in\Tc(\Hf')}(-1)^T\sum_{F\in\F_x(T)}(-1)^{\dim F}[F] \\
&=\sum_{T\in\Tc(\Hf')}(-1)^T\sum_{C\in\Lf(\Hf'),\ C\circ B'=T}
(-1)^{\dim C}[\overline{C}] \\
&=\sum_{C\in\Lf(\Hf')}(-1)^C(-1)^{\dim C}[\overline{C}].
\end{align*}
Using statement~(i) for the valuation $\psi$, we get that
\begin{align}
\sum_{C\in\Lf(\Hf)}(-1)^C(-1)^{\dim C}[\overline{C}]
& =
\sum_{\varphi\in\Tt(\Phi)}\epsilon(\varphi)
\sum_{C\in\Lf(\Hf_\varphi)}(-1)^C(-1)^{\dim C}[\overline{C}].
\label{equation_almost_there}
\end{align}
By the top of page 544 of~\cite{GKM}, there exists an endomorphism
of $K_\Hf(V)$ sending $[K]$ to $(-1)^{\dim K}[\mathring{K}]$, for every
$K\in \Cf_\Hf(V)$. Applying this endomorphism to the identity~\eqref{equation_almost_there}
yields statement~(i).

Finally, the general case of statement~(ii) immediately follows
from applying the morphism $\nu:K_\Hf(V)\longrightarrow A$
to both sides of the identity of statement~(i).
\end{proof}

\section{The weighted sum}
\label{section_definition_sum}

\subsection{The weighted complex and the weighted sum}
\label{section_weighted_complex}

We return to the situation of Subsection~\ref{background}.
In particular, we fix a finite-dimensional real inner product space
$V$ and a central hyperplane arrangement $\Hf=(H_{\alpha_e})_{e\in E}$
on $V$, and we denote by $\Lf$ and $\Tc$ the sets of faces and
chambers of $\Hf$.

\begin{subdef}
Let $\lambda\in V$. We consider the following subset of the face poset $\Lf$:
\[\Lf_{\lambda}=\{C\in\Lf :  C \subseteq \overline{H_\lambda^+}\}.\]
In other words, the set $\Lf_\lambda$ is the collection of faces on
the nonnegative side of the hyperplane $H_\lambda$.
More generally, if $C_0$ is a fixed face of $\Lf$, we also consider the
intersection
$$ \Lf_{\lambda,\geq C_0}=\Lf_\lambda\cap\Lf_{\geq C_0} . $$
\label{def_weighted_complex} 
\end{subdef}

\begin{subrmk}
(See~\cite[Section~4.5]{OM} for definitions.)
If $\lambda \neq 0$ then the hyperplane arrangement
$\{H_\lambda\}\cup\{H_{\alpha_e} : e \in E\}$ defines an affine oriented
matroid with distinguished hyperplane~$H_\lambda$.
If~$H_\lambda$ is in general position relative to the $H_{\alpha_e}$, that is,
if $\lambda$ is not in the span of any family $(\alpha_e)_{e \in F}$ for
$|F| \leq \dim(V)-1$, then $\Lf_{\lambda}$ coincides with the bounded complex
of this affine oriented matroid. In general,
$\Lf_{\lambda}$ is larger.
\label{rmk_weighted_vs_bounded}
\end{subrmk}

The basic properties of the subsets $\Lf_{\lambda}$ and $\Lf_{\lambda,\geq C_0}$
are given in the following proposition.

\begin{subprop}
The following two statements hold:
\begin{itemize}
\item[(i)]
For a fixed face $C_0$ of $\Lf$ the set $\Lf_{\lambda,\geq C_0}$
is a lower order ideal in $\Lf_{\geq C_0}$.

\item[(ii)] Let $C\in\Lf_{\lambda}$. Then there exists
$T\in\Tc\cap\Lf_{\lambda}$ such that $C\leq T$.
\end{itemize}
\label{prop_weighted_complex_basic}
\end{subprop}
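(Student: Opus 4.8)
The plan is to prove the two statements separately, using the combinatorics of sign vectors and compositions developed in Subsection~\ref{background}.

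For statement~(i), I would first recall from Remark~\ref{rmk_star_convex} that $\Lf_{\geq C_0}$ is a $T$-convex subset of $\Tc$ (after identifying it with the face poset of the subarrangement $\Hf(C_0)$ via Lemma~\ref{lemma_star}), so being a lower order ideal there makes sense. The key point is then to show that $\Lf_\lambda$ itself behaves well with respect to the covering relations in $\Lf$. Concretely, suppose $C \in \Lf_{\lambda,\geq C_0}$ and $D \leq C$ with $D \in \Lf_{\geq C_0}$; I want to show $D \in \Lf_\lambda$, i.e. $D \subseteq \overline{H_\lambda^+}$. Since $D \leq C$ we have $\overline{D} \subseteq \overline{C}$, and $C \subseteq \overline{H_\lambda^+}$ gives $\overline{C} \subseteq \overline{H_\lambda^+}$ because $\overline{H_\lambda^+}$ is closed; hence $D \subseteq \overline{D} \subseteq \overline{H_\lambda^+}$. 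This is essentially immediate, so statement~(i) reduces to the observation that $\Lf_\lambda$ is a lower order ideal in all of $\Lf$, and intersecting a lower order ideal with $\Lf_{\geq C_0}$ (which is itself an up-set, but we view it as a poset in its own right) yields a lower order ideal in $\Lf_{\geq C_0}$.

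For statement~(ii), the content is genuinely geometric: given a face $C$ with $C \subseteq \overline{H_\lambda^+}$, I need a chamber $T \geq C$ that still lies on the nonnegative side of $H_\lambda$. The natural approach is to pick a point $y \in C$ and a point $z \in \overline{H_\lambda^+}$ in general position with respect to the arrangement $\Hf$ (for instance in some chamber $T_0$ with $T_0 \subseteq \overline{H_\lambda^+}$ — such a chamber exists since $\overline{H_\lambda^+}$ has nonempty interior and meets at least one chamber, unless $\lambda$ forces a degenerate situation, which one handles by a small perturbation). Then consider $x_\varepsilon = y + \varepsilon z$ for $\varepsilon > 0$ small; this point lies in the face $C \circ T_0$, which is a chamber $T$ with $C \leq T$ by the defining property of composition recalled in Subsection~\ref{background}. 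The remaining task is to check $T \subseteq \overline{H_\lambda^+}$: since $(\lambda, y) \geq 0$ (as $y \in C \subseteq \overline{H_\lambda^+}$) and $(\lambda, z) \geq 0$, we get $(\lambda, x_\varepsilon) \geq 0$, so $x_\varepsilon \in \overline{H_\lambda^+}$, and as $x_\varepsilon \in T$ this gives $T \cap \overline{H_\lambda^+} \neq \varnothing$; since $T$ is a chamber it is not contained in $H_\lambda$ unless $H_\lambda$ is one of the arrangement hyperplanes — but in all cases $T \subseteq \overline{H_\lambda^+}$ follows because a chamber meeting an open or closed halfspace and not crossing $H_\lambda$ must lie in its closure. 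I would phrase this last step carefully: the point is that $\overline{T}$ is a cone and $(\lambda, \cdot)$ is linear, so if it is nonnegative somewhere on $T$ and $T$ does not meet $H_\lambda^-$ then $T \subseteq \overline{H_\lambda^+}$.

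The main obstacle, and the step requiring the most care, is the choice of the auxiliary point $z$ (equivalently the chamber $T_0$) in statement~(ii): one must ensure that $C \circ T_0$ actually lands inside $\overline{H_\lambda^+}$, which is why $z$ should be chosen with $(\lambda, z) \geq 0$ — but one also wants $z$ generic enough that $C \circ T_0$ is a chamber rather than a lower face, and these two requirements must be shown to be simultaneously satisfiable. The cleanest way is probably to take $T_0$ to be any chamber of $\Hf$ whose closure contains a point of $H_\lambda^+$ (such exists because $H_\lambda^+$ is open and nonempty and $\bigcup_{T \in \Tc}\overline{T} = V$), note $T_0 \subseteq \overline{H_\lambda^+}$ by the linearity argument above, and then set $T = C \circ T_0$; one then verifies $T \subseteq \overline{H_\lambda^+}$ directly from $C, T_0 \subseteq \overline{H_\lambda^+}$ and the description of $C \circ T_0$ as the face containing all $y + \varepsilon z$ with $y \in C$, $z \in T_0$. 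I expect the bookkeeping here to be short once the right $T_0$ is identified.
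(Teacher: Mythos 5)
Your argument for (i) is correct and is exactly the paper's: $\Lf_\lambda$ is a lower order ideal in all of $\Lf$ because $\overline{H_\lambda^+}$ is closed, and intersecting with $\Lf_{\geq C_0}$ causes no trouble. The gap is in (ii), and it is not just a missing verification: the key claim is false. First, your production of the auxiliary chamber $T_0$ is circular. A chamber whose closure meets $H_\lambda^+$ need not lie in $\overline{H_\lambda^+}$ (take the single hyperplane $H_{e_1}$ in $\R^2$ and $\lambda=e_2$: both chambers meet $H_\lambda^+$ and neither lies in $\overline{H_\lambda^+}$), and your fallback ``$T$ does not meet $H_\lambda^-$'' is precisely what has to be proven; in fact, the existence of a chamber in $\Tc\cap\Lf_\lambda$ when $\Lf_\lambda\neq\varnothing$ is exactly the case $C=V_0$ of statement (ii), so it cannot be taken as an input.

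Second, even granting a chamber $T_0\in\Tc\cap\Lf_\lambda$, the assertion that $T=C\circ T_0\in\Lf_\lambda$ is wrong: $C\circ T_0$ merely \emph{contains} the points $y+\varepsilon z$ with $y\in C$, $z\in T_0$; it is a full chamber and can leave $\overline{H_\lambda^+}$ far from $C$. Concretely, in $V=\R^3$ take the four hyperplanes with normals $(0,1,0)$, $(0,-1,1)$, $(-1,0,1)$, $(1,0,1)$ and $\lambda=(0,0,1)$. The ray $C=\R_{>0}(1,0,0)$ is a face with sign vector $(0,0,-,+)$ and lies in $\{z\geq 0\}=\overline{H_\lambda^+}$; the chamber $T_0$ with sign vector $(+,-,+,+)$ is nonempty (it contains $(0,3,1)$) and is contained in $\{z>|x|\}\subset\overline{H_\lambda^+}$; but $C\circ T_0$ has sign vector $(+,-,-,+)$ and contains $(1,1,-1/2)$, whose last coordinate is negative, so $C\circ T_0\notin\Lf_\lambda$, even though the chamber with sign vector $(+,+,-,+)$ does witness (ii) for this $C$. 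So ``compose with one fixed good chamber'' cannot work; the choice of chamber above $C$ must depend on $C$. The paper instead argues by contradiction: assuming every chamber $D_1,\ldots,D_p$ above $C$ contains a point $x_i$ with $(\lambda,x_i)<0$, it passes to the subarrangement obtained by deleting the hyperplanes containing $C$ (in which all $x_i$ lie in one chamber), and shows by a convexity and subdivision argument that the convex hull of the $x_i$ meets $C$ itself in a point with $(\lambda,x)<0$, contradicting $C\subseteq\overline{H_\lambda^+}$. You would need an argument of this global type (or an induction on walls) rather than a single composition.
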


\begin{proof}
It suffices to prove (i) when $C_0$ is the minimal face of $\Lf$.
Let $C,D\in\Lf$ such that $C \leq D$ and $D\in\Lf_{\lambda}$.
The hypothesis implies that $C\subset\overline{D}$ and
$D\subset\overline{H_\lambda^+}$. As $\overline{H_\lambda^+}$ is closed,
this immediately gives $C\subset\overline{H_\lambda^+}$,
hence $C\in\Lf_{\lambda}$.

To show (ii) 
let $D_{1},D_{2}, \ldots, D_{p}$ be the chambers of $\Tc$ that are
larger than $C$ with respect to the partial order $\leq$. If one of them is
contained in $\overline{H_\lambda^+}$, then we are done. Otherwise, for every
$1 \leq i \leq p$, we can find a point $x_i\in D_i$ such that
$(\lambda,x_i)<0$. Let 
$\Hf^{\prime}$ be the subarrangement of~$\Hf$
where we remove all the hyperplanes of $\Hf$
that contain the cone $C$. In the arrangement~$\Hf^{\prime}$
all the points~$x_{i}$ are contained in the same chamber
$C^{\prime}$. 
In particular, the convex hull $P$ of the points $x_{1}, x_{2}, \ldots, x_{p}$ 
is contained
in $C^{\prime}$. The convex hull $P$ intersects the linear span of
the cone~$C$ in a point $x$. Since all the points $x_{i}$ are in
the open half-space $H_\lambda^-$, so is the point~$x$,
that is, $(\lambda,x) < 0$.
By inserting the hyperplanes of $\Hf$ that contain
the cone~$C$ back in the arrangement $\Hf^{\prime}$,
we subdivide the region $C^{\prime}$ into regions $C_{1}, C_{2}, \ldots, C_{p}$.
But the point $x$ belongs to the closure of each region $C_{i}$, thus
the point $x$ belongs to the cone~$C$. This is a contradiction since
$C$ is contained in the half-space $\overline{H_{\lambda}^+}$,
so $(\lambda,x) \geq 0$.
\end{proof}

\begin{subdef}
The subcomplex 
of the cell decomposition
$\Sigma(\Lf)$, respectively $\Sigma(\Lf_{\geq C_0})$,
whose face poset is the lower
order ideal $\Lf_\lambda$, respectively $\Lf_{\lambda,\geq C_0}$,
is called the \emph{weighted complex}
and denoted by $\Sigma(\Lf_\lambda)$,
respectively $\Sigma(\Lf_{\lambda,\geq C_0})$.
By
Proposition~\ref{prop_weighted_complex_basic}
it is pure of the same dimension as 
$\Sigma(\Lf_\lambda)$,
respectively $\Sigma(\Lf_{\lambda,\geq C_0})$.
\end{subdef}

We are interested in the following quantity.
\begin{subdef}
\label{def_psi}
Let $\lambda$ be a vector in~$V$ and $B$ a chamber of $\Hf$,
that is, $B \in\Tc$. 
The {\em weighted sum} is defined to be
\begin{align}
\psi_\Hf(B,\lambda)
& =
\sum_{D\in\Lf_\lambda} 
(-1)^{\dim(D)} \cdot (-1)^{|S(B,D\circ B)|}.
\label{equation_psi_H}
\end{align}
More generally, if $C$ is a face of the arrangement $\Hf$, that is,
$C\in\Lf$, and if $B$ is a chamber whose closure contains the face $C$, that is,
$B\in\Tc\cap\Lf_{\geq C}$, we define the weighted sum to be
\begin{align}
\psi_{\Hf/C}(B,\lambda)
& =
\sum_{D\in\Lf_{\lambda,\geq C}} (-1)^{\dim(D)} \cdot (-1)^{|S(B,D\circ B)|}.
\label{equation_psi_H_C}
\end{align}
\end{subdef}

\begin{subrmk}
This definition seems very arbitrary and mysterious. We try to give
some context for it in Appendix~\ref{appendix_more_detail}. Without
getting into too much detail here, the weighted
sum (in the situation of Example~\ref{ex_parabolic_arrangement}) plays
a role in the calculation of the trace of Hecke operators 
on the weighted cohomology of locally symmetric varieties that is very
similar to the role played by stable discrete constants
(see for example pages~493
and 498--500 of~\cite{GKM}) in the calculation of the trace of
Hecke operators on $L^2$ cohomology of these varieties.
It is because of this that we chose the names ``weighted complex''
and ``weighted sum''.
In fact, in that situation we only really need the ``absolute'' version
where $C$ is the minimal face of $\Hf$. The ``relative'' version,
where $C$ is not minimal anymore, appears when the locally
symmetric variety is a Shimura variety defined over a number field $E$
and we are considering the
trace of a Hecke operator multiplied by an element of the absolute
Galois group of $E$.
\label{rmk_name_weighted_sum}
\end{subrmk}

\begin{subrmk}
In our previous paper, we used the notation $S(\lambda)$
(see~\cite[Equation~(6.1)]{Ehrenborg_Morel_Readdy})
to denote what turns out to be
a particular case of the sum in equation~\eqref{equation_psi_H_C}
in the type~$B$ Coxeter case; 
see equation~\eqref{eq_S_vs_psi} in Subsection~\ref{second_application} for 
the precise relation between the two.
In this paper, we decided to follow the notation of~\cite{GKM}
in order to avoid overuse of the letter~$S$.
\end{subrmk}

We state the following lemma. It reduces the calculation of
$\psi_{\Hf/C}(B,\lambda)$ to the case of an essential arrangement.

\begin{sublemma}
Let $V_0$ be the intersection of all the hyperplanes of $\Hf$,
that is, $V_0=\bigcap_{e\in E}H_{\alpha_e}$.
Let $\pi$ denote the projection $V \longrightarrow V/V_0$.
Let $\Hf/V_0$ be the hyperplane arrangement $(H_{\alpha_e}/V_0)_{e\in E}$ on~$V/V_0$.
Note that $\pi$ induces an isomorphism between the face poset
of~$\Hf$ and~$\Hf/V_0$.
Let $C\in\Lf$, let $B$ be a chamber of $\Hf$ such that $B\in\Lf_{\geq C}$
and let $\lambda\in V$.
Then the following identity holds:
\[\psi_{\Hf/C}(B,\lambda)
=
\begin{cases}
(-1)^{\dim(V_0)} \cdot \psi_{(\Hf/V_0)/\pi(C)}(\pi(B),\pi(\lambda))
& \text{ if }\lambda\in V_0^\perp, \\
0 & \text{ if } \lambda\not\in V_0^\perp .
\end{cases} \]
\label{lemma_psi_inessential_arrangement}
\end{sublemma}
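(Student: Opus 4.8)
The plan is to transport the entire sum through the face-poset isomorphism~$\pi$ and keep track of two scalar corrections: a global sign $(-1)^{\dim V_0}$, and the total vanishing that occurs when $\lambda\not\in V_0^\perp$. First I would record a few elementary facts. Since $V_0$ is the common kernel of the linear forms $(\alpha_e,\cdot)$, the family $(\alpha_e)_{e\in E}$ spans $V_0^\perp$; in particular each $\alpha_e$ lies in $V_0^\perp$, so every form $(\alpha_e,\cdot)$ is invariant under translation by $V_0$. Hence every face $D=s^{-1}(X)$ of $\Hf$ satisfies $D+V_0=D$, so $V_0\subseteq\Span(D)$ and $\dim(\pi(D))=\dim(D)-\dim(V_0)$. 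Moreover the isomorphism $\pi$ from $\Lf(\Hf)$ to $\Lf(\Hf/V_0)$ (provided in the statement) preserves sign vectors, since on both sides the index set is $E$ and $(\alpha_e,x)$ depends only on $\pi(x)$; consequently $\pi$ preserves the composition operation~$\circ$ and the separation sets, and $(-1)^{|S(B,D\circ B)|}=(-1)^{|S(\pi(B),\pi(D)\circ\pi(B))|}$ for all $D\in\Lf(\Hf)$.

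Next I would dispose of the case $\lambda\not\in V_0^\perp$. Choose $v\in V_0$ with $(\lambda,v)<0$, which is possible because the $V_0$-component of $\lambda$ is nonzero. For any face $D$ and any $x\in D$ we have $x+tv\in D$ for all $t\geq 0$, and $(\lambda,x+tv)=(\lambda,x)+t(\lambda,v)\to-\infty$; hence $D\not\subseteq\overline{H_\lambda^+}$. Therefore $\Lf_\lambda=\varnothing$, a fortiori $\Lf_{\lambda,\geq C}=\varnothing$, and $\psi_{\Hf/C}(B,\lambda)=0$, which is the second case of the claimed formula.

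Finally, for $\lambda\in V_0^\perp$, I would identify $V/V_0$ with $V_0^\perp$ equipped with the restricted inner product (this is the quotient inner product). Under this identification $\pi(\lambda)$ becomes $\lambda$, and because $(\lambda,x)=(\lambda,\,p(x))$ for the orthogonal projection $p:V\to V_0^\perp$ and all $x\in V$, we get $x\in\overline{H_\lambda^+}\iff\pi(x)\in\overline{H_{\pi(\lambda)}^+}$; applying this to all $x$ in a given face $D$ shows $D\in\Lf_\lambda\iff\pi(D)\in\Lf_{\pi(\lambda)}$. Since $\pi$ is also a poset isomorphism, it restricts to a bijection $\Lf_{\lambda,\geq C}\stackrel{\sim}{\longrightarrow}\Lf_{\pi(\lambda),\geq\pi(C)}$. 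Substituting $D=\pi^{-1}(D')$ in the defining sum~\eqref{equation_psi_H_C}, extracting $(-1)^{\dim V_0}$ via $(-1)^{\dim D}=(-1)^{\dim V_0}(-1)^{\dim\pi(D)}$, and using the separation-set identity from the first paragraph term by term yields $\psi_{\Hf/C}(B,\lambda)=(-1)^{\dim V_0}\psi_{(\Hf/V_0)/\pi(C)}(\pi(B),\pi(\lambda))$. I expect no serious obstacle: the argument is essentially bookkeeping through $\pi$. The only point that needs care is the identification $V/V_0\cong V_0^\perp$ together with the check that, under it, the ``nonnegative side'' of $H_{\pi(\lambda)}$ and the dimension function on faces of $\Hf/V_0$ are exactly the data entering the abstract definition of the weighted sum for the arrangement $\Hf/V_0$.
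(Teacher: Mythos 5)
Your proposal is correct and follows essentially the same route as the paper's proof: for $\lambda\notin V_0^\perp$ one shows $\Lf_\lambda=\varnothing$ by moving inside a face along a direction of $V_0$ on which $(\lambda,\cdot)$ is negative, and for $\lambda\in V_0^\perp$ one transports the sum through the face-poset isomorphism $\pi$, with $\dim(\pi(D))=\dim(D)-\dim(V_0)$ producing the sign $(-1)^{\dim V_0}$. Your extra bookkeeping (identifying $V/V_0$ with $V_0^\perp$ and checking that sign vectors, composition and separation sets are preserved) is just a more explicit version of what the paper leaves implicit in the statement that $\pi$ is an isomorphism of face posets.
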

\begin{proof}
Note that $\lambda\in V_0^\perp$ if and only if $V_0\subset H_\lambda$.
If $\lambda\not\in V_0^\perp$ then the linear functional $(\lambda,\cdot)$
takes both positive and negative values on $V_0$. As $V_0\subset\overline{D}$
for every $D\in\Lf$, this linear functional also takes both positive and
negative values on $D$, so $D\not\in\Lf_\lambda$. This shows that
$\Lf_\lambda=\varnothing$ if $\lambda\not\in V_0^\perp$ and gives the second
case.
Now suppose that $\lambda\in V_0^\perp$. Then $V_0\subset H_\lambda$, and it
is easy to see that $D\in\Lf_\lambda$, respectively $D\in\Lf_{\geq C}$,
if and only if $\pi(D)\subset
\overline{H^+_{\pi(\lambda)}}$, respectively $\pi(D)\geq\pi(C)$,
and that $\dim(\pi(D))=\dim(D)-\dim(V_0)$.
This yields the first case.
\end{proof}

Suppose that $V=V_1\times\cdots\times V_r$ with the $V_i$ mutually
orthogonal subspaces of $V$
and that $\Hf$ also decomposes
as a product $\Hf_1\times\cdots\times\Hf_r$. By this, we mean that there
is a decomposition $E=E_1\sqcup\cdots\sqcup E_r$ such that,
for $1 \leq i \leq r$ and every $e\in E_i$, we have $\alpha_{e} \in V_i$.
The arrangement $\Hf_i=(V_i\cap H_{\alpha_e})_{e\in E_i}$
is a hyperplane arrangement on the subspace $V_i$,
and each hyperplane of $\Hf$ is of the form
$H \times\prod_{j \neq i}V_j$,
where $1 \leq i \leq r$ and $H$ is one of the hyperplanes of $\Hf_i$.

Let $\Lf_i$ be the face poset of $\Hf_i$ for $1 \leq i \leq r$.
Then the faces of $\Lf$ are exactly the products $C_1\times\cdots
\times C_r$ where $C_i\in\Lf_i$, and the order on $\Lf$ is the product
order. In particular, $C$ is a chamber in $\Lf$ if and only if all the $C_i$ are
chambers in $\Lf_i$.

\begin{sublemma}
\label{lemma_product}
Assume that the arrangement $\Hf$ factors as $\Hf_1\times\cdots\times\Hf_r$
as described in the two previous paragraphs.
Let $C=C_1\times\cdots\times C_r$ be a face in $\Lf$, and let
$B=B_1\times\cdots\times B_r$ be a chamber in $\Lf_{\geq C}$. Finally,
let $\lambda\in V$. Then
\[\psi_{\Hf/C}(B,\lambda)=\prod_{i=1}^r\psi_{\Hf_i/C_i}(B_i,\lambda_i),\]
where, for $1 \leq i \leq r$, $\lambda_i$ is the orthogonal
projection of $\lambda$ on $V_i$.
\end{sublemma}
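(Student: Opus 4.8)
The plan is to observe that every ingredient of the weighted sum in~\eqref{equation_psi_H_C} --- the indexing set $\Lf_{\lambda,\geq C}$, the dimension $\dim(D)$, and the separation set $S(B,D\circ B)$ --- is compatible with the product decomposition $V=V_1\times\cdots\times V_r$, $\Hf=\Hf_1\times\cdots\times\Hf_r$, and then to conclude by distributing a finite product of finite sums.

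\emph{Step 1: the indexing set factors.} Under the identification $\Lf=\Lf_1\times\cdots\times\Lf_r$ recalled before the statement, I would first check that
\[\Lf_{\geq C}=(\Lf_1)_{\geq C_1}\times\cdots\times(\Lf_r)_{\geq C_r}\qquad\text{and}\qquad\Lf_\lambda=(\Lf_1)_{\lambda_1}\times\cdots\times(\Lf_r)_{\lambda_r},\]
whence $\Lf_{\lambda,\geq C}=\prod_{i=1}^r(\Lf_i)_{\lambda_i,\geq C_i}$. The first identity is immediate: for $C=C_1\times\cdots\times C_r$ and $D=D_1\times\cdots\times D_r$ we have $\overline D=\overline{D_1}\times\cdots\times\overline{D_r}$, so $C\subseteq\overline D$ holds iff $C_i\subseteq\overline{D_i}$ for every $i$. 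For the second, recall that $\overline{H_\lambda^+}=\{x\in V:(\lambda,x)\geq 0\}$ and that $(\lambda,x)=\sum_{i=1}^r(\lambda_i,x_i)$ for $x=(x_1,\ldots,x_r)$; thus $D\subseteq\overline{H_\lambda^+}$ means $\sum_i(\lambda_i,x_i)\geq 0$ for all $x_i\in D_i$. The ``if'' part of the equivalence with ``$(\lambda_i,x_i)\geq 0$ for all $x_i\in D_i$, for every $i$'' is clear. For the ``only if'' part --- the one genuinely geometric point of the proof --- if some $D_{i_0}$ contained a point $y_{i_0}$ with $(\lambda_{i_0},y_{i_0})<0$, then choosing $y_i\in D_i$ for $i\neq i_0$ and replacing $y_{i_0}$ by $t\cdot y_{i_0}$ (which stays in $D_{i_0}$, a cone) for $t\to+\infty$ would drive $\sum_i(\lambda_i,\cdot)$ to $-\infty$, contradicting $D\subseteq\overline{H_\lambda^+}$.

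\emph{Step 2: the summand factors.} The dimension is additive, $\dim(D_1\times\cdots\times D_r)=\sum_i\dim(D_i)$, since $\Span(D)=\Span(D_1)\times\cdots\times\Span(D_r)$; hence $(-1)^{\dim(D)}=\prod_i(-1)^{\dim(D_i)}$. For the other factor, I would use that the $V_i$ are mutually orthogonal and $\alpha_e\in V_i$ for $e\in E_i$, so $(\alpha_e,x)=(\alpha_e,x_i)$ depends only on the $i$-th component: the sign-vector map decomposes as $s(D)=(s_1(D_1),\ldots,s_r(D_r))$ under $\{+,-,0\}^E=\prod_i\{+,-,0\}^{E_i}$, composition is computed coordinatewise so $D\circ B=(D_1\circ B_1)\times\cdots\times(D_r\circ B_r)$, and therefore $S(B,D\circ B)=\coprod_{i=1}^r S(B_i,D_i\circ B_i)$, giving $(-1)^{|S(B,D\circ B)|}=\prod_i(-1)^{|S(B_i,D_i\circ B_i)|}$. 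Here one also uses that $B=B_1\times\cdots\times B_r$ is a chamber of $\Hf$ precisely when each $B_i$ is a chamber of $\Hf_i$, and that $B\in\Lf_{\geq C}$ iff $B_i\in(\Lf_i)_{\geq C_i}$ for all $i$, so that each $\psi_{\Hf_i/C_i}(B_i,\lambda_i)$ on the right-hand side makes sense.

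\emph{Step 3: conclusion.} Combining Steps 1 and 2 and distributing the finite product of finite sums:
\begin{align*}
\psi_{\Hf/C}(B,\lambda)
&=\sum_{D\in\prod_{i=1}^r(\Lf_i)_{\lambda_i,\geq C_i}}\ \prod_{i=1}^r(-1)^{\dim(D_i)}\,(-1)^{|S(B_i,D_i\circ B_i)|}\\
&=\prod_{i=1}^r\ \sum_{D_i\in(\Lf_i)_{\lambda_i,\geq C_i}}(-1)^{\dim(D_i)}\,(-1)^{|S(B_i,D_i\circ B_i)|}
=\prod_{i=1}^r\psi_{\Hf_i/C_i}(B_i,\lambda_i).
\end{align*}
The main (indeed the only) obstacle is the product description of $\Lf_\lambda$ in Step 1, and even that is just the elementary observation that a linear functional which takes a negative value somewhere on a cone is unbounded below on it. One minor point deserves a sentence: when some $\lambda_i=0$ the hyperplane $H_{\lambda_i}$ degenerates, and one should read $\overline{H_{\lambda_i}^+}$ as $\{x_i\in V_i:(\lambda_i,x_i)\geq 0\}=V_i$ --- which is precisely what the scaling argument produces --- so that the identity of Step 1, and hence the lemma, remain valid verbatim.
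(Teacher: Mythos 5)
Your proof is correct and takes essentially the same route as the paper: everything reduces to the product description of $\Lf_{\lambda,\geq C}$, and your cone-scaling argument (sending the offending coordinate to infinity) is just the contrapositive of the paper's argument (scaling the other coordinates toward zero by $\varepsilon\to 0$). The bookkeeping in your Step 2 on dimensions, sign vectors and separation sets is exactly what the paper subsumes in the statement that $\Lf=\Lf_1\times\cdots\times\Lf_r$ as posets.
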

\begin{proof}
The expression for
$\psi_{\Hf/C}(B,\lambda)$ follows from the
fact that $\Lf=\Lf_1\times\cdots\times\Lf_r$ as posets 
once we prove the following statement: Let $D=D_1\times\cdots\times D_r\in\Lf$,
with $D_i\in\Lf_i$. Then $D\in\Lf_\lambda$ if and only if $D_i\in
\Lf_{i,\lambda_i}$ for $1 \leq i \leq r$.

We prove this last fact. Note that $\lambda=(\lambda_1,\ldots,\lambda_r)$
in $V_1\times\cdots\times V_r=V$ because the $V_i$ are pairwise orthogonal.
If $D_i\in\Lf_{i,\lambda_i}$ for every
$1 \leq i \leq r$ then for every $x=(x_1,\ldots,x_r)\in V$ we have
$(\lambda,x)=\sum_{i=1}^r(\lambda_i,x_i)\geq 0$. Conversely, suppose that
$D\in\Lf_{\lambda}$. Let $x_j\in D_j$ for $1 \leq j \leq r$. 
As all the $D_j$ are cones, for every $\varepsilon>0$ the element
$\varepsilon x_j$ is in $D_j$. 
Fix $1 \leq i \leq r$ and consider the element
$x_\varepsilon=
(\varepsilon x_1,\ldots,\varepsilon x_{i-1},x_i,\varepsilon x_{i+1},\ldots,\varepsilon x_r)$.
Then $x_\varepsilon$ is in $D$.
Thus we have the inequality
$0 \leq (\lambda,x_\varepsilon)
=(\lambda_i,x_i)+\varepsilon\sum_{j \neq i} (\lambda_j,x_j)$.
Letting $\varepsilon$ tend to $0$, we obtain $(\lambda_i,x_i)\geq 0$
and hence $D_i\in \Lf_{i,\lambda_i}$.
\end{proof}

\subsection{Calculating the weighted sum for some arrangements
with many symmetries}
\label{section_second_main_thm}

We continue to use the notation of
Subsections~\ref{background} and~\ref{section_Coxeter_arrangements}.
Suppose that 
$E=E^{(1)}\sqcup E^{(2)}$, with $\Hf^{(1)}=(H_{\alpha_e})_
{e\in E^{(1)}}$ a Coxeter arrangement 
whose Coxeter group~$W$ stabilizes $\Hf^{(2)}=(H_{\alpha_e})_{e\in E^{(2)}}$,
and that 
$C=(\bigcap_{e\in E^{(1)}}H_{\alpha_e})\cap(\bigcap_
{e\in E^{(2)}}H^+_{\alpha_e})$,
so that $E(C) = \{e\in E : C\subset H_{\alpha_e}\} = E^{(1)}$.
To simplify some of the notation,
without loss of generality
we may assume that the vectors~$\alpha_e$, where $e\in E$, are all unit vectors.
We assume that there
exists a chamber~$B$ of~$\Hf$ that is on the positive side
of every hyperplane of~$\Hf$ (not just~$\Hf^{(1)}$); in particular,
we have $C\leq B$.
This also defines a chamber of the arrangement $\Hf^{(1)}$, and we denote
by~$(W,S)$ the associated Coxeter system, as in
Theorem~\ref{thm_Coxeter_arrangements}.

The set $\Phi=\{\pm\alpha_e : e\in E^{(1)}\}$ is
a normalized pseudo-root system (see
Definition~\ref{def_pseudo_root_system}), the subset $\Phi^+=\{\alpha_e : 
e\in E^{(1)}\}$ is a system of positive pseudo-roots in $\Phi$ 
(see Definition~\ref{def_positive_pseudo_roots}), and $(W,S)$ is the
corresponding Coxeter system.
See
Proposition~\ref{prop_pseudo_root_system_vs_Coxeter_system}.

Our main example of such arrangements is the following.

\begin{subex}
\label{ex_parabolic_arrangement}
{\rm
Let $(W,S)$ be a Coxeter system, let $V$ be the canonical representation
of~$W$, and let $\Hf=(H_\alpha)_{\alpha\in\Phi^+}$ be the associated
hyperplane arrangement on $V$ as in 
Section~\ref{section_Coxeter_arrangements}.
Let $I$ be a subset of $S$, set
$\Phi^{(1)} = \Phi^+ \cap\bigl(\sum_{\alpha\in I}\R\alpha\bigr)$
and $\Phi^{(2)}=\Phi^+-\Phi^{(1)}$. Then $\Hf^{(1)}=(H_\alpha)_{\alpha\in
\Phi^{(1)}}$ is a Coxeter arrangement with associated Coxeter system
$(W_I,I)$, where $W_I$ is the subgroup of~$W$ generated by~$I$,
and $W_I$ preserves the arrangement
$\Hf^{(2)}=(H_\alpha)_{\alpha\in\Phi^{(2)}}$.
If $C = (\bigcap_{\alpha\in \Phi^{(1)}} H_\alpha)
\cap (\bigcap_{\alpha\in \Phi^{(2)}}H^+_\alpha)$
as before,
then the chamber $B$ corresponding to $1\in W$ is in $\Lf_{\geq C}$.
}
\end{subex}

We use again the notion of $2$-structures for $\Phi$;
see Subsection~\ref{section_2_structures}.
If $\varphi\in\Tt(\Phi)$ we write
$\varphi^+=\varphi\cap\Phi^+$, and we denote by
$\Hf_\varphi$ the hyperplane arrangement 
$(H_{\alpha})_{\alpha\in\varphi^+\sqcup E^{(2)}}$
and by $B_\varphi$, respectively $C_\varphi$,
the unique chamber of $\Hf_\varphi$ containing~$B$, respectively~$C$.
By the choice of $B$, the chamber $B_\varphi$ is also the unique chamber
on the positive side of every hyperplane in $\Hf_\varphi$.

\begin{subthm}
Let $\Hf = \Hf^{(1)} \sqcup \Hf^{(2)}$ be an arrangement in~$V$ with base chamber~$B$.
Assume that the subarrangement $\Hf^{(1)}$ is a Coxeter arrangement with
pseudo root system~$\Phi$ and its Coxeter group stabilizes~$\Hf$.
Let~$C$ be the intersection of the base chamber of~$\Hf^{(2)}$ and the hyperplanes in~$\Hf^{(1)}$.
Then for every $\lambda\in V$ we have
\[\psi_{\Hf/C}(B,\lambda)=\sum_{\varphi\in\Tt(\Phi)}
\epsilon(\varphi) \cdot \psi_{\Hf_\varphi/C_\varphi}(B_\varphi,\lambda).\]
\label{thm_second_main}
\end{subthm}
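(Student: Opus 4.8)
The plan is to deduce Theorem~\ref{thm_second_main} from Theorem~\ref{theorem_2_structures_and_chambers} applied to the Coxeter arrangement $\Hf^{(1)}$, by recognizing the weighted sum $\psi_{\Hf/C}(B,\lambda)$ as $\Pi(\Hf^{(1)},\nu)$ for a suitable valuation~$\nu$ on $\Cf_{\Hf^{(1)}}(V)$ that vanishes on degenerate cones. The case $\lambda=0$ is trivial (then $\overline{H_\lambda^+}=\varnothing$, so $\Lf_\lambda=\varnothing$ and both sides vanish), so assume $\lambda\neq 0$, and write $\psi_\lambda$ for the valuation of Lemma~\ref{lemma_GKM}, which --- using the inner product to identify $V$ with~$V^\vee$ --- satisfies $\psi_\lambda(L)=1$ precisely when a nonempty cone $L$ is contained in $\overline{H_\lambda^+}$.

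The first step, and the one I expect to be the most delicate, is to transport the weighted sum along the isomorphism of Lemma~\ref{lemma_star}. Since $E(C)=E^{(1)}$ we have $\Hf(C)=\Hf^{(1)}$, so $\iota_C\colon\Lf_{\geq C}\stackrel{\sim}{\longrightarrow}\Lf(\Hf^{(1)})$ preserves dimension, composition, and separation sets, hence also the sign of a face; in particular $B^{(1)}:=\iota_C(B)$ is the base chamber of $\Hf^{(1)}$. Furthermore, part~(ii) of Lemma~\ref{lemma_star} gives $\overline{D}=\overline{\iota_C(D)}\cap\overline{\Cf_C}$ for every face $D\geq C$, where $\Cf_C=\bigcap_{e\in E^{(2)}}H_{\alpha_e}^+$; obtaining this requires rewriting the identity $\iota_C^{-1}(D')=D'\cap\Cf_C$ of part~(ii) in terms of closures, a short convexity argument using that $\Cf_C$ is open and that $\iota_C(D)$ is the relative interior of $\overline{D}+\Span(C)$. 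Consequently $D\in\Lf_\lambda$ if and only if $\psi_\lambda(\overline{\iota_C(D)}\cap\overline{\Cf_C})=1$, so reindexing the sum defining $\psi_{\Hf/C}(B,\lambda)$ along~$\iota_C$ yields
\[
\psi_{\Hf/C}(B,\lambda)=\sum_{F\in\Lf(\Hf^{(1)})}(-1)^{\dim F}\,(-1)^{F}\,\psi_\lambda\bigl(\overline{F}\cap\overline{\Cf_C}\bigr).
\]

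Next I would construct the valuation. Because intersecting with the fixed cone $\overline{\Cf_C}$ commutes with unions and intersections, $\phi\colon K\mapsto\psi_\lambda(K\cap\overline{\Cf_C})$ is a valuation on $\Cf_{\Hf^{(1)}}(V)$. Fix a point $x$ in the base chamber $B$ of $\Hf$, so that $x\in B^{(1)}$ and $x$ lies on no hyperplane of $\Hf^{(1)}$, and set $\nu=\phi\star\psi_x$, the $\star$-product of Corollary~\ref{cor_second_convolution} with $\psi_x$ the valuation of Lemma~\ref{lemma_GKM}; being a $\star$-product of valuations, $\nu$ is a valuation. Exactly as in the proof of Theorem~\ref{theorem_2_structures_and_chambers}(i), since $x$ lies on no hyperplane of $\Hf^{(1)}$ the valuation $\nu$ vanishes on degenerate cones, and for a chamber $Z$ of $\Hf^{(1)}$ the factor $\psi_x(\overline{F}^{\perp,\overline{Z}})$ equals $1$ exactly for the faces $F\leq Z$ with $F\circ B^{(1)}=Z$ (Lemma~\ref{lemma_psi_x_vs_circ}). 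Hence $\nu(\overline{Z})=\sum_{F\circ B^{(1)}=Z}(-1)^{\dim F}\,\psi_\lambda(\overline{F}\cap\overline{\Cf_C})$, and summing $(-1)^Z\nu(\overline{Z})$ over all chambers $Z$ counts each face $F$ of $\Hf^{(1)}$ exactly once, with sign $(-1)^{F\circ B^{(1)}}=(-1)^F$; together with equation~\eqref{equation_P_H_nu} and the displayed formula above this gives $\Pi(\Hf^{(1)},\nu)=\psi_{\Hf/C}(B,\lambda)$.

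It now suffices to apply Theorem~\ref{theorem_2_structures_and_chambers}(ii) to $\Hf^{(1)}$, its pseudo-root system $\Phi$, and the valuation $\nu$, which gives
\[
\psi_{\Hf/C}(B,\lambda)=\Pi(\Hf^{(1)},\nu)=\sum_{\varphi\in\Tt(\Phi)}\epsilon(\varphi)\,\Pi\bigl(\Hf^{(1)}_\varphi,\nu\bigr),
\]
where $\Hf^{(1)}_\varphi=(H_\alpha)_{\alpha\in\varphi^+}$, and then to identify $\Pi(\Hf^{(1)}_\varphi,\nu)$ with $\psi_{\Hf_\varphi/C_\varphi}(B_\varphi,\lambda)$. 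This I would do by rerunning the argument of the second and third steps with $(\Hf_\varphi,C_\varphi,B_\varphi)$ in place of $(\Hf,C,B)$: here $\Hf^{(1)}_\varphi$ is a subarrangement of $\Hf^{(1)}$ with $\Hf_\varphi(C_\varphi)=\Hf^{(1)}_\varphi$, one has $\Cf_{C_\varphi}=\Cf_C$ (both equal $\bigcap_{e\in E^{(2)}}H_{\alpha_e}^+$, since $C$ and $C_\varphi$ have the same signs on $E^{(2)}$), and $x\in B\subseteq B_\varphi$ lies in the base chamber of $\Hf^{(1)}_\varphi$, so the restriction of $\nu$ to $\Cf_{\Hf^{(1)}_\varphi}(V)$ is precisely the $\star$-product produced by the construction for $\Hf_\varphi$; hence $\Pi(\Hf^{(1)}_\varphi,\nu)=\psi_{\Hf_\varphi/C_\varphi}(B_\varphi,\lambda)$ and the theorem follows. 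Apart from the delicate first step, and the verification that $\phi$ (hence $\nu$) is a valuation vanishing on degenerate cones, everything is formal: once $\psi_{\Hf/C}(B,\lambda)$ is rewritten as $\Pi(\Hf^{(1)},\nu)$, Theorem~\ref{theorem_2_structures_and_chambers} supplies the identity, and the passage to $\Hf^{(1)}_\varphi$ is bookkeeping.
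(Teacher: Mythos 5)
Your proposal is correct and follows essentially the same route as the paper: both arguments reduce Theorem~\ref{thm_second_main} to Theorem~\ref{theorem_2_structures_and_chambers} applied to the Coxeter arrangement $\Hf^{(1)}$, by exhibiting $\psi_{\Hf_\varphi/C_\varphi}(B_\varphi,\lambda)$ as $\Pi(\Hf^{(1)}_\varphi,\nu)$ for a $\star$-product valuation involving $\psi_x$, and both hinge on Lemmas~\ref{lemma_star} and~\ref{lemma_psi_x_vs_circ} together with the closure identity $\overline{F\cap\Cf_C}=\overline{F}\cap\overline{\Cf_C}$ (which you only sketch, but which is exactly the claim the paper proves by a sign-vector computation). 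The differences are in packaging: the paper takes $\nu(K)=\psi_{K\cap\overline{\Cf_C}}(x,\ell)=(\psi_\ell\star\psi_x)(K\cap\overline{\Cf_C})$ with base point $x\in(-C)\circ B$ and reaches the identification through the auxiliary sums $\psi_{D/C}(D',\lambda)$ (Lemmas~\ref{lemma_psi_vs_psi} and~\ref{lemma_on_the_definition_of_psi}), whereas you take $\nu=\bigl(\psi_\lambda(\cdot\cap\overline{\Cf_C})\bigr)\star\psi_x$ with $x\in B$ and transport the weighted sum directly along $\iota_C$. These are genuinely different valuations, but either one works: in your version $\psi_x$ is only ever evaluated on faces of chambers of $\Hf^{(1)}$ (resp.\ $\Hf^{(1)}_\varphi$), where the face containing $x$ is the base chamber in either convention, so you can dispense with the lemma that forces the paper's choice $x\in(-C)\circ B$; the price is that you must justify directly that $K\mapsto\psi_\lambda(K\cap\overline{\Cf_C})$ is a valuation (easy, via Corollary~\ref{cor_second_convolution} and Remark~\ref{rmk_K_Hf} for the $\star$-product) and redo the transport along $\iota_{C_\varphi}$ for each $\varphi$, which is the same bookkeeping the paper performs.

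One small correction: your disposal of the case $\lambda=0$ is wrong. With the paper's conventions (the closed half-space is $\overline{H_\lambda^+}=\{x\in V : (\lambda,x)\geq 0\}$, as written explicitly in Appendix~\ref{appendix_valuations}, and consistently with the value $\psi_\Hf(B,0)=1$ in Proposition~\ref{prop_psi_rank_1_2_system}), for $\lambda=0$ one has $\overline{H_0^+}=V$, hence $\Lf_0=\Lf$, and neither side of the identity vanishes. Fortunately the case split is unnecessary: $\psi_0=\phi_V$ is still a valuation of the type covered by Lemma~\ref{lemma_GKM}, and your construction of $\nu$ and the rest of the argument apply verbatim when $\lambda=0$.
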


This is the second main theorem of this article.
It will be proved in Subsection~\ref{section_proof_second_main_theorem}.
See Definition~\ref{def_psi} for the description of the
terms and Remark~\ref{rmk_name_weighted_sum} for an explanation
of their significance. Theorem~\ref{thm_second_main}
states that the weighted
sum for a Coxeter arrangement can be expressed as an alternating sum of weighted
sums for much simpler subarrangements (the $2$-structures) that are direct
products of rank one and rank two Coxeter arrangements. As
weighted sums for these subarrangements can be calculated directly
(see Corollary~\ref{cor_root_systems} and
Proposition~\ref{prop_psi_rank_1_2_system} for the case where $C$ is
the minimal face), this gives a way to calculate the weighted sum
for the original Coxeter arrangement, and thus, as explained in
Appendix~\ref{appendix_more_detail}, to relate weighted cohomology
of locally symmetric varieties to the spectral side of the Arthur-Selberg
trace formula.

We first give some applications of Theorem~\ref{thm_second_main}.

\subsection{First application: the case of Coxeter arrangements}
\label{section_Coxeter_case}

We specialize Theorem~\ref{thm_second_main} to the case where $\Hf=\Hf^{(1)}$
is a Coxeter arrangement. In particular, $C$ is the minimal face
of $\Lf$, so $\psi_{\Hf/C}(B,\lambda)=\psi_\Hf(B,\lambda)$ for
every $\lambda\in V$. 

Let $\varphi\in\Tt(\Phi)$, and let $\varphi=\varphi_1
\sqcup\varphi_2\sqcup\cdots\sqcup\varphi_r$ be the decomposition of
$\varphi$ into irreducible pseudo-root systems.
Let $V_{i,\varphi}=\Span(\varphi_i)$ for $1 \leq i \leq r$.
Then
$V=V_{0,\varphi}\times V_{1,\varphi}\times\cdots\times V_{r,\varphi}$, where
$V_{0,\varphi}=\varphi^\perp$. 
The dimension of
$V_{0,\varphi}$ is equal to $\dim(V)-\rk(\varphi)$,
so it is independent of~$\varphi$ by
Proposition~\ref{prop_W_acts_transitively_on_T_Phi}.
Let $\Hf_{i,\varphi}$ be the hyperplane arrangement given by
$\varphi_i\cap\Phi^+$ on $V_{i,\varphi}$
where $1 \leq i \leq r$.
For a fixed index $i$
let $B_{i,\varphi}$ be the chamber
of the arrangement $\Hf_{i,\varphi}$ that is on the positive side of every hyperplane, and
let $\lambda_{i,\varphi}$ be the orthogonal projection of $\lambda$
on $V_{i,\varphi}$.

Combining Theorem~\ref{thm_second_main} with
Lemmas~\ref{lemma_psi_inessential_arrangement} and~\ref{lemma_product},
we obtain:
\begin{subcor}
\label{cor_root_systems}
For every $\lambda\in V$, we have
\[\psi_\Hf(B,\lambda)
=
(-1)^{\dim(V)-R} \cdot
\sum_{\substack{\varphi\in\Tt(\Phi)\\ \lambda\in\Span(\varphi)}}
\epsilon(\varphi) \cdot
\prod_{i=1}^r\psi_{\Hf_{i,\varphi}}(B_{i,\varphi},
\lambda_{i,\varphi}),\]
where $R$ is the rank of any $\varphi\in\Tt(\Phi)$.
\end{subcor}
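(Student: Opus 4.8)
The plan is to combine Theorem~\ref{thm_second_main}, which is already available to us, with the two reduction lemmas proved in Subsection~\ref{section_weighted_complex}, namely Lemma~\ref{lemma_psi_inessential_arrangement} (reduction to an essential arrangement) and Lemma~\ref{lemma_product} (behavior under orthogonal products). The key point to observe at the outset is that when $\Hf = \Hf^{(1)}$ is itself a Coxeter arrangement, the auxiliary subspace $\Hf^{(2)}$ is empty, $C$ is the minimal face $V_0 = \{0\}$ (the arrangement is essential), and $C_\varphi$ is likewise the minimal face of $\Hf_\varphi$, so that $\psi_{\Hf/C}(B,\lambda) = \psi_\Hf(B,\lambda)$ and $\psi_{\Hf_\varphi/C_\varphi}(B_\varphi,\lambda) = \psi_{\Hf_\varphi}(B_\varphi,\lambda)$. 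Theorem~\ref{thm_second_main} then reads $\psi_\Hf(B,\lambda) = \sum_{\varphi\in\Tt(\Phi)} \epsilon(\varphi)\,\psi_{\Hf_\varphi}(B_\varphi,\lambda)$, and everything remaining is to rewrite each term $\psi_{\Hf_\varphi}(B_\varphi,\lambda)$ using the product decomposition.

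Next I would fix $\varphi\in\Tt(\Phi)$ and analyze $\Hf_\varphi = (H_\alpha)_{\alpha\in\varphi^+}$. Writing $\varphi = \varphi_1\sqcup\cdots\sqcup\varphi_r$ for the decomposition into irreducible pseudo-root systems, with $V_{i,\varphi} = \Span(\varphi_i)$ and $V_{0,\varphi} = \varphi^\perp$, we have $V = V_{0,\varphi}\times V_{1,\varphi}\times\cdots\times V_{r,\varphi}$ as an orthogonal direct sum; here I would note that the pseudo-roots in distinct irreducible components are mutually orthogonal (since there is no edge between them in the Coxeter graph, i.e.\ $m_{s,t}=2$), so indeed the $V_{i,\varphi}$ are pairwise orthogonal and $\Hf_\varphi$ decomposes as the product $\Hf_{1,\varphi}\times\cdots\times\Hf_{r,\varphi}$ on $V_{1,\varphi}\times\cdots\times V_{r,\varphi}$, trivial in the $V_{0,\varphi}$ factor. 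The intersection of all hyperplanes of $\Hf_\varphi$ is precisely $V_{0,\varphi}=\varphi^\perp$, whose dimension is $\dim(V)-\rk(\varphi) = \dim(V)-R$, independent of $\varphi$ by Proposition~\ref{prop_W_acts_transitively_on_T_Phi}. Applying Lemma~\ref{lemma_psi_inessential_arrangement} to $\Hf_\varphi$ (with $C$ minimal, so $\pi(C)$ minimal): if $\lambda\notin V_{0,\varphi}^\perp = \Span(\varphi)$ then $\psi_{\Hf_\varphi}(B_\varphi,\lambda)=0$, and otherwise $\psi_{\Hf_\varphi}(B_\varphi,\lambda) = (-1)^{\dim(V)-R}\,\psi_{(\Hf_\varphi/V_{0,\varphi})}(\pi(B_\varphi),\pi(\lambda))$. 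Then Lemma~\ref{lemma_product} applied to the essential product arrangement $\Hf_\varphi/V_{0,\varphi} \cong \Hf_{1,\varphi}\times\cdots\times\Hf_{r,\varphi}$ gives $\psi_{(\Hf_\varphi/V_{0,\varphi})}(\pi(B_\varphi),\pi(\lambda)) = \prod_{i=1}^r \psi_{\Hf_{i,\varphi}}(B_{i,\varphi},\lambda_{i,\varphi})$, since the orthogonal projection of $\pi(\lambda)$ onto the $V_{i,\varphi}$ factor agrees with the orthogonal projection $\lambda_{i,\varphi}$ of $\lambda$ and $\pi(B_\varphi)$ factors as $B_{1,\varphi}\times\cdots\times B_{r,\varphi}$ (each $B_{i,\varphi}$ being the all-positive chamber).

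Substituting back, the terms with $\lambda\notin\Span(\varphi)$ drop out, the sign $(-1)^{\dim(V)-R}$ pulls out of the whole sum since $R$ is the common rank, and we are left with exactly
\[
\psi_\Hf(B,\lambda)
= (-1)^{\dim(V)-R}
\sum_{\substack{\varphi\in\Tt(\Phi)\\ \lambda\in\Span(\varphi)}}
\epsilon(\varphi)\prod_{i=1}^r \psi_{\Hf_{i,\varphi}}(B_{i,\varphi},\lambda_{i,\varphi}),
\]
which is the claimed identity. I do not expect any genuine obstacle here: the statement is essentially a bookkeeping corollary, and the only point requiring a moment's care is verifying that $\Hf_\varphi$ really does decompose as an orthogonal product indexed by the irreducible components of $\varphi$ (using that $\varphi$ is a pseudo-root system whose components are $A_1$, $B_2$, or $I_2(2^k)$, hence pairwise orthogonal), and that the various "all-positive" chambers and orthogonal projections match up under the isomorphisms of Lemmas~\ref{lemma_psi_inessential_arrangement} and~\ref{lemma_product}; the definition of $B_\varphi$ as the unique chamber of $\Hf_\varphi$ on the positive side of every hyperplane makes this immediate.
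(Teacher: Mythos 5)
Your proof is correct and follows exactly the path the paper intends: the text of the paper presents Corollary~\ref{cor_root_systems} with the single sentence that it follows from ``combining Theorem~\ref{thm_second_main} with Lemmas~\ref{lemma_psi_inessential_arrangement} and~\ref{lemma_product},'' and your write-up is a careful unpacking of precisely that combination, including the observations that in the case $\Hf=\Hf^{(1)}$ the face $C$ and the faces $C_\varphi$ are minimal so the relative weighted sums reduce to absolute ones, that $V_{0,\varphi}^\perp=\Span(\varphi)$ so the vanishing and sign conditions of Lemma~\ref{lemma_psi_inessential_arrangement} read as stated, and that the pairwise orthogonality of the irreducible components of $\varphi$ (which is in fact built directly into condition~(a) of Definition~\ref{def_2_structure}, so one need not invoke the Coxeter graph) makes Lemma~\ref{lemma_product} applicable to the essentialized arrangement.
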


To finish the calculation of $\psi_\Hf(B,\lambda)$ in this case,
we use the following proposition, whose proof is a straightforward
calculation.

\begin{subprop}
In types $A_1$, $B_2 = I_2(4)$ and $I_2(2^k)$ for $k \geq 3$,
the function $\psi$ is given
by the following expressions:
\begin{itemize}
\item[(1)] Type~$A_1$: Suppose that $V=\R e_1$ and that $\Phi^+=\{e_1\}$.
Then $\psi$ is given by
\[\psi_\Hf(B,c e_1)=\begin{cases}0 & \text{ if } c>0, \\
1 & \text{ if } c=0, \\
2 & \text{ if } c<0.\end{cases}\]

\item[(2)] Type~$I_2(2^k)$, where $k\geq 2$:
Let $V=\R e_1\oplus\R e_2$ with the usual inner product. For every $v\in V-\{0\}$, let
$\theta(v)\in[0,2\pi)$ be the angle from $e_1$ to $v$.
Suppose
that $\Phi$ is the set of unit vectors that have an angle of $r\pi/2^k$ with $e_1$, where
$r\in\Z$, and that $B$ is the set of nonzero vectors
$v\in V$ such that $0<\theta(v)<\pi/2^k$.
Then $\psi$ is given by
\[\psi_\Hf(B,\lambda)
=
\begin{cases}
1 & \text{ if } \lambda=0, \\
2 & \text{ if }\lambda \neq 0\text{ and }\theta(\lambda)=r\pi/2^k\text{ with }2^{k-1}+1\leq r\leq
3\cdot 2^{k-1}, \\
4 & \text{ if }\lambda \neq 0\text{ and }r\pi/2^k<\theta(\lambda)<(r+1)\pi/2^k \\
    & \text{ with } r \text{ odd and }2^{k-1}+1\leq r\leq 3\cdot 2^{k-1}  - 1, \\
0 & \text{ otherwise.} 
\end{cases}\]
\end{itemize}
\label{prop_psi_rank_1_2_system}
\end{subprop}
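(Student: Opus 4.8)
Here the arrangement has exactly the three faces $\{0\}$, $B=H_{e_1}^+$ and $-B=H_{e_1}^-$, with $\dim\{0\}=0$, $\dim(\pm B)=1$, and $\{0\}\circ B=B\circ B=B$ while $(-B)\circ B=-B$, so the exponents $|S(B,\{0\}\circ B)|$ and $|S(B,B\circ B)|$ vanish while $|S(B,(-B)\circ B)|=1$. One then only has to read off which of these three faces lie in $\overline{H^+_{ce_1}}$: for $c>0$ it is $\{0\}$ and $B$; for $c=0$, where $\overline{H^+_{0}}$ is read as $\{x:(0,x)\ge 0\}=V$, it is all three; for $c<0$ it is $\{0\}$ and $-B$. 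Plugging into~\eqref{equation_psi_H} gives $0$, $1$, and $2$ respectively.

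\textbf{Type $I_2(2^k)$.}
Write $m=2^k$. First I would set up angular coordinates: label the $2m$ chambers cyclically as $B=T_0,T_1,\ldots,T_{2m-1}$ with $T_j=\{v:j\pi/m<\theta(v)<(j+1)\pi/m\}$, the $2m$ rays as $\rho_0,\ldots,\rho_{2m-1}$ with $\rho_j$ the common wall of $T_{j-1}$ and $T_j$ (all indices mod $2m$), and record the last face $\{0\}$. Three pieces of data are needed. (1) The chamber distance gives $|S(B,T_j)|=\min(j,2m-j)$, so, $2m$ being even, $(-1)^{|S(B,T_j)|}=(-1)^{j}$. (2) From the definition of composition (perturb a point of $\rho_i$ in the direction of a point of $B$) one computes $\rho_0\circ B=T_0$, $\rho_i\circ B=T_{i-1}$ for $1\le i\le m$, and $\rho_i\circ B=T_i$ for $m+1\le i\le 2m-1$. (3) For $\lambda\neq 0$ the face $\{0\}$ always lies in $\Lf_\lambda$, the ray $\rho_i$ lies in $\Lf_\lambda$ iff $(\lambda,\rho_i)\ge 0$, and the chamber $T_j$ lies in $\Lf_\lambda$ iff $T_j\subseteq H_\lambda^+$; translated into angles, these conditions cut out explicit cyclic intervals of indices, of one shape when $\theta(\lambda)$ equals one of the special values $r\pi/m$ (so that $H_\lambda$ is an arrangement hyperplane) and of another shape when $\theta(\lambda)$ lies strictly between two consecutive such values. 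When $\lambda=0$ one has $\Lf_\lambda=\Lf$.

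With this data, $\psi_\Hf(B,\lambda)$ becomes the sum of the ``origin'' contribution $+1$, of $-(-1)^{|S(B,\rho_i\circ B)|}$ over the rays $\rho_i\in\Lf_\lambda$, and of $(-1)^j$ over the chambers $T_j\in\Lf_\lambda$; by step~(3) each of the last two sums runs over a cyclic interval, and evaluating them is routine because $2m=2^{k+1}$ is even — a sum of $m$ consecutive terms $(-1)^j$ vanishes, while one of $m-1$ such terms equals its first term — and because, by step~(2), the signs $-(-1)^{|S(B,\rho_i\circ B)|}$ agree with $(-1)^i$ up to an overall $+1$ on the block $1\le i\le m$ and $-1$ on the complementary block. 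To halve the case analysis I would use the orthogonal reflection $\sigma$ of $V$ through the angular bisector of $B$: it permutes the hyperplanes of $\Hf$ and fixes $B$, hence $\psi_\Hf(B,\sigma\lambda)=\psi_\Hf(B,\lambda)$, so one may assume $\theta(\lambda)$ lies in a fixed half-circle. The four remaining subcases ($\lambda=0$; $\theta(\lambda)=r\pi/m$; $\theta(\lambda)$ strictly between, and within or outside the reduced range) then give the four values $1$, $2$ or $0$, $4$ or $0$, exactly as stated; the case $B_2=I_2(4)$ is just $k=2$ and needs no separate argument, though it is a convenient check. The one genuine difficulty is the bookkeeping — tracking indices modulo $2m$ and, in particular, aligning the cyclic interval describing $\Lf_\lambda$ with the ``wrap‑around'' index $i=m$ at which the formula for $\rho_i\circ B$ changes. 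The evenness of $2^{k+1}$ and the symmetry $\sigma$ are precisely what keep this bounded, which is why the proposition is a straightforward calculation.
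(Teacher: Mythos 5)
Your computation is correct: the sign data $(-1)^{|S(B,T_j)|}=(-1)^j$, the compositions $\rho_i\circ B$, and the identification of $\Lf_\lambda$ as cyclic intervals all check out, and the resulting sums give the stated values $1,2,4,0$ (the $A_1$ case likewise). The paper offers no written proof — it declares the proposition "a straightforward calculation" — and your argument is exactly that direct calculation carried out in detail, so it matches the paper's intended approach.
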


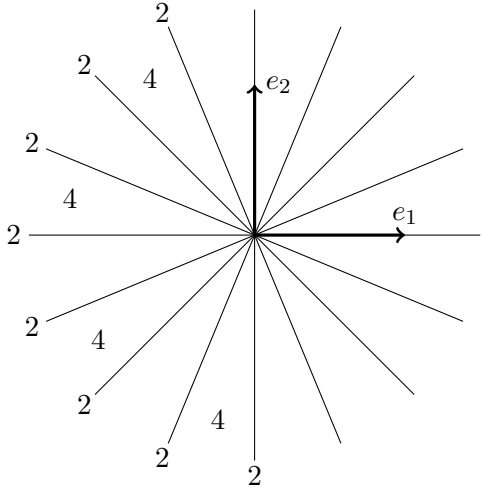
\begin{figure}[t]
\centering
\begin{tikzpicture}
\foreach \tttt in {0, 22.5, ..., 337.5}
{\draw[-] (0,0) -- ({3*cos(\tttt)},{3*sin(\tttt)});};
\foreach \tttt in {112.5, 135, ..., 270}
{\draw[-] ({3.2*cos(\tttt)},{3.2*sin(\tttt)}) node{$2$};};
\foreach \tttt in {123.75, 168.75, ..., 258.75}
{\draw[-] ({2.5*cos(\tttt)},{2.5*sin(\tttt)}) node{$4$};};
\draw[->, very thick] (0,0) -- (2,0) node[above]{$e_{1}$};
\draw[->, very thick] (0,0) -- (0,2) node[right]{$e_{2}$};
\end{tikzpicture}
\caption{The function $\psi_\Hf(B,\lambda)$ in
the dihedral pseudo-root system $I_{2}(8)$.
The origin is assigned the value $1$ and the unmarked
faces are assigned the value $0$.}
\label{figure_dihedral_psi}
\end{figure}

\begin{subrmk}
If $(W,S)$ arises from a root system $\Phi$ and $-1$ is an element of $W$
(or, equivalently, the root system is generated by strongly orthogonal
roots), then Goresky--Kottwitz--MacPherson~\cite[Theorem~3.1]{GKM} 
and Herb~\cite[Theorem~4.2]{Herb-2S}
give two different expressions for the coefficients appearing
in the formula for the averaged discrete series characters
of a real reductive group with root system $\Phi$.
Corollary~\ref{cor_root_systems} asserts the equality of these two formulas.
In general, although there no longer exist discrete series
in this setting, the formulas of Goresky--Kottwitz--MacPherson and Herb still
make sense, and Corollary~\ref{cor_root_systems} says that they are still
equal. Also, Corollary~\ref{cor_root_systems} implies that
$\psi_\Hf(B,\lambda)=0$ if $\lambda$ is not in the span of any
$2$-structure for $\Phi$, so it implies~\cite[Theorem~5.3]{GKM}.
It is not clear whether this is an easier proof than the one given in~\cite{GKM}.
\end{subrmk}

\subsection{Second application: the type {\em{A}} 
identity involving ordered set partitions}
\label{second_application}

We now show how to
deduce~\cite[Theorem~6.4]{Ehrenborg_Morel_Readdy}\footnote{This
is a reformulation of~\cite[Proposition~A.4]{Morel}.}
from~Theorem \ref{thm_second_main}.
We take $V=\R^n$ with the usual inner product, and
we denote by $(e_1,\ldots,e_n)$ the standard basis of $V$.
We consider
the hyperplane arrangement $\Hf$ of type~$B_n$ on $V$, that is,
$\Hf=(H_\alpha)_{\alpha\in\Phi_B^+}$, where $\Phi_B^+=
\{e_i\pm e_j:1\leq i<j\leq n\}\cup\{e_1,\ldots,e_n\}$. 
We write $\Phi_B^+=\Phi^{(1)}\sqcup\Phi^{(2)}$, where
$\Phi^{(1)}=\{e_i-e_j:1\leq i<j\leq n\}$, and we denote
by $\Hf=\Hf^{(1)}\sqcup\Hf^{(2)}$ the corresponding decomposition of
$\Hf$. 
The arrangement~$\Hf^{(1)}$ is a Coxeter arrangement of type~$A_{n-1}$,
and we denote by $\Phi=\Phi^{(1)}\cup(-\Phi^{(1)})$ the associated root system.
Let $C$ be the intersection
$(\bigcap_{\alpha\in\Phi^{(1)}}H_\alpha)\cap(\bigcap_{\alpha\in
\Phi^{(2)}}H^+_\alpha)$. Then $C$ is the open ray
$\R_{>0}\cdot(e_1+e_2+\cdots+e_n)$.

Recall that $\Lf$ is the face poset of $\Hf$. We will now give a
description of $\Lf$ in terms of signed ordered partitions.
See also~\cite[Section~5]{ER} for this description.
A \emph{signed block} is a nonempty subset~$\til{B}$ 
of $\{\pm 1,\ldots,\pm n\}$ such that, for every $i\in\{1,
\ldots,n\}$, at most one of $\pm i$ is in $\til{B}$. We then denote
by $B$ the subset of $\{1,\ldots,n\}$ defined by $B=\{|i| : i\in\til{B}\}$.
A \emph{signed ordered partition} of a subset~$I$ 
of $\{1,\ldots,n\}$ is a list $(\til{B}_1,\ldots,\til{B}_r)$
of signed blocks such that $(B_1,\ldots,B_r)$ is
an ordered partition of~$I$. 

We consider the poset $\Qns$
whose elements are pairs $\pi=(\til{\pi},Z)$, where $Z \subseteq \{1,\ldots,n\}$
and $\til{\pi}$ is a signed ordered partition of $\{1,\ldots,n\}-Z$,
and the cover relation is given by the following two rules:
\begin{align*}
((\til{B}_1,\ldots,\til{B}_r),Z) & \coveredby ((\til{B}_1,\ldots,\til{B}_{r-1}),B_r\cup Z) , \\
((\til{B}_1,\ldots,\til{B}_r),Z) & \coveredby ((\til{B}_1,\ldots,\til{B}_{i-1},
\til{B}_i\cup\til{B}_{i+1},\til{B}_{i+2},\ldots,\til{B}_{r}),Z).
\end{align*}
The set $Z$ is usually called the {\em zero block} of $\pi$.

Let $\pi=(\til{\pi},Z)$ be an element of $\Qns$, with
$\til{\pi}=(\til{B}_1,\ldots,\til{B}_r)$.
We define the cone $C_\pi$
to be the set of $(x_1,\ldots,x_n)\in V$ such that
(with the convention that $x_{-i}=-x_i$ for $1 \leq i \leq n$):
\begin{itemize}
\item[(i)]
if $Z=\{i_1,\ldots,i_m\}$
then the equalities $x_{i_1}= \cdots =x_{i_m}=0$ hold;
\item[(ii)]
for every block $\til{B}= \{i_1,\ldots,i_m\}$ in  $\til{\pi}$,
the equalities and inequality $x_{i_1}= \cdots =x_{i_m}> 0$ hold;
\item[(iii)]
for every two consecutive blocks $\til{B}_{s}$ and $\til{B}_{s+1}$
in $\til{\pi}$
with $i \in \til{B}_s$ and $j \in \til{B}_{s+1}$, the inequality $|x_i| > |x_j|$ holds.
\end{itemize}

It is easy to see that the map $\varphi:\Qns \longrightarrow \Lf$ sending $\pi$ to
$C_\pi$ is a bijection, and that it induces an 
order-reversing isomorphism between the
poset $\Qns$  and the face poset~$\Lf$.
The inverse image of the ray
$C = \R_{>0}\cdot(e_1+e_2+\cdots+e_n)$
by this bijection is the
element $\pi_0=((\{1,\ldots,n\}),\varnothing)$ of~$\Qns$, so
the elements of $\Lf_{\geq C}$ correspond exactly to the
(unsigned) ordered partitions of $\{1,\ldots,n\}$. In other words,
the bijection $\varphi$ induces an 
order-reversing isomorphism between the poset~$\Qn$
of ordered partitions of $\{1,\ldots,n\}$ defined
in~\cite[Section~2]{Ehrenborg_Morel_Readdy}
and the poset $\Lf_{\geq C}$.

Let $\lambda=(\lambda_1,\ldots,\lambda_n)\in\R^n$. 
For a signed block $\til{B}$, we set $\lambda_{\til{B}}=\sum_{i\in B}
\lambda_i$, with the convention that
$\lambda_{-i}=-\lambda_{i}$ for $1 \leq i \leq n$.
Define the subset $\Qns(\lambda)$ of $\Qns$ by
\[
\Qns(\lambda)
=
\left\{ ((\til{B}_{1}, \til{B}_{2}, \ldots, \til{B}_{r}),Z) \in \Qns \:\: : \:\:
\sum_{i=1}^{s} \lambda_{\til{B}_{i}} \geq 0 
\text{ for } 1 \leq s \leq r \right\} .\]
Then an element $\pi$ of $\Qns$ is in $\Qns(\lambda)$ if and only if
$C_\pi$ is in $\Lf_\lambda$. Moreover, the subset~$\Lf_{\lambda,\geq C}$
corresponds to the set $\Qn(\lambda)$
of ordered partitions $(B_1,\ldots,B_r)$ of
$\{1,\ldots,n\}$ such that, for every $1 \leq s \leq r$, we have
$\sum_{i=1}^s\lambda_{B_i}\geq 0$. This is almost the set
$\mathcal{P}(\lambda)$ of~\cite[Section~3]{Ehrenborg_Morel_Readdy};
the only difference is that the inequalities defining $\mathcal{P}(\lambda)$
are strict. We can give the following identity relating these two sets:
For every $\varepsilon\in\R$, let $\lambda_\varepsilon=
(\lambda_1-\varepsilon,\ldots,\lambda_n-\varepsilon)$.
Then if $\varepsilon>0$ is sufficiently small, we have
$\Qn(\lambda_\varepsilon)=\mathcal{P}(\lambda)$.

Let $B$ be the unique chamber of $\Lf$ that is on the positive side of
every hyperplane, that is, $B=\{x_1>x_2>\cdots>x_n>0\}$. As we already
observed, $\Lf_{\geq C}$ is isomorphic to the face poset of the
arrangement $\Hf^{(1)}$, which is a Coxeter arrangement of type~$A_{n-1}$.
The unique chamber of this arrangement containing $B$ corresponds to
the identity element in the symmetric group $\mathfrak{S}_n$. It then follows 
from
Proposition~\ref{prop_f_B} that the function
$f_B:\Lf_{\geq C} \longrightarrow 
\Tc\cap\Lf_{\geq C}$ sending $C'\in\Lf_{\geq C}$ to $C\circ B$
corresponds via
$\varphi:\Qn \stackrel{\sim}{\longrightarrow} \Lf_{\geq C}$ to the
function $f:\Qn \longrightarrow  \mathfrak{S}_n$
of~\cite[Section~4]{Ehrenborg_Morel_Readdy}.
We obtain the equality:
\[\psi_{\Hf/C}(B,\lambda)
=
\sum_{\pi\in\Qn(\lambda)} (-1)^{|\pi|} \cdot (-1)^{f(\pi)},\]
where $|\pi|$
denotes the number of blocks of the ordered partition
$\pi = (B_1, \ldots, B_r)$, in other words, $|\pi|=r$.
Let 
$\overline{\lambda}$
denote the reverse of $\lambda$, that is,
$\overline{\lambda}=(\lambda_n,\ldots,\lambda_1)$.
For $\varepsilon$ real we let $\overline{\lambda}_\varepsilon$ be
$(\lambda_n-\varepsilon, \ldots,\lambda_1-\varepsilon)$.
By~\cite[Lemma~7.1]{Ehrenborg_Morel_Readdy}, we have
\[\psi_{\Hf/C}(B,\overline{\lambda})
=
(-1)^{\binom{n}{2}} \cdot
\sum_{\pi\in\Qn(\lambda)}(-1)^{|\pi|} \cdot (-1)^{g(\pi)},\]
where $g:\Qn \longrightarrow \mathfrak{S}_n$ is the function defined at the beginning
of~\cite[Section~6]{Ehrenborg_Morel_Readdy}.\footnote{The
map $f : \Qn \longrightarrow \mathfrak{S}_n$
takes an ordered partition, orders the elements in each block in increasing order
and then maps them to the permutation formed by
reading the elements from left to right.
The map $g$ is similarly defined, except the elements in each block
are reordered in decreasing order.}
Finally, using the fact that
$\Qn(\lambda_\varepsilon)=\mathcal{P}(\lambda)$ for
sufficiently small $\varepsilon>0$,
then the sum $S(\lambda)$
of~\cite[Section~6]{Ehrenborg_Morel_Readdy} is given by the expression:
\begin{equation}\label{eq_S_vs_psi}
S(\lambda)
=
(-1)^{\binom{n}{2}} \cdot \psi_{\Hf/C}(B,\overline{\lambda}_\varepsilon)
\end{equation}
for any sufficiently small $\varepsilon>0$.

We now find an expression for the sum $T(\lambda)$
of~\cite[Section~6]{Ehrenborg_Morel_Readdy} in terms of $2$-structures.
As in~\cite{Ehrenborg_Morel_Readdy},
we denote by~$M_n$
the set of maximal matchings on $\{1,2,\ldots,n\}$. Then
we have a bijection $M_n \stackrel{\sim}{\longrightarrow} \Tt(\Phi)$ sending
a matching $p=\{p_1,\ldots,p_m\}$,
where $p_1=\{i_1<j_1\},\ldots,p_m=\{i_m<j_m\}$ are the edges of~$p$,
to the $2$-structure
$\varphi_p=\{\pm(e_{i_1}-e_{j_1}),\ldots,\pm(e_{i_m}-e_{j_m})\}$.
Moreover, we have $(-1)^p=\epsilon(\varphi_p)$.
We can calculate 
$\psi_{\Hf_{\varphi_p}/C_{\varphi_p}}(B_{\varphi_p},\lambda)$ using
Lemma~\ref{lemma_product} for the decomposition $V=V_0\times
V_1\times\cdots\times V_m$, where $V_k=\R e_{i_k}+\R e_{j_k}$ for
$1\leq k\leq m$, $V_0=\{0\}$ if $n$ is even, and $V_0=\R e_i$
if $n$ is odd and $i$ is the unique unmatched element of $\{1,\ldots,n\}$.
By Lemma~\ref{lemma_product} we have
\[\psi_{\Hf_{\varphi_p}/C_{\varphi_p}}(B_{\varphi_p},\lambda)=
\prod_{k=1}^{m} d_2(\lambda_{i_k},\lambda_{j_k}) 
\cdot
\begin{cases}
1 & \text{if $n$ is even,} \\
d_{1}(\lambda_{i}) & \text{if $n$ is odd,}
\end{cases}
\]
where:
\begin{itemize}
\item[(a)] 
The function $d_1:\R \longrightarrow \R$ is defined by $d_1(a)=\psi_{\Hf_1/C_1}
(B_1,a)$, where $\Hf_1$ is the hyperplane arrangement $(H_e)$ on
$\R e$ and $B_1=C_1=\R_{>0}e$. 
\item[(b)] The function $d_2:\R^2 \longrightarrow \R$
is defined by $d_2(a,b)=\psi_{\Hf_2/C_2}
(B_2,(a,b))$, where $\Hf_2$ is the hyperplane arrangement $(H_e,H_f,
H_{e-f},H_{e+f})$ on
$\R e\oplus\R f$, $C_2=\{\alpha e+\beta f : \alpha=\beta>0\}$ and
$B_2=\{\alpha e+\beta f : \alpha>\beta>0\}$.
\end{itemize}
In other words, the functions $d_1$ and $d_2$ are precisely the function
$\psi_{\Hf/C}(B,\lambda)$ that we are trying to determine in the cases
$n=1$ and $n=2$. A direct calculation yields:
$$
d_1(a)=
\begin{cases}
-1 & \text{ if } a\geq 0, \\
0 & \text{ if } a<0,
\end{cases}
\:\:\:\: \text{ and } \:\:\:\:
d_2(a,b)=\begin{cases}
-1 & \text{ if } a,b \geq 0, \\
-2 & \text{ if } b \geq -a>0, \\
0 & \text{ otherwise}.
\end{cases}
$$
Comparing this with the formula defining $c(p,\lambda)$
in~\cite[Section~6]{Ehrenborg_Morel_Readdy}, we see that, for all $a,b\in\R$,
if $\varepsilon>0$ is sufficiently small relative to $a$ and $b$, we have
$d_{1}(a-\varepsilon) = - c_{1}(a)$ and $d_{2}(a-\varepsilon,b-\varepsilon) = - c_{2}(b,a)$,
and hence
\begin{align*}
\psi_{\Hf_{\varphi_p}/C_{\varphi_p}}(B_{\varphi_p},\overline{\lambda}_\varepsilon)
& =
c(p,\lambda)\cdot
\begin{cases}(-1)^{n/2} & \text{ if } n\text{ is even,} \\
(-1)^{(n+1)/2} & \text{ if }n\text{ is odd}\end{cases}
\\
& = 
(-1)^{n} \cdot 
(-1)^{\binom{n}{2}} \cdot 
c(p,\lambda),
\end{align*}
if $\varepsilon>0$ is sufficiently small relative to the $\lambda_i$.
Combining all these calculations,
we see that if $\varepsilon>0$ is sufficiently small, then
\[\sum_{\varphi\in\Tt(\Phi)}
\epsilon(\varphi) \cdot
\psi_{\Hf_\varphi/C_\varphi}(B_\varphi,\overline{\lambda}_
\varepsilon)
=
(-1)^{n} \cdot 
(-1)^{\binom{n}{2}} \cdot 
\sum_{p\in M_n}(-1)^p \cdot c(p,\lambda)
=
(-1)^{n} \cdot 
(-1)^{\binom{n}{2}} \cdot 
T(\lambda).\]
The identity
$S(\lambda)=(-1)^n \cdot T(\lambda)$
in~\cite[Theorem~6.4]{Ehrenborg_Morel_Readdy}
now follows from Theorem~\ref{thm_second_main},
applied to~$\overline{\lambda}_\varepsilon$ for
$\varepsilon>0$ sufficiently small.

\subsection{Proof of Theorem~\ref{thm_second_main}}
\label{section_proof_second_main_theorem}

We assume for now that $\Hf=(H_{\alpha_e})_{e\in E}$ is an arbitrary
central hyperplane arrangement on $V$.
The following definition will be useful.

\begin{subdef}
Let $C\in\Lf$ and $\lambda\in V$. If $D,D'\in\Lf_{\geq C}$,
we define $\psi_{D/C}(D',\lambda)$ by the sum
\[\psi_{D/C}(D',\lambda)=\sum_{\substack{C'\in\Lf_{\lambda,\geq C} \\
C'\circ D'\leq D}}(-1)^{\dim(C')},\]
where $\Lf_{\geq\lambda,C}=\Lf_\lambda\cap\Lf_{\geq C}$.
\end{subdef}

\begin{subrmk}
Suppose that $D'$ is a chamber. Then $C'\circ D'$ is a chamber
for every $C'\in\Lf$, so $\psi_{D/C}(D',\lambda)=0$ unless $D$ is
also a chamber. If $D$ is a chamber, we have
\[\psi_{D/C}(D',\lambda)=\sum_{\substack{C'\in\Lf_{\lambda,\geq C} \\
C'\circ D'=D}}(-1)^{\dim(C')}.\]
\label{rmk_psi_for_chambers}
\end{subrmk}

The functions $\psi_{D/C}(D',\lambda)$ are related to
$\psi_{\Hf/C}(B,\lambda)$ by the following lemma.

\begin{sublemma}
Let $C\in\Lf$ and $B\in\Tc\cap\Lf_{\geq C}$.
Then for every $\lambda \in V$ the following identity holds:
\[\psi_{\Hf/C}(B,\lambda)
=
\sum_{T\in\Tc\cap\Lf_{\geq C}}(-1)^{|S(B,T)|} \cdot \psi_{T/C}(B,\lambda).\]
\label{lemma_psi_vs_psi}
\end{sublemma}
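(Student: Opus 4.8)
The plan is to start from the definition of the weighted sum and reorganize the sum over $\Lf_{\lambda,\geq C}$ according to the chamber $D\circ B$ that each face $D$ maps to under the composition with the base chamber. Recall from Definition~\ref{def_psi} that
\[
\psi_{\Hf/C}(B,\lambda)=\sum_{D\in\Lf_{\lambda,\geq C}}(-1)^{\dim(D)}\cdot(-1)^{|S(B,D\circ B)|}.
\]
First I would observe that for $D\in\Lf_{\geq C}$ the face $D\circ B$ is again in $\Lf_{\geq C}$ (since $\Lf_{\geq C}$ is closed under composition by Lemma~\ref{lemma_star}(iv), or directly from $C\le D$ and $C\le B$), and moreover $D\circ B$ is a chamber because $B$ is. Thus the map $D\longmapsto D\circ B$ sends $\Lf_{\lambda,\geq C}$ into $\Tc\cap\Lf_{\geq C}$, and I would partition $\Lf_{\lambda,\geq C}$ into the fibers of this map.

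The key point is to identify, for a fixed chamber $T\in\Tc\cap\Lf_{\geq C}$, the inner contribution of the fiber $\{D\in\Lf_{\lambda,\geq C}:D\circ B=T\}$. On this fiber the sign $(-1)^{|S(B,D\circ B)|}=(-1)^{|S(B,T)|}$ is constant, so it factors out, and what remains is $\sum_{D\in\Lf_{\lambda,\geq C},\,D\circ B=T}(-1)^{\dim(D)}$. This is exactly $\psi_{T/C}(B,\lambda)$ as given in Remark~\ref{rmk_psi_for_chambers}: in the notation of Definition~\ref{def_psi_DC}, taking $D'=B$ and target chamber $T$, we have $\psi_{T/C}(B,\lambda)=\sum_{C'\in\Lf_{\lambda,\geq C},\,C'\circ B=T}(-1)^{\dim(C')}$. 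Hence
\[
\psi_{\Hf/C}(B,\lambda)=\sum_{T\in\Tc\cap\Lf_{\geq C}}(-1)^{|S(B,T)|}\sum_{\substack{D\in\Lf_{\lambda,\geq C}\\ D\circ B=T}}(-1)^{\dim(D)}=\sum_{T\in\Tc\cap\Lf_{\geq C}}(-1)^{|S(B,T)|}\cdot\psi_{T/C}(B,\lambda),
\]
which is the desired identity.

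The only genuine thing to check — and the step I expect to require the most care — is that the reindexing is legitimate, i.e.\ that $D\mapsto D\circ B$ really does map $\Lf_{\lambda,\geq C}$ to $\Tc\cap\Lf_{\geq C}$ and that every term of the original sum appears exactly once in the reorganized sum. Surjectivity onto the relevant index set is not needed (chambers $T$ with empty fiber simply contribute $\psi_{T/C}(B,\lambda)=0$, consistent with the convention). For the fiber computation one uses that $D\in\Lf_\lambda$ is a condition on $D$ alone, so membership in $\Lf_{\lambda,\geq C}$ passes correctly through the partition; the equality $D\circ B=T$ is then precisely the constraint in the definition of $\psi_{T/C}(B,\lambda)$. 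No further input is required — this is essentially a bookkeeping lemma that repackages the weighted sum as a sum over chambers, setting up the reduction to Corollary~\ref{cor_signs} and Theorem~\ref{theorem_2_structures_and_chambers} in the proof of Theorem~\ref{thm_second_main}.
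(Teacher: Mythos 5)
Your proposal is correct and follows essentially the same route as the paper: both proofs regroup the sum defining $\psi_{\Hf/C}(B,\lambda)$ according to the chamber $T=D\circ B\in\Tc\cap\Lf_{\geq C}$, factor out the constant sign $(-1)^{|S(B,T)|}$ on each fiber, and identify the inner sum with $\psi_{T/C}(B,\lambda)$ via Remark~\ref{rmk_psi_for_chambers}. The only cosmetic issue is your citation of a nonexistent label for the definition of $\psi_{D/C}(D',\lambda)$; the intended definition is the one given just before that remark, and the mathematics is unaffected.
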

\begin{proof} Indeed, if $D\in\Lf_{\geq C}$ then the chamber
$D\circ B$ is also in $\Lf_{\geq C}$. Hence, using
equation~\eqref{equation_psi_H_C}
in Definition~\ref{def_psi}
and 
Remark~\ref{rmk_psi_for_chambers}, we obtain:
\begin{align*}
\psi_{\Hf/C}(B,\lambda)
& =
\sum_{T\in\Tc\cap\Lf_{\geq C}}(-1)^{|S(B,T)|} \cdot
\sum_{\substack{D\in\Lf_{\lambda,\geq C} \\ D\circ B=T}}(-1)^{\dim(D)}
=
\sum_{T\in\Tc\cap\Lf_{\geq C}} (-1)^{|S(B,T)|} \cdot \psi_{T/C}(B,\lambda).
\end{align*}
\end{proof}

Before Corollary~\ref{cor_psi_C_is_a_valuation} of
Appendix~\ref{appendix_valuations},
we define, for $K$ a closed convex polyhedral
cone in~$V$, a function 
$\psi_K:V\times V^\vee \longrightarrow \R$. For fixed $(x,\ell)\in V\times V^\vee$,
the function $K \longmapsto\psi_K(x,\ell)$ is a valuation on the set of
closed convex polyhedral
cones in $V$ (see Definition~\ref{def_valuation}).
This function is related to the functions
$\psi_{D/C}(D',\lambda)$ in the following way.
\begin{sublemma}
Let $C\in\Lf$, let $D\in\Lf_{\geq C}$ and let
$\lambda\in V$.
Denote by $\ell\in V^\vee$ the linear functional~$(\cdot,\lambda)$.
Then for every $D'\in\Lf_{\geq C}$ the following identity holds:
\[\psi_{D/C}(D',\lambda) = \psi_{\overline{D}}(x,\ell),\]
where $x$ is any point in $D'_1=(-C)\circ D'$.
\label{lemma_on_the_definition_of_psi}
\end{sublemma}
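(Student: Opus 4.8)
The plan is to unwind both sides to explicit combinatorial sums over faces and match them term by term, using Lemma~\ref{lemma_psi_x_vs_circ} as the bridge. First I would recall the definition of $\psi_{\overline{D}}(x,\ell)$ from Appendix~\ref{appendix_valuations}: by the construction preceding Corollary~\ref{cor_psi_C_is_a_valuation}, $\psi_K(x,\ell)$ is built as a $\star$-product of the valuation $\psi_x$ (of Lemma~\ref{lemma_GKM}, detecting whether $x$ lies in a certain dual cone) with a valuation recording the linear functional $\ell$ on faces. Concretely, unwinding this for $K=\overline{D}$ should give an alternating sum over the closed faces $F$ of $\overline{D}$ that satisfy \emph{both} a positivity condition coming from $\ell$ (equivalently from $\lambda$, namely that $F$ lies on the nonnegative side of $H_\lambda$, so that $\overline{F}$ corresponds to an element of $\Lf_\lambda$) \emph{and} the condition $\psi_x(\overline{F}^{\perp,\overline{D}})=1$, weighted by $(-1)^{\dim F}$. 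The faces of $\overline{D}$ are exactly the closures $\overline{C'}$ of faces $C'\le D$ of $\Hf$, so this matches the shape of $\psi_{D/C}(D',\lambda)=\sum_{C'\in\Lf_{\lambda,\ge C},\ C'\circ D'\le D}(-1)^{\dim C'}$, provided the index sets agree.

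The heart of the argument is therefore the identification of index sets. Fix $x\in D'_1=(-C)\circ D'$. I need: a face $C'\le D$ lies in the sum for $\psi_{D/C}(D',\lambda)$ iff $C'\in\Lf_\lambda$ (i.e. $C'\subseteq\overline{H_\lambda^+}$), $C'\ge C$, and $C'\circ D'\le D$; while it lies in the sum for $\psi_{\overline{D}}(x,\ell)$ iff the $\ell$-positivity condition holds (again $C'\in\Lf_\lambda$) and $\psi_x(\overline{C'}^{\perp,\overline{D}})=1$. The first two conditions match on the nose. For the remaining conditions, I would apply Lemma~\ref{lemma_psi_x_vs_circ} with the roles $C\rightsquigarrow D$, $D\rightsquigarrow C'$, $x\rightsquigarrow x$: that lemma asserts the equivalence of (b) $\psi_x(\overline{C'}^{\perp,\overline{D}})=1$ with (c) $C'\circ D_0\le D$, where $D_0$ is the unique face of $\Hf$ containing $x$. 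So the final point to check is that $D_0=D'_1=(-C)\circ D'$, i.e.\ that $x\in(-C)\circ D'$ is indeed the relative-interior face containing $x$ — which holds by the defining property of composition (the face $(-C)\circ D'$ is the unique face containing all $y+\varepsilon z$ with $y\in -C$, $z\in D'$, $\varepsilon$ small, and this is a single face of $\Lf$, so picking $x$ in it means $D_0 = (-C)\circ D'$). It then remains only to see that $C'\circ D_0\le D$ with $D_0=(-C)\circ D'$ is equivalent to $C'\circ D'\le D$ together with $C'\ge C$: since $C'\ge C$ forces $s(C')_e\in\{0,s(C)_e\}$ on $E(C)$ and $s(C')_e=s(D)_e$ off $E(C)$ (as $C'\le D$), composing with $-C$ only affects coordinates where $C'$ is zero, and one checks directly on sign vectors that $C'\circ((-C)\circ D')=C'\circ D'$ whenever $C\le C'$. (If $C'\not\ge C$ one must check the face does not contribute on either side, which follows since $\Lf_{\lambda,\ge C}$ already imposes $C'\ge C$, and on the $\psi_{\overline D}$ side the $\perp,\overline C$ bookkeeping in the $\star$-product is taken relative to $\overline C$.)

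The main obstacle I anticipate is bookkeeping, not conceptual difficulty: precisely matching the definition of the $\star$-product $\psi_{\overline D}(x,\ell)$ from Appendix~\ref{appendix_valuations} — in particular getting the $\overline{C}^*$ / $\overline{D}^{\perp,\overline C}$ conventions and the $(-1)^{\dim}$ signs exactly right — against the face-wise sum, and verifying the sign-vector identity $C'\circ((-C)\circ D')=C'\circ D'$ for $C\le C'$, $C'\le D$, $D'\ge C$. Once Lemma~\ref{lemma_psi_x_vs_circ} is invoked with the right substitution and $D_0$ is recognized as $(-C)\circ D'$, the two alternating sums are literally the same sum, and the lemma follows. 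A secondary point to be careful about is the hypothesis $D'\in\Lf_{\ge C}$, which guarantees $(-C)\circ D'$ makes sense as a face with $C\le D'_1$ and $x\in\mathring{D'_1}$; this is exactly what is needed for the perpendicularity data in $\psi_{\overline D}$ to be computed relative to the correct linear span.
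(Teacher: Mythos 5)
Your route is essentially the paper's: unwind $\psi_{\overline D}(x,\ell)=(\psi_\ell\star\psi_x)(\overline D)$ as an alternating sum over the closed faces $\overline{C'}$ with $C'\leq D$, apply Lemma~\ref{lemma_psi_x_vs_circ} with $C_0=D'_1$ (which is indeed the unique face containing $x$), translate $\psi_\ell(\overline{C'})=1$ into $C'\in\Lf_\lambda$, and compare index sets. Your sign-vector identity $C'\circ\bigl((-C)\circ D'\bigr)=C'\circ D'$ for $C\leq C'$ is correct and handles the faces with $C'\geq C$, exactly as in the paper's proof of its intermediate claim.

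The gap is in the faces $C'\leq D$ with $C'\not\geq C$: these genuinely occur in the $\star$-product sum for $\psi_{\overline D}(x,\ell)$, and your stated reason for discarding them is wrong. The perpendicularity data in $(\psi_\ell\star\psi_x)(\overline D)$ is $\overline{C'}^{\perp,\overline D}$, taken relative to $\overline D$, not to $\overline C$; the face $C$ enters the right-hand side only through the position of $x$. What you need (the ``conversely'' half of the paper's claim) is that for such $C'$ one has $\psi_x\bigl(\overline{C'}^{\perp,\overline D}\bigr)=0$, equivalently $C'\circ D'_1\not\leq D$, equivalently $x\notin\overline D+\Span(C')$. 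This follows from the specific choice $x\in(-C)\circ D'$: since $C'\not\geq C$ there is $e\in E-E(C)$ with $C'\subset H_{\alpha_e}$; as $C\leq D$, both $\overline D$ and $\Span(C')\subset H_{\alpha_e}$ lie in $\overline{H_{\alpha_e}^{s(C)_e}}$, whereas $(\alpha_e,x)$ has sign $-s(C)_e\neq 0$, so $x\notin\overline D+\Span(C')$. This is precisely why $D'$ is replaced by $D'_1=(-C)\circ D'$ (with $x\in D'$ those faces would contribute and the identity would fail), so it cannot be dismissed as bookkeeping; once you insert this short check, your argument coincides with the paper's proof.
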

\begin{proof}
As before we write $E(C)=\{e\in E :  C\subset H_{\alpha_e}\}$. 
Note that $s(D'_1)_e=s(D')_e$ for $e\in E(C)$, and $s(D'_1)=-s(C)_e \neq 0$
for $e\in E-E(C)$. Also, by definition of $\Lf_{\geq C}$,
if $e$ is any index of $E  -  E(C)$ and
$C'\in\Lf_{\geq C}$ then $s(C)_e=s(C')_e \neq 0$.

We claim that for every $C'\in\Lf$ we have
$C'\circ D'_1\leq D$ if and only if $C'\in\Lf_{\geq C}$ and $C'\circ D'\leq D$.
Suppose first that $C'\in\Lf_{\geq C}$ and $C'\circ D'\leq D$.
Then for every $e\in E(C)$ we have $s(C'\circ D'_1)_e=s(C'\circ D')_e\leq s(D)_e$.
Moreover, if $e\in E-E(C)$ then $s(C')_e=s(C)_e=s(D)_e \neq 0$, so
$s(C'\circ D'_1)_e=s(C')_e=s(D)_e$. This shows that $C'\circ D'_1\leq D$.
Conversely, suppose that $C'$ is a face of $\Lf$ such that
$C'\circ D'_1\leq D$.
If $e\in E-E(C)$ then $0 \neq s(C)_e=s(D)_e=-s(D'_1)_e$,
thus $s(C')_e \neq 0$, and so $s(C')_e=s(C'\circ D'_1)_e=s(D)_e$.
This implies that $C'\in\Lf_{\geq C}$.
Moreover, if $e\in E(C)$ we have $s(D'_1)_e=s(D')_e$,
thus $s(C'\circ D')_e=s(C'\circ D'_1)_e\leq s(D)_e$.
Hence we conclude that $C'\circ D'\leq D$.

By the claim, we obtain
\[
\psi_{D/C}(D',\lambda)
=
\sum_{\substack{C'\in\Lf_{\lambda} \\ C'\circ D'_1\leq D}} (-1)^{\dim(C')}
=
\psi_D(D'_1,\lambda) .
\]
We wish to show that this is equal to $\psi_{\overline{D}}(x,\ell)$,
if $x\in D'_1$. As in Appendix~\ref{appendix_valuations},
we denote by $\mathcal{F}(\overline{D})$ the set of closed faces of the closed
convex polyhedral cone $\overline{D}$. We have
$\mathcal{F}(\overline{D})=\{\overline{C'} : C'\in\Lf,\ C'\leq D\}$, and
the set $\{C'\in\Lf : C'\circ D'_1\leq D\}$ is included in the set
$\{C'\in\Lf : C'\leq D\}$.
To prove the equality above, it suffices to show 
that the two following statements hold
for $C'\in\Lf$ such that $C'\leq D$ (see
Lemma \ref{lemma_GKM} for the definition of $\psi_x$ and $\psi_\ell$,
and the beginning of
Section~\ref{subsection_convolution} for the notation $\overline{C'}^{\perp,\overline{D}}$):
\begin{itemize}
\item[(a)] The face $C'$ belongs to $\Lf_\lambda$ if and only $\psi_\ell(\overline{C'})=1$.
\item[(b)] The inequality $C'\circ D'_1\leq D$ holds if and only if 
$\psi_x\left(\overline{C'}^{\perp,\overline{D}}\right)=1$.
\end{itemize}

Statement (a) is just a direct translation of the definition of $\Lf_\lambda$,
and statement (b) is proved in Lemma~\ref{lemma_psi_x_vs_circ}.

\end{proof}

We are now ready to prove Theorem~\ref{thm_second_main}, so we assume that
we are in the situation of that theorem.

Let $\ell$ be the linear functional $(\cdot,\lambda)$ on $V$,
let $x\in (-C)\circ B$, and
consider the valuation $\nu$ on the set of closed convex polyhedral cones
in $V$ sending such a cone $K$ to $\psi_{K\cap\overline{\Cf}}(x,\ell)$,
where $\Cf=\Cf_C=\bigcap_{e\in E^{(2)}}H_{\alpha_e}^+$.
The function $K\longmapsto\psi_K(x,\ell)$ is a priori defined only
on the set of closed convex polyhedral cones. However, as it is a valuation,
we can extend it to the set of all finite intersections of
closed and open half-spaces in $V$. See
Remark~\ref{rmk_extension_valuation}.
As $x$ is not on any hyperplane of $\Hf$, 
the valuation $\psi_x$ vanishes on any cone contained in
a hyperplane of~$\Hf$.
It follows from the definition of the function
$K\longmapsto\psi_K(x,\ell)$ in the discussion before
Corollary~\ref{cor_psi_C_is_a_valuation}
that we have $\psi_K(x,\ell)=0$ if $K$ is contained in a hyperplane of $\Hf$.
In particular, for $D$ a face of~$\Hf$, we have
$\nu(D)=0$ unless $D$ is a chamber, and if $D$ is a chamber
then $\nu(D)=\nu(\overline{D})$.

Let $\varphi\subset\Phi$ be a pseudo-root system (we do not
assume that $\varphi$ is a $2$-structure), let $\varphi^+=
\varphi\cap\Phi^+$ and $\Hf_\varphi=(H_{\alpha_e})_{e\in\varphi^+\sqcup E^{(2)}}$, 
and denote by $B_\varphi$ and $C_\varphi$ the unique
faces of $\Hf_\varphi$ containing~$B$ and~$C$. We have
$C_\varphi=\bigcap_{e\in\varphi^+}H_{\alpha_e}\cap\bigcap_{e\in E^{(2)}}H_{\alpha_e}^+$.
We also set $\Hf^{(1)}_\varphi=(\Hf_{\alpha_e})_{e\in\varphi^+}$,
$\Lf_\varphi=\Lf(\Hf_\varphi)$ and $\Tc_\varphi=\Tc(\Hf_\varphi)$.

As in statement~(i) of Lemma~\ref{lemma_star}, we denote
by $\iota:\Lf_{\varphi,\geq C_\varphi}\fl\Lf(\Hf^{(1)}_\varphi)$ the map sending a face
$D\geq C_\varphi$ of $\Hf_\varphi$ to the unique face of 
$\Hf^{(1)}_\varphi$ that contains it.
By the lemma we just cited,
we know that this is an order-preserving bijection, and that its
inverse sends a face $D^{(1)}$ of $\Hf^{(1)}_\varphi$ to
$D^{(1)}\cap\Cf$, where $\Cf=\bigcap_{e\in E^{(2)}}H_{\alpha_e}^+$ as before.
We claim that, if $D^{(1)}$ is a face of $\Hf^{(1)}_\varphi$, then
$\overline{D^{(1)}\cap\Cf}=\overline{D^{(1)}}\cap\overline{\Cf}$.
Indeed, let $s\in\{+,-,0\}^{\varphi^+}$ be the sign vector of $D^{(1)}$.
Then the sign vector $t\in\{+,-,0\}^E$ of $D^{(1)}\cap\Cf$ is given
by $t_e=s_e$ if $e\in\varphi^+$, and $t_e=+$ if $e\in E^{(2)}$.
We set $\Rrr_+=\Rrr_{\geq 0}$, $\Rrr_-=\Rrr_{\leq 0}$ and $\Rrr_0=\{0\}$.
Let $y\in V$. Then $y\in\overline{D^{(1)}}$
if and only if $(\alpha_e,y)\in\Rrr_{s_e}$ for every
$e\in \varphi^+$,
while $y\in\overline{D^{(1)}\cap\Cf}$ if and only
$(\alpha_e,y)\in\Rrr_{t_e}$ 
for every $e\in E$
and $y\in\overline{\Cf}$ if and only if $(\alpha_e,y)\geq 0$ for
every $e\in E^{(2)}$. This immediately implies the claim.

Let $D^{(1)}$ be a face of $\Hf^{(1)}_\varphi$.
By Lemma~\ref{lemma_on_the_definition_of_psi} we have
\[\psi_{\iota^{-1}(D^{(1)})/C_\varphi}(B_\varphi,\lambda)=
\psi_{\overline{D^{(1)}\cap\Cf}}(x,\ell)=
\psi_{\overline{D^{(1)}}\cap\overline{\Cf}}(x,\ell)=
\nu(\overline{D^{(1)}}),\]
because $x\in(-C)\circ B\subset(-C_\varphi)\circ B_\varphi$.
Moreover, by Lemma~\ref{lemma_psi_vs_psi}, we have
\[\psi_{\Hf_\varphi/C_\varphi}(B_\varphi,\lambda)
=
\sum_{T\in\Tc_\varphi\cap\Lf_{\varphi,\geq C_\varphi}}(-1)^{|S(B_\varphi,T)|} \cdot 
\psi_{T/C_\varphi}(B_\varphi,\lambda).\]
So, using statements~(i) and~(iii) of Lemma~\ref{lemma_star}, we get that
\begin{align*}\psi_{\Hf_\varphi/C_\varphi}(B_\varphi,\lambda)&=
\sum_{T^{(1)}\in\Tc(\Hf^{(1)}_\varphi)}
(-1)^{|S(\iota(B_\varphi),T^{(1)})|}\psi_{\iota^{-1}(T^{(1)})/C_\varphi}(B_\varphi,\lambda)\\
&=
\sum_{T^{(1)}\in\Tc(\Hf^{(1)}_\varphi)}
(-1)^{|S(\iota(B_\varphi),T^{(1)})|}\nu(\overline{T^{(1)}})\\
&=
\sum_{C^{(1)}\in\Tc(\Hf^{(1)}_\varphi)}
(-1)^{|S(\iota(B_\varphi),C^{(1)}\circ\iota(B_\varphi))|}\nu(\overline{C^{(1)}}).
\end{align*}
In other words, using the notation of Subsection~\ref{thm_first_main},
we obtain
\begin{equation}
\psi_{\Hf_\varphi/C_\varphi}(B_\varphi,\lambda)=\Pi(\Hf_\varphi,\nu).
\label{eq_psi_vs_pizza}
\end{equation}
Applying this identity to $\varphi=\Phi$, we have
$\psi_{\Hf/C}(B,\lambda)=\Pi(\Hf,\nu)$.
By Theorem~\ref{theorem_2_structures_and_chambers}, we deduce that
\[\psi_{\Hf/C}(B,\lambda)=\sum_{\varphi\in\Tt(\Phi)}\epsilon(\varphi)
\Pi(\Hf_\varphi,\nu).\]
The conclusion of Theorem~\ref{thm_second_main} follows from this
equality and from equation~\eqref{eq_psi_vs_pizza} for all
$\varphi\in\Tt(\Phi)$.

\section{Properties of the weighted complex}
\label{section_weighted_complex_shellable}

This section is independent of Sections~\ref{section_first_main_theorem}
and~\ref{section_definition_sum}, except for
Definition~\ref{def_weighted_complex} 
and Remark~\ref{rmk_weighted_vs_bounded}.
We prove that the weighted complex is shellable for
Coxeter arrangements, or more generally for arrangements satisfying
some condition on the dihedral angles between their hyperplanes
(Condition~\ref{(A)}). This implies that the weighted complex
is a PL ball for arrangements satisfying Condition~\ref{(A)}.

\subsection{Shellable polytopal complexes}

We introduce the following definition.
For instance, see~\cite[Definition~4.7.14]{OM}.
\begin{subdef}
A pure $n$-dimensional polytopal complex $\Delta$ is {\em shellable} if 
it is $0$-dimensional (and hence a collection of a finite number of points),
or 
if there is a linear order of the facets $F_{1}, F_{2}, \ldots, F_{k}$
of $\Delta$, called a \emph{shelling order}, such that:
\begin{itemize}
\item[(i)]
The boundary complex of $F_{1}$ is shellable.
\item[(ii)]
For $1 < j \leq k$ the intersection of $\overline{F_{j}}$
with the union of the closures
of the previous facets is nonempty and is the beginning
of a shelling of the $(n-1)$-dimensional boundary complex
of~$F_{j}$, that is,
$$ \overline{F_{j}} \cap 
(\overline{F_{1}} \cup \overline{F_{2}} \cup \cdots \cup \overline{F_{j-1})}
= 
\overline{G_{1}} \cup \overline{G_{2}} \cup \cdots \cup \overline{G_{r}} , $$
where
$G_{1}, G_{2}, \ldots, G_{r}, \ldots, G_{t}$ 
is a shelling order of $\partial F_{j}$ and $r\geq 1$.
\end{itemize}
\end{subdef}

We then have the following result.

\begin{subthm} 
(\cite[Theorem~4.3.3]{OM}.)
Let $\Hf$ be a central hyperplane arrangement on $V$, let
$\Lf=\Lf(\Hf)$ and $\Tc=\Tc(\Hf)$, and
let $B$ be a chamber in $\Tc$.
Then any linear extension of the chamber poset with base chamber $B$
is a shelling order on the facets of $\Sigma(\Lf)$.
\label{thm_shelling_order}
\end{subthm}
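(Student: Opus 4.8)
The plan is to verify the two clauses in the definition of a shelling directly for the cell complex $\Sigma(\Lf)$, whose facets are the chambers $\Tc=\Tc(\Hf)$ and which we order by a fixed linear extension $T_1=B,T_2,\dots,T_k$ of $\Tc_B$. Two elementary facts will be used repeatedly. First, if two chambers $T,T'$ share a wall $H_{\alpha_e}$, their sign vectors agree off the coordinate $e$, so $S(B,T')$ is obtained from $S(B,T)$ by inserting or deleting $e$; hence $T$ and $T'$ are always $\preceq_B$-comparable, with $T'\prec_B T$ precisely when $e\in S(B,T)$. Second, every chamber $T\neq B$ is non-minimal in $\Tc_B$, so it has a wall-neighbour $T'\prec_B T$: a geodesic from $B$ to $T$ in the chamber graph has such a $T'$ as its penultimate vertex, since geodesic distance equals $|S(B,\cdot)|$ and $\Tc_B$ is graded by $|S(B,\cdot)|$. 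The case $j=1$ of clause (i) is the shellability of $\partial\overline{B}$; after replacing $V$ by $V/V_0$ (so that cones become pointed) and projectivizing, this is the boundary of a polytope, hence shellable by the line-shelling argument of Bruggesser--Mani.

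Next I would fix $j\geq 2$ and set $R_j=\overline{T_j}\cap(\overline{T_1}\cup\cdots\cup\overline{T_{j-1}})$. For each wall $e$ of $T_j$ let $F_e\leq T_j$ denote the codimension-one face lying on $H_{\alpha_e}$. The key claim is that $R_j\neq\varnothing$ and
\[R_j=\bigcup_{\substack{e\text{ a wall of }T_j\\ e\in S(B,T_j)}}\overline{F_e}.\]
The inclusion ``$\supseteq$'' and nonemptiness follow from the two facts above: for such an $e$ the chamber across $H_{\alpha_e}$ is $\prec_B T_j$, hence equals some $T_i$ with $i<j$, and $F_e\leq T_j,T_i$; nonemptiness holds because $T_j\neq B$ has a lower wall-neighbour. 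For ``$\subseteq$'', take a face $C\leq T_j$ of $\Hf$ with $C\leq T_i$ for some $i<j$. Then $T_i,T_j$ both lie in $\Lf_{\geq C}$, are distinct, and since $i<j$ in the linear extension we have $T_j\not\preceq_B T_i$; in particular $T_j$ is not $\preceq_B$-minimal among the chambers containing $C$ in their closure. Using the isomorphism $\iota_C$ of Lemma~\ref{lemma_star} between $\Lf_{\geq C}$ and the face poset of the subarrangement $\Hf(C)$ — which preserves separation sets, composition and dimension — the chamber $\iota_C(T_j)$ is non-minimal in the corresponding chamber poset, so it has a lower wall-neighbour there; transporting back, $T_j$ has a wall-neighbour $\tilde T\prec_B T_j$ across some $H_{\alpha_e}$ with $C\subset H_{\alpha_e}$. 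Then $C\leq F_e$ (as $C\subset H_{\alpha_e}$, $C\leq T_j$ and $C\leq\tilde T$), $e\in S(B,T_j)$ by the first fact, and $\tilde T$ equals some $T_{i'}$ with $i'<j$; hence $C\in\overline{F_e}\subseteq R_j$, proving the claim.

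Finally I would check that the set of facets $\{F_e: e\text{ a wall of }T_j,\ e\in S(B,T_j)\}$ is the initial segment of a shelling of the polytopal complex $\partial\overline{T_j}$. Projectivizing, $\overline{T_j}$ becomes a polytope $Q$, and a facet $F_e$ is visible from a point $p$ in the interior of $B$ exactly when $p$ and $Q$ lie on opposite sides of $H_{\alpha_e}$, i.e. when $e\in S(B,T_j)$. The Bruggesser--Mani line shelling of $\partial Q$ from a generic such $p$ is a shelling whose initial block is precisely the set of $p$-visible facets, which is the required set; its restrictions to boundary complexes of lower-dimensional faces are again shellings, so all the recursive requirements in clause (ii) are met. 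This establishes clause (ii) for every $j$ and completes the proof. I expect the main obstacle to be the inclusion $R_j\subseteq\bigcup_e\overline{F_e}$ — showing that no lower-dimensional face of $\overline{T_j}$ meeting an earlier chamber can avoid all the walls ``pointing towards $B$'' — since this is exactly the step that forces one to localize at the star $\Lf_{\geq C}$ and invoke Lemma~\ref{lemma_star}, the remainder being the classical shellability of polytope boundaries together with routine sign-vector bookkeeping.
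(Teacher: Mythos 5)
Your argument is essentially correct, but note that the paper does not prove Theorem~\ref{thm_shelling_order} at all: it simply cites \cite[Theorem~4.3.3]{OM}, where the statement is established for arbitrary oriented matroids by purely combinatorial means. What you give instead is the direct geometric proof available in the realizable case: identify $\overline{T_j}\cap\bigl(\overline{T_1}\cup\cdots\cup\overline{T_{j-1}}\bigr)$ with the union of the closed facets $\overline{F_e}$ of $\overline{T_j}$ with $e\in S(B,T_j)$ (the non-trivial inclusion being handled by localizing at the star $\Lf_{\geq C}$ via Lemma~\ref{lemma_star}), and then observe that these are exactly the facets of the projectivized cone visible from a point $p\in B$, hence an initial segment of a Bruggesser--Mani line shelling. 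This buys a self-contained, geometrically transparent proof, at the price of using realizability (genericity of lines, visibility), which the oriented-matroid proof avoids; the latter is what makes the result available in the greater generality alluded to in the paper's remarks about oriented matroids.

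Three small points deserve attention if you write this up. First, the deduction ``$T_j\not\preceq_B T_i$, hence $T_j$ is not $\preceq_B$-minimal among the chambers of $\Lf_{\geq C}$'' needs the (easy, one-line) fact that $C\circ B$ is the $\preceq_B$-minimum of $\Tc\cap\Lf_{\geq C}$: if $e\in S(B,C\circ B)$ then $s(C)_e=-s(B)_e\neq 0$, so $e\in S(B,T)$ for every chamber $T\geq C$; this is what lets you conclude $T_j\neq C\circ B$ and then apply your ``penultimate vertex'' fact inside $\Hf(C)$. Second, for the last chamber $T_j=-B$ the point $p$ lies in $-\mathring{\overline{T_j}}$, every facet is ``visible'' in the conical sense, and no affine slice through $p$ yields a bounded polytope, so the visibility argument as stated degenerates; but there the required statement is only that the full boundary $\partial\overline{T_j}$ is the beginning (with $r=t$) of a shelling of itself, i.e.\ plain Bruggesser--Mani, so a one-sentence remark suffices. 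Third, your gradedness/geodesic facts are stated in the paper only for arrangements with distinct hyperplanes; since repeated hyperplanes change neither $\Sigma(\Lf)$ nor $\preceq_B$, you should say explicitly that you may assume the hyperplanes distinct.
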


Let $\Hf$ be a central hyperplane arrangement on $V$. We
write $\Lf=\Lf(\Hf)$ and $\Tc=\Tc(\Hf)$.
Let $B\in\Tc$, and let $B = T_1,T_2,\ldots,T_r = -B$ be a linear ordering of
$\Tc$ refining the partial order $\preceq_B$. 
By Theorem~\ref{thm_shelling_order}
the linear order
$T_1,T_2,\ldots,T_r$
is a shelling order of the chambers of~$\Sigma(\Lf)$.
In particular, the shelling order
defines a partition of the faces of~$\Sigma(\Lf)$:
\[\Lf
=
\coprod_{i=1}^r \{C\in\Lf :  C\leq T_i \text{ and }C\not\leq T_j \text{ for }1\leq j<i\} .\]
We will give a formula for the blocks of this partition
(see Proposition~\ref{prop_fibers_of_f_B}) which implies in
particular that the partition is independent of the linear
refinement of $\preceq_B$.

\begin{subdef}
Given a chamber $B\in\Tc$, we define a function $f_B$
from the face poset $\Lf$ to the set of chambers $\Tc$
by $f_B(C)=C\circ B$.
\end{subdef}

The next proposition gives some basic properties of the function $f_B$.

\begin{subprop}
\label{prop_f_B}
The following two statements hold:
\begin{itemize}
\item[(i)] Fix a face $C\in\Lf$ and consider the poset isomorphism
$\iota_C:\Lf_{\geq C} \stackrel{\sim}{\longrightarrow} \Lf_{\Hf(C)}$ of Lemma~\ref{lemma_star}. 
If $B\in\Tc\cap\Lf_{\geq C}$
then for every $D\in\Lf_{\geq C}$
we have $\iota_C(f_B(D))=f_{\iota_C(B)}(\iota_C(D))$.

\item[(ii)] Suppose that $\Hf$ is a Coxeter arrangement with a
chamber $B$ that is on the positive side of every hyperplane, and let
$(W,S)$ be the associated
Coxeter system. 
Identify $\Lf$ with the Coxeter complex $\Sigma(W)$ as in
Subsection~\ref{section_Coxeter_arrangements}. If the face $C\in\Lf$ corresponds
to a standard coset $c\subset W$, then the element $w\in W$ corresponding
to the chamber $f_B(C)$ is the shortest element of $c$ and also the
minimal element of the coset $c$ in the right weak Bruhat order.
\end{itemize}
\end{subprop}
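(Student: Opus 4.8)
The plan is to handle the two statements separately: statement~(i) will be a formal consequence of Lemma~\ref{lemma_star}, while statement~(ii) will be reduced to a general observation about separation sets combined with the standard theory of minimal-length coset representatives. For~(i), I would simply unwind the definitions. If $D\in\Lf_{\geq C}$ and $B\in\Tc\cap\Lf_{\geq C}$, then $D$ and $B$ both lie in $\Lf_{\geq C}$, and so does $D\circ B$ (immediate from $C\leq D$ and $C\leq B$, since for every $e$ with $s(C)_e\neq 0$ we have $s(D)_e=s(B)_e=s(C)_e$). Hence, using that $\iota_C$ preserves composition (statement~(iv) of Lemma~\ref{lemma_star}),
\[\iota_C(f_B(D))=\iota_C(D\circ B)=\iota_C(D)\circ\iota_C(B)=f_{\iota_C(B)}(\iota_C(D)),\]
which is the claimed identity.

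For~(ii), the first step is a combinatorial claim valid for an arbitrary central arrangement $\Hf$ with base chamber $B$: for every face $C\in\Lf$, the chamber $f_B(C)=C\circ B$ is the minimum element of $\Tc\cap\Lf_{\geq C}$ with respect to $\preceq_B$. To prove this I would check directly from the definitions that $C\leq C\circ B$ (so that $C\circ B$ indeed lies in $\Tc\cap\Lf_{\geq C}$), that $S(B,C\circ B)=S(B,C)$, and that $S(B,C)\subseteq S(B,T)$ for every $T\in\Tc$ with $C\leq T$; the last inclusion holds because each $e\in S(B,C)$ has $s(C)_e\neq 0$, which forces $s(T)_e=s(C)_e$ and hence $e\in S(B,T)$. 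Thus $C\circ B\preceq_B T$ for all such $T$.

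The second step is to translate this into Coxeter-group language. Using the identifications $\Tc\simeq W$, $w\mapsto w(B)$, and $\Lf\simeq\Sigma(W)$ of Subsection~\ref{section_Coxeter_arrangements} — under which $\preceq_B$ becomes the right weak Bruhat order — the set $\Tc\cap\Lf_{\geq C}$ corresponds to the coset $c$ itself: a chamber $w'(B)$ satisfies $C\leq w'(B)$ precisely when the singleton $\{w'\}$ is $\geq c$ in $\Sigma(W)$, that is (reverse inclusion), when $w'\in c$. Therefore $f_B(C)$ corresponds to the minimum of $c$ in the right weak Bruhat order. Writing $c=wW_I$ and letting $v$ be its unique element of minimal length, the standard theory of parabolic quotients (\cite[Section~2.4]{BB}) gives $\ell(vu)=\ell(v)+\ell(u)$ for all $u\in W_I$, which says exactly that $v$ precedes every element $vu$ of $c$ in the right weak Bruhat order; hence $v$ is simultaneously the shortest element of $c$ and its minimum in the right weak order, and it is the element of $W$ corresponding to $f_B(C)$.

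The main obstacle, such as it is, is not any single deep step but the careful bookkeeping needed to line up the four descriptions in play — faces of $\Hf$, cells of $\Sigma(W)$, elements of $W$, and the order $\preceq_B$ versus the right weak Bruhat order — so that the elementary separation-set computation of the first step matches the coset-representative statement. The only genuine (if routine) calculations I anticipate are the identity $S(B,C\circ B)=S(B,C)$ and the inclusion $S(B,C)\subseteq S(B,T)$; everything else is definition-chasing or an appeal to facts already recorded in Subsection~\ref{section_Coxeter_arrangements} and in \cite{BB}.
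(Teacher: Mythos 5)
Your proposal is correct and follows essentially the same route as the paper: (i) is the composition-preservation statement of Lemma~\ref{lemma_star}, and (ii) is proved by identifying $f_B(C)=C\circ B$ as the $\preceq_B$-minimum of $\Tc\cap\Lf_{\geq C}$ and then transporting this through the identification of $\preceq_B$ with the right weak Bruhat order and of $\Tc\cap\Lf_{\geq C}$ with the coset $c$. You merely make explicit two points the paper compresses — the separation-set computation $S(B,C\circ B)=S(B,C)\subseteq S(B,T)$, and the fact (via $\ell(vu)=\ell(v)+\ell(u)$, i.e.\ \cite[Proposition~2.4.4]{BB}) that the shortest element of $c$ is its right-weak minimum — so no substantive difference.
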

In particular, part (ii) implies that, for the type~$A$ Coxeter
complex, the function $f_B$ defined here (for $B$ the chamber
corresponding to $1\in W$) is equal to the function $f$ defined at the
beginning of~\cite[Section~4]{Ehrenborg_Morel_Readdy}.
Note that the existence of a minimal element in every
standard coset is proved
in~\cite[Proposition~2.4.4]{BB}.

\begin{proof}[Proof of Proposition~\ref{prop_f_B}.]
Statement (i) follows immediately from Lemma~\ref{lemma_star}.

We now prove (ii).
By definition of the composition~$\circ$, the chamber $f_B(C)=C\circ B$
is the element of $\Tc\cap\Lf_{\geq C}$ closest to $B$ in the chamber
graph; in other words, it is the minimal element of $\Tc\cap\Lf_{\geq C}$
for the order $\preceq_B$; see Subsection~\ref{background}.
As we know that $\preceq_B$ corresponds 
to the right weak
Bruhat order on $W$ (see the discussion after Theorem~\ref{thm_W_finite}),
and as the elements of $W$ corresponding to the chambers of
$\Tc\cap\Lf_{\geq C}$ are the elements of the coset $c$,
the result follows.
\end{proof}

The link between the function $f_B$ and the shellings of
Theorem~\ref{thm_shelling_order} is established in the following
proposition. For the type~$A$ Coxeter complex, this result appeared
implicitly in the proof of~\cite[Proposition~4.1]{Ehrenborg_Morel_Readdy}.

\begin{subprop}
Let $B\in\Tc$, and let $B = T_1,T_2,\ldots,T_r = -B$ be a linear ordering of
$\Tc$ refining the partial order $\preceq_B$. Then for every
index $1 \leq i \leq r$ the fiber of $f_B$ over $T_i$ is given by
\begin{align}
f_B^{-1}(T_i)
& =
\{C\in\Lf : C\subset\overline{T}_i
-
(\overline{T}_1 \cup \overline{T}_2 \cupdots \overline{T}_{i-1})\}
\label{equation_fiber_of_f_B_1}
\\
& =
\{C\in\Lf :  C\leq T_i \text{ and }C\not\leq T_j \text{ for }1\leq j<i\}.
\label{equation_fiber_of_f_B_2}
\end{align}
\label{prop_fibers_of_f_B}
\end{subprop}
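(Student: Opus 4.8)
The plan is to first recognize $f_B(C)=C\circ B$ as the $\preceq_B$-minimal chamber of the star $\Tc\cap\Lf_{\geq C}$, then use the elementary fact that a linear extension of a poset lists the unique minimum of any subset before all the other elements of that subset, and finally rewrite the set-theoretic description~\eqref{equation_fiber_of_f_B_1} of the fiber in terms of the face order so that it matches~\eqref{equation_fiber_of_f_B_2}.

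For the first point (which was already invoked in the proof of Proposition~\ref{prop_f_B}(ii)) I would argue with sign vectors. Write $E(C)=\{e\in E:C\subset H_{\alpha_e}\}$. Since $s(C\circ B)_e$ equals $s(C)_e$ when $s(C)_e\neq 0$ and equals $s(B)_e$ otherwise, we have $C\leq C\circ B$, so $C\circ B\in\Tc\cap\Lf_{\geq C}$. If $T\in\Tc\cap\Lf_{\geq C}$, then $s(T)_e=s(C)_e$ for every $e\in E-E(C)$, so $S(B,T)\cap(E-E(C))=\{e\in E-E(C):s(B)_e\neq s(C)_e\}$ is independent of $T$; on the other hand $S(B,C\circ B)\cap E(C)=\varnothing$, because $s(C\circ B)_e=s(B)_e$ for $e\in E(C)$. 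Hence $S(B,C\circ B)\subseteq S(B,T)$ for every $T\in\Tc\cap\Lf_{\geq C}$, which means precisely that $C\circ B$ is the unique $\preceq_B$-minimum of $\Tc\cap\Lf_{\geq C}$. Because $T_1,\ldots,T_r$ is a linear extension of $\preceq_B$ and $C\circ B\preceq_B T$ with $C\circ B\neq T$ for every other $T$ in the star, the chamber $C\circ B$ occurs in the list $T_1,\ldots,T_r$ strictly before all the other chambers of $\Tc\cap\Lf_{\geq C}$; equivalently, $f_B(C)=T_i$ if and only if $C\leq T_i$ and $C\not\leq T_j$ for $1\leq j<i$, which is the content of~\eqref{equation_fiber_of_f_B_2}.

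It then remains to match~\eqref{equation_fiber_of_f_B_1} with~\eqref{equation_fiber_of_f_B_2}. For a face $C$ and a chamber $T_j$, a point $x\in C$ lies in $\overline{T}_j$ if and only if $\sign((\alpha_e,x))=s(C)_e\in\{s(T_j)_e,0\}$ for every $e\in E$, a condition that does not involve the choice of $x\in C$; thus $C\cap\overline{T}_j$ equals $C$ when $C\leq T_j$ and is empty otherwise. Consequently $C\subset\overline{T}_i-(\overline{T}_1\cupdots\overline{T}_{i-1})$ holds if and only if $C\leq T_i$ and $C\cap\overline{T}_j=\varnothing$, that is $C\not\leq T_j$, for all $1\leq j<i$, and this is exactly~\eqref{equation_fiber_of_f_B_2}. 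I do not expect a serious obstacle here: the only point requiring care is the dichotomy in this last step, namely that $C\cap\overline{T}_j$ is either all of $C$ or empty, which is where the combinatorics of the face poset is used; the remainder is bookkeeping with sign vectors and with the definition of a linear extension.
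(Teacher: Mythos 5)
Your proof is correct and follows essentially the same route as the paper's: the heart in both cases is the sign-vector/separation-set computation showing that $C\circ B$ is the $\preceq_B$-minimum of $\Tc\cap\Lf_{\geq C}$, combined with the fact that the listing $T_1,\ldots,T_r$ refines $\preceq_B$. The only differences are organizational — you isolate the minimality statement once and deduce both inclusions of~\eqref{equation_fiber_of_f_B_2} from it, and you spell out the dichotomy $C\cap\overline{T}_j\in\{C,\varnothing\}$ that the paper treats as immediate from the definition of $\leq$.
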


\begin{proof}
The equivalence between equalities~\eqref{equation_fiber_of_f_B_1}
and~\eqref{equation_fiber_of_f_B_2}
is an immediate consequence of the definition of
the order $\leq$ on $\Lf$. Let us prove
equality~\eqref{equation_fiber_of_f_B_2}.

Let $C\in f_B^{-1}(T_i)$, that is, $C\circ B=T_i$.
In particular, we have $C\leq C\circ B=T_i$.
Suppose that $T\in\Tc$ is another chamber such that $C\leq T$.
Then for every $e\in S(B,T_i)$ we have $s(T_i)_e \neq s(B)_e$, but
$s(C\circ B)_e=s(T_i)_e$, so $0 \neq s(C)_e=s(T_i)_e$. As $C\leq T$, this
implies that 
$s(T)_e = s(C)_e = s(T_i)_e \neq s(B)_e$,
hence that $e\in S(B,T)$.
So we have proved that $S(B,T_i)\subseteq S(B,T)$, which means that
$T_i\preceq_B T$. In particular, if $1 \leq j \leq i-1$ then $C\not\leq T_j$.

Conversely, let $C\in\Lf$ be such that
$C\leq T_i$ and $C\not\leq T_j$ for $1\leq j<i$, and let $T=C\circ B$.
If $e\in S(B,T)$ then $0 \neq s(C)_e=s(T)_e$.
As $C\leq T_i$, this implies that $s(C)_e = s(T_i)_e$, so
$s(T_i)_e = s(C)_e = s(T)_e  \neq  s(B)_e$,
that is, $e\in S(B,T_i)$.
So we have proved that $S(B,T) \subseteq S(B,T_i)$, which means
that $T\preceq_B T_i$. Hence there exists an index $1 \leq i' \leq i$ such
that $T=T_{i'}$. As $C\leq T$ and $C\not\leq T_j$ for $1\leq j<i$, we must
have $i'=i$, that is, $f_B(C) = C\circ B = T = T_i$.
\end{proof}

\subsection{A condition on hyperplane arrangements}
\label{section_condition_A}

We now introduce a geometric condition on the hyperplane arrangement $\Hf$ that
will imply the shellability of the weighted complex.

\begin{conditionA}
Denote by \setword{(A)}{(A)}
the following condition on the family
$(\alpha_e)_{e \in E}$ (or the corresponding arrangement):
For every $T\in\Tc$ and for every $e \in E$ such that $S = \overline{T}\cap
H_{\alpha_e}$ is of dimension $\dim(V)-1$, that is, $S$ is a facet of the
convex cone $\overline{T}$, the following inclusions hold:
\begin{align*}
T\subseteq\mathring{S}+\R_{> 0}\alpha_e \text{ if } T\subseteq H^+_{\alpha_e}, \\
T\subseteq\mathring{S}+\R_{< 0}\alpha_e \text{ if } T\subseteq H^-_{\alpha_e},
\end{align*}
where $\mathring{S}$ is the relative interior of the cone $S$, that is, the interior
of $S$ in $\Span(S)$.
\end{conditionA}

Geometrically, Condition~\ref{(A)} means that if $T\in\Tc$
then the dihedral angle between any two adjacent facets 
(facets whose intersection is a face of
dimension $\dim(V)-2$)
of the convex polyhedral cone~$\overline{T}$ is acute, 
that is, less than or equal to $\pi/2$.

\begin{subprop}
\label{prop_crossing_wall}
Suppose that the arrangement $\Hf$ satisfies Condition~\ref{(A)}.
Let $T,T'\in\Tc$ and $e \in E$ such that $S(T,T')=\{e\}$,
the inner product $(\alpha_e,\lambda)$ is nonnegative 
and the inclusion $T' \subset H^-_{\alpha_e}$ holds.
Then $T'\in\Lf_{\lambda}$ implies that $T\in\Lf_\lambda$.
\end{subprop}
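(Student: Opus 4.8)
The plan is to use Condition~\ref{(A)} to express every point of $T$ as a point in the relative interior of the common wall of $T$ and $T'$ plus a positive multiple of $\alpha_e$, and then to estimate the value of the linear functional $(\lambda,\cdot)$ on such a point.

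First I would record the geometry of the adjacent pair $T,T'$. Since $S(T,T')=\{e\}$, the chambers $T$ and $T'$ lie on the same side of every hyperplane $H_{\alpha_f}$ with $f\neq e$ and on opposite sides of $H_{\alpha_e}$; as $T'\subseteq H^-_{\alpha_e}$ by hypothesis, this forces $T\subseteq H^+_{\alpha_e}$. A short sign-vector computation then identifies the set $S=\overline{T}\cap H_{\alpha_e}$ with $\overline{T}\cap\overline{T'}$: one inclusion is clear, and conversely any $x\in\overline{T}\cap H_{\alpha_e}$ satisfies $(\alpha_e,x)=0$ and $\sign((\alpha_f,x))\in\{0,s(T)_f\}=\{0,s(T')_f\}$ for $f\neq e$, hence $x\in\overline{T'}$. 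Taking the point where a segment from a point of $T$ to a point of $T'$ meets $H_{\alpha_e}$ shows that $S$ has dimension $\dim(V)-1$, so $S$ is a common facet of $\overline{T}$ and of $\overline{T'}$; in particular $\mathring{S}\subseteq\overline{T'}$ (indeed $\mathring{S}$ is the face of $\Hf$ lying in $H_{\alpha_e}$ that is below both $T$ and $T'$).

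Since $S=\overline{T}\cap H_{\alpha_e}$ is a facet of $\overline{T}$ with $T\subseteq H^+_{\alpha_e}$, and $\Hf$ satisfies Condition~\ref{(A)}, Condition~\ref{(A)} applies to the pair $(T,e)$ and yields
\[T\subseteq\mathring{S}+\R_{>0}\,\alpha_e.\]
Now assume $T'\in\Lf_\lambda$, i.e.\ $T'\subseteq\overline{H_\lambda^+}$; since $\overline{H_\lambda^+}$ is closed this gives $\overline{T'}\subseteq\overline{H_\lambda^+}$, so $(\lambda,y)\geq 0$ for every $y\in\mathring{S}\subseteq\overline{T'}$. For an arbitrary $x\in T$ I would write $x=y+t\,\alpha_e$ with $y\in\mathring{S}$ and $t>0$, and then
\[(\lambda,x)=(\lambda,y)+t\,(\alpha_e,\lambda)\geq 0,\]
using $(\lambda,y)\geq 0$, $t>0$, and the hypothesis $(\alpha_e,\lambda)\geq 0$. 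As $x\in T$ was arbitrary, $T\subseteq\overline{H_\lambda^+}$, that is, $T\in\Lf_\lambda$.

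The only step that requires a little care is the identification $S=\overline{T}\cap H_{\alpha_e}=\overline{T}\cap\overline{T'}$ together with the verification that it has dimension $\dim(V)-1$: this is exactly what allows us to invoke Condition~\ref{(A)} for $(T,e)$ and what guarantees $\mathring{S}\subseteq\overline{T'}$. Once this is in place, the rest is the one-line estimate above, so I do not expect any genuine obstacle.
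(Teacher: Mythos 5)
Your proof is correct and follows essentially the same route as the paper: identify $S=\overline{T}\cap\overline{T'}=\overline{T}\cap H_{\alpha_e}$ as a common facet, invoke Condition~\ref{(A)} to write each point of $T$ as a point of $\mathring{S}$ plus a positive multiple of $\alpha_e$, and conclude by linearity using $(\alpha_e,\lambda)\geq 0$. The only cosmetic difference is that you evaluate $(\lambda,\cdot)$ at a point of $\mathring{S}\subseteq\overline{T'}$, while the paper shifts further to a point $x-c\,\alpha_e\in T'$ via the second inclusion of Condition~\ref{(A)}; both yield the same one-line estimate.
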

\begin{proof}
The hypothesis implies that
$\overline{T}\cap\overline{T}'
=
\overline{T}\cap H_{\alpha_e}
=
\overline{T}'\cap H_{\alpha_e}$.
We denote this intersection by~$S$. It is a facet of both $\overline{T}$ and
$\overline{T}'$. By Condition~\ref{(A)}, we have
$T \subset \mathring{S}+\R_{> 0}\alpha_e$
and 
$T' \subset \mathring{S}+\R_{< 0}\alpha_e$.
In particular, if $x\in T$ then there exists $c>0$ such that
$x - c \cdot \alpha_e \in T'$. Then we have
$(x,\lambda)=(x- c \cdot \alpha_e,\lambda)+c \cdot (\alpha_e,\lambda)\geq 0$.
This implies that $T\subset\overline{H_\lambda^+}$,
that is, $T\in\Lf_{\lambda}$.
\end{proof}

\begin{subcor}
Suppose that the arrangement $\Hf$ satisfies Condition~\ref{(A)}.
If $(\lambda,\alpha_e)\geq 0$ for every $e \in E$ and if
there exists $B\in\Tc$ such that $B\subset H^+_{\alpha_e}$
for every $e \in E$, then $\Tc\cap \Lf_{\lambda}$ is a lower order
ideal in $\Tc_B$.
More generally, if $C\in\Lf$ and $E(C)=\{e\in E :  C\subset H_{\alpha_e}\}$,
if $(\lambda,\alpha_f)\geq 0$ for every
$f\in E(C)$ and if
there exists $B\in\Tc\cap\Lf_{\geq C}$ such that $B\subset H^+_{\alpha_f}$
for every $f\in E(C)$, then $\Tc\cap\Lf_{\lambda,\geq C}$ is a lower order
ideal in $\Tc_B$.
\label{cor_lower_order_ideal}
\end{subcor}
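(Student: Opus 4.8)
The plan is to prove both assertions at once: the first is the special case of the second obtained by letting $C$ be the minimal face $V_0=\bigcap_{e\in E}H_{\alpha_e}$ of $\Lf$, since then $E(C)=E$, $\Lf_{\geq C}=\Lf$ and $\Lf_{\lambda,\geq C}=\Lf_\lambda$. Since $B\in\Tc\cap\Lf_{\geq C}$, Remark~\ref{rmk_star_convex} already gives that $\Tc\cap\Lf_{\geq C}$ is a lower order ideal in $\Tc_B$; hence it suffices to show that if $T\in\Tc\cap\Lf_{\lambda,\geq C}$ and $T''\in\Tc\cap\Lf_{\geq C}$ satisfy $T''\preceq_B T$, then $T''\in\Lf_\lambda$. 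I would argue by induction on $|S(T,T'')|$, the case $T=T''$ being trivial.

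For the inductive step I would pick a minimal gallery in the chamber graph from $T$ to $T''$; by the standard properties of such galleries (see \cite[Section~4.2]{OM}) its first edge crosses a wall $H_{\alpha_e}$ of $T$ with $e\in S(T,T'')$. Since $T,T''\in\Lf_{\geq C}$, statement~(iii) of Lemma~\ref{lemma_star} forces $e\in E(C)$. Let $U$ be the chamber with $S(T,U)=\{e\}$. Then $U\in\Lf_{\geq C}$, because $C\leq T$ and $s(C)_e=0$ (as $e\in E(C)$) give $C\leq U$ as well; moreover a short computation with sign vectors, using the identity $S(T,T'')=S(B,T)\setminus S(B,T'')$ that follows from $T''\preceq_B T$, shows that $T''\preceq_B U$ and $|S(U,T'')|=|S(T,T'')|-1$. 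Granting in addition that $U\in\Lf_\lambda$, the induction hypothesis applied to $U$ and $T''$ yields $T''\in\Lf_\lambda$, as desired.

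Thus the only nontrivial point is to show $U\in\Lf_\lambda$, and this is exactly where Condition~\ref{(A)} and the positivity hypotheses enter. Because $e\in S(B,T)$ and $B\subset H^+_{\alpha_e}$ (here I use the hypothesis that $B\subset H^+_{\alpha_f}$ for all $f\in E(C)$), the chamber $T$ lies in $H^-_{\alpha_e}$ while $U$ lies in $H^+_{\alpha_e}$; and $(\lambda,\alpha_e)\geq 0$ by the hypothesis on $E(C)$. Proposition~\ref{prop_crossing_wall}, applied with $T$ playing the role of the chamber on the negative side of $H_{\alpha_e}$, then deduces $U\in\Lf_\lambda$ from $T\in\Lf_\lambda$. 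The main obstacle has, in effect, already been cleared by Proposition~\ref{prop_crossing_wall}; within the present argument no step is deeper than this single invocation, together with the gallery facts from \cite[Section~4.2]{OM} and the separation-set bookkeeping, so I expect no genuine difficulty.
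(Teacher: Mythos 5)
Your proof is correct and follows essentially the same route as the paper's: both reduce the comparison $T''\preceq_B T$ to single wall-crossings (you via induction along a minimal gallery, the paper via covering relations in the graded poset $\Tc_B$), use Lemma~\ref{lemma_star}(iii) to place the crossed wall's index in $E(C)$, and then invoke Proposition~\ref{prop_crossing_wall} together with Remark~\ref{rmk_star_convex}. The differences are only organizational, not mathematical.
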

\begin{proof}
It suffices to prove the second statement.
Let $T,T'$ be
such that $S(B,T)\subset S(B,T')$ and $T' \in \Lf_{\lambda,\geq C}$.
We want to show that $T\in\Lf_{\lambda,\geq C}$. As $\Tc_B$
is a graded poset and the intersection $\Tc\cap\Lf_{\geq C}$ is a lower order ideal in
$\Tc_B$ (see Remark~\ref{rmk_star_convex}), 
we know that $T\in\Tc\cap\Lf_{\geq C}$, and
it suffices to treat the case where $S(T',B)-S(T,B)$
is a singleton. 
Let $f$ be the single index of $S(B,T')-S(B,T)$.
As $B,T'\in\Tc\cap\Lf_{\geq C}$, we have $f\in E(C)$ by
Lemma~\ref{lemma_star}(iii), so $B\subset H^+_{\alpha_f}$.
As $f \in S(B,T')-S(B,T)$, we have $T'\subset H^-_{\alpha_f}$
and $T\subset H^+_{\alpha_f}$. Also, as $f \in E(C)$, we have
$(\lambda,\alpha_f) \geq 0$.
So we may
apply Proposition~\ref{prop_crossing_wall}, and we obtain that $T\in\Lf_{\lambda}$.
\end{proof}

\begin{subcor}
Suppose that the arrangement $\Hf$ satisfies Condition~\ref{(A)}.
Then the complex~$\Sigma(\Lf_\lambda)$ is shellable.
Moreover, there exists a shelling order on its chambers
which is an initial shelling of~$\Sigma(\Lf)$.
In particular, if $\lambda \neq 0$ then $\Sigma(\Lf_\lambda)$
is a shellable $PL$ ball of dimension $\dim(V/V_0)-1$.
\label{cor_shelling_order}
\end{subcor}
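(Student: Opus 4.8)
The plan is to realize $\Sigma(\Lf_\lambda)$ as an initial segment of a shelling of $\Sigma(\Lf)$, using Theorem~\ref{thm_shelling_order} to shell $\Sigma(\Lf)$ and Corollary~\ref{cor_lower_order_ideal} to guarantee that the chambers lying in $\Lf_\lambda$ can be listed first. First I would reduce to the essential case. If $\lambda\notin V_0^\perp$ then the minimal face $V_0$ of $\Lf$ is not contained in $\overline{H_\lambda^+}$, hence neither is any face, so $\Lf_\lambda=\varnothing$ and there is nothing to prove; and if $\lambda\in V_0^\perp$, then, just as in the proof of Lemma~\ref{lemma_psi_inessential_arrangement}, the projection $\pi:V\fl V/V_0$ induces an isomorphism of posets $\Lf_\lambda\iso(\Lf(\Hf/V_0))_{\pi(\lambda)}$, hence an isomorphism of the associated polytopal complexes, and since each $\alpha_e$ lies in $V_0^\perp$ one checks directly that Condition~\ref{(A)} is inherited by $\Hf/V_0$. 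So we may assume $\Hf$ is essential, i.e.\ $V_0=\{0\}$ and $\mathbb{S}$ is the unit sphere in $V$ of dimension $\dim(V)-1$. If $\lambda=0$ then $\overline{H_\lambda^+}=V$, so $\Lf_\lambda=\Lf$ and the assertion is just Theorem~\ref{thm_shelling_order}; so assume also $\lambda\neq 0$.

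Next I would choose the base chamber. Let $F$ be the unique face of $\Hf$ containing $\lambda$ and let $B\in\Tc$ be any chamber with $F\leq B$; after replacing some of the $\alpha_e$ by $-\alpha_e$ we may assume $B\subseteq H^+_{\alpha_e}$ for every $e\in E$. Because $F\leq B$ we have $s(F)_e\in\{0,+\}$, and therefore $(\lambda,\alpha_e)=(\alpha_e,\lambda)\geq 0$, for every $e$. Thus both hypotheses of Corollary~\ref{cor_lower_order_ideal} are met, and we conclude that $\Tc\cap\Lf_\lambda$ is a lower order ideal of the chamber poset $\Tc_B$. Since $\{0\}\in\Lf_\lambda$, Proposition~\ref{prop_weighted_complex_basic}(ii) shows that this ideal is nonempty, so it contains the minimum $B$ of $\Tc_B$; in particular $B\in\Lf_\lambda$. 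Proposition~\ref{prop_weighted_complex_basic}(ii) also shows that every face of $\Lf_\lambda$ lies below some chamber of $\Tc\cap\Lf_\lambda$, so the maximal faces of $\Lf_\lambda$ are precisely the elements of $\Tc\cap\Lf_\lambda$, and $\Sigma(\Lf_\lambda)$ is pure of dimension $\dim(V)-1$ with facets indexed by $\Tc\cap\Lf_\lambda$.

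Now set $m=|\Tc\cap\Lf_\lambda|$ and choose a linear extension $B=T_1,T_2,\ldots,T_r=-B$ of $\preceq_B$ in which $T_1,\ldots,T_m$ enumerate $\Tc\cap\Lf_\lambda$; this is possible precisely because $\Tc\cap\Lf_\lambda$ is a lower order ideal of $\Tc_B$. By Theorem~\ref{thm_shelling_order}, $T_1,\ldots,T_r$ is a shelling order of $\Sigma(\Lf)$, and I claim that its truncation $T_1,\ldots,T_m$ is a shelling order of $\Sigma(\Lf_\lambda)$. The key point is that $\Lf_\lambda$ is a lower order ideal of $\Lf$ (Proposition~\ref{prop_weighted_complex_basic}(i)), so for each $j\leq m$ the whole closed cell $\overline{T_j}$, and in particular its boundary $\partial T_j$, already lies in $\Sigma(\Lf_\lambda)$; hence for $1<j\leq m$ the subcomplex $\overline{T_j}\cap(\overline{T_1}\cupdots\overline{T_{j-1}})$ is literally the same object whether formed inside $\Sigma(\Lf_\lambda)$ or inside $\Sigma(\Lf)$, and the shelling axiom at $T_j$ — a statement only about $\partial T_j$ and the faces $T_1,\ldots,T_j$ — is inherited from the shelling of $\Sigma(\Lf)$. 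The requirement that $\partial T_1$ be shellable is inherited for the same reason. Hence $\Sigma(\Lf_\lambda)$ is shellable, via a shelling order on its chambers that is an initial segment of one for $\Sigma(\Lf)$.

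Finally, since $\lambda\neq 0$ the chamber $-B$ cannot lie in $\Lf_\lambda$: otherwise both $\overline{B}$ and $\overline{-B}=-\overline{B}$ would lie in $\overline{H_\lambda^+}$, forcing $(\lambda,x)=0$ for all $x\in\overline{B}$ and hence $\lambda=0$ (as $\overline{B}$ spans $V$). Therefore $m<r$, so $\Sigma(\Lf_\lambda)=\overline{T_1}\cupdots\overline{T_m}$ is a proper, nonempty initial segment of a shelling of the shellable sphere $\Sigma(\Lf)$; by the standard facts that such a segment is a shellable ball of full dimension and that shellable balls are $PL$ balls (see for instance~\cite[Chapter~4]{OM}), $\Sigma(\Lf_\lambda)$ is a shellable $PL$ ball of dimension $\dim(V)-1=\dim(V/V_0)-1$. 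The only genuinely delicate point in this argument is setting up the hypotheses of Corollary~\ref{cor_lower_order_ideal}, which is why the base chamber must be chosen so that $\lambda$ lies in its closure and the normals $\alpha_e$ oriented accordingly; after that, the rest is a formal manipulation of shellings resting on the fact that the weighted complex, being a lower order ideal, already contains every closed cell of each of its facets.
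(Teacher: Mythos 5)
Your proof is correct, and at its core it is the same argument as the paper's: Condition~\ref{(A)} feeds into Corollary~\ref{cor_lower_order_ideal} to show that $\Tc\cap\Lf_\lambda$ is a lower order ideal of $\Tc_B$, and then Theorem~\ref{thm_shelling_order} makes any linear extension listing that ideal first an initial shelling of $\Sigma(\Lf)$ whose truncation shells $\Sigma(\Lf_\lambda)$; the lower-order-ideal property of $\Lf_\lambda$ in $\Lf$ (Proposition~\ref{prop_weighted_complex_basic}(i)) is exactly what makes the truncation legitimate, as you say. The one place where you genuinely deviate is the production of the base chamber and the signs: the paper fixes $\varepsilon_e$ with $(\lambda,\varepsilon_e\alpha_e)\geq 0$ and takes $B$ to be the chamber containing $x_0+c\lambda$ for $c\gg 0$, asserting directly that this chamber lies on the nonnegative side of $H_\lambda$, whereas you take $B$ to be any chamber whose closure contains $\lambda$, orient all normals toward $B$, and then recover $B\in\Lf_\lambda$ a posteriori from the facts that the ideal $\Tc\cap\Lf_\lambda$ is nonempty (Proposition~\ref{prop_weighted_complex_basic}(ii) applied to the minimal face) and that a nonempty lower order ideal of $\Tc_B$ contains its minimum $B$. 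This variant is self-contained and avoids having to justify the geometric claim about the chamber containing $x_0+c\lambda$, at the cost of first reducing to the essential case (equivalently $\lambda\in V_0^\perp$) — a reduction you carry out correctly, and which is anyway implicitly needed for the PL-ball assertion, since $\Lf_\lambda=\varnothing$ when $\lambda\notin V_0^\perp$. Your explicit purity check, the truncation argument, and the appeal to the standard fact that a proper nonempty initial shelling segment of a shellable PL sphere is a shellable PL ball are all fine and match the level of detail the paper leaves implicit.
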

\begin{proof}
If $\lambda=0$ then $\Lf_\lambda=\Lf$ and $\Sigma(\Lf_\lambda)=\Sigma(\Lf)$,
and the corollary is just Theorem~\ref{thm_shelling_order}.

We now assume that $\lambda \neq 0$.
By Theorem~\ref{thm_shelling_order} and
Corollary~\ref{cor_lower_order_ideal}, it suffices to find a family
of signs $(\varepsilon_e) \in \{\pm 1\}^E$ such that:
\begin{itemize}
\item[--] for every $e \in E$, we have $(\lambda,\varepsilon_e\alpha_e)\geq 0$;
\item[--] there exists a chamber $B\in\Lf_\lambda$ with $B\subset
H^+_{\varepsilon_e\alpha_e}$ for every $e \in E$.
\end{itemize}
Indeed, Corollary~\ref{cor_lower_order_ideal} will then imply that
$\Tc\cap\Lf_\lambda$ is a lower order ideal in $\Tc_B$, so it will be an
initial segment for at least one linear extension of $\preceq_B$.

Let $F = \{e\in E : (\lambda,\alpha_e) \neq 0\}$. For every $e\in F$,
we choose $\varepsilon_e \in \{\pm 1\}$ such that
$(\lambda,\varepsilon_e\alpha_e)>0$.
Let $x_0$ be a point in $V$ not on any hyperplane of $\Hf$,
that is, $x_0\in V - \bigcup_{e \in E} H_{\alpha_e}$.
Then for every $e \in F$, the inner product
$(x_0 + c \cdot \lambda,\varepsilon_e\alpha_e)
= (x_0,\varepsilon_e\alpha_e)+c \cdot (\lambda,\varepsilon_e\alpha_e)
$ tends to $+\infty$ as $c$ tends to $+\infty$, so it is positive
for $c$ large enough. Similarly, the inner product
$(x_0+c \cdot \lambda,\lambda)=(x_0,\lambda)+c \cdot (\lambda,\lambda)$ is positive
for $c$ large enough. On the other hand, if $e\in E -  F$, then
$(x_0+c \cdot \lambda,\alpha_e)=(x_0,\alpha_e) \neq 0$ for every $c\in\R$.
So, if $c\in\R$ is large enough, then $x=x_0+c \cdot \lambda\in V - 
\bigcup_{e \in E} H_{\alpha_e}$, and $x$ is in 
$H^+_{\varepsilon_e\alpha_e}$ for every $e\in F$ and in $H^+_\lambda$. 
In particular, there exists a chamber
$B\in\Tc$ such that $x\in B$, and $B$ is included in
$H^+_{\varepsilon_e\alpha_e}$ for every $e\in F$ and in $H^+_\lambda$. 
Now, if $e\in E -  F$, we choose $\varepsilon_e\in\{\pm 1\}$ such that
$B\subset H^+_{\varepsilon_e\alpha_e}$.
As $(\lambda,\alpha_e)=0$, we clearly
have $(\lambda,\varepsilon_e\alpha_e)\geq 0$.
\end{proof}

\subsection{The case of Coxeter arrangements}
\label{section_weighted_complex_Coxeter}

\begin{sublemma}\label{lemma_Coxeter_A}
Every Coxeter arrangement $\Hf$ satisfies Condition~\ref{(A)}.
\end{sublemma}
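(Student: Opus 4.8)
The goal is to verify Condition~(A) for an arbitrary Coxeter arrangement $\Hf$. By the definition of a Coxeter arrangement and Theorem~\ref{thm_Coxeter_arrangements}, after passing to the quotient $V/\bigcap_{e\in E}H_{\alpha_e}$ (which changes neither the chambers nor the angles between hyperplanes), we may assume $\Hf$ is the essential arrangement associated to a finite Coxeter system $(W,S)$ with canonical representation on $V$, pseudo-root system $\Phi$, positive system $\Phi^+$, and base chamber $B=\bigcap_{s\in S}H_{e_s}^+$. Since $W$ acts transitively on the chambers $\Tc$ and preserves both the arrangement and the inner product, it suffices to check the two displayed inclusions of Condition~(A) for a single chamber, say $T=B$; the general case follows by applying a suitable $w\in W$. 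Moreover, since $B\subseteq H^+_{\alpha}$ for every $\alpha\in\Phi^+$, only the first inclusion is relevant, and because $W$ also acts on the facets of $\overline{B}$, we may further reduce to checking that $B\subseteq\mathring{S}+\R_{>0}e_s$ where $S=\overline{B}\cap H_{e_s}$ is the facet corresponding to a fixed simple reflection $s\in S$.

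\emph{Main step.} The heart of the argument is the following description of $\overline{B}$ in terms of the dual basis. Since the Gram matrix $\bigl((e_s,e_t)\bigr)_{s,t\in S}$ is positive definite (Theorem~\ref{thm_W_finite}), the simple pseudo-roots $(e_s)_{s\in S}$ form a basis of $V$; let $(\varpi_s)_{s\in S}$ be the dual basis with respect to $(\cdot,\cdot)$, so that $(\varpi_s,e_t)=\delta_{s,t}$. Then $\overline{B}=\{x\in V:(x,e_t)\geq 0\ \forall t\}=\sum_{t\in S}\R_{\geq 0}\varpi_t$, the facet $S$ associated to $s$ is $\overline{S}=\sum_{t\neq s}\R_{\geq 0}\varpi_t$, and $\mathring{S}=\sum_{t\neq s}\R_{>0}\varpi_t$ (together with $\{0\}$ if $|S|=1$). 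The key inequality to prove is that the coordinate of $e_s$ along $\varpi_t$ in this basis, namely $(e_s,e_t)=-\cos(\pi/m_{s,t})$, is $\leq 0$ for every $t\neq s$: this holds because $m_{s,t}\geq 2$ forces $\pi/m_{s,t}\in(0,\pi/2]$, hence $\cos(\pi/m_{s,t})\geq 0$. Writing $e_s=\sum_{t\in S}(e_s,e_t)\varpi_t=\varpi_s+\sum_{t\neq s}(e_s,e_t)\varpi_t$, we see that for any $x=\sum_{t\in S}c_t\varpi_t\in B$ (so all $c_t>0$) and any small $\delta>0$, the element $x-\delta e_s=(c_s-\delta)\varpi_s+\sum_{t\neq s}(c_t-\delta(e_s,e_t))\varpi_t$ has the $\varpi_t$-coordinate for $t\neq s$ equal to $c_t-\delta(e_s,e_t)\geq c_t>0$, so $x-\delta e_s\in\mathring{S}$ once $\delta<c_s$; equivalently, $x\in\mathring{S}+\R_{>0}e_s$. (When $|S|=1$ there is no condition $t\neq s$ and $\mathring S=\{0\}$, and the statement is immediate.) This gives the required inclusion for $T=B$.

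\emph{Conclusion.} Transporting by $W$: if $T\in\Tc$ is arbitrary, write $T=w(B)$ for the unique $w\in W$, and let $e\in E$ be such that $\overline{T}\cap H_{\alpha_e}$ is a facet of $\overline{T}$; then $\alpha_e=\pm w(e_s)$ for a unique $s\in S$, and $w$ carries the facet $S_0=\overline{B}\cap H_{e_s}$ of $\overline B$ to $S=\overline T\cap H_{\alpha_e}$, carries $\mathring{S_0}$ to $\mathring S$, and carries $e_s$ to $\pm\alpha_e$ with the sign chosen so that $T\subseteq H^{+}_{\pm\alpha_e}$, since $B\subseteq H^+_{e_s}$. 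Applying $w$ to the inclusion $B\subseteq\mathring{S_0}+\R_{>0}e_s$ therefore yields exactly the inclusion $T\subseteq\mathring S+\R_{>0}\alpha_e$ (when $T\subseteq H^+_{\alpha_e}$) or $T\subseteq\mathring S+\R_{<0}\alpha_e$ (when $T\subseteq H^-_{\alpha_e}$), which is Condition~(A).

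\emph{Expected obstacle.} The only genuine content is the sign computation $(e_s,e_t)=-\cos(\pi/m_{s,t})\leq 0$ for $t\neq s$, i.e.\ that all off-diagonal Gram entries of the canonical form are nonpositive — this is precisely what makes the dihedral angles of $\overline B$ non-obtuse. Everything else is bookkeeping: identifying $\overline B$ with the cone spanned by the dual basis, and checking that the $W$-action intertwines all the relevant data. One minor care point is the degenerate case $|S|=1$ (rank one), where $\mathring S=\{0\}$; this is handled trivially but should be mentioned so the relative interior of a point is not misread.
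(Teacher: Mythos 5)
Your proof is correct in substance, and it takes a more self-contained route than the paper's. The paper disposes of the lemma in one line, by invoking the geometric reading of Condition~(A) stated just after its definition (the dihedral angle between adjacent facets of a chamber of a Coxeter arrangement is $\pi/m$ with $m\geq 2$, hence non-obtuse). You instead verify the defining inclusion directly: expand a simple pseudo-root in the basis dual to the simple pseudo-roots, use that the off-diagonal Gram entries $(e_s,e_t)=-\cos(\pi/m_{s,t})$ are nonpositive, and transport the resulting inclusion $B\subseteq\mathring{S}+\R_{>0}e_s$ to an arbitrary chamber by the $W$-action. What your approach buys is an actual proof of the inclusion rather than an appeal to the (unproved in the paper) equivalence between Condition~(A) and the dihedral-angle statement; the cost is the dual-basis and transport bookkeeping.

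Two small repairs. First, in the main computation the correct choice is $\delta=c_s$, not $\delta<c_s$: for $\delta<c_s$ the point $x-\delta e_s$ still has positive $\varpi_s$-coordinate, so it lies in $B$ and not in $\mathring{S}\subseteq H_{e_s}$; taking $\delta=c_s$ makes the $\varpi_s$-coordinate vanish while the remaining coordinates are $c_t-c_s(e_s,e_t)\geq c_t>0$, so $x=(x-c_s e_s)+c_s e_s\in\mathring{S}+\R_{>0}e_s$ as required. Second, your opening reduction identifies the given essential Coxeter arrangement with the canonical one via Theorem~\ref{thm_Coxeter_arrangements}, but that isomorphism is a priori only linear, whereas Condition~(A) is a metric condition on the normal directions $\alpha_e$; either note that the intertwiner can be chosen compatibly with the inner products, or, more simply, run the identical dual-basis argument directly in the given inner product space using the unit inward normals of a fixed chamber, whose pairwise inner products are $-\cos(\pi/m_{s,t})\leq 0$ by the standard theory of finite reflection groups. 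Neither point affects the validity of the overall argument.
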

\begin{proof}
In a Coxeter arrangement, 
the dihedral angle between any two adjacent facets 
is $\pi/n$, with $n\geq 2$.
\end{proof}

In particular, Corollaries~\ref{cor_lower_order_ideal}
and~\ref{cor_shelling_order} apply to Coxeter arrangements.
But
we can actually prove a stronger result
in this case.

We fix a Coxeter arrangement $\Hf$ on an inner product space $V$,
and we use the notation introduced above.
We say that
a vector $\lambda\in V$ is \emph{dominant} if
$(\lambda,\alpha)\geq 0$ for every $\alpha\in\Phi^+$.
\begin{sublemma}
Suppose that $\lambda\in V$ is dominant.
Denote by $B$ the chamber of $\Hf$ corresponding to $1\in W$.
Let $z,w \in W$ such that
$z \leq w$ in the strong Bruhat order of~$W$.
Then for every $x\in B$ the following inequality holds:
\[
(z^{-1}(\lambda),x)  \geq  (w^{-1}(\lambda),x) .
\]
\label{lemma_Bruhat_order_implies}
\end{sublemma}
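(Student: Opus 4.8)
The plan is to reduce to the case where $w$ covers $z$ in the strong Bruhat order, and then to a single reflection, using the defining cover relation of that order. First I would remark that it suffices to prove the inequality when $z \coveredby w$, since any relation $z \leq w$ in the strong Bruhat order is obtained by a chain of covers, and the desired inequality is transitive. So assume $w = t z$ where $t = u s u^{-1}$ is a reflection (for some $s \in S$, $u \in W$) and $\ell(w) = \ell(z) + 1$. Write $t = s_\beta$ for the positive pseudo-root $\beta \in \Phi^+$ such that reflection in $H_\beta$ equals $t$; then $z^{-1}(\lambda) - w^{-1}(\lambda) = z^{-1}(\lambda) - z^{-1} t(\lambda) = z^{-1}(\lambda - s_\beta(\lambda)) = z^{-1}\bigl(2(\beta,\lambda)\beta\bigr) = 2(\beta,\lambda)\,z^{-1}(\beta)$, using the reflection formula~\eqref{equation_canonical_representation}.

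So for $x \in B$ we need $(z^{-1}(\lambda) - w^{-1}(\lambda), x) = 2(\beta,\lambda)\,(z^{-1}(\beta), x) \geq 0$. Since $\lambda$ is dominant and $\beta \in \Phi^+$, the first factor $(\beta,\lambda)$ is nonnegative. For the second factor, the key point is the standard characterization of the cover relation in the strong Bruhat order in terms of roots (pseudo-roots): if $z \coveredby w = s_\beta z$ with $\beta \in \Phi^+$, then $z^{-1}(\beta) \in \Phi^+$. This is the pseudo-root-system analogue of~\cite[Proposition~4.4.4 and the surrounding discussion on $\ell(s_\beta w)$ vs.\ $\ell(w)$]{BB}: the condition $\ell(s_\beta z) > \ell(z)$ is equivalent to $z^{-1}(\beta) \in \Phi^+$, and the additional hypothesis that the lengths differ by exactly one forces the cover relation (it is automatic here since we assumed it). Given $z^{-1}(\beta) \in \Phi^+$, we have $(z^{-1}(\beta), x) > 0$ for every $x$ in the base chamber $B = \bigcap_{\alpha \in \Phi^+} H_\alpha^+$, by definition of $B$. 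Hence the product is nonnegative, completing the proof.

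I would then spell out the chain argument: given $z \leq w$ in the strong Bruhat order, pick a chain $z = z_0 \coveredby z_1 \coveredby \cdots \coveredby z_k = w$, and apply the single-cover inequality to each consecutive pair to get $(z_i^{-1}(\lambda), x) \geq (z_{i+1}^{-1}(\lambda), x)$ for all $x \in B$; summing (or rather, chaining) these gives $(z^{-1}(\lambda), x) \geq (w^{-1}(\lambda), x)$.

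The main obstacle I anticipate is purely bookkeeping: making sure that the root-theoretic characterization of Bruhat covers (the statement $z \coveredby s_\beta z$ with $\beta \in \Phi^+$ and $\ell$ increasing implies $z^{-1}(\beta) \in \Phi^+$) is available in the generality of pseudo-root systems rather than only for crystallographic root systems. The excerpt's Appendix on pseudo-root systems, together with the cited results~\cite[Section~2.1, Proposition~4.4.4]{BB} which hold for arbitrary Coxeter systems, should supply exactly this; the reflection computation and the dominance argument are then immediate. No delicate geometry is needed beyond the definition of the fundamental chamber $B$.
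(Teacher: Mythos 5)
Your proof is correct and follows essentially the same route as the paper: reduce to a single Bruhat cover $w=s_\beta z$, use the reflection formula to express $z^{-1}(\lambda)-w^{-1}(\lambda)$ as $2(\beta,\lambda)\,z^{-1}(\beta)$, and conclude from dominance together with the standard length criterion for positivity of the reflected pseudo-root. The paper merely writes the same computation from the other side, showing $w^{-1}(\alpha)\in\Phi^-$ so that the correction term $-2(\lambda,\alpha)(w^{-1}(\alpha),x)$ is nonnegative, which is equivalent to your statement that $z^{-1}(\beta)\in\Phi^+$.
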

\begin{proof}
We may assume that $w$ covers $z$, so that there exists
$s\in S$ and $u\in W$ such that $w=(usu^{-1})z$. Let $\alpha$ be the
unique pseudo-root of $\Phi^+$ such that $u(e_s)$ is a multiple (positive
or negative) of~$\alpha$. If $s_\alpha$ is the reflection across
$H_\alpha$, we have $usu^{-1}=s_\alpha$, and so $w=s_\alpha z$
and $s_\alpha w = z$.
Since the elements of~$\Phi^+$ are unit vectors,
$s_\alpha$ is given by the following formula:
$s_{\alpha}(\mu)
=
\mu - 2\cdot (\mu,\alpha) \cdot \alpha$
for $\mu \in V$.
Hence
\begin{align*}
(s_{\alpha} w)^{-1}(\lambda)
& =
(w^{-1} s_{\alpha})(\lambda)
=
w^{-1}(\lambda)
- 2\cdot (\lambda,\alpha) \cdot w^{-1}(\alpha), \\
\intertext{and so, if $x \in B$,}
((s_\alpha w)^{-1}\lambda,x)
& =
(w^{-1}(\lambda),x)
-
2\cdot(\lambda,\alpha)
\cdot
(w^{-1}(\alpha),x) .
\end{align*}
As $\lambda$ is dominant, we have $(\lambda,\alpha) \geq 0$.
By \cite[Equation~(4.25)]{BB}, we have
the equivalence $w^{-1}(\alpha) \in \Phi^+
\Longleftrightarrow \ell(w^{-1}s_\alpha) > \ell(w^{-1})$,
and~\cite[Proposition~1.4.2(iv)]{BB}
states that $\ell(v^{-1}) = \ell(v)$ for every $v\in W$.
Using these two facts 
and the condition $\ell(s_{\alpha} w)<\ell(w)$, we see that
$w^{-1}(\alpha) \in \Phi^-$.
Thus $(w^{-1}(\alpha),x) < 0$ by definition of $B$. 
Hence the term $-2\cdot (\lambda,\alpha) \cdot (w^{-1}(\alpha),x)$
is nonnegative, that is, 
$(z^{-1}(\lambda),x)=((s_{\alpha} w)^{-1}(\lambda),x) \geq (w^{-1}(\lambda),x)$.
\end{proof}

\begin{subprop}
\label{prop_strong_Bruhat}
Let $(W,S)$ be a Coxeter system, and let $\Hf=(H_\alpha)_{\alpha\in\Phi^+}$
be the associated hyperplane arrangement on the space $V$ of the canonical
representation of $(W,S)$.
Let $\lambda\in V$ be a dominant vector.
Then the set $W_\lambda$ of $w\in W$ such that the corresponding
chamber of $\Hf$ is in $\Tc\cap\Lf_\lambda$ is a lower order ideal
with respect to
the \emph{strong} Bruhat order on $W$.
\end{subprop}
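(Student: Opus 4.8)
The plan is to deduce Proposition~\ref{prop_strong_Bruhat} almost immediately from Lemma~\ref{lemma_Bruhat_order_implies}, which carries all the real content. First I would reformulate membership in $W_\lambda$ as an inequality. Recall that the chamber of $\Hf$ attached to $w\in W$ is $w(B)$, where $B=\bigcap_{\alpha\in\Phi^+}H_\alpha^+$ is the chamber corresponding to $1\in W$. Since $w$ acts orthogonally on $V$, for any $x\in B$ we have $(\lambda,w(x))=(w^{-1}(\lambda),x)$. Because $\Lf_\lambda$ is defined using the \emph{closed} half-space $\overline{H_\lambda^+}$ and $w(B)$ is open, the condition $w(B)\in\Lf_\lambda$, that is $w(B)\subseteq\overline{H_\lambda^+}$, is equivalent to $(\lambda,w(x))\geq 0$ for every $x\in B$, hence to $(w^{-1}(\lambda),x)\geq 0$ for every $x\in B$. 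Therefore $w\in W_\lambda$ if and only if $(w^{-1}(\lambda),x)\geq 0$ for all $x\in B$; no passage to closures is needed.

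Next I would verify the lower-order-ideal property. Let $w\in W_\lambda$ and let $z\in W$ with $z\leq w$ in the strong Bruhat order. By Lemma~\ref{lemma_Bruhat_order_implies} (which uses that $\lambda$ is dominant), for every $x\in B$ we have
\[
(z^{-1}(\lambda),x)\ \geq\ (w^{-1}(\lambda),x)\ \geq\ 0,
\]
the last inequality because $w\in W_\lambda$ by the reformulation of the previous paragraph. Hence $(z^{-1}(\lambda),x)\geq 0$ for all $x\in B$, so $z\in W_\lambda$. This shows that $W_\lambda$ is closed downward under the strong Bruhat order, i.e.\ it is a lower order ideal, which is the assertion.

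I do not expect any genuine obstacle at this stage: the only points to be careful about are the bookkeeping translating $\Tc\cap\Lf_\lambda$ into $W_\lambda$ via the bijection $\Tc\simeq W$ of Subsection~\ref{section_Coxeter_arrangements}, and the observation that testing only on the open chamber $B$ suffices. All of the difficulty has been pushed into Lemma~\ref{lemma_Bruhat_order_implies}, whose proof handles a single strong Bruhat cover $z\coveredby w=s_\alpha z$ by isolating the reflection $s_\alpha$ and combining dominance of $\lambda$ with the sign criterion $w^{-1}(\alpha)\in\Phi^-\iff\ell(s_\alpha w)<\ell(w)$ from~\cite{BB}.
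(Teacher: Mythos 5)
Your proposal is correct and follows essentially the same route as the paper: both reduce membership in $W_\lambda$ to the inequality $(w^{-1}(\lambda),x)\geq 0$ for all $x\in B$ and then apply Lemma~\ref{lemma_Bruhat_order_implies} to pass down the strong Bruhat order. The extra remark about the closed half-space versus the open chamber is a harmless elaboration of what the paper uses implicitly.
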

\begin{proof}
We denote by $B$ the chamber of $\Hf$ corresponding to $1\in W$. 
By definition of $W_\lambda$, an element $w$ of $W$ is in $W_\lambda$ if and
only if for every $x\in B$ we have $(\lambda,w(x))=(w^{-1}(\lambda),x)\geq 0$.
By Lemma~\ref{lemma_Bruhat_order_implies}, if $z,w \in W$
and $w$ is greater than $z$ in the strong Bruhat order then for
every $x\in B$, we have
$(z^{-1}(\lambda),x)  \geq  (w^{-1}(\lambda),x)$. If moreover
$w\in W_\lambda$, this immediately implies that $z \in W_\lambda$.
\end{proof}

\section{Concluding remarks}
\label{section_concluding_remarks}

As mentioned in the introduction, we are not aware
of whether 
there is a representation-theoretic interpretation
of the identity in
Theorem~\ref{thm_second_main} in general.
More precisely, what is the meaning
of the constants $\psi_{\Hf/C}(B,\lambda)$
for different values of $\lambda$?

The main results in this paper are in the setting of
Coxeter arrangements.
However, the sum~$\psi_\Hf$ makes sense for general hyperplane
arrangements, our original proof of Theorem~\ref{thm_second_main}
used an induction formula that linked the sum $\psi_\Hf$
to similar sums for subarrangements of the restricted arrangements
on the hyperplanes of $H$, and this induction formula is valid
for general hyperplane arrangements, and with some adaptions,
for oriented matroids.
In the paper \cite{EMR_pizza}, we use a similar type of induction
argument, this
time to calculate the alternating sum of another
valuation on the chambers of a hyperplane arrangement,
that is, the volume
of the intersection of the chamber with some set of finite volume.
Is there is some analogue
of $2$-structures for more general hyperplane arrangements?

It is natural to ask is there some analogue of
Theorem~\ref{theorem_2_structures_and_chambers} for Coxeter
systems with possible infinite Coxeter groups?

In Section~\ref{section_weighted_complex_shellable} we prove
that the weighted complexes of a hyperplane arrangement
are shellable under a geometric condition on the arrangement
that we call Condition~\ref{(A)}. This implies that the weighted complexes
are PL balls for arrangements satisfying Condition~\ref{(A)}.
Are the weighted complexes always PL balls?
By Remark~\ref{rmk_weighted_vs_bounded},
this extends a conjecture of Zaslavsky
(see~\cite[Chapter~I, Section~3C, p.\ 33]{Zaslavsky})
that the bounded complex of a simple hyperplane arrangement
is always a PL ball.
As a consequence to Corollary~\ref{cor_shelling_order},
we have the following result.
\begin{corollary}
Zaslavsky's conjecture holds
for affine arrangements obtained
by intersecting 
an affine hyperplane with
an arrangement satisfying Condition~\ref{(A)}.
\end{corollary}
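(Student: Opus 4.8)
The plan is to reduce the statement to Corollary~\ref{cor_shelling_order} via the standard ``coning'' dictionary relating affine arrangements to central arrangements equipped with a hyperplane at infinity. So let $\mathcal{A}$ be an affine arrangement obtained by intersecting an affine hyperplane $H$ with a central arrangement $\Hf=(H_{\alpha_e})_{e\in E}$ satisfying Condition~\ref{(A)}. We may assume $0\notin H$ (if $0\in H$, then $\mathcal A$ is central and its bounded complex is empty or a single point, so there is nothing to prove), and then $H=\{x\in V : (\lambda,x)=1\}$ for a unique nonzero vector $\lambda\in V$. Coning $\mathcal A$ from the origin recovers $\Hf$ on the closed half-space $\overline{H_\lambda^+}$, with $H_\lambda$ playing the role of the hyperplane at infinity: the faces of $\mathcal A$ are the nonempty intersections $C\cap H$ with $C\in\Lf(\Hf)$, the map $C\mapsto C\cap H$ is an isomorphism onto the face poset of $\mathcal A$, and the face $C\cap H$ is bounded exactly when $\overline{C}\cap H_\lambda\subseteq\{0\}$. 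In other words, $\mathcal A$ is precisely the affine arrangement underlying the affine oriented matroid attached to $\{H_\lambda\}\cup\Hf$ with distinguished hyperplane $H_\lambda$, in the sense of Remark~\ref{rmk_weighted_vs_bounded}.

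Next I would use that $\mathcal A$ is simple to ensure that $H_\lambda$ is in general position relative to the family $(H_{\alpha_e})_{e\in E}$ in the sense of Remark~\ref{rmk_weighted_vs_bounded}; this is a routine translation, since a linear dependence $\lambda\in\Span(\alpha_e : e\in F)$ with $|F|\le\dim(V)-1$ would force $(\lambda,\cdot)$ to vanish on the positive-dimensional subspace $\bigcap_{e\in F}H_{\alpha_e}$ and hence produce a forbidden degeneracy among the hyperplanes $H\cap H_{\alpha_e}$ of $\mathcal A$. By Remark~\ref{rmk_weighted_vs_bounded}, the bounded complex of $\mathcal A$ then coincides with the weighted complex $\Sigma(\Lf_\lambda)$ of $\Hf$. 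Since $\lambda\neq 0$ and $\Hf$ satisfies Condition~\ref{(A)}, Corollary~\ref{cor_shelling_order} gives that $\Sigma(\Lf_\lambda)$ is a shellable $PL$ ball of dimension $\dim(V/V_0)-1$. Combining the last two statements, the bounded complex of $\mathcal A$ is a $PL$ ball, which is exactly Zaslavsky's conjecture for $\mathcal A$.

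The part requiring the most care is the hypothesis matching in the middle step: one must pin down the precise meaning of a ``simple'' affine arrangement and check that it really entails the general-position condition on $H_\lambda$ needed by Remark~\ref{rmk_weighted_vs_bounded}, so that the bounded complex equals $\Sigma(\Lf_\lambda)$ rather than the a priori larger weighted complex; one should also dispose of the degenerate situations where $H$ passes through the origin or $\mathcal A$ is not essential. Everything else is a direct appeal to Corollary~\ref{cor_shelling_order}.
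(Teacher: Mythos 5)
Your argument is correct and is essentially the paper's own (implicit) proof: the corollary is stated there as a direct consequence of Corollary~\ref{cor_shelling_order}, the intended reasoning being exactly your coning step together with Remark~\ref{rmk_weighted_vs_bounded}, which identifies the bounded complex of the affine arrangement with the weighted complex $\Sigma(\Lf_\lambda)$ once the simplicity hypothesis of Zaslavsky's conjecture puts $H_\lambda$ in general position relative to $\Hf$. Your added care about matching ``simple'' with the general-position condition and about the degenerate cases ($0\in H$, inessential $\Hf$) only makes explicit what the paper leaves unsaid.
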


In a paper of Dong (see~\cite{Dong})
he claims to have proven Zaslavsky's conjecture. However, we do not
understand the proof of the crucial Lemma~4.7 in that paper:
In the second paragraph of case~2, Dong chooses a linear
extension $\leq$
of $\mathcal{T}(\Lf/g,d_i)$ such that $[d_i,d_j]$ is
an initial segment. This linear extension is a shelling order,
and Dong deduces that there exists $d_k\in [d_i,d_j)$ such that
$d_k\wedge d_j\precdot d_j$. But the only thing that
we can deduce from the fact that we have a shelling order
is that $d_i\leq d_k< d_j$, which does not imply that
$d_k\in[d_i,d_j)$ for the order on $\mathcal{T}(\Lf/g,d_i)$.
If we do not know that $d_k\in[d_i,d_j)$, the rest
of the argument fails.

In Appendix~\ref{appendix_valuations} we
construct a ring structure on the set of valuations on convex closed
polyhedral cones with values in a fixed ring.
Are there other products of
valuations that also yield valuations?
For instance, can the ring structure of
Corollary~\ref{cor_convolution}
be extended to valuations on (not necessarily polyhedral) cones
in Euclidean space?

In Appendix~\ref{appendix_2_structures}
the proof of
Proposition~\ref{prop_W_acts_transitively_on_T_Phi}
that the group $W$ acts transitively on
the set of $2$-structures~$\Tt$
consists of verifying the result for all
irreducible pseudo-root systems.
Is there a general proof that does not use the classification
of irreducible pseudo-root systems?

\appendix

\section{Extending the construction of a valuation by Goresky, Kottwitz and MacPherson}
\label{appendix_valuations}

We introduce a ring structure on the set of valuations defined
on closed convex polyhedral cones in a finite-dimensional
real vector space with values in a fixed ring.
As a special case we obtain in Corollary~\ref{cor_psi_C_is_a_valuation}
a valuation due to Goresky, Kottwitz and MacPherson;
see~\cite[Proposition~A.4]{GKM}.

Subsection~\ref{subsection_convolution} of this appendix
contains definitions and statements of results.
The proofs are relegated to Subsection~\ref{subsection_convolution_proofs}.

\subsection{The ring of valuations}
\label{subsection_convolution}

Let $V$ be a finite-dimensional real vector space and $V^\vee$ its dual.
A \emph{closed convex polyhedral cone} in $V$ is a nonempty subset
of the form $\R_{\geq 0} v_{1} + \R_{\geq 0} v_{2} + \cdots + \R_{\geq 0} v_{k}$,
where $v_{1}, v_{2}, \ldots, v_{k} \in V$ and $k\geq 0$.

For a subset $X$ of the space~$V$, define
$X^{\perp} = 
\{ \alpha \in V^{\vee} : \forall x \in X \: \bl{\alpha}{x} = 0 \}$
and
$X^{*} = 
\{ \alpha \in V^{\vee} : \forall x \in X \: \bl{\alpha}{x} \geq 0 \}$.
Note that $X^{\perp}$ is a subspace of $V^{\vee}$
and depends only on the linear span of $X$,
and that $X^*$ is a convex cone in $V^{\vee}$ and depends only on
the closed convex polyhedral cone generated by $X$.

For $F$ a face\footnote{In this appendix, we take all faces to be closed faces,
unlike in the rest of the article.}
of a closed convex polyhedral cone $K$,
define $F^{\perp,K} = F^{\perp} \cap K^{*}$.
The map $F \longmapsto F^{\perp,K}$
is an order-reversing bijection from the set $\F(K)$
of faces of $K$ to the set of faces
of $K^*$.
This statement and other basic properties of closed convex polyhedral
cones are proved in~\cite[Section~1.2]{Fulton}.

\begin{subrmk}
For two  closed convex cones $X_1$ and $X_2$ such that $X_1\cup X_2$ is
convex then the set $X_1^*\cup X_2^*$ is also convex and we have the two identities
\[(X_1\cup X_2)^*=X_1^*\cap X_2^*
\quad \text{ and } \quad
(X_1\cap X_2)^* = X_1^* \cup X_2^*.\]
\label{rmk_dual_sum}
\end{subrmk}

\begin{subdef}
We denote by $\Cf(V)$ the set of closed
convex polyhedral cones in $V$.
Denote the free abelian group on $\Cf(V)$
by $\bigoplus_{K\in \Cf(V)}\Z[K]$
and
let $K(V)$ be its quotient by
the relations 
$[K\cup K']+[K\cap K']=[K]+[K']$ for all $K,K'\in \Cf(V)$
such that $K\cup K'$. For $K\in \Cf(V)$, we still denote its
image in $K(V)$ by $[K]$.
\label{def_K(V)}
\end{subdef}

For $\lambda\in V^\vee$, we define the hyperplane
$H_{\lambda}$ and the two open half-spaces
$H^{+}_\lambda$ and $H^{-}_\lambda$ by
\[
H_\lambda=\{x\in V : \bl{\lambda}{x}=0\},
\quad
H^{+}_\lambda=\{x\in V : \bl{\lambda}{x} > 0\}
\quad
\text{ and }
\quad
H^{-}_\lambda=\{x\in V : \bl{\lambda}{x} < 0\}.
\]
The closed half-spaces are given by
$\overline{H_\lambda^+} = \{x\in V : \bl{\lambda}{x} \geq 0\}$
and
$\overline{H_\lambda^-} = \{x\in V : \bl{\lambda}{x} \leq 0\}$.

\begin{subdef}
A \emph{valuation} on $\Cf(V)$ with values in an
abelian group $A$ is a function
$f:\Cf(V) \longrightarrow A$ such that $f(\varnothing)=0$
and that for any $K,K'\in\Cf(V)$ such that $K\cup K'\in\Cf(V)$, we have
\begin{align}
f(K\cup K')+f(K\cap K') & = f(K)+f(K').
\end{align}
\label{def_valuation}
\end{subdef}

By the definition of $K(V)$, saying that a function $f:\Cf(V)\fl A$
is a valuation is equivalent to saying that there exists a morphism
(necessarily unique) $K(V)\fl A$ sending $[K]$ to $f(K)$ for every
$K\in \Cf(V)$. We also denote this morphism $K(V)\fl A$ by $f$.

\begin{subex}
By Remark~\ref{rmk_dual_sum},
the function $\delta:\Cf(V)\fl K(V^\vee)$ sending $K\in\Cf(V)$ to
$[K^*]$ is a valuation. Thus it induces a morphism
$\delta:K(V)\fl K(V^*)$. 
\label{example_delta}
\end{subex}

We have the following criterion for recognizing valuations
on closed convex polyhedral cones.
This is known as {\em Groemer's first extension theorem}
and is proved in~\cite[Theorem~2]{Gr}.

\begin{subthm}[Groemer]
Let $A$ be an abelian group and
$f:\Cf(V) \longrightarrow A$ be a function such that $f(\varnothing)=0$.
Suppose that for every $K\in\Cf(V)$ and every $\mu \in V^\vee$ the
following holds:
\begin{align}
f(K)+f(K\cap H_\mu)
& =
f\bigl(K\cap \overline{H_\mu^{+}}\bigr)
+
f\bigl(K \cap \overline{H_\mu^{-}}\bigr).
\label{equation_Groemer}
\end{align}
Then the function $f$ is a valuation.
\label{theorem_Groemer}
\end{subthm}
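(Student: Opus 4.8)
The plan is to deduce the valuation property from the more flexible-looking hyperplane-cut relation~\eqref{equation_Groemer} by passing to indicator functions. Let $L\subseteq\Z^V$ be the subgroup generated by the indicator functions $\mathbf 1_K$, $K\in\Cf(V)$ (note $\mathbf 1_\varnothing=0$, matching the normalization $f(\varnothing)=0$). Since the identity $\mathbf 1_{K\cup K'}+\mathbf 1_{K\cap K'}=\mathbf 1_K+\mathbf 1_{K'}$ holds pointwise for all $K,K'\in\Cf(V)$ with $K\cup K'\in\Cf(V)$, it suffices to construct a group homomorphism $\phi\colon L\to A$ with $\phi(\mathbf 1_K)=f(K)$ for every $K$; applying $\phi$ to that pointwise identity then gives exactly the relation in Definition~\ref{def_valuation}. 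Equivalently, I must show: whenever $\sum_{i=1}^N a_i\mathbf 1_{K_i}=0$ with $a_i\in\Z$ and $K_i\in\Cf(V)$ (which we may take nonempty), one has $\sum_i a_i f(K_i)=0$.

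So fix such a relation. Each $K_i$, being a nonempty polyhedral cone, is a finite intersection of closed linear half-spaces; collect all the bounding hyperplanes into one finite central arrangement $\Hf$ in $V$. Then every $K_i$ is an intersection of closed half-spaces bounded by hyperplanes of $\Hf$, and in particular a union of relatively open faces of $\Hf$. Write $\Lf$ for the face poset of $\Hf$, with faces taken relatively open and ordered by $C\le D\iff C\subseteq\overline D$; it is graded by dimension and each closure $\overline C$ lies in $\Cf(V)$. Now M\"obius-invert $f$ over this poset: define $g\colon\Lf\to A$ by $g(C)=\sum_{F\le C}\mu_{\Lf}(F,C)\,f(\overline F)$, so that $f(\overline C)=\sum_{F\le C}g(F)$ for every face $C$.

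The heart of the argument is the following claim, which I would prove by induction on the number of faces of $\Hf$ contained in $K$: for every cone $K$ that is an intersection of closed half-spaces bounded by hyperplanes of $\Hf$, one has $f(K)=\sum_{C\in\Lf,\ C\subseteq K}g(C)$. If $K$ is a closed cell of $\Hf$, this is exactly the defining property of $g$. Otherwise some hyperplane $H\in\Hf$ must genuinely cut $K$: indeed, if every $H\in\Hf$ satisfied $K\subseteq\overline{H^+}$, $K\subseteq\overline{H^-}$, or $K\subseteq H$, then $K$ would be the intersection of those closed half-spaces, hence a closed cell. Choosing $\mu\in V^\vee$ with $H_\mu=H$, apply~\eqref{equation_Groemer} to this $K$ and $\mu$; the three cones $K\cap\overline{H_\mu^+}$, $K\cap\overline{H_\mu^-}$, $K\cap H_\mu$ are again intersections of closed half-spaces bounded by hyperplanes of $\Hf$, and each contains strictly fewer faces of $\Hf$ than $K$ does (since $K$ straddles $H$, it has faces on both open sides), so the inductive hypothesis applies to all three. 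Because each relatively open face of $\Hf$ lies entirely in $H^+$, in $H^-$, or in $H$, combining the three contributions with the signs dictated by~\eqref{equation_Groemer} reproduces $\sum_{C\subseteq K}g(C)$, which proves the claim.

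Finally, for each relatively open face $C$ of $\Hf$, evaluating $\sum_i a_i\mathbf 1_{K_i}=0$ at any point of $C$ yields $\sum_{i\,:\,C\subseteq K_i}a_i=0$, since $C$, being a single face while each $K_i$ is a union of faces, is either contained in $K_i$ or disjoint from it. Combining this with the claim,
\[\sum_i a_i f(K_i)=\sum_i a_i\sum_{C\in\Lf,\ C\subseteq K_i}g(C)=\sum_{C\in\Lf}g(C)\sum_{i\,:\,C\subseteq K_i}a_i=0,\]
as required, so $\phi$ is well defined and $f$ is a valuation. I expect the main obstacle to be the inductive claim in the third paragraph: one has to organize the bookkeeping between the half-space presentation of a cone and the faces of the auxiliary arrangement $\Hf$ carefully enough that the induction is well-founded and that the three pieces produced by~\eqref{equation_Groemer} remain of the required form; the rest is formal.
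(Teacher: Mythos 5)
Your proof is correct. A caveat on the comparison: the paper does not actually prove this theorem --- it is stated as Groemer's first extension theorem and referred to \cite[Theorem~2]{Gr} --- so there is no internal argument to measure yours against; what you have written is a self-contained proof, in the spirit of the standard indicator-function method behind Groemer's result, specialized to polyhedral cones. The skeleton is sound: a homomorphism $\phi$ on the subgroup of $\Z^V$ generated by the $\mathbf{1}_K$ with $\phi(\mathbf{1}_K)=f(K)$ yields the valuation identity of Definition~\ref{def_valuation}; to get well-definedness you refine the cones of a relation $\sum_i a_i\mathbf{1}_{K_i}=0$ by one central arrangement $\Hf$, define $g$ on faces by M\"obius inversion, and prove $f(K)=\sum_{C\subseteq K}g(C)$ by induction on the number of faces of $\Hf$ contained in $K$, and the sign count in the inductive step (a face in an open half-space is hit once, a face in $H_\mu$ is hit $1+1-1$ times) is exactly right, as is the observation that the three pieces produced by~\eqref{equation_Groemer} are again intersections of closed half-spaces bounded by hyperplanes of $\Hf$ and each contains strictly fewer faces. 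Two facts you assert without proof should be spelled out, though neither is a gap in substance: (1) every closed convex polyhedral cone is a finite intersection of closed linear half-spaces (Minkowski--Weyl duality), which is what lets you build $\Hf$ at all; and (2) the hinge of your base case, namely that if $K$ lies weakly on one side of (or inside) every hyperplane of $\Hf$ then $K$ is a closed cell. For (2), take for each $H$ the smallest of $\overline{H^+}$, $\overline{H^-}$, $H$ containing $K$; each half-space in a presentation of $K$ contains the corresponding chosen region, so $K$ equals the intersection of the chosen regions; then, for $x$ in the relative interior of $K$ lying in the face $C$, every hyperplane of $\Hf$ containing $C$ must contain all of $K$ (otherwise, moving from $x$ away from a point of $K$ on one open side shows $K$ meets both open sides, contradicting the hypothesis), while for every other hyperplane $K$ lies weakly on the side of $C$; hence $K=\overline{C}$. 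With those two lines added, your argument is a complete and correct proof of the theorem.
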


The main result of this appendix is the following theorem
whose proof is in Subsection~\ref{subsection_convolution_proofs}.
To make the notation more compact in this appendix, we denote the linear span
of a subset $S$ of vector space by $\spam{S}$.

\begin{subthm}
\begin{enumerate}
\item[(i)] Consider the function $\Delta:\Cf(V)\fl K(V)\otimes_\Z K(V)$ defined
by
\[\Delta(K)=\sum_{F\in\F(K)}(-1)^{\dim F}[F]\otimes[\spam{F}+K],\]
for every $K\in \Cf(V)$. Then $\Delta$ is a valuation and it
induces a morphism $\Delta:K(V)\fl K(V)\otimes_\Z K(V)$.
Moreover this morphism $\Delta$ is coassociative,
that is, we have
\[(\Delta\otimes\id_{K(V)})\circ\Delta=(\id_{K(V)}\otimes\Delta)\circ\Delta.\]

\item[(ii)] Consider the function $\varepsilon:\Cf(V)\fl\Z$ defined by
$\varepsilon(K)=(-1)^{\dim K}$ if
$K$ is a vector subspace of~$V$ and $\varepsilon(K)=0$ otherwise. Then
$\varepsilon$ is a valuation and it
induces a morphism $\varepsilon:K(V)\fl\Z$.
This morphism is a counit of $\Delta$, in other words, we have
\[(\varepsilon\otimes\id_{K(V)})\circ\Delta
= \id_{K(V)} =
(\id_{K(V)}\otimes\varepsilon)\circ\Delta.\]
\end{enumerate}
\label{thm_coalgebra}
\end{subthm}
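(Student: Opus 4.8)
The plan is to establish, more or less independently, the five assertions of Theorem~\ref{thm_coalgebra}: that $\Delta$ and $\varepsilon$ are valuations, that $\Delta$ is coassociative, and the two counit identities. The valuation property of $\Delta$ is the technical core. Throughout I would use the identity $(F^{\perp,K})^{*}=\spam F+K$ for a face $F$ of a closed convex polyhedral cone $K$, so that the second tensor slot of $\Delta(K)$ records the dual of the face $F^{\perp,K}$ of $K^{*}$, together with the order-reversing bijection $\F(K)\to\F(K^{*})$, $F\mapsto F^{\perp,K}$, and the elementary facts that $(A\cap B)^{*}=A^{*}+B^{*}$ for polyhedral cones $A,B$ and that $K\cap\spam F=F$ for $F\in\F(K)$ (see~\cite[Section~1.2]{Fulton}).

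To see that $\Delta$ is a valuation I would invoke Groemer's criterion (Theorem~\ref{theorem_Groemer}): fixing $K\in\Cf(V)$ and $\mu\in V^{\vee}$, and writing $K^{+}=K\cap\overline{H_{\mu}^{+}}$, $K^{-}=K\cap\overline{H_{\mu}^{-}}$ and $K^{0}=K\cap H_{\mu}$, one must prove $\Delta(K)+\Delta(K^{0})=\Delta(K^{+})+\Delta(K^{-})$ in $K(V)\otimes_{\Z}K(V)$. When $K$ lies on one side of $H_{\mu}$ this is immediate, so assume $K$ meets both open half-spaces. One then sorts the faces of $K$ by the position of $\mathring F$ relative to $H_{\mu}$ (strictly on the $+$ side, strictly on the $-$ side, inside $H_{\mu}$, or crossing $H_{\mu}$) and describes the faces of $K^{\pm}$ and $K^{0}$ as obtained from those of $K$ by slicing along $H_{\mu}$ (an old face $F$, a halved face $F\cap\overline{H_{\mu}^{\pm}}$, or a new wall face $F\cap H_{\mu}$). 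Matching the contributions $(-1)^{\dim}[\,\cdot\,]\otimes[\,\spam{\cdot}+(\text{relevant cone})\,]$ term by term, the computation reduces to applying the defining relations of $K(V)$ to cones of the form $U+(\text{piece of }K)$ with $U$ a linear subspace contained in $H_{\mu}$, together with stability statements of the type $\spam F+K=\spam F+K^{+}$ (valid when $\spam F$ meets $H_{\mu}^{+}$, hence also $H_{\mu}^{-}$) and $\spam F+K=\spam{F\cap H_{\mu}}+K$ (valid when $F$ crosses $H_{\mu}$), each proved by translating a point of $K$ along suitable vectors of the ambient span until it lands in the desired piece. I expect this bookkeeping — making sure every face of $K^{\pm}$ and $K^{0}$ is accounted for with the correct sign and multiplicity — to be the main obstacle of the whole theorem.

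Coassociativity I would prove by computing both sides explicitly and observing that each equals the single sum
\[
\sum_{\substack{F_{1},F_{2}\in\F(K)\\ F_{1}\leq F_{2}}}(-1)^{\dim F_{1}+\dim F_{2}}\,[F_{1}]\otimes[\spam{F_{1}}+F_{2}]\otimes[\spam{F_{2}}+K].
\]
For $(\Delta\otimes\id_{K(V)})\circ\Delta$ this uses only $\F(F_{2})=\{F_{1}\in\F(K):F_{1}\leq F_{2}\}$. For $(\id_{K(V)}\otimes\Delta)\circ\Delta$ it uses a description of the face lattice of $\spam{F_{1}}+K=(F_{1}^{\perp,K})^{*}$: its faces are exactly the cones $\spam{F_{1}}+F_{2}$ for $F_{2}\in\F(K)$ with $F_{2}\geq F_{1}$; moreover $\dim(\spam{F_{1}}+F_{2})=\dim F_{2}$ (since $F_{2}\subseteq\spam{F_{1}}+F_{2}\subseteq\spam{F_{2}}$) and $\spam{\spam{F_{1}}+F_{2}}+\spam{F_{1}}+K=\spam{F_{2}}+K$. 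This description follows by transporting the face lattice of $\spam{F_{1}}+K$ through the duality to the faces of $K^{*}$ lying in $F_{1}^{\perp,K}$, that is, to $\{F_{2}^{\perp,K}:F_{2}\geq F_{1}\}$, and then using $K\cap\spam{F_{2}}=F_{2}$ to identify the corresponding face of $\spam{F_{1}}+K$ with $\spam{F_{1}}+F_{2}$.

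Finally, $\varepsilon$ is a valuation by a short case analysis in Groemer's criterion: for $K$ and $\mu$ as above, according to whether $K\subseteq H_{\mu}$, whether $K$ is a linear subspace, and on which side(s) of $H_{\mu}$ the cone $K$ lies, only the terms coming from cones that are linear subspaces survive in~\eqref{equation_Groemer}, and the signs $(-1)^{\dim}$ balance. For the counit identities, note that the only face of $K$ that is a linear subspace is its lineality space $K_{0}$ (the minimal face), so $(\varepsilon\otimes\id_{K(V)})\circ\Delta(K)=(-1)^{\dim K_{0}}(-1)^{\dim K_{0}}[\spam{K_{0}}+K]=[K]$ since $\spam{K_{0}}+K=K$; and the cone $\spam F+K=(F^{\perp,K})^{*}$ is a linear subspace precisely when $F^{\perp,K}$ is the minimal face of $K^{*}$, i.e.\ when $F=K$, so $(\id_{K(V)}\otimes\varepsilon)\circ\Delta(K)=(-1)^{\dim K}(-1)^{\dim K}[K]=[K]$. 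This gives all the assertions.
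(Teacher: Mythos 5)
Your proposal is correct in outline, and for the two valuation statements it is essentially the paper's own argument: Groemer's criterion, the four-way sorting of $\F(K)$ by the position of the relative interior of a face with respect to $H_\mu$, the stability identities for $\spam{F}+K$ (these are exactly Lemma~\ref{lemma_cut_face}(a)--(c)), and the inclusion--exclusion $[X+K]=[X+K^+]+[X+K^-]-[X+K^0]$ for $X\subseteq H_\mu$ (Lemma~\ref{lemma_cut_face}(d)). Where you genuinely diverge is the key step of coassociativity: the paper identifies $\F(\spam{F_1}+K)$ with $\{F_2\in\F(K):F_2\geq F_1\}$ by placing $K$ inside the arrangement of hyperplanes through its facets and invoking Lemma~\ref{lemma_star}, whereas you obtain the same bijection (with preservation of dimension and of $\spam{\cdot}+K$) purely by cone duality, transporting faces of $\spam{F_1}+K=(F_1^{\perp,K})^*$ to the faces of $K^*$ contained in $F_1^{\perp,K}$ and using $K\cap\spam{F_2}=F_2$; this is valid and has the advantage of keeping the appendix self-contained, since the arrangement lemma is the only outside input in the paper's proof. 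Your counit computation is also slightly more careful than the paper's, since you correctly take the surviving face in the first slot to be the lineality space $K_0$ rather than $\{0\}$, which matters for non-pointed cones. One slip to repair: the parenthetical hypothesis ``$\spam F+K=\spam F+K^{+}$ valid when $\spam F$ meets $H_\mu^+$'' is too weak, and the statement is false at that level of generality. For instance, in $\R^3$ with $\mu$ the third coordinate functional, let $K$ be the simplicial cone generated by $(1,0,-1)$, $(0,1,-2)$, $(1,0,1)$ and $F$ the ray through $(1,0,-1)$: then $K$ meets $H_\mu^+$ and $\spam F$ meets $H_\mu^+$, yet the functional $(1,0,1)$ vanishes on $\spam F$, is nonnegative on $K^+$ (it equals $(1,0,-1)+2\mu$ with $(1,0,-1)\in K^*$), and is negative at $(0,1,-2)\in K$, so $\spam F+K\neq\spam F+K^+$. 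The correct hypothesis is that $F$ itself contains a point with $\langle\mu,\cdot\rangle>0$, and the ``translating'' vector must be chosen in $F$ (hence in $K$), not merely in the ambient span, so that the corrected point stays in $K$. Since in your bookkeeping this identity is only applied to faces whose relative interior lies in $H_\mu^+$, the slip is harmless to the structure of the proof, but the lemma should be stated and proved with the stronger hypothesis, exactly as in Lemma~\ref{lemma_cut_face}(a).
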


In short, Theorem~\ref{thm_coalgebra} says that the morphisms $\Delta:K(V)\fl
K(V)\otimes_\Z K(V)$ and $\varepsilon:K(V)\fl\Z$ are well-defined and make
$K(V)$ into a $\Z$-coalgebra.

\begin{subcor}
Let $A$ be a ring.
Let $f_1,f_2:\Cf(V)\fl A$ be two valuations. Then the function
$f_1*f_2:\Cf(V)\fl A$ defined by
\[(f_1*f_2)(K)=\sum_{F\in\F(K)}(-1)^{\dim F}f_1(F)f_2(\spam{F}+K)\]
is also a valuation.
This operation $*$ makes 
the group of valuations $\Cf(V)\fl A$ into a ring.
The unit element of this ring
is the composition of $\varepsilon:\Cf(V)\fl\Z$
and the canonical ring morphism $\Z\fl A$.
\label{cor_convolution}
\end{subcor}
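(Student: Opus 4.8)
The plan is to deduce Corollary~\ref{cor_convolution} as a formal consequence of Theorem~\ref{thm_coalgebra}, using the standard fact that the dual of a coalgebra is an algebra (the convolution product on $\operatorname{Hom}$). First I would observe that a valuation $\Cf(V)\fl A$ is, by the remark following Definition~\ref{def_valuation}, the same thing as a group homomorphism $K(V)\fl A$, so the group of valuations is exactly $\operatorname{Hom}_\Z(K(V),A)$. Given two valuations $f_1,f_2:K(V)\fl A$, I define their product by
\[f_1*f_2=m_A\circ(f_1\otimes f_2)\circ\Delta,\]
where $m_A:A\otimes_\Z A\fl A$ is the multiplication of the ring $A$ and $\Delta:K(V)\fl K(V)\otimes_\Z K(V)$ is the comultiplication from Theorem~\ref{thm_coalgebra}(i). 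Unwinding this composition on a generator $[K]$ and using the explicit formula $\Delta([K])=\sum_{F\in\F(K)}(-1)^{\dim F}[F]\otimes[\spam{F}+K]$ gives exactly $(f_1*f_2)([K])=\sum_{F\in\F(K)}(-1)^{\dim F}f_1(F)f_2(\spam{F}+K)$, which matches the formula in the statement; in particular $f_1*f_2$ is a homomorphism $K(V)\fl A$, hence a valuation on $\Cf(V)$.

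Next I would check the ring axioms. Associativity of $*$ follows from coassociativity of $\Delta$ together with associativity of $m_A$: both $(f_1*f_2)*f_3$ and $f_1*(f_2*f_3)$ equal $m_A^{(3)}\circ(f_1\otimes f_2\otimes f_3)\circ\Delta^{(3)}$, where $\Delta^{(3)}=(\Delta\otimes\id)\circ\Delta=(\id\otimes\Delta)\circ\Delta$ by Theorem~\ref{thm_coalgebra}(i) and $m_A^{(3)}$ is the unambiguous triple product in $A$. Bi-additivity of $*$ in each argument is immediate since $\Delta$ and $m_A$ are additive and $\operatorname{Hom}$ is additive. For the unit, let $u:K(V)\fl A$ be the composite of $\varepsilon:K(V)\fl\Z$ with the canonical ring morphism $\Z\fl A$ (which sends $1$ to $1_A$); then $u*f$ is the composite $K(V)\stackrel{\Delta}{\fl}K(V)\otimes K(V)\stackrel{\varepsilon\otimes f}{\fl}\Z\otimes A=A\stackrel{\sim}{\fl}A$, which by the counit identity $(\varepsilon\otimes\id)\circ\Delta=\id$ of Theorem~\ref{thm_coalgebra}(ii) equals $f$; symmetrically $f*u=f$ using $(\id\otimes\varepsilon)\circ\Delta=\id$. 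Finally $u$ is indeed the valuation $K\longmapsto(-1)^{\dim K}\cdot 1_A$ if $K$ is a subspace and $0$ otherwise, i.e.\ the composite of $\varepsilon:\Cf(V)\fl\Z$ with $\Z\fl A$, as claimed.

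The only real point requiring care — and the step I expect to be the main obstacle in writing this out cleanly — is verifying that the abstract composition $m_A\circ(f_1\otimes f_2)\circ\Delta$ literally reproduces the stated combinatorial formula, i.e.\ that evaluating on $[K]$ and pushing the tensor through correctly handles the signs $(-1)^{\dim F}$ and the two arguments $F$ and $\spam{F}+K$. This is a routine unwinding, but one must be attentive to the fact that $f_1\otimes f_2$ lands in $A\otimes_\Z A$ and that $\spam{F}+K$ is a closed convex polyhedral cone (it is, being a sum of two such cones), so $f_2(\spam{F}+K)$ makes sense. Everything else is the formal nonsense that the linear dual of a coalgebra with values in a ring is an algebra, so no further geometric input beyond Theorem~\ref{thm_coalgebra} is needed.
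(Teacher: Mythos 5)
Your proposal is correct and is essentially the paper's own argument: the paper also defines $f_1*f_2$ as the composite of $\Delta$ with the map $K(V)\otimes_\Z K(V)\fl A$ induced by $f_1$ and $f_2$ and the multiplication of $A$, deduces that it is a valuation because it factors through $K(V)$, and obtains associativity and the unit from the coassociativity and counit statements of Theorem~\ref{thm_coalgebra}. No difference in substance, only in how explicitly the convolution-algebra formalism is spelled out.
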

\begin{proof}
The valuations $f_1$ and $f_2$ induce two morphisms
$f_1,f_2:K(V)\fl A$, hence a morphism
$f_1\otimes f_2:K(V)\otimes_\Z K(V)\fl A$, $x\otimes y\mapsto f_1(x)\otimes
f_2(y)$. As we have
\[(f_1*f_2)(K)=(f_1\otimes f_2)(\Delta([K]))\]
for every $K\in\Cf(V)$, this shows that $f_1*f_2$ descends to a morphism
$K(V)\fl A$, hence is a valuation.

The operation $*$ is clearly linear in each variable, and it is
associative by
the coassociativity of~$\Delta$.
The last statement follows immediately from the fact that
$\varepsilon$ is a counit of $\Delta$.
\end{proof}

\begin{subcor}
Let $A$ be a ring, and let $f_1:\Cf(V)\fl A$ and $g_2:
\Cf(V^\vee)\fl A$ be valuations. Then the function
$f_1\star g_2:\Cf(V)\fl A$ defined by
\[(f_1\star g_2)(K)=\sum_{F\in\F(K)}(-1)^{\dim F}f_1(F)g_2(F^{\perp,K})\]
is also a valuation.
\label{cor_second_convolution}
\end{subcor}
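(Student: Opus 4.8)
The plan is to deduce this from Corollary~\ref{cor_convolution} by pre-composing $g_2$ with the duality valuation. Recall from Example~\ref{example_delta} that $\delta:\Cf(V)\fl K(V^\vee)$, $K\mapsto[K^*]$, is a valuation, so it induces a morphism $\delta:K(V)\fl K(V^\vee)$; since $g_2$ induces a morphism $K(V^\vee)\fl A$, the composite $h_2:=g_2\circ\delta:\Cf(V)\fl A$, $K\mapsto g_2(K^*)$, is again a valuation. First I would apply Corollary~\ref{cor_convolution} to the pair $(f_1,h_2)$, which gives that the function $f_1*h_2$ is a valuation.

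Next I would write $f_1*h_2$ out explicitly and match it with $f_1\star g_2$. By the definition of $*$ we have
\[(f_1*h_2)(K)=\sum_{F\in\F(K)}(-1)^{\dim F}f_1(F)\,g_2\bigl((\spam{F}+K)^*\bigr),\]
so it suffices to check the identity $(\spam{F}+K)^*=F^{\perp,K}$ for every closed face $F$ of $K$. This is routine convex geometry: the dual of a sum of two closed convex cones is the intersection of their duals, and the dual cone of the linear subspace $\spam{F}$ is $F^\perp$, so $(\spam{F}+K)^*=\spam{F}^*\cap K^*=F^\perp\cap K^*=F^{\perp,K}$. (Alternatively, this is the identity $(F^{\perp,K})^*=\spam{F}+K$ already used in the proof of Theorem~\ref{theorem_2_structures_and_chambers}, combined with biduality of closed convex cones.) Hence $f_1*h_2=f_1\star g_2$, and the conclusion follows.

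There is essentially no obstacle here: once the duality valuation of Example~\ref{example_delta} is in hand, the only thing to verify is the dual-cone identity above, which is standard (see~\cite[Section~1.2]{Fulton}). One could instead bypass Corollary~\ref{cor_convolution} and prove the statement directly from Groemer's criterion (Theorem~\ref{theorem_Groemer}), by slicing $K$ with a closed halfspace and tracking how the faces of $K$ and the cones $F^{\perp,K}$ behave under the slice; but that analysis is precisely what is already encapsulated in the coassociativity and valuation properties of Theorem~\ref{thm_coalgebra}, so the reduction above is the cleaner route.
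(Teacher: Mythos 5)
Your proposal is correct and is essentially the paper's own proof: compose $g_2$ with the duality valuation $\delta$ of Example~\ref{example_delta}, apply Corollary~\ref{cor_convolution}, and identify $f_1\star g_2$ with $f_1*(g_2\circ\delta)$ via the identity $(\spam{F}+K)^*=F^{\perp,K}$. The only difference is that you spell out the (standard) justification of that dual-cone identity, which the paper simply asserts.
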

\begin{proof}
Consider the valuation $\delta:\Cf(V)\fl K(V^\vee)$ 
of Example~\ref{example_delta}.
Then the map $f_2=g_2\circ\delta:\Cf(V)\fl A$ is also a valuation.
As $F^{\perp,K}=(\spam{F}+K)^*$ for every $K\in\Cf(V)$ and every face $F$ of
$K$, we have $f_1\star g_2=f_1*f_2$, so the statement
follows from Corollary~\ref{cor_convolution}.
\end{proof}

\begin{subrmk}
Let $\Hf$ be a central hyperplane arrangement on $V$, 
let $\Cf_\Hf(V)$ be the set of closed convex polyhedral cones that are
intersections of closed half-spaces bounded by hyperplanes of
$\Hf$, and let $K_\Hf(V)$ be the quotient of the free abelian group 
$\bigoplus_{K\in \Cf_\Hf(V)}\Z[K]$
on $\Cf_\Hf(V)$ by the relations $[\varnothing]=0$ and
$[K]+[K']=[K\cup K']+[K\cap K']$ for all
$K,K'\in \Cf_\Hf(V)$ such that $K\cup K'\in \Cf_\Hf(V)$.
Then the formulas of Theorem~\ref{thm_coalgebra} also define
a coalgebra structure on $K_\Hf(V)$. Indeed, if $K\in\Cf_\Hf(K)$ and
$F\in\F(K)$, then $F$ and $\spam{F}+K$ are also in $\Cf_\Hf(V)$.

In particular, the products in Corollaries~\ref{cor_convolution}
and~\ref{cor_second_convolution} also make sense if the first
valuation is only defined on $\Cf_\Hf(V)$.

\label{rmk_K_Hf}
\end{subrmk}

We now explain how to use Corollary~\ref{cor_second_convolution}
to recover~\cite[Proposition~A.4]{GKM}.

\begin{sublemma}
Let $X$ be a subset of $V$ such that the complement $V - X$ is convex.
Then the function $\phi_X: \Cf(V) \longrightarrow \Z$ 
defined by
$$
\phi_X(K)
=
\begin{cases}
1 & \text{ if } \emptyset \subsetneq K\subseteq X, \\
0 & \text{ otherwise,}
\end{cases}
$$
is a valuation.
In particular, if $\lambda\in V^\vee$ then the function
$\psi_\lambda=\phi_{\overline{H_\lambda^+\!\!}}$
is a valuation.
\label{lemma_GKM}
\end{sublemma}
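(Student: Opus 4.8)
The plan is to apply Groemer's first extension theorem (Theorem~\ref{theorem_Groemer}). Since $\phi_X(\varnothing)=0$ directly from the definition, it suffices to verify, for every $K\in\Cf(V)$ and every $\mu\in V^\vee$, the identity
\[
\phi_X(K)+\phi_X(K\cap H_\mu)=\phi_X\bigl(K\cap\overline{H_\mu^+}\bigr)+\phi_X\bigl(K\cap\overline{H_\mu^-}\bigr).
\]
Throughout I would abbreviate $Y=V - X$, which is convex by hypothesis, and $K_0=K\cap H_\mu$, $K_\pm=K\cap\overline{H_\mu^\pm}$. Three observations set up the whole argument: $K=K_+\cup K_-$ and $K_0=K_+\cap K_-$; all four of $K,K_0,K_+,K_-$ are nonempty because each contains the origin; and for any nonempty cone $C$ among these, $\phi_X(C)=1$ holds precisely when $C\cap Y=\varnothing$.

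First I would dispose of the two easy cases. If $K\cap Y=\varnothing$, then all four cones avoid $Y$, so every term equals $1$ and the identity reads $1+1=1+1$. If $K\cap Y\neq\varnothing$ but already $K_0\cap Y\neq\varnothing$, then $K_0\subseteq K_\pm$ forces $K_+\cap Y\neq\varnothing$ and $K_-\cap Y\neq\varnothing$, so every term equals $0$.

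The one substantive case, and the step I expect to be the only real point, is when $K\cap Y\neq\varnothing$ while $K_0\cap Y=\varnothing$; here I must show exactly one of $K_+,K_-$ is contained in $X$. At least one of them meets $Y$ since $K=K_+\cup K_-$ does. If both met $Y$, choose $y_+\in K_+\cap Y$ and $y_-\in K_-\cap Y$; the segment $[y_+,y_-]$ lies in $K$ by convexity of $K$ and in $Y$ by convexity of $Y$, and since $\langle\mu,y_+\rangle\geq 0\geq\langle\mu,y_-\rangle$ the intermediate value theorem applied to $t\mapsto\langle\mu,(1-t)y_++ty_-\rangle$ produces a point of this segment lying on $H_\mu$, hence a point of $K_0\cap Y$ — contradicting $K_0\cap Y=\varnothing$. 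So exactly one of $\phi_X(K_+),\phi_X(K_-)$ is $1$ and the other is $0$, while $\phi_X(K)=0$ and $\phi_X(K_0)=1$; the Groemer identity becomes $0+1=1+0$. This finishes the verification, and Groemer's theorem gives that $\phi_X$ is a valuation.

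For the final assertion I would just observe that $V - \overline{H_\lambda^+}=H_\lambda^-$ is an open half-space (and is empty when $\lambda=0$), hence convex, so $\psi_\lambda=\phi_{\overline{H_\lambda^+}}$ is a valuation by the case just established.
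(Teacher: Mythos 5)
Your argument is correct and follows the paper's proof essentially step for step: both verify the Groemer criterion by casing on whether $K$, $K_0$, $K_\pm$ lie in $X$, and both handle the nontrivial case by observing that a segment joining a point of $K_+\cap Y$ to a point of $K_-\cap Y$ stays inside the convex set $Y=V-X$ and must cross $H_\mu$, hence meets $K_0$. The only cosmetic difference is that you organize the cases by intersections with $Y$ and spell out the intermediate value argument, whereas the paper organizes by inclusions in $X$, but the content is the same.
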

\begin{proof}
Let $K\in\Cf(V)$ be nonempty and let $\mu\in V^\vee$. Let $K_{0}=K\cap
H_\mu$, $K_{+}=K\cap \overline{H_\mu^{+}}$
and
$K_{-}=K\cap \overline{H_\mu^{-}}$.
We must check Criterion~\eqref{equation_Groemer}
in Theorem~\ref{theorem_Groemer},
that is,
$\phi_X(K) + \phi_X(K_{0})
=
\phi_X(K_{+}) + \phi_X(K_{-})$.

If $K\subseteq X$
then $K_{0}$, $K_{+}$ and $K_{-}$ are also included in $X$, and the
equality above is clear. If $K_{+}\subseteq X$ but $K_{-} \nsubseteq X$,
then $K_{0}\subseteq X$ and $K \nsubseteq X$,
so again the desired equality holds. The case where $K_{-}\subseteq X$
and $K_{+} \nsubseteq X$ is symmetric.
Finally, suppose
that $K_{+},K_{-} \nsubseteq X$. Then $K \nsubseteq X$, and
so we must show that $K_{0} \nsubseteq X$. Take $x\in K_{+}-X$
and $y\in K_{-}-X$. Then the segment $[x,y]$ is contained in
the convex set $V-X$. As this segment intersects
$K_{0}$, this shows that $K_{0} \nsubseteq X$.
\end{proof}

Given $x \in V$ and $\lambda \in V^{\vee}$, we have
two valuations $\psi_\lambda:\Cf(V)\fl\Z$ and
$\psi_x:\Cf(V^\vee)\fl\Z$ defined in
Lemma~\ref{lemma_GKM}.
Let $K\longmapsto\psi_{K}(x,\lambda)$
be the function defined by
\begin{align*}
\psi_{K}(x,\lambda) & = (\psi_{\lambda} \star \psi_{x})(K)
\end{align*}
for every $K\in\Cf(V)$.
This function is defined in~\cite[Appendix~A]{GKM} (at the top of page~540).

\begin{subcor}
For every $x\in V$ and
every $\lambda\in V^\vee$, the function 
$K \longmapsto\psi_K(x,\lambda)$
from~$\Cf(V)$ to~$\R$ is a valuation.
\label{cor_psi_C_is_a_valuation}
\end{subcor}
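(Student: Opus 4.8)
The plan is to recognize the function $K \longmapsto \psi_K(x,\lambda)$ as the $\star$-product $\psi_\lambda \star \psi_x$ of two valuations, and then simply to quote Corollary~\ref{cor_second_convolution}. This is exactly how $\psi_K(x,\lambda)$ was defined just above, so there is essentially nothing left to do once one checks that the two inputs are genuine valuations.

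First I would observe that $\psi_\lambda = \phi_{\overline{H_\lambda^+}} : \Cf(V) \fl \Z$ is a valuation: the complement of the closed halfspace $\overline{H_\lambda^+}$ in $V$ is the open halfspace $H_\lambda^-$, which is convex, so Lemma~\ref{lemma_GKM} applies. Viewing $x \in V$ as a linear functional on $V^\vee$, the same argument (with $V$ replaced by $V^\vee$) shows that $\psi_x = \phi_{\overline{H_x^+}} : \Cf(V^\vee) \fl \Z$ is a valuation. Now, since $\Z$ is a ring, Corollary~\ref{cor_second_convolution} applied with $f_1 = \psi_\lambda$ and $g_2 = \psi_x$ tells us that
\[
(\psi_\lambda \star \psi_x)(K) = \sum_{F \in \F(K)} (-1)^{\dim F}\, \psi_\lambda(F)\, \psi_x(F^{\perp,K})
\]
is a valuation $\Cf(V) \fl \Z$; composing with the inclusion $\Z \hookrightarrow \R$ (equivalently, running the argument directly over $A = \R$) gives a valuation with values in $\R$. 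By the definition of $\psi_K(x,\lambda)$ recorded just before the statement of the corollary, this valuation is precisely $K \longmapsto \psi_K(x,\lambda)$, which completes the proof.

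I do not expect a genuine obstacle here: the entire content sits upstream, in Theorem~\ref{thm_coalgebra} (which makes $K(V)$ into a coalgebra) and in its Corollaries~\ref{cor_convolution} and~\ref{cor_second_convolution}. The only minor points to verify are bookkeeping: that $\psi_x$ is a valuation on all of $\Cf(V^\vee)$ (and not merely on a subclass of cones), which is immediate from Lemma~\ref{lemma_GKM}, and that the identity $F^{\perp,K} = (\spam{F}+K)^*$ used in the proof of Corollary~\ref{cor_second_convolution} is the relevant link between the $\star$- and $*$-products. With these in hand the corollary follows in a couple of lines.
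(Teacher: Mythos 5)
Your proposal is correct and follows exactly the route the paper takes: $\psi_K(x,\lambda)$ is by definition $(\psi_\lambda\star\psi_x)(K)$, the two inputs are valuations by Lemma~\ref{lemma_GKM}, and Corollary~\ref{cor_second_convolution} immediately yields the conclusion. The paper gives no separate argument precisely because, as you observe, all the content lives upstream in Theorem~\ref{thm_coalgebra} and its corollaries.
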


Since any valuation satisfies the
additivity property, we obtain the next corollary~\cite[Proposition~A.4]{GKM}.
\begin{subcor}
[Goresky--Kottwitz--MacPherson]
Let $K$ be a closed convex polyhedral cone.
Suppose that its relative interior $\interior{K}$ is the disjoint union of
the relative interiors 
$\interior{K_1}, \interior{K_2}, \ldots, \interior{K_r}$
of $r$ closed convex polyhedral cones
$K_1, K_2, \ldots, K_r$.
Then for every $x\in V$ and every $\lambda\in V^\vee$
\[
     \psi_K(x,\lambda) = \sum_{i=1}^r (-1)^{\dim(K) - \dim(K_i)} \cdot 
\psi_{K_i}(x,\lambda).
\]
\end{subcor}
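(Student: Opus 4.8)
The plan is to deduce this from Corollary~\ref{cor_psi_C_is_a_valuation}, which says that $K\longmapsto\psi_K(x,\lambda)$ is a valuation on $\Cf(V)$, together with the general principle that any valuation on closed convex polyhedral cones is additive with respect to dissections into relative interiors. Fix $x\in V$ and $\lambda\in V^\vee$, and write $\nu$ for the valuation $K\longmapsto\psi_K(x,\lambda)$. By Corollary~\ref{cor_psi_C_is_a_valuation} and the discussion right after Definition~\ref{def_valuation}, $\nu$ factors through a group morphism $K(V)\longrightarrow\R$, so it suffices to establish the identity
\[
[K]=\sum_{i=1}^r(-1)^{\dim(K)-\dim(K_i)}\,[K_i]
\]
in $K(V)$ and then apply $\nu$.

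To prove this identity I would use the classes $[\mathring{F}]\in K(V)$ attached to faces $F$ of closed convex polyhedral cones, namely $[\mathring{F}]=(-1)^{\dim(F)}\sum_{G\in\F(F)}(-1)^{\dim(G)}[G]$, together with formula~(A.4) on page~543 of~\cite{GKM}, which gives $[C]=\sum_{F\in\F(C)}[\mathring{F}]$ for every $C\in\Cf(V)$. Applying this to $K$ and to each $K_i$, and using that the dissection $\mathring{K}=\coprod_i\mathring{K_i}$ restricts to a dissection of the relative interior of every face of $K$, one reduces the displayed identity (by induction on $\dim(K)$) to the single statement
\[
[\mathring{K}]=\sum_{i=1}^r[\mathring{K_i}]
\]
in $K(V)$.

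This last identity I would prove by passing to indicator functions. The assignment $K\longmapsto\mathbf{1}_K$ is a valuation from $\Cf(V)$ to the group of $\Z$-valued functions on $V$, hence induces a morphism $K(V)\longrightarrow\mathrm{Fun}(V,\Z)$, and this morphism is injective; the injectivity is a form of Groemer's extension theorem (cf.~Theorem~\ref{theorem_Groemer} and~\cite[Theorem~2]{Gr}). Under it, $[\mathring{F}]$ is sent to $\mathbf{1}_{\mathring{F}}$, since the defining formula for $[\mathring{F}]$ is exactly the local Euler--Poincar\'e identity $\mathbf{1}_{\mathring{F}}=(-1)^{\dim(F)}\sum_{G\in\F(F)}(-1)^{\dim(G)}\mathbf{1}_G$ for a closed convex polyhedral cone. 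But the hypothesis $\mathring{K}=\coprod_i\mathring{K_i}$ says precisely that $\mathbf{1}_{\mathring{K}}=\sum_i\mathbf{1}_{\mathring{K_i}}$ as functions on $V$, so injectivity yields $[\mathring{K}]=\sum_i[\mathring{K_i}]$ in $K(V)$, as required.

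The main obstacle I anticipate is the bookkeeping in the second step: verifying that a dissection into relative interiors does restrict to a dissection of the relative interior of each face of $K$, and organizing the induction so that $[K]=\sum_i(-1)^{\dim(K)-\dim(K_i)}[K_i]$ follows cleanly from $[\mathring{K}]=\sum_i[\mathring{K_i}]$ and~(A.4). The injectivity of $K(V)\to\mathrm{Fun}(V,\Z)$ and the local Euler--Poincar\'e identity are standard, but I would double-check that the available form of Groemer's theorem really delivers injectivity and not merely the sufficiency criterion stated in Theorem~\ref{theorem_Groemer}. An alternative that avoids the bookkeeping altogether is simply to cite~\cite[Proposition~A.4]{GKM} and observe that its proof applies verbatim once one knows, by Corollary~\ref{cor_psi_C_is_a_valuation}, that $\nu$ is a valuation.
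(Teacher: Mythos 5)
Your overall strategy matches the paper's: Corollary~\ref{cor_psi_C_is_a_valuation} gives that $K\longmapsto\psi_K(x,\lambda)$ is a valuation, hence factors through $K(V)$, and the statement then reduces to an identity in $K(V)$ plus the general additivity of valuations on dissections. The paper itself only states the corollary with the phrase ``since any valuation satisfies the additivity property'' and defers to~\cite[Proposition~A.4]{GKM}, so trying to spell out the argument is reasonable.

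However, the reduction you propose in your second paragraph has a genuine gap: the claim that the dissection $\mathring{K}=\coprod_i\mathring{K_i}$ restricts to a dissection of the relative interior of every face of $K$ is simply false. For example, take $K=\R_{\geq 0}^2$, $K_1=\{x\geq y\geq 0\}$, $K_2=\{y\geq x\geq 0\}$, $K_3=\{x=y\geq 0\}$. The face $F=\{y=0,\ x\geq 0\}$ of $K$ has relative interior $\mathring{F}=\{y=0,\ x>0\}$, which is disjoint from all of $\mathring{K_1},\mathring{K_2},\mathring{K_3}$, so nothing dissects it. You anticipated this as ``the main obstacle,'' and indeed the induction as sketched does not get off the ground. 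The correct bridge from $[\mathring{K}]=\sum_i[\mathring{K_i}]$ to $[K]=\sum_i(-1)^{\dim K-\dim K_i}[K_i]$ is not an induction over faces, but the Euler involution: there is an endomorphism $E$ of $K(V)$ with $E([K])=(-1)^{\dim K}[\mathring{K}]$, and $E$ is an involution (this is the fact used at the end of the proof of Theorem~\ref{theorem_2_structures_and_chambers}, citing the top of page~544 of~\cite{GKM}). Applying $E$ to $[\mathring{K}]=\sum_i[\mathring{K_i}]$ and using $E([\mathring{K}])=(-1)^{\dim K}[K]$ gives $(-1)^{\dim K}[K]=\sum_i(-1)^{\dim K_i}[K_i]$, which is the desired identity. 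With that substitution, the rest of your argument — the pointwise local Euler--Poincar\'e identity $\mathbf{1}_{\mathring{K}}=(-1)^{\dim K}\sum_{F\in\F(K)}(-1)^{\dim F}\mathbf{1}_F$, the hypothesis giving $\mathbf{1}_{\mathring{K}}=\sum_i\mathbf{1}_{\mathring{K_i}}$, and the injectivity of $K(V)\to\mathrm{Fun}(V,\Z)$ via Groemer — is sound and completes the proof.
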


\begin{subrmk}
Valuations
on $\Cf(V)$ can be extended to relatively open 
cones as well.
Let $G$ be a collection of sets
that is closed under finite intersections.
Define $B(G)$ to be the Boolean algebra
generated by $G$, that is, the smallest collection
of sets that contains $G$ and is closed
under finite unions, finite intersections and complements.
Groemer's Integral Theorem states that
a valuation on $G$ can be extended
to a valuation on the Boolean algebra $B(G)$; see~\cite{Gr}
and also~\cite[Chapter~2]{Klain_Rota}.
In the case where $G = \Cf(V)$,
that is, 
the collection of closed convex polyhedral cones in $E$,
the associated Boolean algebra $B(\Cf(V))$ contains all cones that
are obtained by intersecting closed and open half-spaces.
\label{rmk_extension_valuation}
\end{subrmk}

\begin{subrmk}
The results of this appendix extend to oriented matroids without
much change. Let~$E$ be a finite set and consider an oriented matroid
$\Mf$
on $E$ with set of covectors $\Lf \subseteq \{+,-,0\}^E$.
This set of covectors forms a graded poset
with the partial order given by componentwise 
comparing the entries by $0 < +$ and $0 < -$.
We denote its rank function by $\rho$.
For every $F\subseteq E$ and every $s\in\{+,-,0\}^{F}$, we write
$\Lf_{\leq s}=\{x\in\Lf : x|_{F} \leq s\}$.
Let $\Kf$ be the set of lower order ideals of~$\Lf$ of the form
$\Lf_{\leq s}$. We order $\Kf$ by inclusion.
If $\Lf$ is the
oriented matroid corresponding to a central hyperplane arrangement $\Hf$ on $V$,
then $\Kf$ is the set cones obtained by intersecting
closed half-spaces bounded by hyperplanes of $\Hf$.
In general, every element of $\Kf$ is of the form $\Lf_{\leq x|_{F}}$
for some $x\in\Lf$ and $F\subseteq E$.

We say that an element $a$ of $\Kf$ is a vector subspace if
$a \neq \varnothing$ and $a$ is of the form $\Lf_{\leq x|_{F}}$,
for some $x\in\Lf$ and some $F\subseteq E$ such that $x_e=0$ for
every $e\in F$. 

A \emph{valuation} on $\Kf$ with values in an abelian group $A$
is a function $f:\Kf\fl A$ such that $f(\varnothing)=0$ and, for all
$a,b\in\Kf$ such that $a\cup b\in\Kf$, 
we have $f(a\cup b)+f(a\cap b)=f(a)+f(b)$. 
Giving such a valuation is equivalent to giving a function
$w:\Lf\fl A$; the corresponding valuation then sends $a\in\Kf$ to
$\sum_{x\in a}w(x)$.

The analogue of $K(V)$ is the quotient of the free abelian group
$\bigoplus_{a\in\Kf}\Z [a]$ by the relations $[\varnothing]=0$ and
$[a\cup b]+[a\cap b]=[a]+[b]$ if $a\cup b\in\Kf$. 
We denote this group by $K(\Lf)$.
We have an isomorphism $K(\Lf)\iso\bigoplus_{x\in\Lf}\Z [x]$ sending
$[a]$ to $\sum_{x\in a}[x]$. 

Let $F\subseteq E$.
We denote the set of covectors of the
deletion $\Mf-(E-F)$
by $\Lf_F$ and the rank
function of $\Lf_F$ by $\rho_F$. Let $y\in\Lf_F$. 
If $F(y)=\{e\in F : y_e=0\}$,
then we have an order-preserving
bijection $\Lf_{F,\geq y}\iso\Lf_{F(y)}$ sending any $z\geq y$
in $\Lf_F$ to $z|_{F(y)}$. This is the analogue of Lemma~\ref{lemma_star}.

We define the comultiplication $\Delta:K(\Lf)\fl
K(\Lf)\otimes_\Z K(\Lf)$ by sending $[\Lf_{\leq x|_{F}}]$ to
\[
\Delta([\Lf_{\leq x|_{F}}])
=
\sum_{y\in\Lf_{F,\leq x|_{F}}}
(-1)^{\rho_F(y)}
[\Lf_{\leq y}] \otimes [\Lf_{\leq x|_{F(y)}}].
\]
Let $a\in\Lf$.
The counit $\varepsilon$ sends $[a]$ to $0$ if $a$ is not a vector
subspace. If $a=\Lf_{\leq x|_{F}}$, with $x\in\Lf$ and $F\subseteq E$ such
that $x_e=0$ for every $e\in F$, then we set $\varepsilon([a])=
(-1)^{\rho_F(x|_{F})}$.
\end{subrmk}

\begin{subrmk}
Let $\Kf=\bigoplus_{n\geq 0}K(\Rrr^n)$. We make $\Kf$ into a coalgebra
using the direct sum of the
morphisms $\Delta$ and $\epsilon$ of Theorem~\ref{thm_coalgebra}.
There is a product on
$\Kf$ defined by $[K]\cdot[L]=[K\times L]$ if $K\in\Cf(\Rrr^n)$
and $L\in\Cf(\Rrr^m)$, where we identity $\Rrr^n\times\Rrr^m$ and
$\Rrr^{n+m}$ in the usual way. This product is associative, and the
class of the cone $\{0\}\in
\Cf(\Rrr^0)$ is a unit. It is then straightforward to see that $\Kf$
is actually a bialgebra. However, it is not a Hopf algebra because
if $V$ is a vector subspace of~$\Rrr^n$ with $n\geq 1$, then
the element $(-1)^{\dim V}[V]$ of $\Kf$ is group-like but not invertible.

If $K\in\Cf(\Rrr^n)$ and $F$ is a face of $K$, then the poset of faces
of $\spam{F}+K$ is isomorphic to the interval $[F,K]$ in the poset of
faces of $K$. So the bialgebra $\Kf$ is related to the incidence Hopf
algebras defined by Joni and Rota in~\cite{Joni_Rota} and further studied
by Schmitt in~\cite{Schmitt}, although, unlike those Hopf algebras,
it has signs in the definition of its coproduct. Let us make this
relation more precise. For every $n\geq 0$, we denote by
$K_f(\Rrr^n)$ the free abelian group on the set of closed convex
polyhedral cones in $\Rrr^n$; if $K\in\Cf(\Rrr^n)$, we denote
its class in $K_f(\Rrr^n)$ by $[K]_f$. The formulas for $\Delta$ and
$\epsilon$ also define a coalgebra structure on $K_f(\Rrr^n)$ and, if
we set $\Kf_f=\bigoplus_{n\geq 0}K_f(\Rrr^n)$, then the product on
$\Kf_f$ defined by $[K]_f\cdot[L]_f=[K\times L]_f$ makes $\Kf_f$ into a
coalgebra. Let $\Pf$ be the set of isomorphism classes of finite
posets, and let $\Z[\Pf]$ be the free abelian group on $\Pf$
equipped with the Hopf algebra structure defined in Sections~3 and~4
of~\cite{Schmitt}. Then we have bialgebra morphisms
$\pi_1:\Kf_f\fl \Kf$ and $\pi_2:\Kf_f\fl\Z[\Pf]$ defined as follows:
if $K\in\Cf(\Rrr^n)$ then $\pi_1$ sends $[K]_f$ to $[K]$ and
$\pi_2$ sends $[K]_f$ to $(-1)^d$ times the class of the poset of
faces of $K$, where $d$ is the dimension of the largest vector subspace
contained in $K$.

\end{subrmk}

\subsection{Proofs}
\label{subsection_convolution_proofs}

Before proving Theorem~\ref{thm_coalgebra},
we state and prove the following lemma.
\begin{sublemma}
Let $K \subseteq V$ be a closed convex polyhedral cone, let
$F$ be a closed face of the cone~$K$ and let $\mu\in V^\vee$. 
We write $K_{0}=K\cap H_\mu$, 
$K_{+}=K\cap \overline{H^+_\mu}$ and
$K_{-}=K\cap \overline{H^-_\mu}$.
\begin{itemize}
\item[(a)]
Assume that $F\subseteq \overline{H_\mu^{+}}$ but $F\not\subseteq H_\mu$,
that is, $F$ is a face of $K_{+} $ but not of $K_{0}$.
Then the equality $\spam{F}+K=\spam{F}+K_+$ holds.

\item[(b)]
Assume that $F \cap H_\mu^+ \neq \varnothing$
and
$F \cap H_\mu^{-} \neq \varnothing$,
in other words, the hyperplane $H_\mu$ cuts the face $F$ in two.
Then the equality
$\spam{F}+K=\spam{F}+K_0$ holds.

\item[(c)] In the situation of (b), let $F_{0}=F\cap H_\mu$.
Then the equality
$\spam{F_0}+K=\spam{F}+K$ holds.

\item[(d)] Let $X$ be a subset of $V$.
Then $X+K=(X+K_+)\cup(X+K_-)$. If moreover
$X\subseteq H_\mu$, we also have $X+K_0=(X+K_+)\cap(X+K_-)$.
\end{itemize}
\label{lemma_cut_face}
\end{sublemma}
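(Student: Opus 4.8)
The plan is to prove the four statements (a)--(d) by direct manipulation of convex cones, working throughout with the fact that a closed convex polyhedral cone $K$ is the disjoint union of the relative interiors of its faces, and that $\spam{F}+K$ has face poset isomorphic to the interval $[F,K]$. First I would set up notation: write $\mu$ for the functional, and recall that for any closed face $F$ of $K$ the cone $\spam{F}+K$ is itself a closed convex polyhedral cone whose lineality space contains $\spam{F}$.

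For (a): since $F\subseteq\overline{H_\mu^+}$ but $F\not\subseteq H_\mu$, some $y\in F$ has $\bl{\mu}{y}>0$, so $-y\in\spam{F}$ and $-y\in\spam{F}+K_+$. The inclusion $\spam{F}+K_+\subseteq\spam{F}+K$ is obvious; conversely, given $x\in K$, choose $t\geq 0$ large enough that $x+ty\in\overline{H_\mu^+}$ (possible since $\bl{\mu}{x+ty}=\bl{\mu}{x}+t\bl{\mu}{y}\to+\infty$), so $x+ty\in K_+$ and $x=(x+ty)-ty\in K_++\spam{F}$. This gives the reverse inclusion, hence equality.

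For (b): pick $y^+\in F\cap H_\mu^+$ and $y^-\in F\cap H_\mu^-$; a convex combination $y_0=sy^++(1-s)y^-$ lies in $F\cap H_\mu=:F_0$, and moreover $y^+-y_0,\ y^--y_0\in\spam{F}$ (indeed $\spam{F}=\spam{F_0}+\R y^+$ since $F_0=F\cap H_\mu$ is a facet of $F$). The inclusion $\spam{F}+K_0\subseteq\spam{F}+K$ is clear. For the reverse: given $x\in K$, I would first reduce to $x\in F'$ for a face $F'$ of $K$ with $F\leq F'$ — no, more directly: since $\bl{\mu}{y^+}>0>\bl{\mu}{y^-}$, for any $x\in K$ there is $t\in\R$ with $\bl{\mu}{x+ty^\pm}=0$ for an appropriate choice of sign of $ty^\pm$; writing $x+ty^+\in K$ (valid for $t\geq 0$ small, using that $F\leq K$ so $x+ty^+\in K$ for $t\geq 0$... here one must be slightly careful and instead use that $y^+\in F$ implies $x+ty^+\in K$ for $t\geq 0$) and $\bl{\mu}{x+ty^+}=0$ forces $t=-\bl{\mu}{x}/\bl{\mu}{y^+}$; if this $t<0$ use $y^-$ instead. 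Then $x+ty^\pm\in K\cap H_\mu=K_0$ and $x=(x+ty^\pm)-ty^\pm\in K_0+\spam{F}$. For (c), the inclusion $\spam{F_0}+K\subseteq\spam{F}+K$ follows from $F_0\subseteq F$; the reverse uses $\spam{F}=\spam{F_0}+\R y^+$ and that $y^+\in F\subseteq K$, combined with part (b): $\spam{F}+K=\spam{F}+K_0$ and one checks $y^+,-y^+\in\spam{F_0}+K_0$ is false in general, so instead I would argue $\spam{F}+K = \spam{F_0}+\R y^+ + K$ and absorb $\R y^+$ into $K$ by noting $y^+\in K$ and $-y^+$ can be written using a point of $K_-$; combining with (b) applied to both $F$ and the observation $\spam{F_0}+K\supseteq\spam{F_0}+K_0$. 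This is the fiddliest bookkeeping.

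For (d): the identity $X+K=(X+K_+)\cup(X+K_-)$ is immediate from $K=K_+\cup K_-$ and distributivity of Minkowski sum over union. For the second identity, $(X+K_+)\cap(X+K_-)\supseteq X+K_0$ is clear since $K_0\subseteq K_\pm$. Conversely if $z=x_1+k_+=x_2+k_-$ with $x_i\in X\subseteq H_\mu$, $k_+\in K_+$, $k_-\in K_-$, then applying $\bl{\mu}{\cdot}$ gives $\bl{\mu}{k_+}=\bl{\mu}{z}=\bl{\mu}{k_-}$, and since $\bl{\mu}{k_+}\geq 0\geq\bl{\mu}{k_-}$ both are $0$, so $k_+\in K_0$ and $z=x_1+k_+\in X+K_0$.

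\textbf{Main obstacle.} The routine-looking part (c) is where I expect the real friction: cleanly showing $\spam{F_0}+K=\spam{F}+K$ requires knowing exactly how $\spam{F}$ and $\spam{F_0}$ differ (they differ by one dimension, spanned by any $y^+\in F$ with $\bl{\mu}{y^+}\neq 0$) and then verifying that this extra line is already absorbed into $\spam{F_0}+K$ using the two-sided structure of $K$ relative to $H_\mu$ — i.e., that $K$ contains points on both sides of $H_\mu$ in the relevant directions, which is exactly the hypothesis of (b). The cleanest route is probably to deduce (c) from (b) and the observation that $\spam{F}+K \supseteq \spam{F}+K_0 \supseteq \spam{F_0}+K_0$ and separately $\spam{F_0}+K \supseteq \spam{F_0}+K_+$ and $\supseteq \spam{F_0}+K_-$, then use (d)-type reasoning to recover $\spam{F}$ from these; I would organize (a)--(d) so that later parts cite earlier ones rather than reproving cone identities from scratch.
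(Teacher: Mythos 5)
Your parts (a), (b) and (d) are correct and essentially the paper's own arguments: in (a) you translate by a large positive multiple of some $y\in F$ with $\bl{\mu}{y}>0$, in (b) you translate by a suitable nonnegative multiple of $y^+$ or $y^-$ to land on $H_\mu$ (the paper phrases this as: the image $\mu(F)$ is a cone in $\R$ meeting both sides, hence all of $\R$), and in (d) you apply $\mu$ and use the sign constraints exactly as the paper does. The only real divergence is (c). The paper settles (c) with the same one-step trick as (b): for $x=y+z$ with $y\in\spam{F}$, $z\in K$, pick $y'\in F$ with $\bl{\mu}{y'}=\bl{\mu}{y}$; then $x=(y-y')+(y'+z)$ with $y'+z\in K$ and $y-y'\in\spam{F}\cap H_\mu=\spam{F_0}$. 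Your route — write $\spam{F}=\spam{F_0}+\R y^+$ and absorb the extra line into $\spam{F_0}+K$ — also works, but your sketch stops before the absorption step; the missing computation is short: with $c=-\bl{\mu}{y^+}/\bl{\mu}{y^-}>0$ one has $y^++cy^-\in\spam{F}\cap H_\mu=\spam{F_0}$, hence $-y^+\in\spam{F_0}+\R_{\geq 0}\,y^-\subseteq\spam{F_0}+K$, while $y^+\in F\subseteq K$, so $\R y^++K\subseteq\spam{F_0}+K$ and (c) follows. Two caveats on your justification of $\spam{F}=\spam{F_0}+\R y^+$: the set $F_0=F\cap H_\mu$ is not a facet (nor even a face) of $F$, since $H_\mu$ passes through the relative interior of $F$; what is true, and what both your argument and the paper's implicitly rely on, is the identity $\spam{F_0}=\spam{F}\cap H_\mu$, which holds because the segment from a relative-interior point of $F$ to a point of $F$ strictly on the other side of $H_\mu$ crosses $H_\mu$ at a relative-interior point of $F$, so $F_0$ spans the hyperplane slice. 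With that supplied, either version of (c) closes; the paper's is the shorter bookkeeping.
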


\begin{proof} 
We first prove (a).
The inclusion
$\spam{F}+K_+\subseteq\spam{F}+K$
clearly holds,
so we just need to show the reverse inclusion. 
Let $x\in\spam{F}+K$, and write $x=y+z$, with
$y\in\spam{F}$ and $z\in K$. As $F\not\subseteq H_\mu$,
there exists $y'\in F$ such that $\la\mu,y'\ra>0$.
Then for $a\in\R_{>0}$ large enough we have
$\la\mu,ay'+z\ra\geq 0$, hence $ay'+z\in K_+$. As
$x=(y-ay')+(ay'+z)$, this shows that $x\in\spam{F}+K_+$.

We now prove (b).
The inclusion $\spam{F}+K_0\subseteq\spam{F}+K$
clearly holds, 
so we just need to verify the reverse inclusion. 
Let $x\in\spam{F}+K$, and write $x=y+z$ with
$y\in\spam{F}$ and $z\in K$. 
By the assumption on $F$, the image of $F$ by $\mu$ is not contained
in $\R_{\geq 0}$ or in $\R_{\leq 0}$; as this image is a cone in~$\R$, we
conclude that it is equal to $\R$. In particular, we can find
$y'\in F$ such that
$\la\mu,y'\ra=-\la\mu,z\ra$.
Then $x=(y-y')+(y'+z)$ with $y-y'\in\spam{F}$, $y'+z\in K$ and
$\la\mu,y'+z\ra=0$, hence $x\in\spam{F}+K_0$. 

We prove (c).
The inclusion $\spam{F_0}+K\subseteq\spam{F}+K$ is clear,
so we need to show the reverse inclusion. 
By the proof of (b), the image of $F$ by $\mu$ is equal to
$\R$, so we can find $y'\in F$ such that $\la\mu,y'\ra=
\la\mu,y\ra$. 
Then $x=(y-y')+(y'+z)$ with $y-y'\in\spam{F}$, $y'+z\in K$ and
$\la\mu,y-y'\ra=0$, hence $x\in\spam{F_0}+K$. 

Finally, we prove (d).
The inclusion $(X+K_+)\cup(X+K_-)\subseteq X+K$ is clear. Conversely,
let $x\in X+K$, and write $x=y+z$ with $y\in X$ and $z\in K$. Then
either $z\in K_+$, in which case $x\in X+K_+$, or
$z\in K_-$, in which case $x\in X+K_-$. The inclusion
$X+K_0\subseteq (X+K_+)\cap(X+K_-)$ is also clear and holds without
any condition on $X$. Assume that
$X\subseteq H_\mu$, and let $x\in(X+K_+)\cap (X+K_-)$.
Write $x=y_1+z_1=y_2+z_2$, with $y_1,y_2\in X$, $z_1\in K_+$ and
$z_2\in K_-$. Then $\la\mu,y_1\ra=\la\mu,y_2\ra=0$, so
\[\la\mu,z_1\ra=\la\mu,x\ra-\la\mu,y_1\ra=
\la\mu,x\ra-\la\mu,y_2\ra=\la\mu,z_2\ra.\]
As $\la\mu,z_1\ra\geq 0$ and $\la\mu,z_2\ra\leq 0$, this
implies that $\la\mu,z_1\ra=\la\mu,z_2\ra=0$, hence
that $z_1,z_2\in K_0$, and so $x\in X+K_0$.
\end{proof}

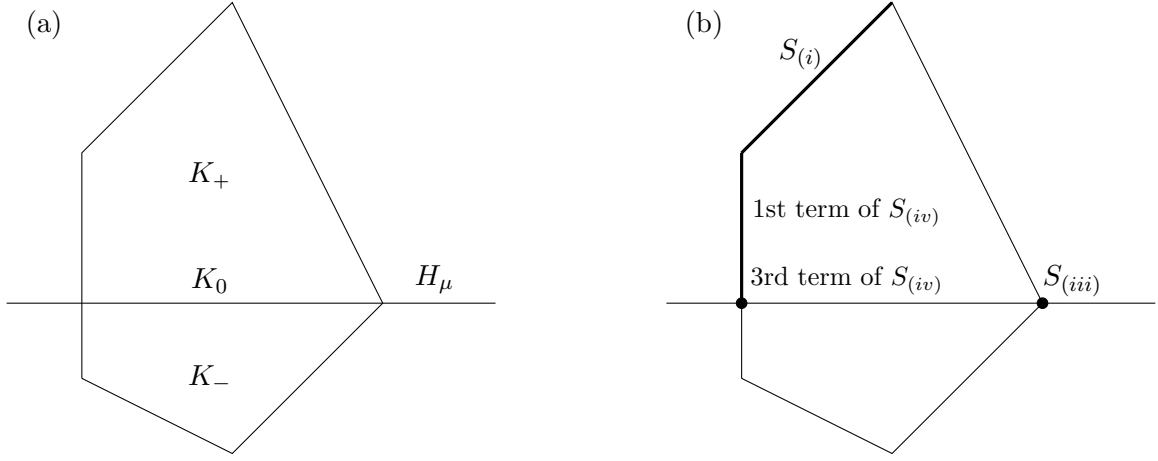
\begin{figure}[t]
\begin{tikzpicture}
\node at (-0.5,3.7) {(a)};
\draw[-] (-1,0) -- node[very near end,above] {$H_{\mu}$} (5.5,0);
\draw[-] (0,-1) -- (0,2) -- (2,4) -- (4,0) -- (2,-2) -- (0,-1);
\node at (1.7,1.7) {$K_{+}$};
\node at (1.7,0.3) {$K_{0}$};
\node at (1.7,-1) {$K_{-}$};
\end{tikzpicture}
\hspace*{20mm}
\begin{tikzpicture}
\node at (-0.5,3.7) {(b)};
\draw[-] (-1,0) -- (5.5,0);
\draw[-] (0,-1) -- (0,2) -- (2,4) -- (4,0) -- (2,-2) -- (0,-1);

\node at (0.79,3.31) {$S_{(i)}$};
\draw[-,very thick] (0,2) -- (2,4);

\node at (4.4,0.3) {$S_{(iii)}$};
\filldraw (4,0) circle (2pt);

\node at (1.4,1.2) {\small $1$st term of $S_{(iv)}$};
\draw[-,very thick] (0,0) -- (0,2);

\node at (1.4,0.3) {\small $3$rd term of $S_{(iv)}$};
\filldraw (0,0) circle (2pt);
\end{tikzpicture}
\caption{A two-dimensional representation of a five-sided three-dimensional cone $K$.
In (b) the four different contributions to $g(K_{+})$ are marked.}
\label{figure_one}
\end{figure}

\begin{proof}[Proof of Theorem~\ref{thm_coalgebra}]
We first show that $\Delta:\Cf(V)\fl K(V)\otimes_\Z K(V)$ and
$\varepsilon$ are valuations.
We check the criterion of Theorem~\ref{theorem_Groemer}. Let $K\in\Cf(V)$
and let $\mu \in V^\vee$.
We define as before three closed convex polyhedral
cones
$K_{+} = K \cap \overline{H_\mu^{+}}$,
$K_{-} = K \cap \overline{H_\mu^{-}}$,
$K_{0} = K \cap H_{\mu}$.
We show that
\begin{equation}
\label{equation_epsilon_valuation}
\varepsilon(K)+\varepsilon(K_0)=\varepsilon(K_+)+
\varepsilon(K_-).
\end{equation}
If $K_+=K_0$ then $K=K_-$ so equation~\eqref{equation_epsilon_valuation} is
clear. The case where $K_-=K_0$ is similar. Suppose that $K_+ \neq K_0$
and $K_- \neq K_0$. Then the image of $K_+$ by $\mu$ is $\R_{\geq 0}$, so
$K_+$ cannot be a vector subspace of $V$, and similarly $K_-$ cannot be a vector
subspace of $V$. This implies that $\varepsilon(K_+)=\varepsilon(K_-) = 0$,
so equation~\eqref{equation_epsilon_valuation} holds if and only
if $\varepsilon(K)=-\varepsilon(K_0)$. As $K_0$ is strictly included in~$K$,
we have $\dim(K_0)=\dim(K)-1$, so we need to prove that $K$
is a vector subspace if and only if $K_0$ is.
If~$K$ is a vector subspace of $V$ then so is~$K_0$.
Suppose that $K_0$ is a vector subspace of $V$. We want to prove that
$K$ also is a vector subspace of $V$.
Without loss of generality we may assume that $\spam{K}=V$.
As $K_+ \neq K_0$
and $K_- \neq K_0$, the hyperplane $H_\mu$ meets the relative interior
of $K$, and so $\spam{K_0}=H_\mu$, hence $H_\mu=K_0\subseteq K$.
As $K$ contains
points in both open half-spaces cut out by $H_\mu$, this implies that
$K=V$.

We now treat the case of $\Delta$.
The faces $F$ of the cone $K$ come in four disjoint categories.
For each category, we consider the contribution to the sum defining~$\Delta(K)$.
\begin{itemize}
\item[(i)]
$F$ is a face of $K_{+}$, but not of $K_{0}$,
that is, $\interior{F} \subseteq H_\mu^{+}$.
Then by Lemma~\ref{lemma_cut_face}(a)
we have $\spam{F}+K=\spam{F}+K_+$.
Hence the contribution is
\begin{align*}
S_{(i)}
& =
\sum_{\substack{F \in \mathcal{F}(K) \cap \mathcal{F}(K_{+}) \\ F\not\in
\mathcal{F}(K_{0})}}
(-1)^{\dim(F)}
\cdot
[F]
\otimes
[\spam{F}+K]\\
& =
\sum_{F \in (\mathcal{F}(K_{+})\cap\mathcal{F}(K))-\mathcal{F}(K_{0})}
(-1)^{\dim(F)}
\cdot
[F]
\otimes
[\spam{F}+K_+]
\end{align*}

\item[(ii)]
$F$ is a face of $K_{-}$, but not of $K_{0}$,
that is, $\interior{F} \subseteq H_\mu^{-}$.
As in case (i), we have $\spam{F}+K=\spam{F}+K_-$,
and the contribution is
\begin{align*}
S_{(ii)}
& =
\sum_{\substack{F \in \mathcal{F}(K) \cap \mathcal{F}(K_{-}) \\ F\not\in
\mathcal{F}(K_{0})}}
(-1)^{\dim(F)}
\cdot
[F]
\otimes
[\spam{F}+K]\\
& =
\sum_{F \in (\mathcal{F}(K_{-})\cap\mathcal{F}(K))-\mathcal{F}(K_{0})}
(-1)^{\dim(F)}
\cdot
[F]
\otimes
[\spam{F}+K_-]
\end{align*}

\item[(iii)]
$F$ is a face of all three cones $K_{+}$, $K_{-}$ and $K_{0}$,
that is, we have $F \subseteq H_{\mu}$.
Here the contribution is
\begin{align*}
S_{(iii)}
& =
\sum_{\substack{F \in \mathcal{F}(K) \\ F \subseteq H_{\mu}}}
(-1)^{\dim(F)}
\cdot
[F]
\otimes
[\spam{F}+K]\\
& = 
\sum_{F \in \mathcal{F}(K_{+})\cap\mathcal{F}(K_{-})\cap\mathcal{F}(K_{0})}
(-1)^{\dim(F)}
\cdot
[F]
\otimes
\bigl([\spam{F}+K_+]
+
[\spam{F}+K_-]
-
[\spam{F}+K_0])\bigr) ,
\end{align*}
since
$\spam{F}+K=(\spam{F}+K_+)\cup(\spam{F}+K_-)$ and
$\spam{F}+K_0=(\spam{F}+K_+)\cap(\spam{F}+K_-)$
by Lemma~\ref{lemma_cut_face}(d).

\item[(iv)]
The face $F$ gets cut into three faces:
$F_{+} = F\cap K_{+}$ in $K_{+}$,
$F_{-} = F\cap K_{-}$ in $K_{-}$ and
$F_{0} = F\cap K_{0}$ in $K_{0}$.
Then we have $\spam{F}=\spam{F_+}=\spam{F_-}$.
By Lemma~\ref{lemma_cut_face}(b), we have 
$\spam{F}+K=\spam{F}+K_0$, and so
\[\spam{F}+K=\spam{F_+}+K_+=\spam{F_-}+K_-=\spam{F}+K_0.\]
By Lemma~\ref{lemma_cut_face}(c), we also have
$\spam{F}+K=\spam{F_0}+K$, and by
Lemma~\ref{lemma_cut_face}(d) we have
$[\spam{F_0}+K]=[\spam{F_0}+K_+]+
[\spam{F_0}+K_-]-[\spam{F_0}+K_0]$.
So the contribution is
\begin{align*}
S_{(iv)}
& =
\sum_{\substack{F \in \mathcal{F}(K) \\ F \text{ being cut}}}
(-1)^{\dim(F)}
\cdot
[F]
\otimes
[\spam{F}+K]\\
& =
\sum_{\substack{F \in \mathcal{F}(K) \\ F \text{ being cut}}}
(-1)^{\dim(F)}
\cdot
([F_+]+[F_-]-[F_0])
\otimes
[\spam{F}+K]\\
& = 
\sum_{\substack{F \in \mathcal{F}(K) \\ F \text{ being cut}}}
(-1)^{\dim(F)}
\cdot
([F_+]\otimes[\spam{F_+}+K_+]+[F_-]\otimes[\spam{F_-}+K_-]-[F_0]\otimes[\spam{F_0}+K])\\
&=
\sum_{\substack{F \in \mathcal{F}(K) \\ F \text{ being cut}}}
(-1)^{\dim(F)}
\cdot 
\biggl(
[F_+]\otimes[\spam{F_+}+K_+]+[F_-]\otimes[\spam{F_-}+K_-]\\
&
\hspace*{20mm}
-
[F_0]\otimes[\spam{F_0}+K_+]
-[F_0]\otimes[\spam{F_0}+K_-]
+[F_0]\otimes[\spam{F_0}+K_0]\biggr).
\end{align*}
\end{itemize}
Now expand $\Delta(K)$ as $S_{(i)}+S_{(ii)}+S_{(iii)}+S_{(iv)}$.
We use use the fact that $(-1)^{\dim(F)} = - (-1)^{\dim(F_{0})}$
in the third, fourth and fifth terms of $S_{(iv)}$.
The contributions to $\Delta(K_{+})$, respectively~$\Delta(K_{-})$,
are given by the sum $S_{(i)}$, respectively~$S_{(ii)}$,
the first term in the sum $S_{(iii)}$, respectively the second term,
and
the first and third terms in the sum $S_{(iv)}$, respectively the second and fourth terms.
See Figure~\ref{figure_one}~(b).
Finally,
the third term of the sum $S_{(iii)}$
and
the fifth term of the sum $S_{(iv)}$
yield the sum for $-\Delta(K_{0})$,
which proves
that 
$\Delta(K)=\Delta(K_{+})+\Delta(K_{-})-\Delta(K_{0})$.

We now prove that $\Delta$ is coassociative. Let $K\in\Cf(V)$. Then
\begin{align*}
(\Delta\otimes\id_{K(V)})(\Delta(K)) & =
(\Delta\otimes\id_{K(V)})\left(
\sum_{F\in\F(K)}(-1)^{\dim F}[F]\otimes[\spam{F}+K]
\right)\\
& = \sum_{F\in\F(K)}\sum_{G\in\F(F)}
(-1)^{\dim F+\dim G}[G]\otimes[\spam{G}+F]\otimes[\spam{F}+K].
\end{align*}
We want to compare this expression with $(\id_{K(V)} \otimes \Delta)(\Delta([K]))$.
To calculate this last expression, we need a description of
the faces of the cone $\spam{G}+K$, where $G$ is a face of $K$. 
Let $\Hf$ be the collection
of hyperplanes containing a facet of $K$. Then $\Hf$ is a finite
central hyperplane arrangement on $V$ and, as in
Subsection~\ref{background}, we write $\Lf=\Lf(\Hf)$ and $\Tc=
\Tc(\Hf)$. Let $C$ and $T$ be the relative interiors of $G$ and $K$
respectively. We have $C\in\Lf$ and $T\in\Tc\cap\Lf_{\geq C}$, and there
is a bijection $\{D\in\Lf_{\geq C} : D\leq T\}\iso\{F\in\F(K) : G\subseteq F\}$
sending $D$ to $\overline{D}$. Let $\Hf(C)$ be the subarrangement
of $\Hf$ whose hyperplanes are the ones containing $C$ (or equivalently
$G$). By Lemma~\ref{lemma_star}, the cone $\spam{G}+K$ is the closure
of the unique chamber of $\Hf(C)$ containing $T$, and there is a bijection
from the set $\{D\in\Lf_{\geq C} : D\leq T\}$ to the set of faces
of $\spam{G}+K$ sending $D$ to $\spam{G}+\overline{D}$. We deduce that
there is a bijection from the set  $\{F\in\F(K) : G\subseteq F\}$
to $\F(\spam{G}+K)$ sending $F$ to $\spam{G}+F$. Moreover,
Lemma~\ref{lemma_star}(iv) states that this bijection preserves dimensions.
Thus we obtain
\begin{align*}
(\id_{K(V)}\otimes\Delta)(\Delta(K)) & =
(\id_{K(V)}\otimes\Delta)\left(
\sum_{G\in\F(K)}(-1)^{\dim G}[G]\otimes[\spam{G}+K]
\right)\\
& = \sum_{G\in\F(K)}\sum_{F'\in\F(\spam{G}+K)}
(-1)^{\dim F'+\dim G}[G]\otimes[F']\otimes[\spam{F'}+K]\\
& = \sum_{G\in\F(K)}\sum_{F\in\F(K) : G\subseteq F}
(-1)^{\dim F+\dim G}[G]\otimes[\spam{G}+F]\otimes[\spam{F}+K]\\
& = (\Delta\otimes\id_{K(V)})(\Delta(K)).
\end{align*}
This completes the proof of the coassociativity of $\Delta$.

We finally prove that $\varepsilon$ is a counit of $\Delta$.
Let $K\in\Cf(V)$. Suppose first that $K$ is not a vector subspace of $V$.
Then the only face of $K$ that is a vector subspace is $\{0\}$,
and the only face $F$ such that $\spam{F}+K$ is a vector subspace is
$K$. Hence
\begin{align*}
(\id_{K(V)}\otimes\varepsilon)(\Delta(K)) & 
=\sum_{F\in\F(K)}(-1)^{\dim F}[F]\otimes\varepsilon([\spam{F}+K])\\
& = (-1)^{\dim K}[K]\otimes\varepsilon([\spam{K}])\\
& = (-1)^{\dim K}(-1)^{\dim\spam{K}}[K]\otimes 1=[K]
\end{align*}
and
\begin{align*}
(\varepsilon\otimes\id_{K(V)})(\Delta(K)) & =
\sum_{F\in\F(K)}(-1)^{\dim F}\varepsilon([F])\otimes[\spam{F}+K]\\
& = (-1)^0\varepsilon([\{0\}]\otimes[K]=[K].
\end{align*}
If $K$ is a vector subspace of $V$ then the only face of $K$ is $K$ itself,
so $\Delta(K)=(-1)^{\dim K}[K]\otimes[K]$, and we clearly have
\[(\id_{K(V)}\otimes\varepsilon)(\Delta(K))=
(\id_{K(V)}\otimes\varepsilon)(\Delta(K))=[K].
\qedhere \]
\end{proof}

\section{Review of 2-structures}
\label{appendix_2_structures}

The concept of $2$-structure for a root system
was introduced by Herb to calculate discrete series characters
on real reductive groups.
See for example Section~5 of~\cite{Herb-DSC} or
Section~4 of the review article~\cite{Herb-2S}.
In this section we review Herb's constructions and adapt them
so that they work for an arbitrary Coxeter system having finite Coxeter group.
We also adapt some of heer results to this setting
and give detailed elementary proofs of these results.
Although this is not strictly necessary, we think that it might
be valiable, as the proofs of these
results in the literature can be very hard to follow for people
not already immersed in the representation theory of real groups.

We fix a finite-dimensional $\R$-vector space $V$ and an inner
product $(\cdot,\cdot)$ on $V$.
For every $v\in V-\{0\}$, we denote
by $s_v$ the (orthogonal) reflection across the hyperplane $v^\perp$.

Whenever we need to describe the irreducible
root systems,
we use the description given in the tables at the end
of~\cite{Bourbaki}, except that we write $(e_1,\ldots,e_n)$
for the canonical basis of~$\R^n$. When we need a system of positive
roots in these root systems, we also use the ones given in these tables.

This appendix is organized as follows.
Subsections~\ref{subsection_B_1}
and~\ref{subsection_2-structures}
contain the definitions and results respectively.
Subsections~\ref{subsection_B_3}
and~\ref{section_2_structures_irreducibles}
contain the technical proofs.
The verification that the Coxeter group $W$ acts transitively
on the set of $2$-structures takes place in the fourth subsection.

\subsection{Pseudo-root systems}
\label{subsection_B_1}

\begin{subdef}
A finite subset
$\Phi$ of $V-\{0\}$ is called a \emph{pseudo-root system} if it satisfies the
following conditions:
\begin{itemize}
\item[(a)] for every $\alpha\in\Phi$, we have $\Phi\cap\R\alpha=\{\pm\alpha\}$;
\item[(b)] for every $\alpha,\beta\in\Phi$, the reflection $s_\alpha$ sends
$\beta$ to a vector of the form $c\gamma$, with $c\in\R_{>0}$ and
$\gamma\in\Phi$.
\end{itemize}
If all the elements of $\Phi$ are unit vectors, we call $\Phi$ a
\emph{normalized pseudo-root system}. In that case, condition (b)
become ``$s_\alpha(\beta)\in\Phi$''.
\label{def_pseudo_root_system}
\end{subdef}

\begin{subrmk}
We use this definition because it is convenient in the context of Coxeter
systems. A root system (in the usual sense) is a pseudo-root system,
which is not
normalized in general. The converse is not true, even if we allow ourselves
to replace the elements of $\Phi$ by scalar multiples, because of the
existence of non-crystallographic Coxeter systems
(see Proposition~\ref{prop_pseudo_root_system_vs_Coxeter_system}).

Pseudo-root systems are called ``root systems'' in~\cite[Section~1.2]{Hu-Cox}
and~\cite[Section~4.4]{BB}. 
We avoid this terminology because it is not compatible with
the established definition of root systems in representation theory.

\end{subrmk}

\begin{subrmk}
If $\Phi$ is normalized or an actual root system then the group
$W$ preserves $\Phi$, so the action of $W$ on $V$ restricts
to an action of $W$ on $\Phi$.
In general, we can still make $W$ act on $\Phi$ by declaring that if
$w\in W$ and $\alpha\in\Phi$ then $w\cdot\alpha$ is the unique
element $\beta$ of $\Phi$ such that $w(\alpha)\in\R_{>0}\beta$.
This reduces to the previous action if $\Phi$ is normalized or an
actual root system. Whenever we write an element
of $W$ acting on an element of $\Phi$, this is the action that
we mean.
\label{rmk_action_W_Phi}
\end{subrmk}

\begin{subdef}
Let $\Phi\subset V$ be a pseudo-root system. 
A subset $\Delta$ of $\Phi$ is called a \emph{system of simple pseudo-roots}
if
\begin{itemize}
\item[(a)]
The set $\Delta$ is a vector space basis for the linear span of $\Phi$.
\item[(b)]
For every $\alpha\in\Phi$, we can write $\alpha=\sum_{\beta\in\Delta}
n_\beta\beta$, where the coefficients~$n_\beta$ are in $\Rrr$ and they are
either all nonnegative or all nonpositive.
\end{itemize}
The corresponding \emph{system of positive pseudo-roots} is then
$$\Phi^+
=
\Phi
\cap
\biggl\{\sum_{\beta\in\Delta} n_\beta \beta\ 
: n_\beta\in\R_{\geq 0}\ \forall\beta\in\Delta\biggr\}. $$
We also write $\Phi^- = - \Phi^+$.
\label{def_positive_pseudo_roots}
\end{subdef}

\begin{subdef}
Let $\Phi\subset V$ be a pseudo-root system. We say that $\Phi$ is
\emph{irreducible} if there is no partition $\Phi=\Phi_1\sqcup\Phi_2$, with
$\Phi_1$ and $\Phi_2$ nonempty pseudo-root systems such that
$(\alpha_1,\alpha_2)=0$ for every $\alpha_1\in\Phi_1$ and 
every $\alpha_2\in\Phi_2$.
\end{subdef}

\begin{subprop}
The following two statements hold:
\begin{itemize}
\item[(i)] (\cite[Section~1.9]{Hu-Cox} and~\cite[Section~1.4]{Hu-Cox}.)
Let $\Phi\subset V$ be a pseudo-root system and $\Delta\subset\Phi$ be
a system of simple pseudo-roots. Let $W=W(\Phi)$ 
be the subgroup of $\GL(V)$ generated
by the reflections $s_\alpha$ for $\alpha\in\Phi$, and let
$S=\{s_\alpha : \alpha\in\Delta\}$.
Then $(W,S)$ is a Coxeter system where $W$ is finite, 
and the Coxeter graph of $(W,S)$ is connected if and only if
$\Phi$ is irreducible.

Moreover, $W$ acts transitively on the
set of systems of positive pseudo-roots if we use the action of
Remark~\ref{rmk_action_W_Phi}.

\item[(ii)] (\cite[Section~5.4]{Hu-Cox}.)
Conversely, let $(W,S)$ be a Coxeter system with $W$ finite, 
and let
$\rho:W \longrightarrow \GL(V)$ be its canonical representation on $V=\bigoplus_{s\in S}
\R e_s$
(see the beginning of Subsection~\ref{section_Coxeter_arrangements}). 
Then $\Phi=\{\rho(w)(e_s) : w\in W,\ s\in S\}$ is a 
normalized 
pseudo-root
system
and $\Delta=\{e_s : s\in S\}$ is a system of simple pseudo-roots in $\Phi$.
\end{itemize}
\label{prop_pseudo_root_system_vs_Coxeter_system}
\end{subprop}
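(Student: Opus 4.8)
The plan is to prove the two statements separately, quoting Humphreys~\cite{Hu-Cox} for the classical facts and then supplying the small extra arguments needed to adapt them to the (not necessarily crystallographic) pseudo-root setting. For statement~(i), I would first observe that the subgroup $W = W(\Phi) \subseteq \GL(V)$ generated by the reflections $s_\alpha$, $\alpha \in \Phi$, is finite: since $\Phi$ is finite and every $s_\alpha$ permutes the rays $\R_{>0}\gamma$ through elements of $\Phi$ (by condition~(b) of Definition~\ref{def_pseudo_root_system}), we get a homomorphism from $W$ to the symmetric group on the (finite) set of such rays; its kernel fixes every element of $\Phi$ up to positive scalar, hence fixes $\Span(\Phi)$ pointwise, and it also fixes $\Span(\Phi)^\perp$ pointwise since each $s_\alpha$ is the identity there, so the kernel is trivial and $W$ is finite. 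The fact that $(W,S)$ is a Coxeter system for $S = \{s_\alpha : \alpha \in \Delta\}$ is~\cite[Section~1.9]{Hu-Cox}, and the identification of connectedness of the Coxeter graph with irreducibility of $\Phi$ is immediate from the orthogonal decomposition of $\Span(\Phi)$ induced by the connected components of the graph. For the transitivity of $W$ on systems of positive pseudo-roots, I would use that each system of simple pseudo-roots $\Delta'$ determines a chamber $\bigcap_{\beta \in \Delta'} H_\beta^+$ of the Coxeter arrangement $\Hf = (H_\alpha)_{\alpha \in \Phi^+}$, that $W$ acts simply transitively on chambers, and that distinct positive systems give distinct chambers; this is essentially~\cite[Section~1.4]{Hu-Cox}. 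One point that needs care here is the action of $W$ on $\Phi$: for a non-normalized pseudo-root system we must use the twisted action of Remark~\ref{rmk_action_W_Phi}, and I would note that under this action $w$ sends the positive system associated to $\Delta'$ to the one associated to $w \cdot \Delta'$, which reduces the statement to transitivity on systems of simple pseudo-roots.

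For statement~(ii), let $(W,S)$ be a Coxeter system with $W$ finite and $\rho : W \to \GL(V)$ the canonical representation on $V = \bigoplus_{s\in S} \R e_s$ defined by~\eqref{equation_canonical_representation}. By Theorem~\ref{thm_W_finite}, $\rho$ is faithful and the bilinear form $(\cdot,\cdot)$ is positive definite, so it is genuinely an inner product and each $\rho(s)$ is the orthogonal reflection $s_{e_s}$ across $e_s^\perp$. Set $\Phi = \{\rho(w)(e_s) : w\in W,\ s\in S\}$; this is finite because $W$ is finite and $S$ is finite, and every element has norm~$1$ because $\rho(W)$ acts orthogonally and $(e_s,e_s)=1$. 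I would then verify conditions~(a) and~(b) of Definition~\ref{def_pseudo_root_system}: condition~(b) is immediate since $s_\alpha(\beta) = \rho(w s_s w^{-1})(\beta) \in \Phi$ whenever $\alpha = \rho(w)(e_s)$ and $\beta \in \Phi$, using that $\rho(w) s_{e_s} \rho(w)^{-1} = s_{\rho(w)(e_s)} = s_\alpha$; and condition~(a) — that $\Phi \cap \R\alpha = \{\pm\alpha\}$ — follows because any element of $\Phi$ on the line $\R\alpha$ is a unit vector, hence $\pm\alpha$. That $\Delta = \{e_s : s\in S\}$ is a system of simple pseudo-roots is the content of~\cite[Section~5.4]{Hu-Cox}: $\Delta$ is a basis of $V = \Span(\Phi)$ by construction, and the sign-coherence property~(b) of Definition~\ref{def_positive_pseudo_roots} is the standard statement that every root in the canonical representation has all coordinates of one sign, proved via the length function on $W$.

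The routine parts are the finiteness/faithfulness bookkeeping and the reflection-conjugation identities, which are all standard. The one genuinely delicate point — and the step I expect to be the main obstacle — is making sure the arguments go through \emph{without} the crystallographic hypothesis, i.e. with general (possibly irrational) values of $m_{s,t}$ and the twisted $W$-action on $\Phi$ from Remark~\ref{rmk_action_W_Phi}: in particular the transitivity of $W$ on positive systems in~(i) must be phrased in terms of chambers of the arrangement $\Hf$ rather than in terms of an honest linear action on the root set, and one should check that the dictionary ``positive systems $\leftrightarrow$ chambers $\leftrightarrow$ elements of $W$'' set up in Subsection~\ref{section_Coxeter_arrangements} is compatible with this twisted action. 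Once that compatibility is spelled out, the result follows from the simple transitivity of $W$ on the chambers of $\Hf$.
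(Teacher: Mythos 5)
The paper offers no proof of this proposition beyond the citations to Humphreys (Sections~1.4, 1.9 and 5.4), and your write-up is exactly the standard argument those citations encode, together with the routine finiteness bookkeeping and the normalization/twisted-action adaptation to pseudo-root systems, so it takes essentially the same route as the paper. The only micro-omission is in (ii), condition~(a): besides observing that unit vectors on $\R\alpha$ are $\pm\alpha$, you should record that $-\alpha\in\Phi$ (if $\alpha=\rho(w)(e_s)$ then $-\alpha=\rho(ws)(e_s)$), which your conjugation identity already supplies.
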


\begin{subdef}
Let $\Phi\subset V$ be an irreducible pseudo-root system. We say that
$\Phi$ is of type~$A_n$, respectively $B_n$, $D_n$, $E_6$, $E_7$, $E_8$, $F_4$,
$H_3$, $H_4$, $I_2(m)$ with $m \geq 3$, if the corresponding Coxeter system
is of that type. Here we use the classification of simple finite
Coxeter systems proved 
in~\cite[Chapter~5]{GB}.
See Table~1 in~\cite[Appendix~A]{BB}.
\end{subdef}

\begin{subrmk}
The Coxeter group of type~$I_2(m)$ is the dihedral group of order $2m$.
Note that types~$I_2(3)$ and~$A_2$ are isomorphic, types
$I_2(4)$ and $B_2$ are isomorphic, and types $I_2(6)$ and $G_2$ are
isomorphic.
We did not include $I_2(2)$ in the list of irreducible types,
because the corresponding Coxeter system is not irreducible,
as it is isomorphic to $A_1\times A_1$.
\end{subrmk}

We will use the following lemma when introducing the sign associated to
a $2$-structure in Proposition~\ref{prop_sign_2_structure}.
Recall that, if $r\geq 1$, then the \emph{lexicographic order}
on $\R^r$ is defined by $(x_1,\ldots,x_r) < (y_1,\ldots,y_r)$
if there exists $1 \leq i \leq r$ such that $x_i < y_i$ and that
$x_j=y_j$ for $1\leq j\leq i-1$.
It is a total order.
Furthermore we say that a vector $x$ is {\em positive} if 
$x > (0,0, \ldots, 0)$.

\begin{sublemma}
Let $\Phi\subset V$ be a pseudo-root system.
Let $v_1, v_2, \ldots, v_r$ be 
linearly independent 
elements of $V$ such that no element of $\Phi$ is
orthogonal to every $v_i$. Define $\Phi^+$ to be the set
of $\alpha\in\Phi$ such that the element $((\alpha,v_1),(\alpha,v_2),\ldots,
(\alpha,v_r))$ of $\R^r$ is positive with respect to the lexicographic
order on~$\R^r$. 
Then $\Phi^+$ is a system of positive pseudo-roots. 

\label{lemma_defining_positive_roots}
\end{sublemma}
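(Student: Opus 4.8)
The plan is to verify conditions (a) and (b) of Definition~\ref{def_positive_pseudo_roots} for the set $\Phi^+$ defined via the lexicographic order. The essential point is that a total order on $\R^r$ compatible with the vector-space structure (in the sense that the set of positive vectors is closed under addition and under multiplication by positive scalars) pulls back along the injective linear map $\alpha\longmapsto((\alpha,v_1),\ldots,(\alpha,v_r))$ to give a notion of positivity on the linear span $U=\Span(\Phi)$; the hypothesis that no element of $\Phi$ is orthogonal to all the $v_i$ guarantees this map is nonzero on every element of $\Phi$, so every $\alpha\in\Phi$ is either positive or negative, and $\Phi=\Phi^+\sqcup(-\Phi^+)$.

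First I would record the two formal properties of the lexicographic order on $\R^r$: it is a total order, and the set $P$ of positive vectors satisfies $P+P\subseteq P$ and $\R_{>0}\cdot P\subseteq P$, while $0\notin P$ and $x\in P\iff -x\notin P\cup\{0\}$. These are immediate from the definition. Next, since $v_1,\ldots,v_r$ are linearly independent, extend them to a basis and let $\ell:V\longrightarrow\R^r$ be $\ell(\alpha)=((\alpha,v_1),\ldots,(\alpha,v_r))$; the hypothesis says $\ell(\alpha)\neq 0$ for $\alpha\in\Phi$, so exactly one of $\ell(\alpha),\ell(-\alpha)=-\ell(\alpha)$ lies in $P$. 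Thus $\Phi^+$ and $\Phi^-=-\Phi^+$ partition $\Phi$.

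For condition~(a) of Definition~\ref{def_positive_pseudo_roots}, I would show that $\Phi^+$ contains a basis $\Delta$ of $U=\Span(\Phi)$ with the required sign property, i.e.\ produce the simple system. Here I would argue as follows: among all subsets of $\Phi^+$ whose nonnegative-real span contains $\Phi^+$, pick one, say spanned by a minimal generating set; more cleanly, define $\Delta$ to be the set of $\alpha\in\Phi^+$ that cannot be written as $\beta+\gamma$ with $\beta,\gamma\in\R_{>0}$-span of $\Phi^+$ — the usual extraction of indecomposable elements with respect to the positivity coming from $P$. Because the order is total and the positivity is additive, every $\alpha\in\Phi^+$ is a nonnegative combination of elements of $\Delta$, and the elements of $\Delta$ are linearly independent (the standard argument: a linear dependence, separated into positive and negative parts, produces a vector that is simultaneously positive and the sum of negatives of positives, contradicting $0\notin P$ and $P\cap(-P)=\varnothing$). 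Since $\Phi$ spans $U$ and every element of $\Phi$ is $\pm$ a nonnegative combination of $\Delta$, the set $\Delta$ spans $U$, hence is a basis; this is exactly (a), and (b) follows because each $\alpha\in\Phi^+$ (resp.\ $\Phi^-$) has all coefficients $\geq 0$ (resp.\ $\leq 0$) in the $\Delta$-basis.

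The main obstacle, and the step deserving the most care, is the extraction of the simple system $\Delta$ and the proof that it is linearly independent: this is the one place where one genuinely uses that the lexicographic order is a \emph{total} ordering compatible with addition, not merely that $\Phi=\Phi^+\sqcup\Phi^-$. I expect the rest — closure properties of $P$, nonvanishing of $\ell$ on $\Phi$, the sign bookkeeping for (b) — to be routine. One can alternatively shortcut the whole argument by invoking the standard fact (e.g.\ from \cite[Section~1.3]{Hu-Cox}) that any ``total order compatible with the group structure'' on the root lattice determines a simple system, after transporting the lexicographic order on $\R^r$ back to $U$ via $\ell$; if that reference is admissible I would cite it and keep the proof to a couple of lines.
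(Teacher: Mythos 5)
Your main, self-contained route has a genuine gap at exactly the step you flag as delicate: the linear independence of the set $\Delta$ of indecomposable elements of $\Phi^+$ does not follow from positivity alone. If $\sum_{\delta\in\Delta}c_\delta\delta=0$ and you separate the coefficients by sign, you get $v=\sum_{\alpha\in A}c_\alpha\alpha=\sum_{\beta\in B}d_\beta\beta$ with all $c_\alpha,d_\beta>0$; both sides exhibit $v$ as a nonnegative combination of positive vectors, so $v$ is simply positive twice over and there is no contradiction with $0\notin P$ or $P\cap(-P)=\varnothing$. The standard argument first proves $(\alpha,\beta)\le 0$ for distinct indecomposables --- and that step uses axiom (b) of Definition~\ref{def_pseudo_root_system} (the set is stable under the reflections $s_\alpha$ up to positive scalars), not merely the additivity and totality of the order --- and only then computes $(v,v)=\sum_{\alpha,\beta}c_\alpha d_\beta(\alpha,\beta)\le 0$ to force $v=0$, after which positivity kills the coefficients. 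Without this root-system input the claim is false for general finite sets of lexicographically positive vectors: in $\R^3$ the vectors $e_1,e_2,e_3,e_1+e_2-e_3$ are all positive, none is a nonnegative combination of the others, yet they are linearly dependent. So as written, your extraction of a simple system does not go through; it must be supplemented by the $(\alpha,\beta)\le 0$ lemma, i.e.\ by the argument of \cite[Section~1.3]{Hu-Cox}.

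Your proposed shortcut is in fact the paper's proof, with one technical adjustment. The paper completes $(v_1,\ldots,v_r)$ to a basis $(v_1,\ldots,v_n)$ of $V$ and uses the lexicographic order on all $n$ inner products; since the $v_i$ form a basis, the map $x\mapsto((x,v_1),\ldots,(x,v_n))$ is injective, so this really is a total vector-space ordering of $V$ in the sense of \cite[Section~1.3]{Hu-Cox}, and the hypothesis that no pseudo-root is orthogonal to all of $v_1,\ldots,v_r$ ensures that the sign of each $\alpha\in\Phi$ is already decided by the first $r$ coordinates, so the associated positive system is the stated $\Phi^+$; the theorem of loc.\ cit.\ then gives the simple system. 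Pulling back the lexicographic order on $\R^r$ along $\ell(\alpha)=((\alpha,v_1),\ldots,(\alpha,v_r))$, as you suggest, does not literally give a total order on $\Span(\Phi)$ when $\ker\ell$ meets $\Span(\Phi)$ nontrivially (the hypothesis only excludes pseudo-roots from $\ker\ell$), so the extension to a full basis, or an equivalent fix, is needed before citing the reference.
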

\begin{proof}
We complete $(v_1,\ldots,v_r)$ to a basis $(v_1,\ldots,v_n)$ of $V$,
where $n$ is the dimension of~$V$. If $v,w\in V$, we say that $v<w$ if 
$((v,v_1),\ldots,(v,v_n))<((w,v_1),\ldots,(w,v_n))$ in the lexicographic
order on $\R^n$. This defines a total order on $V$ in the sense
of~\cite[Section~1.3]{Hu-Cox}, and $\Phi^+$ is the corresponding
positive system in $\Phi$. By the theorem in~\cite[Section~1.3]{Hu-Cox},
$\Phi^+$ is a system of positive pseudo-roots in the sense of
Definition~\ref{def_positive_pseudo_roots}.
\end{proof}

\begin{subdef}
If $\theta=(v_1,\ldots,v_r)$ is a sequence of 
linearly independent elements of
$V$ such that $\theta^\perp\cap\Phi=\varnothing$, we denote the
system of positive pseudo-roots of Lemma~\ref{lemma_defining_positive_roots} by
$\Phi^+_\theta$.
\label{def_Phi_theta}
\end{subdef}

\subsection{2-structures}
\label{subsection_2-structures}

We define $2$-structures, generalizing a notion introduced by
Herb for root systems; see for example the beginning of
\cite[Section~4]{Herb-2S}. We also generalize some of the results
of~\cite[Section~5]{Herb-DSC} to Coxeter systems with finite
Coxeter groups.

We fix a pseudo-root system $\Phi$ in $V$ and a system of
positive pseudo-roots $\Phi^+\subset\Phi$. We denote by
$(W,S)$ the corresponding Coxeter system
(see Proposition~\ref{prop_pseudo_root_system_vs_Coxeter_system}).

\begin{subdef}
A \emph{$2$-structure} for $\Phi$ is a subset $\varphi$ of $\Phi$,
that is, a pseudo-root system in $V$
satisfying the following properties:
\begin{itemize}
\item[(a)] 
The subset $\varphi$ is a disjoint union
$\varphi=\varphi_1\sqcup\varphi_2\sqcup\cdots\sqcup\varphi_r$,
where the $\varphi_i$ are pairwise orthogonal subsets
of $\varphi$ and each of them is an irreducible pseudo-root system in $V$ of type
$A_1$, $B_2$ or $I_2(2^n)$, for $n\geq 3$.
\item[(b)] Let $\varphi^+=\varphi\cap\Phi^+$. If $w\in W$ is such that
$w(\varphi^+)=\varphi^+$ then $\det(w) = 1$.
\end{itemize}
\label{def_2_structure}
\end{subdef}

\begin{subrmk}
Although condition (b) involves the set of positive pseudo-roots
$\varphi^+$ in $\varphi$, 
it does not actually depend on the choice
of $\varphi^+$, because 
the Coxeter
group of $\varphi$ acts transitively on sets of positive pseudo-roots
in $\varphi$.
\end{subrmk}

\begin{subrmk}
If $\varphi \subseteq \Phi$ is a $2$-structure then there is no
$\alpha\in\Phi$ that is orthogonal to every element of $\varphi$.
Indeed, if such an $\alpha$ existed then the associated reflection
$s_\alpha$ would fix every element of $\varphi$, and in particular
send $\varphi^+$ to itself, which would contradict condition (b)
of Definition~\ref{def_2_structure}.
\label{rmk_no_orthogonal_root}
\end{subrmk}

Let $\Tt(\Phi) \subseteq 2^{\Phi}$ be the set of all 
$2$-structures for the pseudo-root system $\Phi$.
The following proposition is proved in
Subsection~\ref{section_2_structures_irreducibles}, where we
also show that each irreducible pseudo-root system
contains a $2$-structure and give
the type of this $2$-structure.
This introduces no
circularity in the arguments: the only results in this appendix
that depend on Proposition~\ref{prop_W_acts_transitively_on_T_Phi} are
Lemmas~\ref{lemma_T_Phi_induction_step}
and~\ref{lemma_r_Phi}, and these lemmas are not used in
Subsections~\ref{subsection_B_3}
and~\ref{section_2_structures_irreducibles}.

\begin{subprop}
The group $W$ acts transitively on the collection of $2$-structures $\Tt(\Phi)$.
\label{prop_W_acts_transitively_on_T_Phi}
\end{subprop}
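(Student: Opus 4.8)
The plan is to prove Proposition~\ref{prop_W_acts_transitively_on_T_Phi} by reducing to the case of irreducible pseudo-root systems and then checking the irreducible cases individually, following the strategy of Herb. First I would observe that the action of $W$ on $\Tt(\Phi)$ is well-defined by Remark~\ref{rmk_action_W_Phi}: if $\varphi$ is a $2$-structure and $w\in W$, then $w(\varphi)$ is again a pseudo-root system of the same type decomposition, and condition~(b) of Definition~\ref{def_2_structure} for $w(\varphi)$ follows from condition~(b) for $\varphi$ after conjugating by $w$ (noting $\det$ is a class function and $w(\varphi)^+ = w(\varphi^+\text{ for the appropriate choice})$, using transitivity of the Coxeter group of $\varphi$ on its positive systems).

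Next I would handle the reduction to irreducible factors. Write $\Phi = \Psi_1 \sqcup \cdots \sqcup \Psi_k$ as the decomposition into irreducible components with pairwise orthogonal spans, so that $W = W(\Psi_1)\times\cdots\times W(\Psi_k)$. The key structural claim is that any $2$-structure $\varphi$ for $\Phi$ decomposes as $\varphi = \bigsqcup_j (\varphi\cap\Psi_j)$ with each $\varphi\cap\Psi_j$ a $2$-structure for $\Psi_j$; this needs an argument that each irreducible component $\varphi_i$ of $\varphi$ lies entirely within a single $\Psi_j$ (which is immediate since $\varphi_i$ is irreducible and the $\Psi_j$ have orthogonal spans, so a connected piece cannot straddle two of them), and that condition~(b) for $\varphi$ within $\Phi$ is equivalent to condition~(b) for each $\varphi\cap\Psi_j$ within $\Psi_j$ (here one uses that $\det$ on $W$ is the product of $\det$ on the factors, and that a reflection in $W(\Psi_j)$ acts trivially on the spans of the other components). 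Granting this, $\Tt(\Phi) \cong \prod_j \Tt(\Psi_j)$ as $W$-sets, and transitivity for $\Phi$ follows from transitivity for each $\Psi_j$.

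Then I would prove the irreducible case. This is exactly the content deferred to Subsection~\ref{section_2_structures_irreducibles}: for each irreducible type $A_n$, $B_n$, $D_n$, $E_6$, $E_7$, $E_8$, $F_4$, $H_3$, $H_4$, $I_2(m)$ one exhibits an explicit $2$-structure $\varphi_0$ (using the coordinate descriptions from the tables of~\cite{Bourbaki}), computes the stabilizer $W_1(\varphi_0) = \{w : w(\varphi_0^+)=\varphi_0^+\}$ inside $W(\varphi_0,\Phi^+) = \{w : w(\varphi_0^+)\subset\Phi^+\}$, and shows that the orbit $W\cdot\varphi_0$ has size $|W|/|\mathrm{Stab}_W(\varphi_0)|$ equal to the total number of $2$-structures. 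In practice, the cleanest route is to show directly that if $\varphi$ and $\varphi'$ are any two $2$-structures then some $w\in W$ carries $\varphi$ to $\varphi'$; for the dihedral types $I_2(m)$ this is a short finite check (the $2$-structures are the rotations of a fixed one, cf. Herb), and for the other types one uses Herb's inductive description together with the already-established transitivity on positive systems.

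The main obstacle I anticipate is the case-by-case verification for the irreducible types, particularly the exceptional types $E_7$, $E_8$ and $F_4$ and the non-crystallographic types $H_3$, $H_4$: one must correctly identify what the $2$-structures actually are (Herb's work supplies them for crystallographic root systems, but the $H_3$, $H_4$ and the $I_2(2^k)$ cases require the adaptation promised in the introduction), and then verify both that the list is complete and that $W$ acts transitively. A secondary subtlety is making condition~(b) of Definition~\ref{def_2_structure} interact cleanly with the reduction step — one must be careful that the parity condition on $\det$ genuinely factors over the irreducible components and is not disturbed by the ambient group $W$ being larger than $W(\varphi)$. I would organize the irreducible computations so that the dihedral types are treated uniformly and the rank-$\geq 3$ types are reduced, via Herb's branching, to lower-rank cases already handled, thereby keeping the casework as compact as possible.
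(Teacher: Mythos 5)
Your skeleton matches the paper's: the proof there also reduces immediately to irreducible pseudo-root systems and proceeds case by case (Subsection~\ref{section_2_structures_irreducibles}), and your reduction over the orthogonal decomposition of $\Phi$ (including the observation that condition~(b) of Definition~\ref{def_2_structure} factors over the components) is fine. The gap is that the case-by-case verification, which is the entire mathematical content of the proposition, is not carried out, and the tools you name are not sufficient to carry it out. In the simply-laced types the paper first identifies $2$-structures as exactly the maximal sets of pairwise orthogonal roots (no $B_2$ or $I_2(2^k)$ subsystems exist there), and then invokes Lemma~\ref{lemma_one_structures}: two orthogonal sequences of equal-length roots, none of which has a root orthogonal to all its members, are $W$-conjugate as sets. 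That lemma is not a formal consequence of ``transitivity on positive systems''; its proof needs the crystallographic fact that the difference of two equal-length roots with positive inner product is again a root or zero, so it is stated and proved only for genuine root systems. For $B_n$, $C_n$ and $F_4$ the paper runs a genuine induction on the rank, using Lemma~\ref{lemma_choice_of_theta} to normalize $\theta$ to consist of short (or long) roots and then an explicit analysis of how rank-one and rank-two components of the two given $2$-structures can interact; the stabilizer-and-orbit count you suggest is not performed anywhere and would be considerably more work.

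Most importantly, for $H_3$ and $H_4$ your fallback ``Herb's inductive description together with transitivity on positive systems'' is unavailable: Herb's results concern crystallographic root systems, and Lemma~\ref{lemma_one_structures} does not apply to pseudo-roots. The paper handles these cases by a different argument: all inner products lie in $\Q[\sqrt 5]$, so no two pseudo-roots make an angle of $\pi/4$, hence $2$-structures can only have $A_1$ components; one then checks that the coordinate cross $\{\pm e_i\}$ is indeed a $2$-structure, computes the order of its stabilizer, and compares $|W|/|W_0|$ with a computer-assisted count of all maximal orthogonal sets of pseudo-roots to conclude transitivity. You correctly flag the non-crystallographic types as the main obstacle, but the proposal leaves precisely those steps open, so as written it is an outline of the paper's strategy rather than a proof.
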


Let $\varphi\in\Tt(\Phi)$.
We write $\varphi^+=\varphi\cap\Phi^+$
and $\varphi^-=\varphi\cap\Phi^-$,
and we define
\begin{align*}
W(\varphi,\Phi^+) & = \{w\in W:w(\varphi^+)\subset\Phi^+\} , \\
W_1(\varphi,\Phi^+) & = \{w\in W:w(\varphi^+)\subset\varphi^+\} 
= \{w\in W:w(\varphi^+) = \varphi^+\} .
\end{align*}
Note that $W_1(\varphi,\Phi^+)$ is a subgroup of $W$, and that
the subset $W(\varphi,\Phi^+)$ of $W$ is stable by right translations
by elements of $W_1(\varphi,\Phi^+)$.

\begin{subcor}
Let $\varphi\in\Tt(\Phi)$. Then the
map $W \longrightarrow \Tt(\Phi)$,
$w \longmapsto w(\varphi)$ induces a bijection
\[W(\varphi,\Phi^+)/W_1(\varphi,\Phi^+) \iso \Tt(\Phi).\]
\label{cor_T_Phi_as_quotient}
\end{subcor}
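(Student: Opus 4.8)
The plan is to derive Corollary~\ref{cor_T_Phi_as_quotient} directly from Proposition~\ref{prop_W_acts_transitively_on_T_Phi} together with the behaviour of the sign $\epsilon$ under the $W$-action, exactly as one proves an orbit-stabilizer statement. First I would observe that the map $W \longrightarrow \Tt(\Phi)$, $w \longmapsto w(\varphi)$, is surjective: this is precisely the content of Proposition~\ref{prop_W_acts_transitively_on_T_Phi}, which says $W$ acts transitively on $\Tt(\Phi)$. (One should of course check this action is well-defined on $\Tt(\Phi)$, i.e.\ that $w(\varphi)$ is again a $2$-structure when $\varphi$ is, but condition~(a) is $W$-equivariant since $W$ acts by orthogonal maps preserving $\Phi$, and condition~(b) is preserved because $w$ conjugates the relevant stabilizer subgroup.)

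Next I would identify the fibers of this map. Suppose $w,w'\in W$ satisfy $w(\varphi)=w'(\varphi)$, equivalently $w^{-1}w'(\varphi)=\varphi$. I claim $w^{-1}w'\in W_1(\varphi,\Phi^+)$, i.e.\ that any element $v\in W$ with $v(\varphi)=\varphi$ actually satisfies $v(\varphi^+)=\varphi^+$. Indeed $v(\varphi^+)$ is a system of positive pseudo-roots in the pseudo-root system $\varphi=v(\varphi)$; since the Coxeter group $W(\varphi)$ of $\varphi$ acts transitively on such systems (Proposition~\ref{prop_pseudo_root_system_vs_Coxeter_system}(i)), there is $u\in W(\varphi)$ with $uv(\varphi^+)=\varphi^+$, and by condition~(b) in Definition~\ref{def_2_structure} this forces $\det(u)=1$ and $\det(uv)=1$, hence $\det(v)=1$. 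But actually I do not even need the determinant here — the cleaner route is: the set $\{v\in W:v(\varphi)=\varphi\}$ is a subgroup $G$ containing $W_1(\varphi,\Phi^+)$, and $G$ maps onto $W(\varphi)$ via $v\mapsto (u\mapsto v|_\varphi)$ with kernel contained in $W_1$; conjugating, $W_1$ has index $|W(\varphi)|$ in $G$. Then I invoke $\sum_{\varphi'\in\Tt(\Phi)}\epsilon(\varphi')=1$ only if needed — but it is \emph{not} needed for this corollary, so I would instead argue combinatorially. The key point to nail down is that the stabilizer of $\varphi$ (for the full action) equals $W_1(\varphi,\Phi^+)$: given $v(\varphi)=\varphi$, pick the unique $u\in W(\varphi)$ with $u(v(\varphi^+))=\varphi^+$; then $uv$ stabilizes $\varphi^+$, so by condition~(b) $\det(uv)=1$; on the other hand $uv$ stabilizes $\varphi^+$ means $uv\in W_1$, and since $u\in W(\varphi)\subseteq W_1$ (elements of $W(\varphi)$ permute positive systems of $\varphi$, but the \emph{specific} one stabilizing $\varphi^+$ lies in $W_1$) — here I must be careful: not every element of $W(\varphi)$ lies in $W_1$. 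So the correct deduction is simply $v = u^{-1}(uv) \in W(\varphi)\cdot W_1$, and since both $u^{-1}\in W(\varphi)\subseteq W$ and $uv\in W_1$, I get $v(\varphi^+) = u^{-1}(\varphi^+)$, which need not be $\varphi^+$. The resolution: the stabilizer of $\varphi$ as a set is $W(\varphi)\rtimes$(something), and the bijection in the corollary is between $W(\varphi,\Phi^+)/W_1(\varphi,\Phi^+)$ and $\Tt(\Phi)$, not between $W/\mathrm{Stab}$ and $\Tt(\Phi)$ — so I should restrict attention to $W(\varphi,\Phi^+)$ from the start.

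So the streamlined plan: (1) The map $W(\varphi,\Phi^+)/W_1(\varphi,\Phi^+)\to\Tt(\Phi)$, $w\mapsto w(\varphi)$, is well-defined because $W_1(\varphi,\Phi^+)$ stabilizes $\varphi^+$ hence $\varphi$. (2) Injectivity: if $w,w'\in W(\varphi,\Phi^+)$ with $w(\varphi)=w'(\varphi)$, then $w^{-1}w'(\varphi)=\varphi$ and $w^{-1}w'(\varphi^+) = w^{-1}(w'(\varphi^+))$; since $w,w'$ both send $\varphi^+$ into $\Phi^+$, and $w^{-1}w'(\varphi^+)\subseteq w^{-1}(\Phi^+)$... this still needs the observation that $w^{-1}w'(\varphi^+)$ is a positive system of $\varphi$ contained in $\Phi^+\cap\varphi = \varphi^+$ — wait, one must check $w^{-1}w'(\varphi^+)\subseteq\Phi^+$; it lies in $\varphi$ since $w^{-1}w'(\varphi)=\varphi$, and it is a positive system of $\varphi$, and a positive system of $\varphi$ contained in $\Phi^+$ must equal $\varphi^+ = \varphi\cap\Phi^+$ provided $\varphi^+$ is the \emph{unique} positive system of $\varphi$ inside $\Phi^+$, which holds because two distinct positive systems of $\varphi$ are never nested. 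I still need $w^{-1}w'(\varphi^+)\subseteq\Phi^+$: write $w^{-1}w' = w^{-1}\cdot w'$; this is where I would use a length/inversion argument or simply note $w'(\varphi^+)\subseteq\Phi^+$ and $w^{-1}$ maps $\Phi^+$... no. The honest fix is to use that $W(\varphi,\Phi^+)$ is a set of \emph{shortest coset representatives}: for a pseudo-root system $\varphi\subseteq\Phi$ with $W(\varphi)\leq W$, each coset of the subgroup $\{v: v(\varphi^+)=\varphi^+\}$ has a unique element sending $\varphi^+$ into $\Phi^+$. This is a standard fact (analogous to \cite[Proposition~2.4.4]{BB}) and gives injectivity and surjectivity simultaneously. (3) Surjectivity: Proposition~\ref{prop_W_acts_transitively_on_T_Phi} gives \emph{some} $w\in W$ with $w(\varphi)=\varphi'$; replacing $w$ by $uw$ for the appropriate $u\in W(\varphi)$ arranges $w(\varphi^+)\subseteq\Phi^+$, i.e.\ $w\in W(\varphi,\Phi^+)$. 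I expect the main obstacle to be step~(2), specifically verifying cleanly that $W(\varphi,\Phi^+)$ is a genuine transversal for $W_1(\varphi,\Phi^+)\backslash W(\varphi)W_1$ — this is the ``minimal coset representative'' type lemma, and the cleanest writeup will cite the relevant statement about reflection subgroups (or reprove the one-line argument that among $\{uw : u\in W(\varphi)\}$ exactly one sends $\varphi^+$ into $\Phi^+$, using that $W(\varphi)$ is simply transitive on positive systems of $\varphi$ and that positivity with respect to $\Phi^+$ picks out exactly one of them).
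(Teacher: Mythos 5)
Your final, streamlined plan is essentially the paper's own proof: surjectivity comes from Proposition~\ref{prop_W_acts_transitively_on_T_Phi} after correcting a given $w$ with $w(\varphi)=\varphi'$ by an element of the Coxeter group $W(\varphi)$ of $\varphi$, and injectivity comes from the fact that exactly one such correction lands in $W(\varphi,\Phi^+)$, because $W(\varphi)$ acts simply transitively on the positive systems of $\varphi$ and $w^{-1}(\Phi^+)\cap\varphi$ is one of them (the appeal to \cite[Section~1.3]{Hu-Cox} that the paper also makes). Two details in your formulation need repair, though neither is a gap in the idea. First, the ``unique representative'' fact must be stated for the coset $wW(\varphi)$ (equivalently for $w\cdot\mathrm{Stab}(\varphi)$ modulo right multiplication by $W_1(\varphi,\Phi^+)$, the stabilizer of $\varphi$ being $W(\varphi)\rtimes W_1(\varphi,\Phi^+)$), not, as you wrote, for cosets of $W_1(\varphi,\Phi^+)$: the condition $w(\varphi^+)\subset\Phi^+$ is constant on each coset $wW_1(\varphi,\Phi^+)$, so ``each coset of $W_1$ has a unique element sending $\varphi^+$ into $\Phi^+$'' is false as stated, and it is not an instance of \cite[Proposition~2.4.4]{BB}, since $W(\varphi)$ and $W_1(\varphi,\Phi^+)$ are not standard parabolic subgroups. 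Second, your corrections are on the wrong side (you write $uw$ and ``replacing $w$ by $uw$'' with $u\in W(\varphi)$); they must be $wv$ with $v\in W(\varphi)$, since only then does $wv(\varphi^+)=w(v(\varphi^+))$ run over $w$ applied to the positive systems of $\varphi$. With these fixes your argument coincides with the paper's: there one writes $w'=wvu$ with $v\in W(\varphi)$, $u\in W_1(\varphi,\Phi^+)$, and shows $v=1$ because a nontrivial $v$ sends some $\alpha\in\varphi^+$ into $\varphi^-$, whence $wv(\alpha)\in\Phi^-$, contradicting $wv(\varphi^+)=w'(\varphi^+)\subset\Phi^+$.
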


\begin{proof}
We denote by $f:W\fl\Tt(\Phi)$ the map defined by
$f(w)=w(\varphi)$. 

If $u\in W_1(\varphi,\Phi^+)$, then $u(\varphi)=\varphi$, so
$f(wu)=f(w)$ for every $w\in W$. So the map $f$ does induce a
map from $W(\varphi,\Phi^+)/W_1(\varphi,\Phi^+)$ to $\Tt(\Phi)$,
that we denote by $\overline{f}$.

We show that $\overline{f}$ is surjective. Let $\varphi'\in
\Tt(\Phi)$. By Proposition~\ref{prop_W_acts_transitively_on_T_Phi},
there exists $w\in W$ such that $w(\varphi)=\varphi'$. By
the theorem in~\cite[Section~1.3]{Hu-Cox}, the set
$w^{-1}(\Phi^+)\cap\varphi$ is a system of positive pseudo-roots
in $\varphi$, so, by 
Proposition~\ref{prop_pseudo_root_system_vs_Coxeter_system}, there
exists $v\in W(\varphi)$, where
$W(\varphi)$ is the Coxeter group of $\varphi$,
such that $v(\varphi^+)=w^{-1}(\Phi^+)\cap\varphi$.
Then $wv(\varphi^+)=\Phi^+\cap w(\varphi)\subset\Phi^+$, so
$wv\in W(\varphi,\Phi^+)$, and $wv(\varphi)=w(\varphi)=\varphi'$,
that is, $f(wv)=\varphi'$.

We show that $\overline{f}$ is injective. Let $w,w'\in W(\varphi,\Phi^+)$
such that $w(\varphi)=w'(\varphi)$. Then we have $w^{-1}w'(\varphi)=\varphi$,
and, again by the theorem in~\cite[Section~1.3]{Hu-Cox}, the set
$w^{-1}w'(\varphi^+)$ is a system of positive pseudo-roots in
$\varphi$, so there exists $v\in W(\varphi)$ such that
$v^{-1}w^{-1}w'(\varphi^+)=\varphi^+$. This means that we have
$w'=wvu$ with $u\in W_1(\varphi,\Phi^+)$. So we will be done if we
show that $v=1$. Note that $wv(\varphi^+)=wvu(\varphi^+)=
w'(\varphi^+)\subset\Phi^+$. Suppose that $v \neq 1$; then there
exists $\alpha\in\varphi^+$ such that $v(\alpha)\in \varphi^-$,
and then $wv(\alpha)=-w(-v(\alpha))\in \Phi^-$ (because
$w\in W(\varphi,\Phi^+)$), contradicting the fact that $wv(\varphi^+)
\subset\Phi^+$. So $v=1$.
\end{proof}

The following proposition, which 
follows immediately from Lemma~5.6 of \cite{Herb-DSC} for root systems,
can be proved
via a direct calculation for the remaining irreducible types.
We will not need this result, so we do not go into details.

\begin{subprop}
Let $\Tt_{(a)}(\Phi)$ be the set of $\varphi \subseteq \Phi$ that satisfy
condition (a) of Definition~\ref{def_2_structure}. Then $\Tt(\Phi)$
is exactly the set of elements of $\Tt_{(a)}(\Phi)$ that are
maximal with respect to inclusion.
\label{prop_maximality_of_2_structures}
\end{subprop}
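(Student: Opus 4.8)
The plan is to reduce to the case of an irreducible pseudo-root system and then to treat the irreducible types one by one, invoking Herb's work for the crystallographic ones and the explicit determination of $\Tt(\Phi)$ from Subsection~\ref{section_2_structures_irreducibles} for the rest.

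First I would set up the reduction. Write $\Phi=\Phi_1\sqcup\cdots\sqcup\Phi_k$ for the decomposition into irreducible components, so the subspaces $V_i=\Span(\Phi_i)$ are pairwise orthogonal, $W=W(\Phi_1)\times\cdots\times W(\Phi_k)$, and $\det(w)=\prod_i\det(w_i)$ for $w=(w_i)\in W$. Given $\varphi\subseteq\Phi$, each irreducible component of $\varphi$ is connected, hence contained in a single $\Phi_i$, so the components of $\varphi$ lying in $\Phi_i$ are exactly the components of $\varphi\cap\Phi_i$; it follows that $\varphi\in\Tt_{(a)}(\Phi)$ if and only if $\varphi\cap\Phi_i\in\Tt_{(a)}(\Phi_i)$ for every $i$, and (by enlarging or shrinking one $\Phi_i$-part at a time) that $\varphi$ is maximal in $\Tt_{(a)}(\Phi)$ if and only if each $\varphi\cap\Phi_i$ is maximal in $\Tt_{(a)}(\Phi_i)$. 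Moreover, taking on each $\Phi_i$ the positive system $\Phi^+\cap\Phi_i$, we have $\varphi^+=\bigsqcup_i(\varphi\cap\Phi_i)^+$, and for $w=(w_i)\in W$ the identity $w(\varphi^+)=\varphi^+$ is equivalent to $w_i((\varphi\cap\Phi_i)^+)=(\varphi\cap\Phi_i)^+$ for all $i$ (using Remark~\ref{rmk_action_W_Phi}); since $\det(w)=\prod_i\det(w_i)$, plugging in elements of $W$ supported on a single factor shows that $\varphi$ satisfies condition~(b) of Definition~\ref{def_2_structure} if and only if each $\varphi\cap\Phi_i$ does. Combining these three equivalences, the proposition for $\Phi$ follows from the proposition for each $\Phi_i$, so we may assume $\Phi$ irreducible.

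Next, the crystallographic case. If $\Phi$ is of type $A_n$, $B_n$, $D_n$, $E_6$, $E_7$, $E_8$, $F_4$, or $I_2(m)$ with $m\in\{3,4,6\}$, then the only component types permitted in condition~(a) of Definition~\ref{def_2_structure} are $A_1$ and $B_2$, since $I_2(2^n)$ with $n\ge 3$ is not crystallographic. For such $\Phi$ our set $\Tt(\Phi)$ agrees with Herb's set of $2$-structures of the corresponding root system: condition~(a) is literally Herb's, and condition~(b) coincides with Herb's requirement that the stabilizer of $\varphi^+$ in $W$ lie in $\ker(\det)$, which is unambiguous because $W(\varphi)$ acts transitively on the positive systems of $\varphi$. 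The statement that $\Tt(\Phi)$ is the set of maximal elements of $\Tt_{(a)}(\Phi)$ then follows from Lemma~5.6 of~\cite{Herb-DSC}.

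Finally, for the non-crystallographic irreducible types -- $I_2(m)$ with $m\notin\{3,4,6\}$, and $H_3$ and $H_4$ -- I would argue by direct inspection. In each case one lists $\Tt_{(a)}(\Phi)$ explicitly: for $I_2(m)$ one uses that a subsystem of type $I_2(2^n)$ sits inside $I_2(m)$ precisely when $2^n\mid m$ and that $I_2(m)$ contains a pair of orthogonal pseudo-roots precisely when $m$ is even, so the maximal elements of $\Tt_{(a)}(I_2(m))$ are the singletons $\{\pm\alpha\}$ when $m$ is odd and the $W$-orbit of a largest admissible rank-two subsystem when $m$ is even; for $H_3$ and $H_4$ one makes the analogous finite enumeration among the $30$, respectively $120$, pseudo-roots. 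One then compares with the description of $\Tt(\Phi)$ obtained in Subsection~\ref{section_2_structures_irreducibles} and checks condition~(b) directly for the maximal admissible $\varphi$: for $I_2(m)$ the key point is that a reflection of $I_2(m)$ interchanging two orthogonal pseudo-roots exists if and only if $4\mid m$, from which both the failure of (b) for an orthogonal pair when $4\mid m$ and its validity otherwise are immediate; for $H_3$ and $H_4$ one verifies by a short computation that the stabilizer of $\varphi^+$ in $W$ lies in $\ker(\det)$ for each maximal admissible $\varphi$. The main obstacle is precisely this last step for $H_3$ and $H_4$: one must correctly enumerate all orthogonal disjoint unions of admissible ($A_1$, $B_2$, $I_2(2^n)$) subsystems in these two root systems and then check condition~(b) for each maximal one; this is elementary but the bookkeeping is the only part of the argument that is not purely formal.
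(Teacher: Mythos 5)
Your proposal takes essentially the same route as the paper: the paper's entire proof consists of the remark that the statement follows from Lemma~5.6 of~\cite{Herb-DSC} in the crystallographic case and can be verified by a direct calculation for the remaining irreducible types, details being omitted since the result is not used elsewhere. Your reduction to the irreducible case and your explicit handling of the dihedral, $H_3$ and $H_4$ cases simply supply the omitted details, and they are consistent with the descriptions of $2$-structures in Subsection~\ref{section_2_structures_irreducibles} (type $I_2(2^r)$ in the even dihedral case, orthogonal $A_1$-frames in types $H_3$ and $H_4$).
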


\begin{subprop}
Let $\varphi \subseteq \Phi$ be a $2$-structure.
Define an ordered subset $\theta$ of $\varphi$ as
follows.
Select a linear order of the irreducible components
$\varphi_1, \varphi_2, \ldots, \varphi_r$ of $\varphi$.
If $\varphi_{i}$ is a pseudo-root system of type~$A_1$,
let $\theta_{i}$ be the singleton $\varphi_{i} \cap \varphi^{+}$.
If $\varphi_{i}$ is a pseudo-root system of type~$B_{2}$
or~$I_{2}(2^{k})$ for $k \geq 3$, pick two orthogonal elements
$\alpha$ and $\alpha'$ from $\varphi_{i} \cap \varphi^{+}$
such that $\varphi_{i} \cap \varphi^{+}=
\varphi^+_{i,(\alpha,\alpha')}$, that is, such that an element
$\beta$ of $\varphi_i$ is in $\varphi^+$ if and only
if either $(\beta,\alpha)>0$, or $(\beta,\alpha)=0$ and
$(\beta,\alpha')>0$.
Let~$\theta_{i}$ be the sequence $(\alpha, \alpha')$.
Finally let $\theta$ be the concatenation of
the sequences $\theta_{1}, \theta_{2}, \ldots, \theta_{r}$.

Let $\Phi_\theta^+$ be the system of positive pseudo-roots defined by
the sequence  $\theta$ as in Lemma~\ref{lemma_defining_positive_roots},
and let $w_\theta$ be the unique element of $W$ such that
$w_\theta\cdot \Phi^+=\Phi_\theta^+$.
Then the sign $\det(w_\theta)$ depends only on~$\varphi$ and not on the choices
made to form $\theta$.
\label{prop_sign_2_structure}
\end{subprop}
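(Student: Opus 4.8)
The plan is to show that $\det(w_\theta)$ is unaffected by each choice entering the construction of $\theta$, by first rewriting it as an alternating count of pseudo-roots and then tracking how this count changes under ``elementary moves''. Since $w_\theta\cdot\Phi^+=\Phi^+_\theta$, the chamber of $\Hf$ with positive system $\Phi^+_\theta$ is $w_\theta(B)$, where $B$ has positive system $\Phi^+$; so by the description of $S(B,\cdot)$ and the equality $(-1)^{|S(B,T_w)|}=\det(w)$ recalled after Theorem~\ref{thm_W_finite},
\[
\det(w_\theta)=(-1)^{|\Phi^+\setminus\Phi^+_\theta|}.
\]
For every admissible $\theta$ one has $\varphi^+\subseteq\Phi^+_\theta$ — inside each component $\varphi_i$ the subsequence $\theta_i$ was chosen precisely so as to recover $\varphi^+_i$, and a pseudo-root of $\varphi$ is orthogonal to every component but its own — so $\Phi^+\setminus\Phi^+_\theta\subseteq\Phi^+\setminus\varphi$. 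Now $\theta$ amounts to a linear order of the components together with, for each $B_2$- or $I_2(2^k)$-component, an admissible ordered orthogonal pair, and any two admissible $\theta$ are joined by a chain of \emph{elementary moves}: (M1) a transposition of two adjacent components, and (M2) a change of the ordered pair of a single $B_2$- or $I_2(2^k)$-component. Fixing one such move, from $\theta$ to $\theta'$, and setting $D:=\Phi^+_\theta\,\triangle\,\Phi^+_{\theta'}$, the identities $\Phi^+\cap\Phi^-_\theta=\Phi^+\setminus\Phi^+_\theta$, $\Phi^-_\theta\,\triangle\,\Phi^-_{\theta'}=-D$ and $|X|+|Y|\equiv|X\,\triangle\,Y|\pmod 2$ yield $\det(w_\theta)\cdot\det(w_{\theta'})=(-1)^{|\Phi^+\cap D|}$, so it remains to prove that $|\Phi^+\cap D|$ is even, which I would do by induction on $|\Phi|$.

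Put $V=\Span(\Phi)$ and write $V=V_0\oplus V_1\oplus\cdots\oplus V_r$ with $V_i=\Span(\varphi_i)$ and $V_0=\varphi^\perp\cap V$; let $\pi_i$ be orthogonal projection onto $V_i$. No pseudo-root is orthogonal to $\varphi$ (Remark~\ref{rmk_no_orthogonal_root}), so every $\alpha\in\Phi$ has some $\pi_i(\alpha)\neq 0$, and membership of $\alpha$ in $\Phi^+_\theta$ is decided by the $\theta_j$-sign of the first nonzero $\pi_j(\alpha)$ in the chosen order of components. A direct inspection then gives, for (M1) on the components in positions $i,i+1$,
\[
D=\{\alpha\in\Phi:\ \pi_j(\alpha)=0\ (j<i),\ \pi_i(\alpha)\neq 0\neq\pi_{i+1}(\alpha),\ \text{the }\theta_i\text{- and }\theta_{i+1}\text{-signs of }\pi_i(\alpha),\pi_{i+1}(\alpha)\text{ disagree}\},
\]
and a similar formula for (M2), with a single component~$i$ and a disagreement between the old and the new ordered pair. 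In particular $D=-D$ and $D\cap\varphi=\varnothing$, and $D$ lies in the pseudo-root subsystem $\Phi^{(i)}=\Phi\cap(V_1\oplus\cdots\oplus V_{i-1})^\perp$, in which $\varphi_i\sqcup\cdots\sqcup\varphi_r$ is again a $2$-structure (condition~(b) and maximality are inherited) and our move becomes a move on its first component(s). If $i\geq 2$ then $\Phi^{(i)}\subsetneq\Phi$ and we conclude by induction, so we may assume $i=1$.

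For $i=1$ let $U$ be $V_1$ for (M2) and $V_1\oplus V_2$ for (M1). Since $-1\in W(\varphi_j)$ for each allowed component type, the element of $W$ acting by $-1$ on $U$ and by the identity on $U^\perp$ lies in $W$; call it $\eta$. It preserves $D$ (negating $\pi_1(\alpha)$, and for (M1) also $\pi_2(\alpha)$, leaves every defining condition unchanged) and acts on $D$ without fixed points (as $\pi_1(\alpha)\neq 0$ for $\alpha\in D$). If $U\subsetneq V$, the commuting involutions $\eta$ and $\sigma\colon\alpha\mapsto-\alpha$ generate a group of order $4$ acting freely on $D\setminus U$, while $D\cap U$ is the ``$D$'' of the corresponding move for the $2$-structure $\varphi\cap U$ of the proper subsystem $\Phi\cap U$, so $|(\Phi^+\cap U)\cap D|\equiv 0\pmod 2$ by induction; as $D\setminus U$ and $D\cap U$ are both unions of $\sigma$-orbits, $4\mid|D|$ and $|\Phi^+\cap D|=\tfrac12|D|$ is even. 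If instead $U=V$ then $\varphi$ spans $V$, hence $\Phi$ has rank at most $4$ and only finitely many shapes occur; here one checks $4\mid|D|$ directly, using condition~(b) of Definition~\ref{def_2_structure} together with the maximality characterization of $2$-structures (Proposition~\ref{prop_maximality_of_2_structures}). For instance, in the characteristic case $\Phi=I_2(M)$ and $\varphi=\varphi_1$ of type~$B_2$, maximality of $\varphi$ forces $M/4$ to be odd (otherwise $B_2$ would sit inside a larger allowed subsystem), and one computes $|D|=2(M/4-1)$, which is divisible by $4$.

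The main obstacle is this last step: the base cases in which $\varphi$ spans $\Span(\Phi)$. These are precisely where condition~(b) in the definition of a $2$-structure — which plays no role up to that point — and the classification input of Proposition~\ref{prop_maximality_of_2_structures} become indispensable; all the rest is bookkeeping with the orthogonal decomposition and the two involutions $\sigma$ and $\eta$.
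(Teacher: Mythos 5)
Your strategy is genuinely different from the paper's. The paper handles each elementary move directly and uniformly: for a change of ordered pair (your M2) it exhibits the fixed--point--free involution $\iota(\gamma)=-s_\alpha s_{\alpha'}(\gamma)$ on $\Phi^+_\theta\cap\Phi^-_{\theta'}$ (arguing that the component of $\gamma$ outside $\Span(\varphi_i)$ is nonzero); for a transposition of two adjacent rank-$2$ components (your M1) it factors the move into four single-element swaps $(\alpha_i,\alpha_i')\leftrightarrow(\alpha_{i+1},\alpha_{i+1}')$, computes each parity change via Lemma~\ref{lemma_change_order_theta_sign}, and then observes that the four rank-$2$ subsystems $\Phi\cap\Span(\alpha_i,\alpha_{i+1})$, etc.\ are pairwise conjugate by $s_{\alpha_i-\alpha_i'},s_{\alpha_{i+1}-\alpha_{i+1}'}\in W$, so the four sign changes cancel. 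That symmetry trick avoids any induction and any base-case analysis. You replace it by an induction on $|\Phi|$, using the order-$4$ group $\langle\eta,\sigma\rangle$ (your $\eta$ is $s_\alpha s_{\alpha'}$, the paper's $\iota$ is $\sigma\eta$) to kill $D\setminus U$ and the inductive hypothesis on $\Phi\cap U$ to kill $D\cap U$. One advantage of your route is that it cleanly isolates the locus $D\cap U$, i.e.\ the roots lying in $\Span(\varphi_i)$ (resp.\ $\Span(\varphi_i\sqcup\varphi_{i+1})$) that need separate treatment when $\Phi\cap\Span(\varphi_i)$ is strictly larger than $\varphi_i$, as happens in dihedral types $I_2(m)$ with $4\mid m$ but $8\nmid m$, $m>4$; the paper's ``$\lambda\neq0$'' shortcut in its case~(a) tacitly assumes $\Phi\cap\Span(\varphi_i)=\varphi_i$, which is automatic only for crystallographic $\Phi$.

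That said, there is a genuine gap. Your base case, ``if $U=V$ then one checks $4\mid|D|$ directly, using condition~(b) and Proposition~\ref{prop_maximality_of_2_structures},'' is doing a lot of unacknowledged work. For M1 with $U=V$ the irreducible possibilities are (up to Coxeter isomorphism) $A_1\times A_1$ (trivial), the dihedral $I_2(m)$ with $m\equiv2\pmod4$, $B_3$, $B_4$ and $F_4$; for M2 it is $I_2(m)$ with $4\mid m$ and $8\nmid m$. Each of $B_3$, $B_4$, $F_4$ (and the general even dihedral) requires an explicit computation of $|D|$ which you only carry out in one instance, and the reader has no way to see that the claim ``$4\mid|D|$'' is anything but a coincidence in the displayed example; in particular for $B_3$, $B_4$ and $F_4$ the student should actually produce the sets $D$. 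Moreover you invoke Proposition~\ref{prop_maximality_of_2_structures}, which the paper explicitly declines to prove; this dependency is unnecessary --- for $\Phi=I_2(M)$, $\varphi=B_2$, condition~(b) alone already forces $8\nmid M$, because the reflection axis bisecting $\varphi^+$ lies in $W$ exactly when $8\mid M$ --- but as written it undermines the self-containedness of your argument. Until the finitely many non-dihedral base cases are actually checked and the appeal to Proposition~\ref{prop_maximality_of_2_structures} is removed or justified, the proof is incomplete.
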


\begin{figure}[t]
\centering
\begin{tikzpicture}
\foreach \tttt in {0, 22.5, ..., 337.5}
{\draw[-] ({3*cos(\tttt)},{3*sin(\tttt)}) --
({-3*cos(\tttt)},{-3*sin(\tttt)});};
\draw[dashed](-2.5,2.5) node{$\Phi^+$}; 
\draw[dashed] ({4*cos(-11.25)},{4*sin(-11.25)}) --
({-4*cos(-11.25)},{-4*sin(-11.25)});
\draw[->, very thick] (0,0) -- (0,2) node[right]{$\alpha$};
\draw[->, very thick] (0,0) -- (2,0) node[above]{$\alpha'$};
\end{tikzpicture}
\hspace*{1 mm}
\begin{tikzpicture}
\foreach \tttt in {0, 22.5, ..., 337.5}
{\draw[-] ({3*cos(\tttt)},{3*sin(\tttt)}) --
({-3*cos(\tttt)},{-3*sin(\tttt)});};
\draw[dashed](-2.5,2.5) node{$\Phi^+$}; 
\draw[dashed] ({4*cos(-11.25)},{4*sin(-11.25)}) --
({-4*cos(-11.25)},{-4*sin(-11.25)});
\draw[->, very thick] (0,0) -- ({2*cos(3*22.5)},{2*sin(3*22.5})
node[left]{$\alpha$};
\draw[->, very thick] (0,0) -- ({2*cos(7*22.5)},{2*sin(7*22.5})
node[above]{$\alpha'$};
\end{tikzpicture}
\caption{The dihedral pseudo-root system $I_{2}(8)$ with the two choices
of $\theta = \bigl(\alpha, \alpha'\bigr)$.}
\label{figure_choice_theta}
\end{figure}
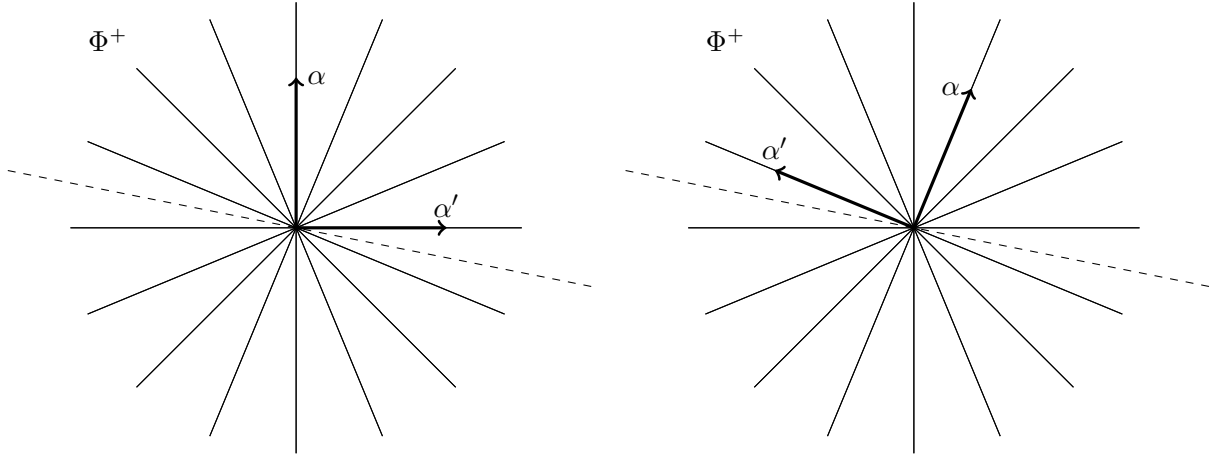

Note that there are several choices when producing the ordered set $\theta$.
First we have to select an order
$\varphi_1, \varphi_2, \ldots, \varphi_r$. There are $r!$ ways to do this.
Second, if $\varphi_i$ is of type~$B_{2}$ or of type~$I_{2}(2^{k})$,
there are two possible choices for the pseudo-roots $\alpha$ and $\alpha'$;
see Figure~\ref{figure_choice_theta}.
These selections do not influence the sign of $w_\theta$, although they do
of course affect the set $\Phi^+_\theta$.

\begin{proof}[Proof of Proposition~\ref{prop_sign_2_structure}.]
Let $\theta$ and $\theta'$ be the results of two possible sequences of choices. 
For an element $w$ in $W$,
recall that its length $\ell(w)$
\footnote{By definition, this is the minimal
number of factors in an expression of $w$ as a product of
reflections corresponding to simple pseudo-roots.}
is also given by the cardinality of
the intersection $w \cdot \Phi^{+} \cap \Phi^{-}$;
see~\cite[Proposition~4.4.4]{BB}.
Note that
$\Phi^+_{\theta} \cap \Phi^-_{\theta'}
=
w_{\theta} \cdot \Phi^+ \cap w_{\theta'} \cdot \Phi^-
=
w_{\theta'} \cdot
(w_{\theta'}^{-1} w_{\theta} \cdot \Phi^+ \cap \Phi^-)$
which has cardinality $\ell(w_{\theta'}^{-1} w_{\theta})$.
Hence to prove that the signs agree, that is, that
$\det(w_\theta)=\det(w_{\theta'})$, it suffices
to show that the set $\Phi^+_{\theta} \cap \Phi^-_{\theta'}$
has an even number of elements.

We can reduce to the following two cases:
\begin{itemize}
\item[(a)] there exists $1 \leq i \leq r$ such that
$\theta$ and $\theta'$ differ only by the choice of the
two pseudo-roots in the factor $\varphi_i$;
\item[(b)] there exists $1 \leq i \leq r-1$ such that
$\theta_i=\theta'_{i+1}$, $\theta_{i+1}=\theta'_i$ and
$\theta_j=\theta'_j$ if $j \neq i,i+1$.
\end{itemize}

We begin by treating case (a). We write $\theta_i=(\alpha, \alpha')$ and
$\theta'_i=(\beta, \beta')$.
Let $\gamma \in \Phi^+_\theta \cap \Phi^-_{\theta'}$.
Then $\gamma$ is orthogonal to $\varphi_1,\ldots,\varphi_{i-1}$,
and it is not orthogonal to $\varphi_i$. Also, as
the sets of positive pseudo-roots in $\varphi_i$ defined by
$\theta_i$ and $\theta'_i$ are equal by assumption, we cannot have
$\gamma\in\varphi_i$. Write $\gamma=c\alpha+c'\alpha'+\lambda$, with
$\lambda\in\varphi_1^\perp\cap\cdots\cap\varphi_i^\perp$.
By the previous sentence, we have $\lambda \neq 0$. The vector
$\iota(\gamma)=-(s_{\alpha}s_{\alpha'})(\gamma)=c\alpha+c'\alpha'-\lambda$
is also in $\Phi$. It is not equal to $\gamma$ because $\lambda \neq 0$,
and it is in $\Phi^+_\theta \cap \Phi^-_{\theta'}$ because
$\gamma$ and $\iota(\gamma)$ have the same inner product with any element of
the set $\{\alpha,\alpha',\beta,\beta'\}$. Note that we clearly
have $\iota(\iota(\gamma))=\gamma$.
We have constructed a fixed-point free
involution~$\iota$ on the set
$\Phi^+_\theta \cap \Phi^-_{\theta'}$, which proves that this set has
even cardinality.

We treat case (b). 
Suppose first that $\varphi_i$ and $\varphi_{i+1}$ are both of type~$A_1$,
so we can write $\theta_i=(\alpha_i)$ and $\theta_{i+1}=(\alpha_{i+1})$.
Let $\Phi'$ be the pseudo-root system $\Phi\cap(\R\alpha_i+\R\alpha_{i+1})$.
If $\Phi'$ is of type~$I_2(m)$ with $m\geq 3$, then $m$ must be even because
$\Phi'$ contains two orthogonal pseudo-roots. 
But then $\Phi'$ contains a multiple
$\beta$ of $\alpha_i-\alpha_{i+1}$, and the reflection $s_\beta$
sends $\varphi^+$ to $\varphi^+$ because it fixes every element
of~$\varphi_j$ for $j=i,i+1$ and exchanges $\alpha_i$ and $\alpha_{i+1}$,
contradicting the definition of a $2$-structure. Hence $\Phi'$ is of type
$A_1\times A_1$, and then the fact that
$|\Phi^+_{\theta} \cap \Phi^-_{\theta'}|$ is even
follows from Lemma~\ref{lemma_change_order_theta_sign}.

Suppose that $\varphi_i$ is of type~$A_1$ and $\varphi_{i+1}$ is of
type~$I_2(2^m)$ with $m\geq 2$. Then we can write
$\theta_i=(\alpha_i)$ and $\theta_{i+1}=(\alpha_{i+1},\alpha'_{i+1})$.
Let $\Phi'$, respectively $\Phi''$, be the pseudo-root
system $\Phi\cap(\R\alpha_i+\R\alpha_{i+1})$,
respectively $\Phi\cap(\R\alpha_i+\R\alpha'_{i+1})$, and let
$\theta''$ be the sequence that we obtain from $\theta$ by switching
$\alpha_i$ and~$\alpha_{i+1}$.
As $\varphi_{i+1}$ is of type
$I_2(2^m)$, it (and hence $\Phi$) contains a pseudo-root $\beta$ proportional
to $\alpha_{i+1}-\alpha'_{i+1}$, and then $s_\beta(\Phi')=\Phi''$, so
$\Phi'$ and $\Phi''$ are of the same type. By 
Lemma~\ref{lemma_change_order_theta_sign}, the cardinalities of the
sets $\Phi^+_\theta \cap \Phi^-_{\theta''}$ and
$\Phi^+_{\theta''} \cap \Phi^-_{\theta'}$ have the same parity, and so
$|\Phi^+_{\theta} \cap \Phi^-_{\theta'}|$ is even.
The case where $\varphi_i$ is of rank $2$ and $\varphi_{i+1}$ 
of rank $1$ follows
from the previous case by switching the roles of $\varphi_i$ and~$\varphi_{i+1}$.

Finally, suppose that both $\varphi_i$ and
$\varphi_{i+1}$ are of rank $2$. Then we can write
$\theta_i=(\alpha_i,\alpha_i')$
and $\theta_{i+1}=(\alpha_{i+1},\alpha'_{i+1})$.
We move from $\theta$ to $\theta'$ by the following sequence of
operations:
\begin{itemize}
\item[(1)]
We switch $\alpha_i'$ and $\alpha_{i+1}$. By
Lemma~\ref{lemma_change_order_theta_sign}
and
Remark~\ref{remark_change_order_theta_sign},
this changes the sign of
$w_\theta$ by $(-1)^{m_1/2 - 1}$, where the pseudo-root system
$\Phi_1=\Phi\cap(\R\alpha_i'+\R\alpha_{i+1})$ is of type~$I_2(m_1)$.

\item[(2)]
We switch $\alpha_i'$ and $\alpha_{i+1}'$. 
By the same lemma and remark,
this changes the sign by $(-1)^{m_2/2 - 1}$,
where the pseudo-root system
$\Phi_2 = \Phi\cap(\R\alpha_i'+\R\alpha'_{i+1})$
is of type~$I_2(m_2)$.

\item[(3)]
We switch $\alpha_{i}$ and $\alpha_{i+1}$.
By the same lemma and remark,
this changes the sign by $(-1)^{m_3/2 - 1}$,
where the pseudo-root system
$\Phi_3=\Phi\cap(\R\alpha_{i}+\R\alpha_{i+1})$
is of type~$I_2(m_3)$.

\item[(4)]
We switch $\alpha_{i}$ and $\alpha_{i+1}'$.
By the same lemma and remark,
this changes the sign by $(-1)^{m_4/2 - 1}$,
where the pseudo-root system
$\Phi_4=\Phi\cap(\R\alpha_{i}+\R\alpha'_{i+1})$
is of type~$I_2(m_4)$.
\end{itemize}
The reflections
$s_i=s_{\alpha_i-\alpha_i'}$ and 
$s_{i+1}=s_{\alpha_{i+1}-\alpha_{i+1}'}$ are both in $W$
because $\varphi_i$ contains a multiple of $\alpha_i-\alpha_i'$ and
$\varphi_{i+1}$ contains a multiple of $\alpha_{i+1}-\alpha_{i+1}'$.
Observe now that
$s_i(\Phi_1)=\Phi_3$,
$s_i(\Phi_2)=\Phi_4$,
$s_{i+1}(\Phi_1)=\Phi_2$
and
$s_{i+1}(\Phi_3)=\Phi_4$.
Thus the four pseudo-root systems
$\Phi_1$, $\Phi_2$, $\Phi_3$ and $\Phi_4$
are isomorphic and hence $m_{1} = m_{2} = m_{3} = m_{4}$.
Hence performing operations~(1) to~(4) changes the sign
by $((-1)^{m_1/2 - 1})^{4} = 1$,
that is, $\det(w_\theta)=\det(w_{\theta'})$.
\end{proof}

\begin{subdef}
Let $\varphi \subseteq \Phi$ be a $2$-structure, and let $w_\theta$ be as
in Proposition~\ref{prop_sign_2_structure}.
Then the sign $(-1)^{r+r'}\det(w_\theta)$, where $r$ is the number of
irreducible factors of $\Phi$ of type~$A_{2n}$ with $n$ odd and
$r'$ is the number of irreducible factors of $\Phi$ of
type~$I_2(2n'+1)$ with $n'\geq 3$ odd,
is called the
\emph{sign} of~$\varphi$ and denoted by $\epsilon(\varphi,\Phi^+)$,
or by $\epsilon(\varphi)$ if the system of positive pseudo-roots
$\Phi^+$ is understood.
\label{def_sign_2_structure}
\end{subdef}

\begin{subrmk}
For a root system,
this coincides with the definition of the sign of $\varphi$ from
Herb's paper \cite{Herb-Pl}, and it differs from the definition
in Section~5 of Herb's paper \cite{Herb-DSC}; see Remark~5.1
of \cite{Herb-DSC} and Corollary~\ref{cor_comparison_signs}.

\end{subrmk}

\begin{sublemma}
Let $\varphi \subseteq \Phi$ be a $2$-structure,
that is, $\varphi\in\Tt(\Phi)$.
\begin{itemize}
\item[(i)]
For every $w\in W$, the identity
$\epsilon(w(\varphi), w(\Phi^+))=\epsilon(\varphi,\Phi^+)$ holds.
\item[(ii)]
Let $w\in W$ be such that $w(\varphi^+) \subseteq \Phi^+$.
Then the identity
$\epsilon(w(\varphi),\Phi^+) = \det(w) \cdot \epsilon(\varphi,\Phi^+)$
holds.
\end{itemize}
\label{lemma_sign_of_w(varphi)}
\end{sublemma}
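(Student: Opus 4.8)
The plan is to deduce both statements directly from Definition~\ref{def_sign_2_structure} by tracking how the element $w_\theta$ appearing there transforms under the action of $W$. The first thing I would record is the trivial but essential observation that the integers $r$ and $r'$ in Definition~\ref{def_sign_2_structure} depend only on the isomorphism type of $\Phi$, hence do not change when $\Phi^+$ is replaced by another system of positive pseudo-roots. Consequently it suffices in (i) to compare the signs $\det(w_\theta)$, and in (ii) to compare $\det(w_\theta)$ computed relative to $\Phi^+$ with the one computed relative to $w(\Phi^+)$.

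For part (i), fix an ordered subset $\theta=(v_1,\ldots,v_k)$ of $\varphi$ produced from $\varphi$ by the recipe of Proposition~\ref{prop_sign_2_structure} relative to $\Phi^+$. I would check that $w(\theta)=(w\cdot v_1,\ldots,w\cdot v_k)$ is one of the ordered subsets that the same recipe produces from $w(\varphi)$ relative to $w(\Phi^+)$: the irreducible components of $w(\varphi)$ are the $w(\varphi_i)$, in the inherited order and with the same types; orthogonality is preserved because $w$ acts through scalar multiples of isometries; and using $w(\varphi^+)=w(\varphi)\cap w(\Phi^+)$ together with the fact that $(\gamma,\alpha)$ and $(w\cdot\gamma,\,w\cdot\alpha)$ have the same sign, the compatibility conditions $\varphi_i\cap\varphi^+=\varphi^+_{i,(\alpha,\alpha')}$ transport verbatim to $w(\varphi_i)$. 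Next, since rescaling a coordinate by a positive number affects neither lexicographic positivity nor these inner-product signs, one obtains $\Phi^+_{w(\theta)}=w(\Phi^+_\theta)$; combining this with $\Phi^+_\theta=w_\theta\cdot\Phi^+$ and the associativity of the $W$-action on systems of positive pseudo-roots gives $\Phi^+_{w(\theta)}=(w w_\theta w^{-1})\cdot w(\Phi^+)$, so by the uniqueness clause in Proposition~\ref{prop_sign_2_structure} the element attached to $w(\theta)$ is $w w_\theta w^{-1}$, which has the same determinant as $w_\theta$. Hence $\epsilon(w(\varphi),w(\Phi^+))=\epsilon(\varphi,\Phi^+)$.

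For part (ii), the hypothesis $w(\varphi^+)\subseteq\Phi^+$ gives, by the standard argument that a positive system of $w(\varphi)$ lying inside $\Phi^+$ must equal $w(\varphi)\cap\Phi^+$, the chain of equalities $w(\varphi)\cap\Phi^+=w(\varphi^+)=w(\varphi)\cap w(\Phi^+)$. Therefore the data feeding the recipe of Proposition~\ref{prop_sign_2_structure} for the pair $(w(\varphi),\Phi^+)$ and for the pair $(w(\varphi),w(\Phi^+))$ coincide, so one and the same ordered subset $\theta$ is admissible for both. Writing $u$ for the element of $W$ with $u\cdot\Phi^+=\Phi^+_\theta$ and $u'$ for the element with $u'\cdot w(\Phi^+)=\Phi^+_\theta$, we get $u\cdot\Phi^+=u'w\cdot\Phi^+$, hence $u=u'w$ and $\det(u)=\det(u')\det(w)$; this yields $\epsilon(w(\varphi),\Phi^+)=\det(w)\cdot\epsilon(w(\varphi),w(\Phi^+))$, and combining with part (i) completes the proof. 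The only genuinely delicate point I anticipate is the bookkeeping around the $W$-action on a pseudo-root system that is not normalized, where $w\cdot\alpha$ is defined only up to a positive scalar: one must repeatedly invoke that positive rescalings change neither lexicographic positivity nor the signs of the relevant inner products, and that this action is associative and compatible with negation. None of this is deep, but it is where care is required.
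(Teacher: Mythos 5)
Your proposal is correct and follows essentially the same route as the paper: choose $\theta$ as in Proposition~\ref{prop_sign_2_structure}, observe that $w(\theta)$ is admissible for $w(\varphi)$ relative to $w(\Phi^+)$ (and also relative to $\Phi^+$ under the hypothesis of (ii), since $w(\varphi)\cap\Phi^+=w(\varphi^+)=w(\varphi)\cap w(\Phi^+)$), and use $\Phi^+_{w\cdot\theta}=w\cdot\Phi^+_\theta$ together with the simple transitivity of the $W$-action on positive systems to compare the determinants. Your extra bookkeeping (the conjugation $ww_\theta w^{-1}$ in (i), the relation $u=u'w$ in (ii), and the invariance of $(-1)^{r+r'}$) just makes explicit what the paper's terse proof leaves implicit.
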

\begin{proof}
Both identities follow easily from the definition of
$\epsilon(\varphi,\Phi^+)$. Indeed, let $\theta$ be a subset of
$\varphi$ chosen as in Proposition~\ref{prop_sign_2_structure}.
For every $w\in W$, $w(\varphi)$ is a $2$-structure for $\Phi$ and
its subset~$w(\theta)$ satisfies the same conditions
for the system of positive pseudo-roots $w(\Phi^+)$,
and also for the system of positive pseudo-roots $\Phi^+$
if $w(\varphi^+) \subset \Phi^+$.
Also, we have $\Phi^+_{w\cdot \theta}=w\cdot\Phi^+_\theta$.
This immediately yields~(i) and~(ii).
\end{proof}

\begin{sublemma}
Let $\alpha_0\in\Phi$ be a simple pseudo-root, let 
$s_0$ be the simple reflection defined by~$\alpha_0$,
let $\Phi_0 = \alpha_0^\perp\cap\Phi$ and $\Phi_0^+=\Phi_0\cap\Phi^+$.
Let $\Tt''$ be the set of $\varphi\in\Tt(\Phi)$
such that $s_0(\varphi)=\varphi$; we also consider the subsets
$\Tt''_1=\{\varphi\in\Tt'':\varphi\cap
\Phi_0\in\Tt(\Phi_0)\}$ and $\Tt''_2=
\Tt''-\Tt''_1$.
Then the following statements hold: 
\begin{itemize}
\item[(0)] Let $\varphi$ be a $2$-structure for $\Phi$.
Then $s_0(\varphi)=\varphi$, that is, the $2$-structure
$\varphi$ is in
$\Tt''$, if and only if $\alpha_0\in\varphi$.

\item[(1)] The map $\Tt''_1\fl\Tt(\Phi_0)$,
$\varphi\longmapsto\varphi\cap\Phi_0$ is bijective.

\item[(2)] For every $\varphi\in\Tt''_1$, we have
$\epsilon(\varphi,\Phi^+)=\epsilon(\varphi\cap\Phi_0,\Phi_0^+)$.

\item[(3)] There exists an involution $\iota$ of $\Tt''_2$
such that, for every $\varphi\in\Tt''_2$, we have
$\varphi\cap\Phi_0=\iota(\varphi)\cap\Phi_0$ and $\epsilon(\iota(\varphi),
\Phi^+)=-\epsilon(\varphi,\Phi^+)$.

\end{itemize}
\label{lemma_T_Phi_induction_step}
\end{sublemma}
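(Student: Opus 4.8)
The plan is to prove the four statements in order, using the structure theory of $2$-structures developed earlier in the appendix, and in particular the description of the irreducible components of a $2$-structure together with Proposition~\ref{prop_W_acts_transitively_on_T_Phi} and Lemma~\ref{lemma_sign_of_w(varphi)}.

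First, for statement~(0): if $\alpha_0 \in \varphi$ then, writing $\varphi = \varphi_1 \sqcup \cdots \sqcup \varphi_r$ as a disjoint union of pairwise orthogonal irreducible components, $\alpha_0$ lies in exactly one component, say $\varphi_i$, and $\alpha_0$ is orthogonal to all the other components. Since $s_0$ fixes every vector orthogonal to $\alpha_0$, it fixes $\varphi_j$ pointwise for $j \neq i$; and $s_0$ stabilizes $\varphi_i$ because $\varphi_i$ is a pseudo-root system containing $\alpha_0$ (each $\varphi_i$ of type $A_1$, $B_2$, or $I_2(2^k)$ is stable under reflections in its own pseudo-roots). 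Hence $s_0(\varphi) = \varphi$. Conversely, suppose $s_0(\varphi) = \varphi$ but $\alpha_0 \notin \varphi$. Since $\alpha_0$ is simple, $s_0$ permutes $\Phi^+ - \{\alpha_0\}$ and sends $\alpha_0$ to $-\alpha_0$ (Lemma~4.4.3 of~\cite{BB}). If $\alpha_0 \notin \varphi$, then $\varphi^+ \subseteq \Phi^+ - \{\alpha_0\}$, so $s_0(\varphi^+) \subseteq \Phi^+$; but $s_0(\varphi) = \varphi$ forces $s_0(\varphi^+) = \varphi^+$, and then condition~(b) in the definition of a $2$-structure gives $\det(s_0) = 1$, a contradiction since $\det(s_0) = -1$.

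Next, statements~(1), (2), and~(3) are the analogue for Coxeter systems of Herb's induction step in the proof of the count of $2$-structures. For~(1): given $\varphi \in \Tt''$, i.e.\ $\alpha_0 \in \varphi$ by~(0), I would show $\varphi \cap \Phi_0$ is always a $2$-structure of $\Phi_0$ precisely when the irreducible component of $\varphi$ containing $\alpha_0$ is of type $A_1$ (so that removing $\alpha_0$ just removes that $A_1$ factor and leaves the orthogonal remainder inside $\alpha_0^\perp$), and that in the rank-$2$ case, $\varphi \cap \Phi_0$ fails condition~(a) or~(b) — these are the elements of $\Tt''_2$. That $\varphi \mapsto \varphi \cap \Phi_0$ is a bijection $\Tt''_1 \iso \Tt(\Phi_0)$ follows because, conversely, any $2$-structure $\varphi_0$ of $\Phi_0$ together with the $A_1$ system $\{\pm\alpha_0\}$ reassembles to a $2$-structure of $\Phi$ (one checks condition~(b) using that no element of $\varphi_0$ is orthogonal to all of $\varphi_0$, Remark~\ref{rmk_no_orthogonal_root}); uniqueness of the reassembly is clear. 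For~(2): by the definition of the sign via $w_\theta$ and the choice of $\theta$, one can choose the ordered set $\theta$ for $\varphi$ so that the $A_1$ factor corresponding to $\alpha_0$ comes first; then the comparison of positive systems $\Phi^+$ vs.\ $\Phi^+_\theta$ and $\Phi_0^+$ vs.\ $(\Phi_0)^+_{\theta \setminus \alpha_0}$ differ only by pseudo-roots not orthogonal to $\alpha_0$, which pair off, giving equal signs; the correction terms $(-1)^{r+r'}$ in Definition~\ref{def_sign_2_structure} match up because $A_1$ contributes nothing to them.

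The involution in~(3) is the main obstacle. For $\varphi \in \Tt''_2$, the component $\varphi_i$ of $\varphi$ containing $\alpha_0$ is of rank $2$ (type $B_2$ or $I_2(2^k)$), and I would define $\iota(\varphi)$ by replacing $\varphi_i$ with the ``other'' rank-$2$ pseudo-root system in $\Span(\varphi_i)$ that contains $\alpha_0$ and restricts to the same $2$-structure on $\alpha_0^\perp \cap \Span(\varphi_i)$ — concretely, reflecting $\varphi_i$ by the reflection $s_\beta$ in the pseudo-root $\beta$ of $\varphi_i$ proportional to the difference of the two orthogonal generators of $\varphi_i$, which exists since $\varphi_i$ has type $B_2$ or $I_2(2^k)$. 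This $s_\beta$ fixes $\alpha_0$'s perpendicular data and all other components, so $\iota(\varphi) \cap \Phi_0 = \varphi \cap \Phi_0$; it is an involution since $s_\beta^2 = 1$; and $\iota(\varphi) \neq \varphi$ because otherwise $s_\beta$ would stabilize $\varphi^+$ up to the $W$-action, contradicting condition~(b) as in case~(b) of the proof of Proposition~\ref{prop_sign_2_structure}. The sign flip $\epsilon(\iota(\varphi), \Phi^+) = -\epsilon(\varphi, \Phi^+)$ is the delicate point: I would establish it by a direct parity computation in the rank-$2$ subsystem $\Phi \cap \Span(\varphi_i)$, showing that $|\Phi^+_\theta \cap \Phi^-_{\iota(\theta)}|$ is odd, which amounts to the same kind of pairing-off argument as in Lemma~\ref{lemma_change_order_theta_sign} but now with a genuine ``leftover'' pseudo-root, namely $\alpha_0$ or a pseudo-root along it, that cannot be paired; the remaining pseudo-roots in the subsystem come in $\iota$-orbits of size $2$. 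Care must be taken to verify this works uniformly for $B_2$ and for each $I_2(2^k)$, $k \geq 3$ — this case analysis is where the real work lies, but it is a finite, explicit check in a two-dimensional space.
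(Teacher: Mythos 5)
Your plan founders at the structural step that drives the whole proof: the claimed dichotomy in (1) is false. It is not true that $\varphi\cap\Phi_0\in\Tt(\Phi_0)$ precisely when the component of $\varphi$ containing $\alpha_0$ has type $A_1$. For example, in type $B_2$ with $\alpha_0=e_2$ the unique $2$-structure is $\varphi=\Phi$ itself, $\alpha_0$ lies in a rank-$2$ component, and yet $\varphi\cap\Phi_0=\{\pm e_1\}$ is a $2$-structure of $\Phi_0$; the same happens throughout types $B_n$ ($n$ even), $F_4$ and $I_2(2^k)$, where $2$-structures have no $A_1$ components at all, so your dichotomy would give $\Tt''_1=\varnothing$ while $\Tt(\Phi_0)\neq\varnothing$, contradicting (1). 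Correspondingly, your proposed inverse $\varphi_0\longmapsto\varphi_0\sqcup\{\pm\alpha_0\}$ does not land in $\Tt(\Phi)$ in general: in $B_2$ with $\alpha_0=e_2$ the set $\{\pm e_1,\pm e_2\}$ violates condition (b), since $s_{e_1-e_2}$ preserves its positive system and has determinant $-1$. The actual proof needs the explicit classification of $2$-structures for the irreducible types (the paper's Subsection on $2$-structures in the irreducible types), from which one sees that $\Tt''_2\neq\varnothing$ only when $\Phi$ has type $B_n$ with $n$ odd and $\alpha_0$ is the short simple root; your argument has no mechanism to detect this.

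The involution in (3) as you define it is vacuous: if $\beta\in\varphi_i$, then $s_\beta(\varphi_i)=\varphi_i$ and $s_\beta$ fixes the other components pointwise, so your $\iota$ is the identity on $\varphi$ (and in the relevant case $\Span(\varphi_i)\cap\Phi=\varphi_i$, so there is no ``other'' rank-$2$ system in that span). The correct involution, available exactly in the one case where $\Tt''_2\neq\varnothing$, mixes two components: it exchanges the partner $e_i$ of $\alpha_0=e_n$ inside its $B_2$ component with the generator $e_j$ of the unique $A_1$ component, producing a genuinely different $2$-structure with the same intersection with $\Phi_0$, and the sign flip then comes from Lemma~\ref{lemma_change_order_theta_sign}. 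Finally, in (2) your sketch only treats $\alpha_0$ sitting in an $A_1$ factor (which omits the rank-$2$ case that does occur in $\Tt''_1$, handled in the paper by placing $\alpha_0$ last in $\theta$ and using the fixed-point-free action of $s_{\alpha_0}$ on the relevant set of pseudo-roots), and the assertion that the discrepant pseudo-roots ``pair off, giving equal signs'' is false as stated: the set $\{\beta\in\Phi^-:(\beta,\alpha_0)>0\}$ has odd cardinality in types $A_{2n}$ and $I_2(2n'+1)$ with $n'$ odd, and it is exactly the correction factors $(-1)^{r+r'}$ in Definition~\ref{def_sign_2_structure} that absorb this, which must be checked type by type rather than asserted.
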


\begin{proof}
We prove (0).
If $\alpha_0\in\varphi$, then $s_0$ is in the Coxeter group of
$\varphi$, so $s_0(\varphi)=\varphi$. Conversely,
we have $s_{\alpha_0}(\Phi^+-\{\alpha_0\})\subset\Phi^+$
by~\cite[Lemma~4.4.3]{BB}, so, if $\varphi\in\Tt''$ and
$\alpha_0\not\in\varphi$, then $s_0(\varphi^+) \subseteq \Phi^+\cap\varphi=
\varphi^+$, contradicting condition~(b) in the definition of a
$2$-structure.
Note also that the subset $\varphi\cap\Phi_0$ 
of $\Phi_0$ always satisfies
condition~(a)
in the definition of a $2$-structure, but it does not always satisfy
condition~(b).

We prove (1). 
We may assume that $\Phi$ is irreducible, and we
will freely use the explicit description of $2$-structures
given in Subsection~\ref{section_2_structures_irreducibles}.
If $2$-structures for $\Phi$ are all of type~$A_1^s$ for some
$s$, which happens in types $A_n$, $D_n$, $E_6$, $E_7$, $E_8$, $H_3$,
$H_4$ and
$I_2(m)$ for $m$ odd,
then $\varphi\cap\Phi_0\in\Tt(\Phi_0)$ for every
$\varphi\in\Tt(\Phi)$, that is, $\Tt_1''=\Tt''$,
and we see in the explicit description of $2$-structures
that the map of statement~(1) is a bijection.
It is easy to check that the same statement holds in
type~$I_2(m)$ for $m$ even.

We now suppose that $\Phi$ is of type~$B_n$ or $F_4$. (Recall that
from the point of view of Coxeter systems types $B_n$ and $C_n$ are
isomorphic.) For convenience, in this case,
we take $\Phi$ to be the actual root system, with possibly
non-normalized roots; this does not affect any of the definitions
that we made before.
To study the map of (1),
we may assume that $\alpha_0 = e_n$ or $\alpha_0 = e_1-e_2$. 
Suppose first that $\alpha_0 = e_1-e_2$.
Then $\Phi_0$ is reducible.
Furthermore,
it is
of type $A_1\times B_{n-2}$ if $\Phi$ is of type~$B_n$, and of type $A_1\times B_2$
if $\Phi$ is of type~$F_4$, where the $A_1$ factor is $\{\pm(e_1+e_2)\}$.
In both cases, 
it is easy to see that $\Tt''_1=\Tt''$ and that
(1) holds.
Suppose that $\alpha_0=e_n$. Then $\Phi_0$ is irreducible.
Furthermore,
it is of type~$B_{n-1}$ if $\Phi$ is of type~$B_n$, and of type~$B_3$ if
$\Phi$ is of type~$F_4$. 
If $\Phi$ is of type~$F_4$ or~$B_n$ with $n$ even then again
it is easy to see that $\Tt''_1=\Tt''$ and that~(1) holds.

Finally, suppose that $\Phi$ is of type~$B_n$ with
$n$ odd and that $\alpha_0=e_n$.
If $\varphi\in\Tt''$ then we have $\varphi\in\Tt''_1$
if and only if $\{\pm e_n\}$ is
an irreducible component of $\varphi$. The map sending
$\varphi_0\in\Tt(\Phi_0)$ to $\varphi_0\sqcup\{\pm e_n\}$
is thus an inverse to the map of~(1), so statement~(1) holds.

We now prove (3).
We have seen in the proof of (1) that $\Tt''_2=\varnothing$ unless
$\Phi$ is of type~$B_n$ with $n$ odd and $\alpha_0$ is the short simple root.
Assume
that we are in this case, which means that $\alpha_0=e_n$.
Let $\varphi\in\Tt''_2$. Then there exists
$2 \leq i \leq n$ such that $\varphi_1=\{\pm e_n,\pm e_i,\pm e_n\pm e_i\}$
is an irreducible component of $\varphi$.
Write $\varphi=\varphi_1\sqcup\varphi_2\sqcup\cdots\sqcup\varphi_r$,
where the $\varphi_k$ are irreducible and
$\varphi_2=\{\pm e_j\}$ is the unique rank $1$ component of $\varphi$.
Set $\iota(\varphi)=\{\pm e_n,\pm e_j,\pm e_n\pm e_j\}\sqcup\{\pm e_i\}\sqcup\varphi_3
\sqcup\cdots\sqcup\varphi_r$.
This map switches the roles of $e_i$ and $e_j$.
Then $\iota(\varphi)$ is also in $\Tt''_2$, it is not
equal to $\varphi$, we have $\iota(\varphi)\cap\Phi_0=\varphi\cap\Phi_0$
and $\iota(\iota(\varphi))=\varphi$. 
To finish the proof of (3),
it suffices to show that $\epsilon(\iota(\varphi),\Phi^+)=
-\epsilon(\varphi,\Phi^+)$ for every
$\varphi\in\Tt''_2$. But this follows immediately from the definition
of~$\iota(\varphi)$ and from Lemma~\ref{lemma_change_order_theta_sign}.

We finally prove (2).
Let $\varphi\in\Tt''_1$.
Choose an ordered subset
$\theta=\{\alpha_1,\ldots,\alpha_r\}$ of $\varphi$ as in 
Proposition~\ref{prop_sign_2_structure}. We may assume that $\alpha_0\in
\theta$.
If 
$\alpha_0$ is in an
irreducible component of $\varphi$ of type~$A_1$, we may assume that
$\alpha_0=\alpha_1$. If $\alpha_0$ is in an
irreducible component of $\varphi$ of rank $2$, then, as it is
a simple pseudo-root, it cannot be the first
element of $\theta$ coming from this rank $2$ factor of $\varphi$
(see Figure~\ref{figure_choice_theta} for an illustration in the
case of $I_2(8)$, the general case is similar), so
we may assume that $\alpha_0=\alpha_r$.

Suppose first that $\alpha_0$ is in an irreducible component of
$\varphi$ of rank $2$ and that
$\alpha_0=\alpha_r$.
By the description of
$2$-structures in Subsection~\ref{section_2_structures_irreducibles},
this can only happen
if $\Phi$ is of type~$B_n$, $F_4$ or $I_2(m)$
with $m$ even. 
The set $\{\alpha_1,
\ldots,\alpha_{r-1}\}$ is an ordered subset of $\varphi_0$ satisfying the
conditions of Proposition~\ref{prop_sign_2_structure}, and
$\Phi^+_{0,\theta_0}=\Phi^+_\theta\cap\Phi_0$.
So the statement of (2) will follow if we can show that
$X=(\Phi^+_{\theta}-\Phi^+_{0,\theta_0}) \cap \Phi^-$ has even cardinality.
Let $s=s_{\alpha_r}$.
We claim that $s(X)=X$ and that $s$ has no fixed points in $X$, which
implies that $X$ has even cardinality because $s^2=1$. The fact
that $s$ has no fixed point in $X$ follows from the facts
that the fixed points of $s$ are the elements of $\alpha_r^\perp$,
that $\Phi\cap\alpha_r^\perp=\Phi_0$ and that $X\cap\Phi_0=
\varnothing$. 
As $\alpha_r\not\in\Phi^-$ and $-\alpha_r\not\in\Phi^+_\theta$, we have
$X=(\Phi^--\{-\alpha_r\})\cap(\Phi^+_\theta-(\Phi^+_{0,\theta_0}
\cup\{\alpha_r\}))$.
As $\alpha_r$ is a simple pseudo-root, we have
$s(\Phi^--\{-\alpha_r\})\subset\Phi^--\{-\alpha_r\}$
by~\cite[Lemma~4.4.3]{BB}. So
it suffices to prove that
$s$ preserves $\Phi^+_\theta-(\Phi^+_{0,\theta_0}\cup\{\alpha_r\})$.
If $\beta\in\Phi^+_\theta-\Phi^+_{0,\theta_0}$ is 
such that $\beta \neq \alpha_r$, then
we cannot have $(\beta,\alpha_i)=0$ for every $i\in\{1,\ldots,r-1\}$;
indeed, as $\Phi$ is of type~$B_n$, $F_4$ or $I_2(m)$ with $m$
even, the family $(\alpha_1,\ldots,\alpha_r)$ is an orthonormal
basis of $V$, so the only element of $\Phi^+_\theta$ that is
orthogonal to $\alpha_1,\ldots,\alpha_{r-1}$ is $\alpha_r$.
So $\Phi^+_\theta-(\Phi^+_{0,\theta_0}\cup\{\alpha_r\})$ is the set
pseudo-roots $\beta\in\Phi$ such that $((\beta,\alpha_1),\ldots,
(\beta,\alpha_{r-1}))>0$ (for the lexicographic order on
$\R^{r-1}$) and that $(\beta,\alpha_r) \neq 0$. This set is stable by
$s$, because, for every $\beta\in V$, we have
$(s(\beta),\alpha_i)=(\beta,s(\alpha_i))=(\beta,\alpha_i)$ if
$1\leq i\leq r-1$ and $(s(\beta),\alpha_r)=(\beta,s(\alpha_r))=
-(\beta,\alpha_r)$.

Now we suppose that $\alpha_0$
is in an irreducible component of $\varphi$ of rank $1$ and that
$\alpha_0=\alpha_1$. Then $\theta_0=\{\alpha_2,
\ldots,\alpha_r\}$ is an ordered subset of $\varphi_0$ satisfying the
conditions of Proposition~\ref{prop_sign_2_structure}, and
$\Phi^+_{0,\theta_0}=\Phi_0\cap\Phi^+_\theta$, so
$\Phi^+_{\theta}-\Phi^+_{0,\theta_0}=\{\beta\in\Phi : (\beta,\alpha_1)>0\}$.
Statement~(2) will follow if we can show that
\[X=(\Phi^+_{\theta}-\Phi^+_{0,\theta_0}) \cap \Phi^-=\{\beta\in\Phi^-:
(\beta,\alpha_1)>0\}\] 
has even cardinality
if $\Phi$ is not of type~$A_{2n}$ or $I_2(2n'+1)$ with $n'$ odd,
and odd cardinality otherwise. 
As $\varphi$ has an irreducible component of rank $1$,
we cannot be in type~$F_4$. We can check that $X$ has even
cardinality
by a computer calculation in the exceptional
types $E$, $G$ and $H$.  

We now go through the remaining types
one by one (in cases $A$, $B$ and $D$, we use the description
of the roots from the tables at the end
of~\cite{Bourbaki}, and not the normalized pseudo-root system):
\begin{itemize}
\item[$\bullet$] \underline{Type~$I_2(m)$}: 
If $m$ is even, then $\varphi$ is a rank $2$ pseudo-root system;
so $m$ must be odd, and then $\varphi_0$ is empty
and $\epsilon(\varphi_0)=1$.
There are exactly $m$ pseudo-roots
$\beta$ such that $(\beta,\alpha_1)>0$, and $(m-1)/2$ of these
are in $\Phi^-$. So $\epsilon(\varphi)=(-1)^{(m-1)/2}(-1)^{(m-1)/2}=1$, 
which is what we wanted.

\item[$\bullet$] \underline{Type~$A_n$}: 
We write $\alpha_0=e_i-e_{i+1}$, with $1\leq i\leq n$. Then
\[X=\{e_j-e_k: 1\leq k<j=i\mbox{ or }i+1=k<j\leq n+1\}\]
has cardinality $n-1$, that is, even if and only if $n$ is odd.

\item[$\bullet$] \underline{Type~$B_n$}: 
As $\varphi$ has an irreducible component of rank $1$,
the integer $n$ must be odd and
$\alpha_0$ is the short
simple root, that is, $\alpha_0=e_n$. Then
\[X=\{-e_i+e_n:1\leq i\leq n\}\]
has cardinality $n-1$, which is even.

\item[$\bullet$] \underline{Type~$D_n$}: 
If $\alpha_0=e_i-e_{i+1}$ with $1\leq i\leq n-1$, then
\begin{align*}
X = &
\{e_j-e_k: 1\leq k<j=i\mbox{ or }i+1=k<j\leq n\} \\
& \cup \{-(e_j+e_k):i+1=j<k\leq n\mbox{ or }i \neq j<k=i+1\}
\end{align*}
has cardinality $2n-4$.
If $\alpha_0=e_{n-1}+e_n$, then
\[X=\{e_j-e_k:n=j>k \neq n-1\mbox{ or }j=n-1>k\}\]
also has cardinality $2n-4$.
\end{itemize}
\end{proof}

\begin{sublemma}
Let $\varphi \subseteq \Phi$ be a $2$-structure. Then $|\Phi^+-\varphi^+|$
is an even integer. More precisely, if $\Phi$ is irreducible, we have
\[|\Phi^+-\varphi^+|=
\begin{cases} 
2n\bmod 4 & \text{ if }\Phi\text{ is of type } A_{2n}, \\
0\bmod 4 & \text{ if }\Phi\text{ is of type } A_{2n+1}, B, D, E, F_4, G_2, \text{or } H, \\
2^r(m-1) & \text{ if } \Phi \text{ is of type } I_2(2^r m) \text{ with } m \text{ odd}.
\end{cases}\]
\label{lemma_r_Phi}
\end{sublemma}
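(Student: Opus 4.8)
The plan is to reduce to the case where $\Phi$ is irreducible, and then to verify the formula type by type using the classification of $2$-structures.

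\textbf{Reduction to the irreducible case.} Let $\Phi=\Phi^{(1)}\sqcup\cdots\sqcup\Phi^{(s)}$ be the decomposition into irreducible components, with $\Phi^{(j),+}=\Phi^{(j)}\cap\Phi^+$. Each irreducible component of $\varphi$ is contained in a single $\Phi^{(j)}$: if it met two of them it would split as a nontrivial orthogonal union of pseudo-root systems, contradicting its irreducibility. Hence $\varphi^{(j)}:=\varphi\cap\Phi^{(j)}$ satisfies condition~(a) of Definition~\ref{def_2_structure}, and condition~(b) holds because any element of $W(\Phi^{(j)})$ fixing $\varphi^{(j),+}$ extends, by the identity on the remaining components, to an element of $W(\Phi)$ fixing $\varphi^+$ and of the same determinant. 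Moreover $\varphi^{(j)}\neq\varnothing$: otherwise every $\alpha\in\Phi^{(j)}$ would be orthogonal to all of $\varphi\subseteq\bigsqcup_{k\neq j}\Phi^{(k)}$, contradicting Remark~\ref{rmk_no_orthogonal_root}. So each $\varphi^{(j)}$ is a $2$-structure for $\Phi^{(j)}$, and since $|\Phi^+-\varphi^+|=\sum_{j}|\Phi^{(j),+}-\varphi^{(j),+}|$, the first assertion follows from the irreducible case, every value in the displayed formula being even.

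\textbf{The irreducible case.} Assume $\Phi$ irreducible. By Proposition~\ref{prop_W_acts_transitively_on_T_Phi} the cardinality $|\varphi^+|$ depends only on the type of $\Phi$, so I read it off from the explicit description of $2$-structures in Subsection~\ref{section_2_structures_irreducibles}, using that a component of type $A_1$, $B_2$, or $I_2(2^k)$ contributes $1$, $4$, or $2^k$ to $|\varphi^+|$ respectively, and then subtract from the standard value of $|\Phi^+|$ (namely $\binom{n+1}{2}$ in type $A_n$, $n^2$ in type $B_n=C_n$, $n(n-1)$ in type $D_n$, the values $36,63,120$ in types $E_6,E_7,E_8$, $24$ in type $F_4$, $6$ in type $G_2$, $15$ and $60$ in types $H_3,H_4$, and $2^r m$ in type $I_2(2^r m)$). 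For example, in type $A_{2n}$ the $2$-structure has $n$ components, all of type $A_1$, giving $|\Phi^+-\varphi^+|=2n^2+n-n=2n^2\equiv 2n\pmod 4$ since $n(n-1)$ is even; in type $A_{2n+1}$ one gets $2n(n+1)\equiv 0\pmod 4$; in type $B_n$ the $2$-structure is $\lfloor n/2\rfloor$ copies of $B_2$ together with one extra $A_1$ when $n$ is odd, yielding $n(n-2)\equiv 0$ and $(n-1)^2\equiv 0$ respectively; in type $D_n$ the $2$-structure consists of $A_1$-components (with $|\varphi^+|=n$ for $n$ even and $n-1$ for $n$ odd, maximality pinning this down), again yielding $n(n-2)\equiv 0$ and $(n-1)^2\equiv 0$; and in type $I_2(2^r m)$ with $m$ odd the $2$-structure is the subsystem of type $B_2$, $I_2(2^r)$, $A_1\times A_1$, or $A_1$ according to whether $r\ge 2$, $r=1$ with $m>1$, or $r=0$, so in every case $|\Phi^+-\varphi^+|=2^r m-2^r=2^r(m-1)$. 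The exceptional types are a finite check: the $2$-structure is $A_1^{4}$, $A_1^{7}$, $A_1^{8}$, $B_2\times B_2$, $A_1\times A_1$, $A_1^{3}$, $A_1^{4}$ in types $E_6,E_7,E_8,F_4,G_2,H_3,H_4$, and one computes $36-4$, $63-7$, $120-8$, $24-8$, $6-2$, $15-3$, $60-4$, all divisible by $4$.

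\textbf{Main obstacle.} There is no serious difficulty here; the only substantive point is extracting the number of components of each type in a $2$-structure for each irreducible $\Phi$ from Subsection~\ref{section_2_structures_irreducibles}, invoking Proposition~\ref{prop_maximality_of_2_structures} where the maximal choice needs to be pinned down (notably in type $D_n$ with $n$ odd and in the exceptional types, where a short computer verification suffices, exactly as in the proof of Lemma~\ref{lemma_T_Phi_induction_step}). Everything else is a routine residue computation modulo $4$.
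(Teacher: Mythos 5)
Your proposal is correct and follows essentially the same route as the paper, whose proof simply refers to the explicit description of $2$-structures for the irreducible types in Subsection~\ref{section_2_structures_irreducibles}; you merely spell out the reduction to the irreducible case and the type-by-type count of $|\Phi^+|-|\varphi^+|$, all of which checks out (the only blemish is the slightly garbled case list in the dihedral type, which does not affect the computation since $|\varphi^+|=2^r$ in every case).
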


\begin{proof}
This follows from the explicit description of $2$-structures for the
irreducible types in Subsection~\ref{section_2_structures_irreducibles}.
\end{proof}

\subsection{Orthogonal sets of pseudo-roots and 2-structures}
\label{subsection_B_3}

For $\Phi$ a root system (not just a pseudo-root system),
let $\Of(\Phi)$ be the set of all finite sequences
$(\alpha_1,\alpha_2,\ldots,\alpha_r)$
of elements of~$\Phi$ which are
pairwise orthogonal and such that their entries all have the same length,
that is, the following two conditions hold:
\begin{itemize}
\item[(a)] $(\alpha_i,\alpha_j)=0$ for all $1 \leq i < j \leq r$;
\item[(b)] $\|\alpha_1\|=\|\alpha_2\|=\cdots=\|\alpha_r\|$.
\end{itemize}

\begin{sublemma}
Suppose that $\Phi$ is a root system.
Let $\theta=(\alpha_1,\ldots,\alpha_r)$
and $\theta'=(\beta_1,\ldots,\beta_s)$ be elements of $\Of(\Phi)$,
and suppose that
$\theta^\perp\cap\Phi=(\theta')^\perp\cap\Phi=\varnothing$ and that
the elements of $\theta$ and $\theta'$ have the same length.
Then
there exists $w\in W$ such that $\{\alpha_1,\ldots,\alpha_r\}=
\{w(\beta_1),\ldots,w(\beta_s)\}$.
In particular, $r=s$ holds.

\label{lemma_one_structures}
\end{sublemma}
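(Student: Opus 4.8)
The plan is to argue by induction on $|\Phi|$. The base case $\Phi=\varnothing$ is immediate: then $\theta$ and $\theta'$ are both the empty sequence, so $r=s=0$ and $w=1$ works. For the inductive step I first note that, since the hypothesis $\theta^\perp\cap\Phi=\varnothing$ forces $\theta$ (and likewise $\theta'$) to be nonempty whenever $\Phi\ne\varnothing$, I may pick $\alpha_1\in\theta$ and $\beta_1\in\theta'$; by assumption they have the same length, which I will call $\ell$.

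If $\Phi$ is reducible, write $\Phi=\Phi^{(1)}\sqcup\Phi^{(2)}$ with $\Phi^{(1)},\Phi^{(2)}$ nonempty root systems spanning orthogonal subspaces, so that $W(\Phi)=W(\Phi^{(1)})\times W(\Phi^{(2)})$, each factor acting as the identity on the span of the other component. Each root in $\theta$ lies in exactly one $\Phi^{(j)}$; let $\theta^{(j)}$ be the subsequence of roots of $\theta$ lying in $\Phi^{(j)}$, and define $(\theta')^{(j)}$ similarly. Then $\theta^{(j)},(\theta')^{(j)}\in\Of(\Phi^{(j)})$ and $(\theta^{(j)})^\perp\cap\Phi^{(j)}=\varnothing$: indeed a root $\delta\in\Phi^{(j)}$ orthogonal to every element of $\theta^{(j)}$ is also orthogonal to every element of $\theta$ (the remaining ones lie in the other, orthogonal, component), hence $\delta\in\theta^\perp\cap\Phi=\varnothing$. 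As $|\Phi^{(j)}|<|\Phi|$, the inductive hypothesis applied inside $\Phi^{(j)}$ gives $w_j\in W(\Phi^{(j)})$ carrying the underlying set of $(\theta')^{(j)}$ onto that of $\theta^{(j)}$; then $w=w_1w_2\in W(\Phi)$ carries the underlying set of $\theta'$ onto that of $\theta$, and in particular $r=s$.

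If $\Phi$ is irreducible, I use that $W$ acts transitively on the roots of a fixed length in an irreducible root system (classical; see, e.g., \cite{Bourbaki}) to find $w_1\in W$ with $w_1(\beta_1)=\alpha_1$. Replacing $\theta'$ by $w_1(\theta')$ --- which is again in $\Of(\Phi)$ and satisfies $(w_1(\theta'))^\perp\cap\Phi=w_1((\theta')^\perp\cap\Phi)=\varnothing$ --- I may assume $\alpha_1=\beta_1=:\gamma$. Put $\Phi_0=\gamma^\perp\cap\Phi$, a root system with $|\Phi_0|\le|\Phi|-2$. Since the sequences are pairwise orthogonal, the tails $\theta_0=(\alpha_2,\dots,\alpha_r)$ and $\theta_0'=(\beta_2,\dots,\beta_s)$ consist of roots lying in $\gamma^\perp$, hence belong to $\Of(\Phi_0)$, and the same computation as above gives $\theta_0^\perp\cap\Phi_0=(\theta_0')^\perp\cap\Phi_0=\varnothing$. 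The inductive hypothesis then produces $w_0\in W(\Phi_0)$ carrying the underlying set of $\theta_0'$ onto that of $\theta_0$. Now $W(\Phi_0)\subseteq W(\Phi)$, and every generator $s_\delta$ of $W(\Phi_0)$ (with $\delta\in\Phi_0\subseteq\gamma^\perp$) fixes $\gamma$; hence $w_0(\gamma)=\gamma$, and $w_0$ carries the underlying set of $\theta'=(\gamma,\beta_2,\dots,\beta_s)$ onto $\{\gamma\}\cup\{\alpha_2,\dots,\alpha_r\}=\{\alpha_1,\dots,\alpha_r\}$, which proves the claim (and $r=s$).

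The one external ingredient is the transitivity of the Weyl group on roots of a given length in the irreducible case; everything else is elementary manipulation of orthogonality. The main point to watch is well-foundedness of the induction --- $|\Phi|$ genuinely decreases in both cases, the reducible one because both components are nonempty and the irreducible one because $\pm\gamma\notin\Phi_0$ --- together with the observation that the conjugating element $w_0$ built inside $\gamma^\perp\cap\Phi$ fixes $\gamma$ when viewed in $W(\Phi)$, which is exactly what lets the two pieces reassemble into a single element of $W$.
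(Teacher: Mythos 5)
Your proof is correct, and it takes a route genuinely different from the one in the paper. The paper first applies transitivity of $W$ on systems of positive roots to reduce to the case $\Phi^+_\theta = \Phi^+_{\theta'}$, and then, by induction on the length of the sequence $\theta$, shows that the two sequences must in fact coincide as sets: the key step identifies $\alpha_1$ with some $\beta_{j_0}$ via a root string argument (that $\alpha_1-\beta_{j_0}\in\Phi\cup\{0\}$ when $(\alpha_1,\beta_{j_0})>0$, and a Cauchy--Schwarz estimate using the equal-length hypothesis). You instead induct on $|\Phi|$, split into reducible and irreducible cases, and in the irreducible case invoke the transitivity of $W$ on roots of a fixed length to align $\alpha_1$ and $\beta_1$ directly, then descend to the root system $\gamma^\perp\cap\Phi$ exactly as the paper does. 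Both proofs therefore hinge on passing to the orthogonal root subsystem, but they use different classical inputs: the paper needs the crystallographic root-string fact, while you need transitivity on roots of a given length in an irreducible root system plus the factorization $W(\Phi)=W(\Phi^{(1)})\times W(\Phi^{(2)})$ for orthogonal decompositions. Your version has the small advantage of replacing the root-string calculation by a single appeal to a transitivity theorem, at the modest cost of a case split on irreducibility; neither approach extends to pseudo-root systems, since both rely on crystallographic structure (the paper through root strings, you through the fact that $W$ is transitive on roots of each length, which fails already for $I_2(2m)$).
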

\begin{proof}
Let $\Phi^+_\theta$, respectively $\Phi^+_{\theta'}$, be the system
of positive roots defined by $\theta$, respectively $\theta'$, as in
Definition~\ref{def_Phi_theta}.
As $W$ acts transitively on
the set of systems of positive roots, there exists $w\in W$ such that
$w(\Phi^+_{\theta'})=\Phi^+_{\theta}$. As $w(\Phi^+_{\theta'})=
\Phi^+_{w(\beta_1),\ldots,w(\beta_s)}$, we may assume that
$\Phi^+_\theta=\Phi^+_{\theta'}$. We then wish to prove that
$\theta$ and $\theta'$ are equal up to reordering their entries.
We proceed by induction on the length of $\theta$.
If $\theta$ is empty then $\Phi$ is also empty because of
the condition $\Phi\cap\theta^\perp=\varnothing$,
so $\theta'$ is empty and we are done.
Suppose that $r\geq 1$. 
Let~$j_0$ be the smallest index $j$ such that
$(\alpha_1,\beta_j) \neq 0$.
Since $\Phi\cap(\theta')^\perp=\varnothing$,
this minimum exists.
As $\beta_{j_0}\in\Phi^+_{\theta'}=
\Phi^+_\theta$, we cannot have $(\alpha_1,\beta_{j_0})<0$, so
$(\alpha_1,\beta_{j_0})>0$. 
As $\Phi$ is a root system and not just a pseudo-root system,
the corollary after
\cite[Chapitre~VI, \S~1, \textnumero~3, Th\'eor\`eme~1]{Bourbaki}  
implies that the difference $\gamma=\alpha_1-\beta_{j_0}$
is an element of $\Phi\cup\{0\}$.
Suppose that $\gamma\in\Phi^+_{\theta'}$. 
As $(\gamma,\beta_j)=0$ for $1\leq j<j_0$, we must then have
$0 \leq (\gamma,\beta_{j_0}) = (\alpha_1,\beta_{j_0}) - (\beta_{j_0},\beta_{j_0})$.
The hypothesis states that $\|\alpha_1\|=\|\beta_{j_0}\|$
and hence we deduce that
$(\alpha_1,\beta_{j_0}) \geq \|\beta_{j_0}\|^2 = \|\alpha_1\| \cdot \|\beta_{j_0}\|$.
This inequality implies that $\alpha_1=\beta_{j_0}$,
contradicting the fact that $\gamma$ is nonzero.
Suppose that $\gamma\in \Phi^-_\theta$.
Then
$0 \leq (\alpha_1,-\gamma) =(\alpha_1,\beta_{j_0}) - (\alpha_1,\alpha_1)$,
so
$(\alpha_1,\beta_{j_0})\geq\|\alpha_1\|^2$, and again this implies that
$\alpha_1=\beta_{j_0}$ and contradicts the assumption.
Hence we conclude that $\gamma=0$, that is, $\alpha_1=\beta_{j_0}$.
Let $\Phi_0=\alpha_1^\perp\cap
\Phi=\beta_{j_0}^\perp\cap\Phi$, $\theta_0=(\alpha_2,\ldots,\alpha_r)$ and
$\theta'_0=(\beta_1,\ldots,\widehat{\beta_{j_0}},\ldots,\beta_s)$.
Then $\Phi_0$ is a root system, $\theta_0$ and $\theta_0'$ are in
$\Of(\Phi_0)$, $\theta_0^\perp\cap\Phi_0=(\theta_0')^\perp\cap\Phi_0=
\emptyset$, and $\Phi^+_{0,\theta_0}=\Phi^+_\theta\cap\Phi_0=
\Phi^+_{\theta'}\cap\Phi_0=\Phi^+_{0,\theta'_0}$. We can apply
the induction hypothesis to conclude that $\{\alpha_2,\ldots,\alpha_r\}=
\{\beta_1,\ldots,\widehat{\beta_{j_0}},\ldots,\beta_s\}$, and this
immediately implies that $\{\alpha_1,\ldots,\alpha_r\}=
\{\beta_1,\ldots,\beta_s\}$.
\end{proof}

\begin{table}
\caption{The number of orthogonal sets of roots or pseudo-roots
of size $k$
where the elements all have the same length
in the exceptional/sporadic
reflection arrangements.
Note the double occurrence of $10!$ in the $E_{8}$ column.
The equality of the columns in type~$F_4$ comes from the fact that there
is an automorphism of the underlying vector space that preserves angles,
sends short roots to long roots, and sends long roots to doubles of short
roots (for instance, the automorphism given by
$e_1 \longmapsto e_1+e_2$,
$e_2 \longmapsto e_1-e_2$,
$e_3 \longmapsto e_3+e_4$ and
$e_4 \longmapsto e_3-e_4$).}
$$
\begin{array}{r | r r r r r r r}
k &  E_{6} & E_{7} &   E_{8} & F_{4} & F_{4} & H_{3} & H_{4} \\
  &       &       &         & \text{\small short} & \text{\small long} & & \\ \hline
1 &    72 &   126 &     240 &    24 &   24  &    30 &   120 \\
2 &  1080 &  3780 &   15120 &    72 &   72  &    60 &  1800 \\
3 &  4320 & 32760 &  302400 &    96 &   96  &    40 &  2400 \\
4 &  2160 & 75600 & 1965600 &    48 &   48  &       &  1200 \\
5 &       & 90720 & 3628800 &       &       &       &       \\
6 &       & 60480 & 3628800 &       &       &       &       \\
7 &       & 17280 & 2073600 &       &       &       &       \\
8 &       &       &  518400 &       &       &       &       \\
\end{array}
$$
\label{table_the_number_of_orthogonal_sets}
\end{table}

\begin{sublemma}
Let $\Phi$ be a normalized pseudo-root system, let $\theta=(\alpha_1,
\ldots,\alpha_r)$ be a sequence of pairwise orthogonal
elements of $\Phi$ such that
$\theta^\perp\cap\Phi=\varnothing$, and let
$\theta'$ be the sequence obtained from~$\theta$ by exchanging
$\alpha_i$ and $\alpha_{i+1}$. 
Consider the subroot system $\Phi'=\Phi\cap(\R\alpha_i+\R\alpha_{i+1})$.
Then $\Phi'$ is of type $A_1\times A_1$ or $I_2(m)$ with $m\geq 4$ even, and
the parity of the cardinality of
$\Phi^+_\theta \cap \Phi^-_{\theta'}$
is given by
\[\begin{array}{rll}
|\Phi^+_\theta \cap \Phi^-_{\theta'}|
& \equiv 0 \ \bmod 2 & \text{ if }\Phi'=A_1\times A_1, \\
|\Phi^+_\theta \cap \Phi^-_{\theta'}|
& \equiv m/2-1 \ \bmod 2 & \text{ if }\Phi'=I_2(m).
\end{array}\]
\label{lemma_change_order_theta_sign}
\end{sublemma}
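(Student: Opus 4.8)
The plan is to set up an explicit fixed-point-free or near-fixed-point-free involution on the set $\Phi^+_\theta \cap \Phi^-_{\theta'}$ and count its orbits, which reduces the parity question to understanding the two-dimensional subsystem $\Phi'$ together with the ``new'' pseudo-roots not lying in $\Span(\alpha_i,\alpha_{i+1})$. First I would record the structure of $\Phi'$. Since $\alpha_i$ and $\alpha_{i+1}$ are orthogonal unit vectors in $\Phi$, the rank-$2$ subsystem $\Phi' = \Phi \cap (\R\alpha_i + \R\alpha_{i+1})$ is itself a pseudo-root system containing two orthogonal pseudo-roots; by the classification of irreducible rank-$2$ pseudo-root systems (types $I_2(m)$) and the fact that $I_2(m)$ contains an orthogonal pair only when $m$ is even (or $\Phi'$ is reducible, i.e.\ of type $A_1 \times A_1$), we get that $\Phi'$ is $A_1 \times A_1$ or $I_2(m)$ with $m \geq 4$ even.

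Next I would analyze which $\gamma \in \Phi$ lie in $\Phi^+_\theta \cap \Phi^-_{\theta'}$. Writing $\gamma = c_i\alpha_i + c_{i+1}\alpha_{i+1} + \lambda$ with $\lambda$ orthogonal to both $\alpha_i$ and $\alpha_{i+1}$, the sign conditions coming from the lexicographic orders defined by $\theta$ and $\theta'$ (Lemma~\ref{lemma_defining_positive_roots}) depend only on the pair of leading coordinates $(c_i, c_{i+1})$: since $\theta$ and $\theta'$ agree in all positions except that $\alpha_i, \alpha_{i+1}$ are swapped, and agree beyond position $i+1$, one sees that $\gamma \in \Phi^+_\theta \cap \Phi^-_{\theta'}$ forces $(\gamma,\alpha_j) = 0$ for $j < i$, and then the condition becomes a condition purely on the signs of $c_i$ and $c_{i+1}$, namely (after checking the lexicographic bookkeeping) that $c_i < 0 < c_{i+1}$ or $c_i > 0$ and $c_{i+1} < 0$ — more precisely, $\gamma$ is $\theta$-positive but $\theta'$-positivity fails, which happens exactly when the first nonzero of $(c_i, c_{i+1})$ is positive but the first nonzero of $(c_{i+1}, c_i)$ is negative; this amounts to $c_i > 0$ and $c_{i+1} < 0$, together with the elements where $c_i = 0$ and... — in any case, the set splits into the contribution from $\Phi'$ itself (the $\lambda = 0$ pseudo-roots) and the contribution with $\lambda \neq 0$.

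For the $\lambda \neq 0$ part I would use the involution $\gamma = c_i\alpha_i + c_{i+1}\alpha_{i+1} + \lambda \longmapsto c_i\alpha_i + c_{i+1}\alpha_{i+1} - \lambda$, which is well-defined on $\Phi$ because it equals $-s_{\alpha_i}s_{\alpha_{i+1}}$ applied to $\gamma$ (reflection in the plane $\R\alpha_i + \R\alpha_{i+1}$), preserves all inner products with $\alpha_i$ and $\alpha_{i+1}$, hence preserves membership in $\Phi^+_\theta \cap \Phi^-_{\theta'}$, and is fixed-point-free precisely on the $\lambda \neq 0$ locus. So the $\lambda \neq 0$ part contributes an even number, and the parity of $|\Phi^+_\theta \cap \Phi^-_{\theta'}|$ equals the parity of $|(\Phi')^+_\theta \cap (\Phi')^-_{\theta'}|$, computed inside the rank-$2$ system $\Phi'$. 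Finally I would do the rank-$2$ computation directly: if $\Phi' = A_1 \times A_1 = \{\pm\alpha_i, \pm\alpha_{i+1}\}$ then swapping $\alpha_i$ and $\alpha_{i+1}$ changes the sign of neither (both remain positive in both orders), so the intersection is empty, giving parity $0$; if $\Phi' = I_2(m)$ with $m$ even, one lists the $m$ positive pseudo-roots for the order defined by $(\alpha_i,\alpha_{i+1})$ and checks that exactly $m/2 - 1$ of them become negative when one passes to the order defined by $(\alpha_{i+1},\alpha_i)$ (these are the pseudo-roots strictly between $\alpha_{i+1}^\perp$-direction and... i.e.\ those $\beta$ with $(\beta,\alpha_i) < 0$ but $(\beta,\alpha_{i+1}) > 0$, of which there are $m/2 - 1$), giving parity $m/2 - 1 \bmod 2$.

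\textbf{Main obstacle.} The part I expect to require the most care is the lexicographic sign bookkeeping: verifying cleanly that membership in $\Phi^+_\theta \cap \Phi^-_{\theta'}$ is governed only by the coordinates in the $\alpha_i, \alpha_{i+1}$ slots (so that the orthogonal complement part $\lambda$ is genuinely free), and then getting the exact count $m/2 - 1$ rather than $m/2$ or $m/2 + 1$ right in the $I_2(m)$ case by correctly identifying which of the $m$ positive pseudo-roots flip sign under the transposition. Everything else — the classification of $\Phi'$, the well-definedness of the involution $\gamma \mapsto -s_{\alpha_i}s_{\alpha_{i+1}}(\gamma)$ — is routine given the results already established in this appendix.
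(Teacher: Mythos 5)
Your proposal is correct and follows essentially the same route as the paper: identify $\Phi'$ as $A_1\times A_1$ or $I_2(m)$ with $m$ even, use the involution $\gamma\mapsto -s_{\alpha_i}s_{\alpha_{i+1}}(\gamma)$ to kill the $\lambda\neq 0$ contributions modulo $2$, and count one open quadrant of $\Phi'$ to get $(2m-4)/4=m/2-1$. The only blemish is a harmless sign slip in the final rank-two count (the flipping pseudo-roots are those with $(\beta,\alpha_i)>0$ and $(\beta,\alpha_{i+1})<0$, not the reverse), which does not affect the cardinality by the symmetry $\beta\mapsto-\beta$.
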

\begin{proof}
As $\Phi'$ is a pseudo-root system of rank $2$ (because it is contained
in a $2$-dimensional vector space and contains the two linearly
independent pseudo-roots $\alpha_i$ and $\alpha_{i+1}$), it is of type
$A_1 \times A_1$ or $I_2(m)$ with $m\geq 3$.
Moreover, $\Phi'$ contains two orthogonal
pseudo-roots, so it cannot be of type~$I_2(m)$ with $m$ odd.

We now set $C=\Phi^+_\theta \cap \Phi^-_{\theta'}$
and calculate the parity of $|C|$.
Let $\gamma\in C$. Then $\gamma$ is
orthogonal to $\alpha_1,\ldots,\alpha_{i-1}$, so we can write
$\gamma=c\alpha_i+d\alpha_{i+1}+\lambda$
with $\lambda\in\Span( \alpha_1,\ldots,\alpha_{i+1})^\perp$
and $c\alpha_i+d\alpha_{i+1} \neq 0$.
Set $\iota(\gamma)=-
s_{\alpha_i}s_{\alpha_{i+1}}(\gamma)$. Then $\iota(\gamma)\in\Phi$ and
$\iota(\gamma)=c\alpha_i+d \alpha_{i+1}-\lambda$, so $\iota(\gamma)\in
C$. Also, we clearly have $\iota(\iota(\gamma))=\gamma$, and $\iota(\gamma)$
is equal to $\gamma$ if and only if $\lambda=0$, that is, if and only
if $\gamma\in\Phi'$. We have
defined an involution $\iota$ of $C$, and we conclude that
$|C| \equiv |C_0| \bmod 2$, where $C_0 = \Phi'\cap C$
is the set of fixed points of $\iota$ in $C$. If $\Phi'$ is of type
$A_1\times A_1$ then we easily see that $C_0$ is empty, so we
are done. Suppose that $\Phi'$ is of type~$I_2(m)$ with $m$ even.
Let $\gamma=c\alpha_i+d\alpha_{i+1}\in\Phi'$, with $c,d\in\R$.
Then $\gamma\in C$ if and only if $c>0$ and $d<0$. The set $C_0$ contains
exactly one quarter of the elements
of~$\Phi'-\{\pm\alpha_i,\pm\alpha_{i+1}\}$, that is,
$|C_0| = (2m-4)/4 = m/2-1$.
\end{proof}

\begin{subrmk}
If we view the root system $A_1 \times A_1$ as the
dihedral pseudo-root system $I_2(2)$ then the conclusion
of Lemma~\ref{lemma_change_order_theta_sign}
is that
$|\Phi^+_\theta \cap \Phi^-_{\theta'}|
\equiv m/2-1  \bmod 2$ if $\Phi'=I_2(m)$
with $m$ even and $m \geq 2$.
\label{remark_change_order_theta_sign}
\end{subrmk}

\begin{sublemma}
Suppose that $\Phi$ is an irreducible root system
(not just a pseudo-root system)
and not of type~$G_2$.
Let $\Phi^+$ be a system of positive roots of $\Phi$
and
let $\varphi \subseteq \Phi$ be a $2$-structure. 
Define a subset $\theta$ of $\varphi$ as in
Proposition~\ref{prop_sign_2_structure}.
Then there is a choice of the sequences $\theta_i$ for which
$\theta$ is an element of~$\Of(\Phi)$.
Moreover,
if~$\Phi$ is of type~$B_n$ or~$F_4$ we can choose $\theta$ to consist of short roots.
Similarly, 
if~$\Phi$ is of type~$C_n$ or~$F_4$ we can choose $\theta$ to consist of long roots.
\label{lemma_choice_of_theta}
\end{sublemma}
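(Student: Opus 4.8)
The plan is to combine one general structural observation about the irreducible components of $\varphi$ with a short type-by-type check of the remaining length condition, using the explicit description of $2$-structures from Subsection~\ref{section_2_structures_irreducibles}. First I would note that, since $\Phi$ is an actual root system — in particular reduced and stable under its own reflections — and $\varphi$ is a pseudo-root system contained in $\Phi$, for $\alpha,\beta\in\varphi$ the vector $s_\alpha(\beta)$ lies in $\Phi$ and, by condition~(b) of Definition~\ref{def_pseudo_root_system}, equals a positive multiple $c\gamma$ of some $\gamma\in\varphi\subseteq\Phi$; as $\Phi\cap\R\gamma=\{\pm\gamma\}$, this forces $s_\alpha(\beta)=\gamma\in\varphi$. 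Hence $\varphi$ is a subroot system of $\Phi$, so it is crystallographic, and among the types $A_1$, $B_2$ and $I_2(2^k)$ with $k\geq 3$ allowed for the irreducible components of a $2$-structure, only $A_1$ and $B_2$ are crystallographic. Therefore every irreducible component $\varphi_i$ of $\varphi$ is of type $A_1$ or $B_2$.

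Next I would record the following elementary fact about a subroot system $\psi$ of type $B_2$ of $\Phi$: in any system of positive pseudo-roots of $\psi$, the two positive short roots are orthogonal to each other, the two positive long roots are orthogonal to each other, and each of these two orthogonal pairs, taken in a suitable order, can serve as the sequence $\theta_i$ in Proposition~\ref{prop_sign_2_structure}. By $W(\psi)$-equivariance this reduces to the standard positive system of $B_2$, where a direct check of the lexicographic recipe of Lemma~\ref{lemma_defining_positive_roots} applied to the short pair $(f_1,f_2)$ and to the long pair $(f_1+f_2,f_1-f_2)$ settles it. Consequently, in each $B_2$-component of $\varphi$ we are free to take $\theta_i$ to consist of two orthogonal short roots, or of two orthogonal long roots. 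Since the components $\varphi_1,\ldots,\varphi_r$ are pairwise orthogonal and each $\theta_i$ is a sequence of pairwise orthogonal roots, the concatenation $\theta$ is always a sequence of pairwise orthogonal roots; thus condition~(a) in the definition of $\Of(\Phi)$ holds automatically, and only condition~(b) — that all entries of $\theta$ have equal length — together with the short/long refinement remains to be arranged.

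The remaining work is a run through the irreducible types, invoking the explicit description of $2$-structures from Subsection~\ref{section_2_structures_irreducibles}. If $\Phi$ is simply-laced (type $A_n$, $D_n$, $E_6$, $E_7$ or $E_8$), all roots have the same length, so condition~(b) holds for every admissible $\theta$ and $\theta\in\Of(\Phi)$. If $\Phi$ is of type $B_n$, a $2$-structure consists of $\lfloor n/2\rfloor$ components of type $B_2$ and, when $n$ is odd, one component of type $A_1$ which is a short root; choosing the short orthogonal pair in each $B_2$-component then produces a $\theta\in\Of(\Phi)$ made of short roots. For type $C_n$ the same description holds with ``short'' and ``long'' interchanged (the $A_1$-component, when present, being a long root), and choosing the long pairs produces a $\theta\in\Of(\Phi)$ made of long roots. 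If $\Phi$ is of type $F_4$, the description shows that $\varphi$ has two components of type $B_2$ and no component of type $A_1$, so choosing short pairs in both gives a $\theta\in\Of(\Phi)$ made of short roots and choosing long pairs in both gives a $\theta\in\Of(\Phi)$ made of long roots. (The hypothesis that $\Phi$ is not of type $G_2$ is needed precisely because in $G_2$ every $2$-structure is of type $A_1\times A_1$ consisting of a long root and an orthogonal short root, so that no admissible $\theta$ has all entries of the same length.)

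I expect the only genuine obstacle to be organisational rather than conceptual: carefully matching the Bourbaki conventions for $B_n$ versus $C_n$ (which roots are short and which are long) with the shape of the $2$-structures, and the small but finicky verification that both the short and the long orthogonal pair inside a $B_2$-component can be ordered so as to meet the hypothesis of Proposition~\ref{prop_sign_2_structure}. Everything else follows directly from the classification of $2$-structures for the irreducible types already established in Subsection~\ref{section_2_structures_irreducibles}.
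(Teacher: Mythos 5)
Your overall strategy is the same as the paper's: reduce the problem to choosing, inside each irreducible component of $\varphi$, an orthogonal pair of equal-length roots (short or long in a $B_2$-component), dispose of the simply-laced types immediately, and then argue type by type for $B_n$, $C_n$ and $F_4$. Your opening observation that $\varphi$ is an honest (crystallographic, reduced) subroot system of $\Phi$, so its components can only be of type $A_1$ or $B_2$, is correct and a nice way to make explicit something the paper leaves implicit.

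However, there is a genuine circularity in how you handle the types $B_n$, $C_n$ and $F_4$. The facts you import from Subsection~\ref{section_2_structures_irreducibles} --- that in type~$B_n$ a $2$-structure is of type $B_2^{m}$ or $B_2^{m}\times A_1$ with the $A_1$-component a \emph{short} root, that in $C_n$ the $A_1$-component is long, and that in $F_4$ every $2$-structure is of type $B_2^2$ with no $A_1$-component --- are established in that subsection only as consequences of the transitivity of the $W$-action, and the transitivity proofs for exactly these types begin by invoking Lemma~\ref{lemma_choice_of_theta}, i.e.\ the statement you are trying to prove. (The paper's remark before Proposition~\ref{prop_W_acts_transitively_on_T_Phi} is careful about this ordering: the results of Subsection~\ref{subsection_B_3} must not rely on Subsection~\ref{section_2_structures_irreducibles}.) So the step ``the $A_1$-factors are short (resp.\ long, resp.\ absent)'' cannot be cited; it must be proved directly. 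The paper does this with a short ad hoc argument: in type~$B_n$, if some $A_1$-factor were a long root, say $\{\pm(e_1+e_2)\}$, then all other factors lie in $e_1^\perp\cap e_2^\perp$ except possibly one containing $e_1-e_2$; if $e_1-e_2\notin\varphi$ the reflection $s_{e_1-e_2}$ fixes $\varphi^+$ pointwise, and if $\{\pm(e_1-e_2)\}$ is a factor then $s_{e_1}$ preserves $\varphi^+$ --- either way contradicting condition~(b) of Definition~\ref{def_2_structure}. A similar argument handles $C_n$ and shows in $F_4$ that no $A_1$-factor can occur. Supplying arguments of this kind (rather than the citation) is the missing ingredient; with it, the rest of your proof goes through.
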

\begin{proof}
By Remark~\ref{rmk_no_orthogonal_root} we have
$\theta^\perp\cap\Phi=\varnothing$.
We use the notation of Proposition~\ref{prop_sign_2_structure}.
If all the roots of $\Phi$ have the same length (which is
the case for $A_n$, $D_n$, $E_6$, $E_7$ and $E_8$), then there is nothing
to prove. 
Note also that if $\varphi_i$ is an actual root system of type~$B_2$
(that is, with the correct root lengths), then the two possible choices for
$\theta_i$ are the set of short positive roots and the set of
long positive roots.

Suppose that $\Phi$ is of type~$B_n$.
If $\varphi$ has no irreducible component of type~$A_1$, then
we choose the two short positive
roots in each $\varphi_i$.
Suppose that $\varphi$ has a factor of type~$A_1$.
We show that this factor cannot contain long roots.
Suppose on the contrary that this occurs.
Without loss of generality,
we may assume that $\varphi_1=\{\pm (e_1+e_2)\}$.
The rank $2$ factors of $\varphi$ cannot contain
$e_1-e_2$, so they are all in $e_1^\perp\cap e_2^\perp$.
All the rank $1$ factors that do not contain $e_1-e_2$ must also be
in $e_1^\perp\cap e_2^\perp$.
If $e_1-e_2$ were not in $\varphi$ then the reflection
$s_{e_1-e_2}$ would act as the identity on all the elements on~$\varphi$,
which contradicts the definition of a $2$-structure.
Hence
$\{\pm(e_1-e_2)\}$ is another rank $1$ factor of~$\varphi$. But then
the reflection $s_{e_1}$ preserves $\varphi^+$, which is impossible.
Hence all the $A_1$ factors of $\varphi$ contain only short roots,
and we choose the $\theta_i$ in the $B_2$ factors to contain the two
short positive roots.

The case of $C_n$ is similar, with the roles of short and long
roots uniformly exchanged.

Finally suppose that $\Phi=F_4$.
In this case we can similarly show that the $2$-structure $\varphi$
has type~$B_{2}^{2}$, allowing us to pick
either short or long roots in each factor.
\end{proof}

\subsection{2-structures in the irreducible types}
\label{section_2_structures_irreducibles}

In this subsection we prove
Proposition~\ref{prop_W_acts_transitively_on_T_Phi},
that is, the fact that
the group $W$ acts transitively on the collection of $2$-structures $\Tt(\Phi)$.
It is enough to prove this result for irreducible pseudo-root systems.
We proceed by a case by case analysis.

\subsubsection*{Types $A_n$, $D_n$, $E_6$, $E_7$ and $E_8$}
Suppose that $\Phi$ is a root system of type~$A_n$, $D_n$ or
$E_m$ with $m\in\{6,7,8\}$.
As all the roots of $\Phi$ have the same length and as $\Phi$
contains no $B_2$ root system, the $2$-structures for $\Phi$ are exactly
the maximal sets $\varphi=\{\pm\alpha_1,\ldots,\pm\alpha_r\}$ such that
$(\alpha_1,\ldots,\alpha_r)\in\Of(\Phi)$. By
Lemma~\ref{lemma_one_structures}, for any
$(\alpha_1,\ldots,\alpha_r)$ and $(\beta_1,\ldots,\beta_s)$ on
$\Of(\Phi)$, there exists $w\in W$ such that
$\{\alpha_1,\ldots,\alpha_r\}=\{w(\beta_1),\ldots,w(\beta_s)\}$.
Hence the group $W$ acts transitively on $\Tt(\Phi)$.
In particular, all
the $2$-structures for $\Phi$ are isomorphic, so we can determine
their type.
See Table~\ref{table_2-structure_in_A_D_E}.

\begin{table}
\caption{The $2$-structures in types $A$,~$D$ and $E$
where $m=\lfloor (n+1)/2 \rfloor$ in type~$A$ and $m=\lfloor n/2 \rfloor$
in type~$D$.}
$$
\begin{array}{c | l l}
\text{Type of root} & & \text{Type of} \\
\text{system } \Phi & \text{$2$-structures are isomorphic to} & \text{$2$-structure} \\ \hline
A_n &
\{\pm(e_1-e_2), \:\:\:
\pm(e_3-e_4), \:\:\: \ldots, \:\:\:
\pm(e_{2m-1}-e_{2m})\} &
A_1^m \\ 
D_n &
\{\pm e_1 \pm e_2, \:\:\:
\pm e_3 \pm e_4, \:\:\: \ldots, \:\:\:
\pm e_{2m-1} \pm  e_{2m}\} &
A_1^{2m} \\
E_6 & 
\{\pm e_1 \pm e_2, \:\:\:
\pm e_3 \pm e_4\} &
A_1^{4} \\
E_7 &
\{\pm e_1 \pm e_2, \:\:\:
\pm e_3 \pm e_4, \:\:\:
\pm e_5 \pm e_6, \:\:\:
\pm(e_7-e_8)\} &
A_1^{7} \\
E_8 & 
\{\pm e_1 \pm e_2, \:\:\:
\pm e_3 \pm e_4, \:\:\:
\pm e_5 \pm e_6, \:\:\:
\pm e_7 \pm e_8\} &
A_1^{8}
\end{array}
$$
\label{table_2-structure_in_A_D_E}
\end{table}

\subsubsection*{Types $B_n$ and $C_n$}
Suppose that $\Phi$ is a root system of type~$B_n$. This will also give
the type~$C_n$ case, since $B_n$ and $C_n$ correspond to
the same Coxeter system.
We claim that $W$ acts transitively on $\Tt(\Phi)$.
In particular, all the
$2$-structures for $\Phi$ are isomorphic to
\[\varphi_0=\begin{cases}
\{\pm e_1, \pm e_2, \pm e_1\pm e_2\} \sqcup 
\cdots \sqcup
\{\pm e_{2m-1},\pm e_{2m},\pm e_{2m-1}\pm e_{2m}\}
\!\!\!\!& \text{if } n=2m, \\
\{\pm e_1, \pm e_2, \pm e_1\pm e_2\} \sqcup 
\cdots \sqcup
\{\pm e_{2m-1},\pm e_{2m},\pm e_{2m-1}\pm e_{2m}\}
\sqcup \{\pm e_{2m+1}\}
\!\!\!\!& \text{if } n=2m+1,
\end{cases}\]
so they are of type~$B_2^{m}$ if $n=2m$ is even, and of type
$B_2^{m}\times A_1$ if $n=2m+1$ is odd.

We prove the claim by induction on $n$. The case $n=1$ is clear.
Suppose that $n\geq 2$.
Let $\varphi,\varphi'\in\Phi$. By
Lemma~\ref{lemma_choice_of_theta}, we can choose sequences
$\theta$ of $\varphi$ and $\theta'$ of $\varphi'$ as
in Proposition~\ref{prop_sign_2_structure} such that
$\theta,\theta'\in\Of(\Phi)$ and that these subsets contain only short roots.
By Lemma~\ref{lemma_one_structures},
we may assume that $\theta$ and $\theta'$ coincide up to the order
of their elements.
Denote by $\varphi=\varphi_1\sqcup\cdots\sqcup\varphi_s$ and
$\varphi'=\varphi'_1\sqcup\cdots\sqcup\varphi'_t$ the decomposition
into irreducible systems that gave rise to $\theta$ and $\theta'$.
We can always change the order on the $\varphi_i$ and the
$\varphi'_j$.

Suppose that $\varphi_1$ is of rank $1$, so that
$\varphi_1=\{\pm\alpha_1\}$. We may assume that $\alpha_1\in\varphi'_1$.
If $\varphi'_1$ is of rank $1$ then
$\varphi'_1=\varphi_1$. 
As $\Phi\cap\varphi_1^\perp$ is an irreducible root system of
type~$B_{n-1}$,
the conclusion follows by the induction hypothesis.

If $\varphi'_1$ is of rank $2$ then $\varphi_1'$ is a $B_2$ root
system whose short positive roots are $\alpha_1$ and some~$\alpha_2$,
and we may assume that $\alpha_2\in\varphi_2$.
In particular, $\beta=\alpha_1-\alpha_2\in\Phi$. If $\varphi_2=\{\pm\alpha_2\}$
then the reflection~$s_\beta$ preserves $\varphi^+$, which is not possible.
So $\varphi_2$ is of rank $2$ (in particular, $n\geq 3$),
which means that it is a $B_2$ root
system whose short roots are $\alpha_2$ and some $\alpha_3$.
We may assume that $\alpha_3\in\varphi'_2$.
In particular, $\alpha_2-\alpha_3\in\Phi$, so
$\gamma=s_\beta(\alpha_2-\alpha_3)=\alpha_1-\alpha_3$ is also a root.
The irreducible components of $s_\gamma(\varphi)$ are
$\varphi'_1,\{\pm\alpha_3\},\varphi_3,\ldots,\varphi_s$. As
$\Phi\cap(\varphi_1')^\perp$ is a root system of type~$B_{n-2}$, the
induction hypothesis implies that there is a $w\in W$ such that
$w(\varphi')=s_\gamma(\varphi)$, which finishes the proof
in this case.

Suppose that $\varphi_1$ is of rank $2$, and call its
other short positive root $\alpha_2$. We may assume that
$\alpha_1\in\varphi'_1$. If $\varphi'_1$ is of rank $1$ then
$\varphi'_1=\{\pm\alpha_1\}$, and we can repeat the reasoning of the
previous paragraph with the roles of $\varphi$ and $\varphi'$ exchanged.
If $\varphi'_1=\varphi_1$ then the conclusion follows from the
induction hypothesis applied to the $B_{n-2}$ root system
$\varphi_1^\perp\cap\Phi$. Finally, suppose that $\varphi'_1$ is of
rank~$2$ and $\varphi'_1 \neq \varphi_1$. Let $\alpha_3$ be the
other short positive root of $\varphi'_1$. As $\alpha_2$ and $\alpha_3$
are both short roots, $\beta=\alpha_2-\alpha_3\in\Phi$. Note
that the irreducible components of $s_\beta(\varphi)$
are $\varphi'_1,s_\beta(\varphi_2),\ldots,s_\beta(\varphi_s)$, so
again the induction hypothesis implies that there exists
$w\in W$ such that $s_\beta(\varphi)=w(\varphi')$, and we are done.

\subsubsection*{Type~$F_4$}
Suppose that $\Phi$ is a root system of type~$F_4$.
Then we can show that $W$ acts transitively on $\Tt(\Phi)$ exactly as in
type~$B_n$. In particular, any $2$-structure is isomorphic to
$\varphi_0
=
\{\pm e_1,\pm e_2,\pm e_1\pm e_2\} \sqcup \{\pm e_3,\pm e_4,
\pm e_{3}\pm e_{4}\}$, so it is of type~$B_2^2$.

\subsubsection*{Dihedral types}
Suppose that $\Phi$ is a pseudo-root system of type~$I_2(m)$ with
$m\geq 5$ (this includes the type~$G_2$ root system).
It is straightforward to see
that $W$ acts transitively on $\Tt(\Phi)$.
If $m$ is odd then all the $2$-structures for $\Phi$ are
isomorphic to $\varphi_0=\{\pm e_1\}$,
and in particular of type~$A_1$.
If $m$ is even then all the
$2$-structures for $\Phi$ are of type~$I_2(2^r)$, where $2^r$ is the largest
power of $2$ dividing $m$.

\subsubsection*{Types $H_3$ and $H_4$}

Suppose that $\Phi$ is of type~$H_3$ or $H_4$.
We use the description of the pseudo-root systems $H_3$ and~$H_4$
given in~\cite[Table~5.2]{GB} where they are called $I_3$ and~$I_4$.
In particular, we choose $\Phi$ to be normalized. 
We claim that $W$ acts transitively on $\Tt(\Phi)$, and
so every $2$-structure for $\Phi$ is isomorphic to
\begin{align}
\varphi_0=\begin{cases}
\{\pm e_1,\pm e_2,\pm e_3\} & \text{ if } \Phi=H_3,\\
\{\pm e_1,\pm e_2,\pm e_3,\pm e_4\} & \text{ if }\Phi=H_4,
\end{cases}
\label{equation_type_H_2-structure}
\end{align}
and in particular it is of type~$A_1^3$ if $\Phi=H_3$
and of type~$A_1^4$ if $\Phi=H_4$.

It is clear by the chosen description of $\Phi$
that all of the inner products of elements of $\Phi$ are
in~$\Q[\sqrt{5}]$, and in particular $1/\sqrt{2}$ never appears.
So there are no pseudo-roots in $\Phi$ with an angle of ${\pi}/{4}$ between
them, which implies that $\Phi$ does not contain any pseudo-root system
of type~$I_2(m)$ with $m$ a multiple of $4$, and so $2$-structures for $\Phi$
(if they exist) can only have irreducible components of type~$A_1$.

We check easily that the set $\varphi_0$ given in 
equation~\eqref{equation_type_H_2-structure}
is a $2$-structure, so it remains to show that all the maximal
sets of pairwise orthogonal pseudo-roots are conjugate under~$W$
to $\zeta_0$, where $\zeta_0=\{e_1,e_2,e_3\}$ if $\Phi=H_3$ and 
$\zeta_0=\{e_1,e_2,e_3,e_4\}$ if $\Phi=H_4$.
Any element of the stabilizer $W_0$ of $\zeta_0$ in $W$ must act on
$\Span(\Phi)$ by a permutation of the coordinates, and it must be
an even permutation to be in $W$. This implies that the cardinality of $W_0$
is $3$ for $\Phi=H_3$ and $12$ for $\Phi=H_4$. Using a computer,
it is not hard to count all the maximal sets of pairwise orthogonal
pseudo-roots in $H_3$ and $H_4$ 
(see Table~\ref{table_the_number_of_orthogonal_sets}).
We find that
there are $40$ such sets for $H_3$ and $1200$ such sets for $H_4$.
In both cases, this number is equal to $|W|/|W_0|$, so
$W$ does act transitively on the set of maximal sets of
pairwise orthogonal pseudo-roots, and hence also on $\Tt(\Phi)$.

\section{Relationship with locally symmetric spaces}
\label{appendix_more_detail}

In this appendix, aimed at specialists of Shimura varieties,
we give more details about the connection
between some of the
objects introduced in this article and the calculation of the
weighted cohomology of locally symmetric spaces. 

This is a continuation of the discussion in the first part of
the introduction, and we return
to the notation of this discussion.
We do not suppose yet that the group $G(\Rrr)$ has a discrete series.
In the introduction, we only considered cohomology of
$X_K$ with constant coefficients, but now we need to introduce
a coefficient system. Let F be an irreducible algebraic representation
of $G$. Then, via the $G(\Qqq)$-covering 
$(G(\Rrr)\times G(\mathbb{A}^\infty))/(K_\infty\times K)\rightarrow X_K$
and the action of $G(\Qqq)$ on F, we get a locally constant
sheaf $L_F$ on $X_K$, and we will write $H^*(X_K,F)$ instead of
$H^*(X,L_F)$ for every reasonable cohomology theory $H^*$.
\footnote{We need a different construction of $L_F$ if $X_K$ is the
set of complex points of a Shimura variety and $H^*$ is \'etale
(intersection) cohomology, but this is not the point of this appendix.}
If $T\subset B$ are a maximal torus and a Borel subgroup
of $G_\Ccc$ respectively,
then the representation $F$ has a highest
weight $\lambda_B$ in the Lie algebra of $T$ that is dominant with respect
to $B$.
\footnote{The representation $F$ might not stay irreducible when seen
as a representation of $G_\Ccc$, but we ignore this technical complication.}
The space $V$ of the article will typically be this Lie algebra
with the inner product coming from the Killing form of the Lie algebra
of $G$ in the usual way.
The pseudo-root system $\Phi$ that defines the hyperplane arrangement
will be the root system of $T$ in the Lie algebra of $G$,
with the positive system determined by $B$; sometimes $T$ will be defined
over $\Rrr$, and $\Phi$ will be the real root system of $T$.

The first cohomology theory that we consider is
\emph{weighted cohomology}, from which the weighted complex and
the weighted sum get their names. Weighted cohomology was
introduced by Goresky--Harder--MacPherson in the paper~\cite{GHM}.
It depends on an auxiliary parameter called a ``weight profile'' and
is the cohomology of a sheaf of truncated differential forms
on the reductive Borel-Serre compactification of $X_K$, where the
truncation depends on the weight profile. The Hecke algebra acts on
the weighted cohomology groups, and they are explicit enough to make
the calculation of the traces of Hecke operators possible; see the
paper~\cite{GM} of Goresky and MacPherson. Also, there are two ``middle''
weight profiles for any group $G$ and, if $X_K$ is a Shimura variety,
then the two middle weighted cohomology groups are both isomorphic to the
intersection cohomology of
the Baily-Borel compactification of $X_K$.
What we call the ``weighted sum'' in this article appears in the
calculation of the trace of a Hecke operator on the
weighted cohomology groups, hence the name. This calculation
is carried out in \cite{GM} and summarized in Section~7 of
\cite{GKM}. Very roughly, the trace of a Hecke operator on
a weighted cohomology group is a sum over conjugacy classes
of rational Levi subgroups $M$ of $G$ and certain conjugacy
classes of $\gamma\in M(\Qqq)$ of the product of:
\begin{itemize}
\item[--] a normalizing factor;
\item[--] an orbital integral on the conjugacy class of $\gamma$
in $M(\mathbb{A}^\infty)$ that depends on the Hecke operator
but not on the weight profile or on the coefficient system;
\item[--] a ``term at infinity'' $L_M(\gamma)$ that depends on the weight
profile and the coefficient system but not on the Hecke operator.
\end{itemize}
See formula~(7.14.7) of~\cite{GKM}. 
Goresky, Kottwitz and MacPherson then introduce a stable virtual character
$\Theta$ on $G(\Rrr)$ (this notion is defined on page~495 of~\cite{GKM})
that depends on the coefficient system via the highest weight
of $F$ and on the weight profile. We can
recover the function $L_M$ as the restriction of $\Theta$ to $M$
up to chasing some denominators 
depending on~$M$; this last statement is Theorem~5.1
of~\cite{GKM}, and it works for any weight profile. 
While the expression for the function $L_M$ involves ``relative''
weighted sums $\psi_{\Hf/C}$ where we are 
in the situation of
Example~\ref{ex_parabolic_arrangement} (see pages~504--505 of~\cite{GKM}), 
the virtual character $\Theta$ only involves the simpler
weighted sums $\psi_\Hf$, where we are in the situation
of Subsection~\ref{section_Coxeter_case}. In both cases,
the space $V$ is the real Lie algebra of a maximal torus $T$ of $G$,
the pseudo-root system is the set of real roots of $T$ in $G$, and the
element $\lambda$ of $V$ is, up to a shift depending on the weight
profile, of the form $w(\lambda_B+\rho_B)-\rho_B$, where $B\supset T$
is a Borel subgroup (defined over $\Ccc$), 
$\lambda_B$ is the highest weight of~$F$ corresponding
to $B$, $\rho_B$ is half the sum of the positive roots and $w$ is an
element of the Weyl group.

We now assume that the weight profile is one of the middle profiles and
that $X_K$ is a Shimura variety.
Then, as explained in the introduction, we know that weighted cohomology
is isomorphic to $L^2$ cohomology, for which we have a spectral
description known as Matsushima's formula (even though it was proved
by Borel and Casselman in this generality). This implies in particular
that the virtual character $\Theta$ is equal to the stable discrete
series character corresponding to the dual of the representation $F$,
hence that the weighted sum $\psi_\Hf$ is equal to what are
known as \emph{stable discrete series constants}; see for example pages~493
and 498--500 of~\cite{GKM} for a quick review of these constants.
The first statement of the previous sentence is proved directly
in Theorem~5.2 of~\cite{GKM}, and the second statement is proved
directly in Theorem~3.1 of the same paper. 
The stable discrete series constants can be expressed in terms
of $2$-structures by the work of Herb (see for 
example~\cite[Theorem~4.2]{Herb-2S}), and this is the expression
on the right-hand side of the identity of Corollary~\ref{cor_root_systems}.

We can go further and relate the traces of Hecke operators on
$L^2$ cohomology to the Arthur-Selberg trace formula for a particular
test function. This is
done in Arthur's paper \cite{A-L2}. The resulting trace formula
can then be stabilized. Although this is a very complicated process
in the general case, it is slightly less involved for our test
function, by the work of Kottwitz (unpublished) and Zhifeng
Peng (\cite{Peng}). Thus we get character
formulas relating the virtual character $\Theta$ and stable discrete
series characters on endoscopic groups of $G$. However, when we
express everything in terms of $2$-structures, the distinction between
$G$ and its endoscopic groups disappears. Indeed, endoscopic groups of $G$
have root systems that are subsystems of the root system of $G$,
and $2$-structures, being very small root systems, can be shared between
$G$ and its endoscopic groups. (We are summarily ignoring many complications,
due in particular to the appearance of transfer factors in the character
identities.)

We finally come to the case where $X_K$ is a Shimura variety defined
over some number field $E$ and
we are interested, not just in the action of the Hecke algebra
on the intersection cohomology $IH^*(\overline{X}_K,F)$
of its Baily-Borel compactification
$\overline{X}_K$, but also in the action of the absolute Galois group
of $E$. There is a calculation of the trace of a Hecke operator
times a power of the Frobenius morphism (at at unramified place $p$) that
parallels the calculation of~\cite{GM}: see \cite{Morel1} for
the algebraic version of weighted cohomology, the papers
\cite{Morel2} and \cite{Morel} for the trace calculation in
the cases of unitary and symplectic groups (over $\Qqq$), and
\cite{Zhu} for the trace calculation in the case of orthogonal groups.
We obtain an expression for this trace that is reminiscent of
formula~(7.14.7) of~\cite{GKM}, that we quickly described above,
except that the orbital integral at $p$ is twisted and that the
terms $L_M(\gamma)$ are slightly different. Nevertheless, by
using techniques similar to those of the proof of Theorem~5.1
of~\cite{GKM}, in particular the Weyl character formula and Kostant's theorem, 
we can
still relate $L_M(\gamma)$ to the relative weighted sum
$\psi_{\Hf/C}$ in the situation of
Example~\ref{ex_parabolic_arrangement}. For symplectic groups
over $\Qqq$, this calculation is done
in the proof of Proposition~3.3.1 of~\cite{Morel}. The difference
with the situation of~\cite{GKM} is that $L^2$ cohomology does not
have an action of the absolute Galois of $E$, so we do not have a nice spectral
expression for our trace, and in particular we do not know if there
is a stable virtual character ``interpolating'' the function
$L_M$ as in Theorem~5.1 of~\cite{GKM}. Fortunately, we are still
able to relate our trace expression directly to a sum of stable trace
formulas for well-chosen test functions on endoscopic groups of $G$,
and this is where Theorem~\ref{thm_second_main} comes into play:
We must express the function $L_M$ in terms
of stable discrete series constants for endoscopic groups of $G$. Via
Herb's formula, this reduces to giving a formula for $L_M$ involving
$2$-structures for the root systems of these endoscopic groups, but, as
we explained above, these $2$-structures can also be seen as
$2$-structures for the root system of $G$. Again, we are sweeping
many technical complications under the rug, and the story is by no means
finished once we have Theorem~\ref{thm_second_main}.

\section*{Acknowledgements}

We thank the anonymous referees for their helpful comments.
The greatest part of this work was written
while the second author was a professor at Princeton
University.
It was also partially 
supported by the LABEX MILYON (ANR-10-LABX-0070) of Universit\'e 
de Lyon, within the program ``Investissements d'Avenir'' (ANR-11-IDEX-0007)
operated by the French National Research Agency (ANR). More precisely,
the authors would like to thank 
the \'Ecole Normale Sup\'erieure de Lyon (\'ENS de Lyon)
for its hospitality and support to the second author during
the academic year 2017--2018, and to the first and third author
during one week visits.
The first and third authors also thank 
Princeton University
for hosting four one-week visits during the academic year 2018--2019,
and the Institute for Advanced Study for hosting
a research visit Summer 2019,
and the second author thanks the University of
Kentucky for its hospitality during a one-week visit in the Fall of
2019.
This work was also partially supported by grants from the
Simons Foundation
(\#429370 to Richard~Ehrenborg
and \#422467 to Margaret~Readdy). 

Finally, we used SageMath and Maple for innumerable root system
computations.

\printbibliography

\end{document}